\documentclass[letterpaper,11pt]{article}
\usepackage[margin=1in]{geometry}
\usepackage{graphicx}
\usepackage{amsmath}
\usepackage{amssymb}
\usepackage{amsthm}
\usepackage{mathrsfs}
\usepackage{bm}
\usepackage{enumerate}
\usepackage{color}
\usepackage{booktabs}

\usepackage{algorithm}
\usepackage{algpseudocode}
\usepackage{xspace}

\usepackage{subcaption}

\usepackage[%dvips,
            CJKbookmarks=true,
            bookmarksnumbered=true,
            bookmarksopen=true,
            colorlinks=true,
            citecolor=red,
            linkcolor=blue,
            anchorcolor=red,
            urlcolor=blue,
            ]{hyperref}

\allowdisplaybreaks

\newcommand{\norm}[1]{{\left\Vert #1 \right\Vert}}
\newcommand{\Norm}[1]{{\Vert #1 \Vert}}
\newcommand{\Fnorm}[1]{\Norm{#1}_{\rm F}}

\newcommand{\ones}{\bm{1}}
\def\R{\mathbb{R}}

\def\P{\mathbb{P}}
\def\E{\mathbb{E}}

\def\naturals{\mathbb{N}}

\def\cE{\mathcal{E}}
\def\cI{\mathcal{I}}

\def\N{\mathcal{N}}
\def\cC{\mathcal{C}}
\def\cF{\mathcal{F}}
\def\cM{\mathcal{M}}

\def\tT{\tilde{T}}

\def\a{\mathbf{a}}
\def\b{\mathbf{b}}

\def\d{\mathbf{d}}
\def\e{\mathbf{e}}

\def\g{\mathbf{g}}
\def\m{\mathbf{m}}
\def\r{\mathbf{r}}
\def\u{\mathbf{u}}
\def\v{\mathbf{v}}
\def\w{\mathbf{w}}
\def\x{\mathbf{x}}
\def\y{\mathbf{y}}
\def\z{\mathbf{z}}

\def\B{\mathbf{B}}
\def\D{\mathbf{D}}
\def\H{\mathbf{H}}
\def\S{\mathbf{S}}
\def\I{\mathbf{I}}
\def\M{\mathbf{M}}
\def\U{\mathbf{U}}

\def\Q{\mathbf{Q}}
\def\T{\mathbf{T}}

\def\X{\mathbf{X}}

\def\bN{\mathbb{N}}
\def\bS{\mathbb{S}}

\def\bmu{\boldsymbol{\mu}}
\def\btheta{\boldsymbol{\theta}}
\def\bbeta{\boldsymbol{\beta}}
\def\bgamma{\boldsymbol{\gamma}}

\def\bvarphi{\boldsymbol{\varphi}}
\def\bSigma{\boldsymbol{\Sigma}}
\def\bLambda{\boldsymbol{\Lambda}}
\def\eps{\varepsilon}
\def\beps{\boldsymbol{\varepsilon}}
\def\d{\mathrm{d}}

\newcommand{\calG}{\mathcal{G}}

\newcommand{\iprod}[2]{\left\langle#1,#2\right\rangle}

\newcommand{\Indc}{\mathbf{1}}

\newcommand{\floor}[1]{\left\lfloor #1 \right\rfloor}

\newcommand{\pnorm}[2]{\lVert #1\rVert_{#2}}

\newcommand{\Bigpnorm}[2]{\Big\lVert#1\Big\rVert_{#2}}
\newcommand{\biggpnorm}[2]{\bigg\lVert#1\bigg\rVert_{#2}}
\newcommand{\abs}[1]{\left| {#1} \right|}
\newcommand{\opnorm}[1]{\| #1 \|_{\rm op}}
\newcommand{\pth}[1]{\left( #1 \right)}
\newcommand{\qth}[1]{\left[ #1 \right]}

\newcommand{\iid}{iid}

\newcommand{\iiddistr}{\stackrel{\mathrm{\iid}}{\sim}}

\DeclareMathOperator{\Var}{Var}
\DeclareMathOperator{\var}{Var}
\DeclareMathOperator{\Cov}{Cov}

\DeclareMathOperator{\diag}{Diag}
\DeclareMathOperator{\Tr}{Tr}
\DeclareMathOperator{\tr}{Tr}

\DeclareMathOperator{\dTV}{\mathrm{TV}}
\DeclareMathOperator{\TV}{\mathrm{TV}}

\DeclareMathOperator{\op}{op}
\DeclareMathOperator{\tiid}{iid}
\DeclareMathOperator{\block}{block}

\DeclareMathOperator{\PI}{PI}

\def\op{\mathrm{op}}

\def\Prob{\mathbb{P}}

\newcommand{\argmin}{\mathrm{argmin}}

\newtheorem{theorem}{Theorem}
\newtheorem{lemma}[theorem]{Lemma}
\newtheorem{corollary}[theorem]{Corollary}

\newtheorem{proposition}[theorem]{Proposition}

\theoremstyle{definition}

\newtheorem{remark}{Remark}
\newtheorem{assumption}{Assumption}

\newcommand{\reals}{\mathbb{R}}
\newcommand{\Expect}{\mathbb{E}}
\usepackage{prettyref}
\newrefformat{eq}{(\ref{#1})}
\newrefformat{chap}{Chapter~\ref{#1}}
\newrefformat{sec}{Section~\ref{#1}}
\newrefformat{alg}{Algorithm~\ref{#1}}
\newrefformat{fig}{Figure~\ref{#1}}
\newrefformat{tab}{Table~\ref{#1}}
\newrefformat{rmk}{Remark~\ref{#1}}
\newrefformat{clm}{Claim~\ref{#1}}
\newrefformat{def}{Definition~\ref{#1}}
\newrefformat{cor}{Corollary~\ref{#1}}
\newrefformat{lmm}{Lemma~\ref{#1}}
\newrefformat{prop}{Proposition~\ref{#1}}
\newrefformat{app}{Appendix~\ref{#1}}
\newrefformat{hyp}{Hypothesis~\ref{#1}}
\newrefformat{thm}{Theorem~\ref{#1}}
\newrefformat{assump}{Assumption~\ref{#1}}
\newrefformat{conj}{Conjecture~\ref{#1}}
\newrefformat{ex}{Example~\ref{#1}}

\renewcommand{\hat}{\widehat}
\renewcommand{\tilde}{\widetilde}

\newcommand{\stepa}[1]{\overset{\rm (a)}{#1}}
\newcommand{\stepb}[1]{\overset{\rm (b)}{#1}}

\begin{document}

% paper title
% \title{Method of moments for 
% empirical Bayes in high-dimensional linear model}
\title{Empirical Bayes linear regression in high dimensions: 
Method of moments and sub-linear sample complexity
}

\author{Zhou Fan, Yandi Shen, Haoyu Wang, Yihong Wu\thanks{
	Z.~Fan and Y.~Wu are with the Department of Statistics and Data Science, Yale University, New Haven, Connecticut, USA, \texttt{\{zhou.fan,yihong.wu\}@yale.edu}.
	Y.~Shen is with the Department of Statistics and Data Science, Carnegie Mellon University, Pittsburgh, PA, USA, \texttt{yandis@andrew.cmu.edu}.
	H.~Wang is with the Department of Mathematics, Yale University, New Haven, Connecticut, USA, \texttt{wanghaoyu417@gmail.com}.
	}}

\date{\today}

\maketitle

\begin{abstract}

We study empirical Bayes estimation of the prior in high-dimensional linear regression
$\y=\X\bbeta+\beps$, where the regression coefficients are drawn independently from an unknown sub-Gaussian prior. In contrast to the sequence model, the design matrix couples the latent coefficients, so that recovering the prior requires deconvolving it from both the noise and copies of itself. We introduce the \emph{Empirical Bayes Method of Moments} (EBMoM), a computationally efficient procedure for general designs that recursively estimates the prior moments through a lower-triangular system of estimating equations and runs in time $O(np^2)$.
% given the Gram matrix $\X^\top\X$, runs in linear time.

Under mild design conditions, satisfied in particular by a broad class of correlated random designs, we show that EBMoM consistently estimates a growing number of moments and hence the prior itself, 
provided that $n\geq p^{1-o(1)}$. A matching information-theoretic lower bound, valid for a broad class of designs, shows that this sublinear sample complexity is optimal for nonparametric prior estimation.
% Thus, consistent prior estimation is possible well below the linear regime $n=\Omega(p)$ in which consistency has so far been established for likelihood-based empirical Bayes methods.
This improves on existing results for likelihood-based methods whose consistency requires a linear sample size $n=\Omega(p)$.

% We study empirical Bayes estimation of the prior in high-dimensional Bayesian linear regression $\y = \X\bbeta + \beps$, where the coordinates of $\bbeta$ are drawn i.i.d.~from an unknown prior $g$. Unlike the sequence model, where this is a classical deconvolution problem, observations from the linear model are comprised of convolutions of $g$ both with itself and with the noise. We propose the Empirical Bayes Method of Moments (EBMoM), which resolves this coupling by recursively estimating the moments of $g$  through a triangular system of estimating equations. The procedure runs in time $O_k((n\vee p)^2)$ once the Gram matrix $\X^\top \X$ is computed, where $k$ is the number of moments to be estimated. We establish non-asymptotic error bounds for the moment estimates under mild design conditions, satisfied by a broad class of correlated random designs, and deduce that a sample complexity $n \geq p^{1-c(k)}$ with some $c(k)>0$ is sufficient for estimating each $k^\text{th}$ order moment. By taking $k\rightarrow\infty$, this further implies that a \emph{sub-linear} sample size $n = p^{1-o(1)}$ suffices for consistent estimation of $g$ in Wasserstein-$1$ distance, where $o(1)$ can be arbitrarily slow as $\min\{n,p\} \rightarrow\infty$. This threshold is better than the sample size condition $n\gtrsim p$ required by existing likelihood-based methods, and is shown to be information-theoretically optimal for a broad class of designs by a matching lower bound.
\end{abstract}

\setcounter{tocdepth}{3}
\tableofcontents

\section{Introduction}

Empirical Bayes, introduced by Robbins \cite{robbins1951asymptotically, robbins1956empirical}, is a powerful paradigm for large-scale inference. It was originally developed for the \emph{sequence model}, in which one observes 
\begin{align*}
y_i = \theta_i + \eps_i, \quad i = 1,\ldots,n,
\end{align*}
where the latent means $\theta_i$ are drawn i.i.d.\ from an unknown prior $g$, and $\eps_i \sim \N(0,\sigma^2)$ is independent Gaussian noise; the goal is to learn the prior $g$ and denoise the observations $y_1,\dots,y_n$ to perform inference on $\theta_1,\ldots,\theta_n$. The core idea of empirical Bayes is to estimate this prior from the data, rather than positing it a priori as in classical Bayesian inference, and then to carry out the downstream Bayesian inference using the estimated prior. Since the $y_i$ are i.i.d.\ samples from the convolution $g * \N(0,\sigma^2)$, learning $g$ in the sequence model is a classical deconvolution problem, and empirical Bayes has been extensively studied in this setting. Such methods have been widely explored in statistics \cite{casella1985introduction, zhang2003compound, efron2012large} and have also received attention in many applied areas \cite{ver1996parametric, efron2001empirical, brown2008season}.

While the theoretical study of empirical Bayes is well developed for the sequence model, many modern applications involve more intricate dependence structures between the observations and the latent variables. A prominent instance of such a model is high-dimensional linear regression
\begin{align}\label{eq:model}
\y = \X\bbeta + \beps, 
\end{align}
where $\X = [\x_1,\ldots,\x_p] \in \R^{n\times p}$ is a fixed or random design matrix, $\bbeta  = (\beta_1,\ldots, \beta_p) \in \R^p$ has coordinates $\beta_1,\ldots, \beta_p$ drawn i.i.d.\ from an unknown prior $g$, and $\beps \sim \N(0,\sigma^2 \I_n)$ is independent of $\bbeta$ and $\X$. As in the sequence model, the goal is to estimate the prior $g$ from the data $(\X, \y)$, now in the high-dimensional regime where the sample size $n$ and the dimension $p$ are both large.

When $\X$ has full column rank, the linear model can be viewed as a sequence model with correlated errors, and in fact contains the sequence model as the special case $\X = \I_n$. Extending empirical Bayes theory from sequence models to linear models with general designs $\X$, however, is far from straightforward. In the sequence model the observations are i.i.d.\ from $g * \N(0,\sigma^2)$, so the marginal likelihood factorizes across coordinates, and estimating $g$ reduces to a well-understood deconvolution problem for which both maximum likelihood and the method of moments are statistically optimal and computationally efficient. In the linear model, the design $\X$ couples the coordinates of $\bbeta$: the marginal law of each observation is no longer a simple convolution of $g$ with the noise, but also involves $g$ convolved with copies of itself. Recovering $g$ therefore requires deconvolving it \emph{simultaneously} from the noise and from itself---a \emph{self-deconvolution} phenomenon with no analogue in the sequence model (see Section \ref{sec:lb} for a detailed discussion). As a consequence, the marginal likelihood neither factorizes across coordinates nor is convex in $g$ in general, and the moments of the data depend nonlinearly on those of $g$. These features make both the statistical analysis and the computation substantially more demanding, and rigorous high-dimensional empirical Bayes theory for the linear model has remained limited; earlier attempts either restrict the prior to an inflexible parametric class or incur considerable computational cost \cite{nebebe1986bayes,george2000calibration,yuan2005efficient}.

To date, most empirical Bayes methods for the linear model have been likelihood-based. A common starting point is the Gibbs variational representation of the marginal data likelihood, which reformulates maximum likelihood as a bivariate optimization over the prior and the posterior \cite{mukherjee2023mean,akyildiz2023interacting, kuntz2023particle,fan2023gradient,fan2025dynamicalI,fan2025dynamicalII,lee2026parametric}. The main challenge is then how to deal with the computational intractability of the posterior, and existing methods take a variety of approaches, including Monte Carlo sampling, coupled gradient flows, and tractable variational approximations; we review these methods in detail in Section \ref{sec:related_work}. Despite their different computational strategies, the available theory has so far remained limited in two important respects. Consistency has only been established for linear sample complexity $n=\Omega(p)$, and the guarantees require specific  assumptions on the design, such as an i.i.d.\ random design. This leaves open the following question:
% These methods differ mainly in how they handle the computational hardness of the posterior---for instance via Monte Carlo sampling, coupled gradient flows, or tractable variational approximations---and we review them in detail in Section \ref{sec:related_work}. Nevertheless, they share two recurring limitations: current consistency results for these methods have only been established when $n \geq cp$ for a constant $c>0$, and their guarantees often rely on specific design assumptions, such as i.i.d.\ random design. This leaves open the following question:
\begin{quotation}
\em Under a general design, what is the optimal sample complexity for consistently estimating the prior, and can it be achieved by a computationally efficient procedure?    
\end{quotation}
The goal of this paper is to answer both questions. We summarize our main contributions as follows.
\begin{enumerate}
\item We propose the \emph{Empirical Bayes Method of Moments} (EBMoM), a computationally efficient estimator of the prior $g$ that applies to general designs, including a broad class of correlated random designs. To estimate $k$ moments, the procedure runs in time $O_k(p(n+ p))$ in addition to computing the Gram matrix $\H = \X^\top \X$ which takes at most $O(np^2)$ time.

\item We establish non-asymptotic error bounds for the moment estimates under mild design conditions, and deduce that a sample complexity $n \geq p^{1-c(k)}$ with some $c(k)>0$ is sufficient for estimating each $k^\text{th}$ order moment. Consequently, by taking $k\rightarrow\infty$, a \emph{sub-linear} sample size $n \geq p^{1-o(1)}$ suffices for consistent estimation of the full prior $g$ in the Wasserstein-$1$ distance, where the $o(1)$ term can vanish arbitrarily slowly as $\min\{n,p\} \rightarrow \infty$.
\item We prove a matching information-theoretic lower bound, showing that this sub-linear $n \geq p^{1-o(1)}$ threshold is optimal. In particular, it improves upon the linear requirement $n \geq cp$ that has been shown for existing likelihood-based methods.
\end{enumerate}

\subsection{Related work}\label{sec:related_work}

\paragraph{MoM for sequence models}
The method of moments (MoM) is one of the classical approaches to parameter estimation and dates back to Pearson \cite{pearson1894contributions}. Despite its conceptual simplicity, classical MoM can face several practical difficulties, including the solvability of the polynomial systems induced by the moment equations, computational inefficiency, and the strong assumptions sometimes needed for consistency. A broad extension is Hansen's Generalized Method of Moments (GMM) \cite{hansen1982large}, which replaces exact moment matching by optimization over discrepancies between empirical and model moments and has been widely used in economics and finance \cite{hall2005generalized}.

We highlight two MoM estimators that are particularly relevant to our experiments. First, the \emph{denoised method of moments} (DMM) \cite{wu2020optimal,WY-fnt} gives an efficient implementation of GMM for location mixture models by optimizing directly over the truncated moment space: the estimated moments are projected onto this space by a semidefinite programming, after which a discrete distribution with matching moments is found using Gauss quadrature. Second, Lindsay's algorithm \cite{lindsay1989moment} is a moment-based estimator for homoscedastic Gaussian mixtures that also estimates the unknown common variance.
% ; see also \cite[Section 4.2]{wu2020optimal} for an analysis of this estimator. 
In Section \ref{sec:exp}, we apply both procedures as subroutines to convert moment estimates into estimates for  the prior.

\paragraph{MoM for Bayesian linear models}
MoM has been widely used in parametric Bayesian linear models where $g$ corresponds to a centered Gaussian prior with unknown variance. MoM is a classical approach to this variance-estimation task, first developed in \cite{henderson1953estimation}. A more comprehensive study of MoM for linear models was pioneered by Rao in his work on minimum-variance quadratic unbiased estimation (MIVQUE) \cite{rao1971estimation,rao1972estimation}. Specifically, for the linear model $ \y = \X \bbeta + \beps $ with Gaussian prior $ \beta_1,\dots,\beta_p \iiddistr \N(0, s^2) $ and noise $ \beps \sim \N(0,\sigma^2 \I) $, the MoM estimator takes the form
\begin{equation}\label{eq:general_mom_bayesian}
\hat{s}^2 = \y^\top \B \y,\ \ \mbox{where } \B \mbox{ satisfies } \E[\y^\top \B \y] = s^2.
\end{equation}
A popular modern procedure for MoM estimation is LD-score regression \cite{bulik2015ld}. Assuming the variants are standardized so that $ \|\x_j\| = 1 $, LD-score regression is based on the identity
$$ \E[(\x_j^\top \y)^2] = s^2 \sum_{k=1}^p \Big[ (\x_j^\top \x_k)^2 - \frac{1}{n} \Big] + \frac{p}{n} s^2  + \sigma^2. $$
This method was shown to fall within the general MoM framework \eqref{eq:general_mom_bayesian} in \cite{zhou2017unified}.

Motivated by applications, prior models beyond the Gaussian distribution have been proposed, along with more complex estimation procedures. For example, \cite{o2019extreme} introduced a fourth-moment/excess-kurtosis-type statistic to measure quantitative polygenicity in genetics,
$$ \kappa_e = \frac{3\E[\alpha^2 \beta^2] - 2\E[\beta^4]}{(\E[\alpha \beta])^2}, $$
where $ \alpha_j = \x_j^\top \X \bbeta $ and $ (\alpha, \beta) $ denotes a uniformly random pair $ (\alpha_j, \beta_j) $, $ j = 1,\dots,p $, with the expectation taken over both this uniform choice and the randomness of $ \bbeta $. In the Bayesian setting $ \beta_j \sim \N(0,s_j^2) $, an MoM estimator for $ \kappa_e $ was proposed in \cite{o2019extreme,o2021distribution}, based on the identity
$$ \kappa_e = \frac{3p}{\sum s_k^2} \sum_{j=1}^p \frac{s_j^2}{\sum s_k^2} \E[\alpha_j^2]. $$
A further extension is the linear mixed model $ \y = \X_1 \bbeta_1 + \cdots + \X_K \bbeta_K + \beps $, now one of the predominant models in applied fields such as genetics \cite{yang2011gcta}, where MoM has been used for a range of inference problems \cite{loh2015contrasting}.

\paragraph{Other EB methods for Bayesian linear models}
Beyond Gaussian priors, in contrast to the moment-based approach taken in this work, most EB methods in the literature are likelihood-based, with the common starting point being the Gibbs variational representation of the marginal data likelihood as a more tractable bivariate optimization problem. From here, existing methods can be roughly categorized as follows.
\begin{itemize}
\item \emph{(EM and its variants.)} This line of work, originating from \cite{neal1998view}, alternates between updating the prior and computing the corresponding posterior; because the posterior step is computationally difficult, it is often replaced by an approximation or a sampling-based analogue such as Monte Carlo EM (MCEM) or hierarchical Bayes sampling. These methods are widely adopted in applied contexts such as statistical genetics \cite{zhou2013polygenic,lloyd2019improved,ge2019polygenic,zhou2021fast}. Classical theoretical treatments of MCEM include \cite{chan1995monte, fort2003convergence, neath2013convergence}, and related methods such as stochastic EM (SEM) are studied in \cite{diebolt1993asymptotic,delyon1999convergence}.
\item \emph{(Gradient flow.)} 
%\nbwu{Is ``Bivariate optimization'' the right headline? Is it better to say ``gradient flow'' then in the paragraph be more specific and say gradient flow implemented by diffusion? }
In contrast to MCEM and related methods, which require sampling from the exact posterior given the current prior, recent work \cite{fan2023gradient,fan2025dynamicalI,fan2025dynamicalII} derives sufficient conditions for the idealized nonparametric maximum likelihood estimator (NPMLE) and analyzes a bivariate gradient-flow algorithm that simultaneously updates the prior and samples from the posterior via Langevin diffusion. See also \cite{akyildiz2023interacting, kuntz2023particle,lim2024momentum,caprio2025error} for related methods in more general settings beyond the linear model.
\item \emph{(Variational inference.)} To circumvent the intractability of posterior computation, variational inference instead approximates the posterior by a more tractable family of distributions. These methods are widely applied in statistical genetics \cite{spence2022flexible,morgante2023flexible}, though rigorous theory is scarcer. The recent work \cite{mukherjee2023mean} establishes conditions under which consistency continues to hold for the optimizer of a naive mean-field variational approximation. This is further sharpened in \cite{lee2026parametric}, which provides necessary and sufficient conditions for an $O(1/\sqrt{p})$ convergence rate and a central limit theorem for the variational EB estimator.
\end{itemize}

\subsection{Notations}
We use the standard big-O notations: for two positive sequences $ \{a_n\} $ and $ \{b_n\} $, we denote $ a_n = O(b_n) $ if $ a_n \leq C b_n $ for some constant $ C > 0 $ and we write $ a_n = O_k(b_n) $ if $ C $ depends on some other parameter $ k $.

For a tensor $\T \in (\reals^p)^{\otimes k}$, its (diagonal) trace is defined as
$\Tr(\T) := \sum_{a=1}^p T_{a,\ldots,a}$. The inner product between tensors $\S $ and $\T$ is defined as $ \iprod{\S}{\T} = \sum_{a_1,\dots,a_k = 1}^p S_{a_1,\dots,a_k} T_{a_1,\dots,a_k} $. For $\x \in\reals^p$, $\x^{\otimes k}$ is a rank-one tensor given by 
$(\x^{\otimes k})_{a_1,\ldots,a_k} = x_{a_1} \cdots  x_{a_k}$. $\bm{1}_p$ is the all-1's vector in $\R^p$, and we omit the subscript when the dimension is clear.

For a probability distribution $ g $, we use $ m_k(g) := \E[\beta ^k]
$ to denote its $ k $th moment and $ \mu_k(g) := \E[(\beta - \E[\beta])^k]$ for its $ k $th central moment, where $ \beta \sim g $. For $ \ell \geq 1 $, the moment vector up to order $ \ell $ is defined as $ \bm{m}_\ell(g) = (m_1(g),\dots,m_\ell(g)) $. The moment tensor of a random vector $ \bbeta $ is defined as the symmetric order-$\ell$ tensor
$M^{(\ell)}(\bbeta) = \Expect[\bbeta^{\otimes \ell}]$, with 
$M^{(\ell)}_{s_1,\ldots,s_\ell} = 
\Expect[\beta_{s_1} \ldots \beta_{s_\ell}]$ for $s_1,\ldots,s_\ell \in[p]$.

\subsection{Organization}
The paper is organized as follows. In Section \ref{sec:method}, we describe the proposed \emph{Empirical Bayes Method of Moments} (EBMoM) along with its statistical guarantees, as well as an efficient linear-time algorithm to compute the moment estimates. In Section \ref{sec:lb}, we present a matching sample complexity lower bound for nonparametric estimation of the prior. Numerical experiments are reported in Section \ref{sec:exp}. Section \ref{sec:discussion} concludes with discussions and open questions. Proofs of risk bounds for EBMoM are given in Section \ref{sec:proof_ub}, and the  minimiax lower bound is proved in Section \ref{sec:lb-pf}.

\section{Method of moments for EB linear regression}
\label{sec:method}

\subsection{Construction of moment estimators}
\label{sec:construction}
To estimate the first $L$ moments of the prior, we aim to set up $L$ estimating equations, which, upon taking expectations, result in a system of polynomial equations that are \textit{lower triangular}. That is, for each $i$,
the expectation of the $i$th summary statistic is a polynomial of $m_1,\ldots,m_{i-1}$ plus a \textit{linear} term of $m_i$. This  
suggests we may use previous estimates of lower-order moments to successively peel off the nonlinear terms and obtain an estimate of the next moment.

For a fixed degree $L \in \bN$, we construct estimators for the mean  $m_1(g)$ and central moments $\mu_2(g),\ldots,\mu_L(g)$. 
These central moments are estimated recursively, with variance as the base case.
We refer to this estimator as the \textit{Empirical Bayes Method of Moments} (EBMoM).

% After introducing estimators for the mean and the variance, the higher-order moment estimates are defined in a recursive manner.

\textbf{Step 1: Mean estimator.}
We estimate the first moment $m_1$ by 
\begin{align}\label{eq:m1hat}
\hat{m}_1 = \frac{\bm{1}^\top \X^\top \y}{\pnorm{\X \bm{1}}{}^2},
\end{align}
where $\bm{1}$ denotes the all one vector in $\R^p$. 
This estimator, which is linear in the response $\y$, is an unbiased estimator of $m_1$. 
% There is in fact a continuum of unbiased linear estimators. See \prettyref{app:bettermom}.

\textbf{Step 2: Variance estimator.}
Denote the central moments of the prior by 
\begin{align}\label{eq:central_moment}
\mu_\ell := \E_{\beta\sim g}[(\beta - m_1)^\ell].
\end{align}
The estimator for $\mu_2$, the variance of $g$, is defined as
\begin{equation}\label{eq:mu2hat}
\hat{\mu}_2 := \frac{\|\X^\top \hat{\y}\|^2-\sigma^2 \Fnorm{\X}^2}{\Fnorm{\X^\top \X}^2}, \qquad \hat{\y} := \y - \hat{m}_1 \X\bm{1}. 
\end{equation}
If $\hat m_1$ were to be replaced by the true value $m_1$, then $\hat\mu_2$ would be unbiased for $\mu_2$.

\textbf{Step 3: Estimating higher moments.}
% For $\ell\geq 2$, the estimator for $m_\ell$ is recursively defined as follows.
The estimator for central moments $\{\mu_k\}$ of order $k\geq 3$ are recursively defined as follows, with the above $\hat{\mu}_2$ as the base case.

First, we introduce the needed tensor notations. Denote the Gram matrix by 
\begin{align*}
\H:= \X^\top \X \in \R^{p\times p}.
\end{align*}
% For any $\ell\in\bN$, define a symmetric order-$\ell$ tensor $T^{(\ell)} \in (\R^p)^{\otimes \ell}$ as
% \begin{align}\label{eq:T_tensor}
% (T^{(\ell)})_{s_1,\ldots,s_\ell} := \sum_{j=1}^p \H_{j,s}\ldots \H_{j,s_\ell}, \qquad s_1,\ldots, s_\ell \in [p]. 
% \end{align}
For each $k$, define a symmetric order-$k$ tensor $\T^{(k)} \in (\R^p)^{\otimes k}$ as
\begin{align}\label{eq:T_tensor}
T^{(k)}_{s_1,\ldots,s_k} := \sum_{j=1}^p H_{j,s_1}\ldots H_{j,s_k}, \qquad s_1,\ldots, s_k \in [p]. 
\end{align}
Given any partition of $k$ into $t$ ordered 
positive integers $d_1,\ldots, d_t \geq 1$ such that $d_1 + \ldots + d_t = k$, define an upper-triangular order-$t$ tensor $\tilde{\T}^{(d_1,\ldots,d_t)} \in (\R^p)^{\otimes t}$ by
\begin{align}
\notag \tilde{T}^{(d_1,\ldots,d_t)}_{s_1,\ldots,s_t} &:= {k  \choose d_1,\ldots,d_t} T^{(k)}_{\underbrace{s_1,\ldots, s_1}_{d_1\text{ times}},\ldots, \underbrace{s_t,\ldots, s_t}_{d_t\text{ times}}}\\
&= {k  \choose d_1,\ldots,d_t} \sum_{j=1}^p H_{j,s_1}^{d_1}\ldots H_{j,s_t}^{d_t}, & \text{ if } 1 \leq s_1 < \ldots < s_t \leq p,\label{eq:T_diag_free}\\
\notag \tilde{T}^{(d_1,\ldots,d_t)}_{s_1,\ldots,s_t} &:=0 & \text{ otherwise.}
\end{align}
Note that for the finest partition with $t = k$ and $d_1=\ldots=d_k=1$, 
$\tilde{\T}^{(1,\ldots,1)}$ coincides with the upper-triangular entries of $\T^{(k)}$ up to the multiplicative constant $k!$.
Let
\begin{align}\label{eq:Ak}
A_k := \Tr \T^{(k)} = \sum_{i,j=1}^p H_{i,j}^k.
\end{align}
Note that we always have $A_k \geq 0$ since $A_k = \bm{1}^\top \H^{(k)} \bm{1}$, where $\H^{(k)}=\H\circ \ldots \circ \H$ denotes the $k$th Hadamard (entrywise) product of $\H$ and is positive semidefinite \cite[Theorem 7.5.3]{horn2012matrix}.

Next, we estimate the central moments. Given $\hat{\mu}_2,\ldots,\hat{\mu}_{k-1}$, we
construct an estimator 
$\hat{\mu}_k$ for $\mu_k$ by:
% Let $K_\ell = M^\ell \ell^{\ell/2}$ where $M$ is the subgaussian constant of the prior $g$ (see Assumption \ref{assump:prior} below for precise definitions), and $\hat{\mu}_\ell^\circ := (\hat{\mu}_\ell \wedge 2K_\ell) \vee (-2K_\ell)$ be the truncated version of $\hat{\mu}_\ell$.
\begin{align}\label{eq:moment_def}
\hat{\mu}_k = \frac{1}{A_k}\Big[F_k(\hat{\y}) - \sum_{t=2}^k \sum_{d_1 + \ldots + d_t = k: d_j \geq 2} \hat{\mu}_{d_1} \ldots \hat{\mu}_{d_t} \sum_{s_1,\ldots,s_t = 1}^p \tilde{T}^{(d_1,\ldots,d_t)}_{s_1,\ldots,s_t}\Big],
\end{align} 
where the summation is over ordered $t$-tuples $(d_1,\ldots,d_t)$ as in (\ref{eq:T_diag_free}), and 
\begin{align}\label{eq:F_k}
F_k(\hat{\y}) := \sum_{j=1}^p (\sigma\pnorm{\x_j}{})^k H_k\Big(\frac{\x_j^\top \hat{\y}}{\sigma\pnorm{\x_j}{}}\Big).
% \qquad \check{\y} = \y - \hat{m}_1 \bm{1}.
% \check{\y} = \X (\bbeta - \hat{m}_1 \bm{1}) + \beps.
\end{align}
Here $H_k(x) \equiv (-1)^k e^{x^2/2}\frac{\d^k}{\d x^k} e^{-x^2/2}$ denotes the degree-$k$ Hermite polynomial. 
Note that \prettyref{eq:moment_def}
recovers the variance estimator 
$\hat{\mu}_2$ in \prettyref{eq:mu2hat}, 
since $\hat\mu_1\equiv 0$ and $H_2(x)=x^2-1$.
Furthermore, in the special case of identity design ($\X=\I$ and $n=p$),
all tensors $\tilde{\T}^{(d_1,\ldots,d_t)}$ are zero and so is the correction term in \prettyref{eq:moment_def}, which reduces to the usual method of moments in the Gaussian sequence model.

% Finally, we define the estimator for the original (uncentered) moment $ m_\ell $ 
% \begin{equation}\label{eq:moment_def}
% \hat{m}_\ell = \sum_{k=0}^\ell \binom{\ell}{k} \hat{\mu}_k \hat{m}_1^{\ell-k}.
% \end{equation}

%for any $\x\in\R^d$ and $q\in \bN$, we write $\pnorm{\x}{\ell_q}^q = \sum_{i=1}^d x_i^q$ (without absolute value). 

Next, we discuss several salient aspects of the proposed estimator.

\paragraph{From moments to distribution}
After obtaining the moment estimates, we can convert it to an estimate of the prior. There are multiple ways to do so. For example, 
we may apply the \textit{denoised method of moments} \cite{wu2020optimal}, a special case of the generalized method of moments \cite{hansen1982large}, to produce an estimator that is a discrete distribution. Specifically, 
\begin{enumerate}
    \item For some odd $L=2k-1$, we first project the noisy moment estimates $\hat{\bmu}_L=(\hat{\mu}_1,\ldots, \hat{\mu}_L)$ with $\hat{\mu}_1 \equiv 0$ onto the space of moments by
solving the following semidefinite program \cite{wu2020optimal}:
\begin{equation}\label{eq:DMM}
\tilde{\bmu}_L := \argmin \{\pnorm{\m - \hat{\bmu}_L}{} : \m \in \cM_L([-A,A])\},
\end{equation}
where
\begin{equation}
\cM_{L}([-A,A]) := \{\bm{m}_L(\pi):\pi \text{ supported on } [-A,A]\},
\end{equation}
is the space of moment vectors $\bm{m}_L(\pi) :=(m_1(\pi),\ldots,m_L(\pi))$ over all distributions $\pi$ supported on $[-A,A]$. Here the hyperparameters $L\in\naturals$ and $A>0$ may be chosen depending on $n,p$ and the tail assumption of the true prior $g$.
\item Next, from the solution $\tilde \bmu_L = (\tilde\mu_1,\ldots,\tilde\mu_L)$ of (\ref{eq:DMM}), we construct a discrete probability distribution $\tilde{g}$ with matching moments, i.e., 
\begin{align*}
m_\ell(\tilde{g}) = \tilde{\mu}_\ell, \quad \ell=1,\ldots,L.
\end{align*}
This $\tilde g$ can be constructed to be $k$-atomic using, e.g., Gauss quadrature \cite{wu2020optimal}; we refer to \cite{golub1969calculation,gautschi2004orthogonal} for implementation details of Gauss quadrature.

\item Finally, we shift the atoms of $\tilde{g}$ by the estimated mean $\hat{m}_1$ to produce the final estimator $\hat g$.
\end{enumerate}

Alternatively, if  continuous estimates are desired, one may fit a continuous parametric family of densities (e.g.~mixture of Gaussians) by moment matching, or estimate a nonparametric density by moment matching and spline smoothing. We explore these possibilities in numerical  experiments in \prettyref{sec:exp}.

% \paragraph{Rationale of the construction}
\paragraph{Motivation of the EBMoM estimator}

The intuition behind the EBMoM estimator is the following.
Unlike the sequence model, where each moment can be estimated separately, in the linear model, the independent entries of $\bbeta$ interact through the design matrix, so that estimating a given moment requires estimates of lower order moments.

Recall that the Hermite polynomial provides an unbiased estimator of monomials in the normal mean model:
\begin{equation}
\E_{Z\sim \N(\mu,1)}[H_k(Z)] = \mu^k, \quad \mu\in\R.     \label{eq:hermite-unbiased}
\end{equation}
For simplicity, let us assume the true prior has zero mean and consider
a simplified version of the summary statistic \prettyref{eq:F_k} without centering $\y$:
\begin{align}\label{eq:F_k1}
F_k({\y}) = \sum_{j=1}^p (\sigma\pnorm{\x_j}{})^k H_k\Big(\frac{\x_j^\top \y}{\sigma\pnorm{\x_j}{}}\Big).
% \qquad \check{\y} = \y - \hat{m}_1 \bm{1}.
% \check{\y} = \X (\bbeta - \hat{m}_1 \bm{1}) + \beps.
\end{align}
Note that conditioned on $\bbeta$, we have
\begin{equation}\label{eq:y_proj}
    \frac{\x_j^\top {\y}}{\sigma\pnorm{\x_j}{}} \;\bigg|\; \bbeta  \sim \N\pth{\frac{\x_j^\top \X\bbeta}{\sigma\pnorm{\x_j}{}},1}, \qquad j = 1,\ldots,p.  
\end{equation}
Averaging over the noise $\beps$ and applying \prettyref{eq:hermite-unbiased}, we obtain
\[\Expect[F_k({\y}) \mid \bbeta] = \sum_{j=1}^p (\x_j^\top \X\bbeta)^k
= \iprod{\bbeta^{\otimes k}}{
\sum_{j=1}^p (\X^\top \X \e_j)^{\otimes k}
}
= \iprod{\bbeta^{\otimes k}}{\T^{(k)}},\]
where $\T^{(k)} \in (\R^p)^{\otimes k}$ is the tensor defined in \prettyref{eq:T_tensor}, and $\e_j$ is the $j$th standard basis of $\R^p$.
Further averaging over $\bbeta$ yields
\begin{align}\label{eq:F_k2}
\Expect[F_k({\y})] 
= \iprod{\M^{(k)}}{\T^{(k)}},
\end{align}
where 
\[
\M^{(k)} := \Expect[\bbeta^{\otimes k}]
\]
is the degree-$k$ moment tensor of $\bbeta\sim g^{\otimes p}$; equivalently, \prettyref{eq:F_k2} also equals the trace of the moment tensor of the random vector $\X^\top \X\bbeta$, whose coordinates are no longer i.i.d.
Since $M^{(k)}_{s_1,\ldots,s_k} = 
\Expect[\beta_{s_1} \ldots \beta_{s_k}]$, depending on the multiplicity of the indices, it can be expressed as monomials of the moments of $g$ thanks to $\beta_i$'s being i.i.d. For example,
$M^{(3)}_{aaa} = m_3,M^{(3)}_{aab} = m_1m_2$ and $M^{(3)}_{abc} = m_1^3$ for distinct indices $a,b,c$.
% $M^{(2)}_{i,j} = m_2$ if $i=j$ and $m_1^2$ otherwise.
Crucially, the $k$th moment of $g$ only appears on the diagonal of $\M^{(k)}$: $M^{(k)}_{a,\ldots,a} = m_k$ for $a=1,\ldots,p$. Furthermore, using the centering of $g$ (i.e., $m_1 = 0$), the inner product \prettyref{eq:F_k2} can be expanded as
\begin{equation}
\Expect[F_k({\y})]  = A_k m_k
+ \sum_{t=2}^k \sum_{d_1 + \ldots + d_t = k: d_j \geq 2} m_{d_1} \ldots m_{d_t} \sum_{s_1,\ldots,s_t = 1}^p \tilde{T}^{(d_1,\ldots,d_t)}_{s_1,\ldots,s_t},
    \label{eq:trace-expansion}
\end{equation}
where 
$A_k = 
\sum_{i,j=1}^p (\X^\top \X)_{ij}^k
$ is the trace of $\T^{(k)}$ defined in \prettyref{eq:Ak}, the middle summation is over ordered $t$-tuples $(d_1,\ldots,d_t)$, and 
$\tilde{\T}^{(d_1,\ldots,d_t)}$ is the tensor defined in 
\prettyref{eq:T_diag_free}.
In this decomposition of 
$\Expect[F_k({\y})]$, the leading term  $m_k A_k$ is desired and the remaining terms only depend on the lower-order moments. This motivates the iterative construction of the estimator \prettyref{eq:moment_def}, wherein we substitute the lower moments by their estimates. In other words, if the estimated lower moments in \prettyref{eq:moment_def} were the true values, the estimator would be unbiased.

For analysis, the major difficulty is that, even if conditioned on $\bbeta$, the $p$ columnwise projections in \prettyref{eq:y_proj} are still dependent because the columns of $\X$ are not orthogonal. This correlation is particularly pronounced in the sublinear regime of $n \ll p$.
We present the statistical guarantees in \prettyref{sec:ub}.

% \nbwu{I think here it would be good to add an example by spelling out what the resulting estimator for the second moment (and variance) and describe what are the required sample complexity for their consistency. (I remember it is much milder than .) 
% This has dual purposes.
% (a) make the exposition more concrete.
% (b) Zhou mentioned the other day that in the previous literature estimating the first two moments are well-studied. So it would be good to compare with what is known before and established methods.}
% \nbwu{OK I added this myself. Need to integrate into narrative. Details can be in app.}

% \paragraph{Connection to Rao's MIVQUE}
% \paragraph{Minimum variance unbiased estimators}
\paragraph{Unbiased polynomial estimators}

As explained in \prettyref{eq:trace-expansion}, the recursive definition of the EBMoM estimators is based on the construction of an unbiased estimator of a given moment assuming that the lower moments are known. There are in fact a continuum of such unbiased estimators, so it is natural to consider the one that minimizes the variance.
% ; what is presented in \prettyref{sec:method} amounts to a special (and natural) choice.

As an example, to estimate the mean $m_1$, for a given \textit{test vector} $\b \in \reals^n$, the estimator
\begin{align*}
\tilde m_1 = \frac{\iprod{\b}{\y}}{\iprod{\b}{\X \ones}}
\end{align*}
is unbiased for $m_1$, with variance
\begin{align*}
\Var(\tilde m_1)
= \frac{\sigma^2 \|\b\|^2 + \mu_2 \|\X^\top \b\|^2}{\iprod{\b}{\X \ones}^2}.
\end{align*}
Minimizing this variance among all test vectors leads to the optimal choice
\[\b_* = (\mu_2 \X\X^\top + \sigma^2 \I_n)^{-1} \X \ones.\]
% leading to the minimal variance $\frac{1}{(\X\ones)^\top Q^{-1} \X\ones }$
This test vector is an oracle choice because it relies on knowledge of the true variance $\mu_2$ and requires an estimate $\hat \mu_2$ in practice. (See \prettyref{app:bettermom} for details and further experiments.)
In comparison, our estimator \prettyref{eq:m1hat} corresponds to choosing $\b = \X\ones$.
Note that not all test vectors work equally well. As a bad example, choosing $\b=\ones_n$ results in an estimator that is inconsistent under the simple isotropic Gaussian design $X_{ij} \iiddistr \N(0,1/p)$ regardless of $n$ and $p$.

Moving to the variance estimator, a general strategy is to consider a quadratic form $\y^\top \Q \y$ for some $\Q$ depending on the design $\X$. Assuming the prior is centered, 
 every $\Q$ gives rise to an unbiased estimator of $\mu_2$ given by 
\begin{align*}
\tilde \mu_2 = \frac{\y^\top \Q \y - \sigma^2\tr(\Q)}{\tr(\X^\top \Q \X)}.
\end{align*}

Finding the one with minimum variance has close connections to Rao's theory of optimal quadratic estimation \cite{rao1971estimation,rao1972estimation}, which studied the problem of jointly estimating the unknown $\mu_2$ and $\sigma^2$ (with extension to multiple variance components) and found the minimum-variance quadratic unbiased estimator (MIVQUE). 
In both Rao's model and the current model (i.e., assuming $\sigma^2$ is known), the oracle MIVQUE turns out to depend on both the unknown variance $\mu_2$ itself and the kurtosis.
(See \prettyref{app:bettermom}.)

For higher-order moments, the problem for finding the optimal tensor that minimizes the variance is much more involved, and the solution, similar to the mean and variance, will ultimately depend on even higher moments. For this reason, we opt to make an explicit (and natural) choice in the EBMoM estimators. 

Finally, we note that the current EBMoM estimator (\ref{eq:F_k}) assumes that the noise variance $\sigma^2$ is given. This is a mild assumption, in the sense that when $\sigma^2$ is identifiable, it may be estimated consistently at a much lower sample complexity ($n\gg \sqrt{p}$) than that for estimating higher moments or the prior itself. See \prettyref{sec:discussion} for details.

\subsection{Statistical guarantees}\label{sec:ub}

Throughout the theoretical analysis, we make the following assumption on the prior.
% Recall that $A_1 = \pnorm{\X\bm{1}}{}^2$ and $A_2 = \pnorm{\X^\top \X}{F}^2$ as defined by (\ref{eq:Ak}).

\begin{assumption}[Prior distribution]\label{assump:prior}
The regression coefficients $\bbeta = (\beta_1,\ldots,\beta_p)$ are drawn i.i.d. from a subgaussian distribution $g$, i.e.\ for some constant $M > 0$, $(\E_{\beta \sim g} |\beta-\E_{\beta \sim g} [\beta]|^p)^{1/p} \leq M\sqrt{p}$.
%\footnote{Recall that a random variable $X$ is called subgaussian with constant $M$ if $(\E|X-\E[X]|^p)^{1/p} \leq M\sqrt{p}$ for all positive integers $p$. A random vector $\X$ is called subgaussian with constant $M$ if $\v^\top \X$ has this property for any unit-norm $\v$.} 
\end{assumption}

We start with bounding the estimation error of the first two moments, namely $\hat m_1$ in (\ref{eq:m1hat}) and $\hat\mu_2$ in (\ref{eq:mu2hat}), based on a simple bias and variance calculation. (See Section \ref{sec:proof_first_two} for a proof.)

\begin{proposition}\label{prop:MSE_first_two}
Suppose that for some fixed $c_0 \in (0,1)$ it holds that
\begin{align}\label{eq:first_two_assumption}
\frac{\pnorm{\X}{\op}}{\sigma} \geq c_0, \quad A_1=\pnorm{\X\bm{1}}{}^2 \geq c_0(n\wedge p)\pnorm{\X}{\op}^2, \quad A_2=\Fnorm{\X^\top \X}^2\geq c_0(n \wedge p)\pnorm{\X}{\op}^4.
\end{align}
Then for some $C > 0$ that depends on $(c_0,M)$, it holds that
\begin{align*}
\E(\hat m_1 - m_1)^2 + \E(\hat \mu_2 - \mu_2)^2 \leq \frac{C}{n\wedge p}. 
\end{align*}
\end{proposition}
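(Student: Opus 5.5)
Both estimates will be handled by a direct bias--variance computation, after writing everything in terms of the centered coefficients $\bgamma:=\bbeta-m_1\ones$. The coordinates of $\bgamma$ are i.i.d., mean zero, with subgaussian constant $M$, so $\E\gamma_i^4\le CM^4$. Throughout, abbreviate $N:=n\wedge p$ and $\Norm{\X}:=\pnorm{\X}{\op}$.

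\textbf{Mean.} Since $\y=m_1\X\ones+\X\bgamma+\beps$, we get exactly
\[
\hat m_1-m_1=\frac{\ones^\top\X^\top(\X\bgamma+\beps)}{A_1},
\]
so $\hat m_1$ is unbiased. Its variance is
\[
\E(\hat m_1-m_1)^2=\frac{\mu_2\Norm{\X^\top\X\ones}^2+\sigma^2\Norm{\X\ones}^2}{A_1^2}\le\frac{\mu_2\Norm{\X}^2+\sigma^2}{A_1}.
\]
Using $\sigma\le\Norm{\X}/c_0$ and $A_1\ge c_0N\Norm{\X}^2$, this is at most $C/N$.

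\textbf{Variance.} Set $\u:=\X^\top\hat\y$ and $\v:=\X^\top\X\ones$. Then
\[
\u=\X^\top(\X\bgamma+\beps)-(\hat m_1-m_1)\v=:\w-\delta\v,
\]
where $\delta=\hat m_1-m_1=\iprod{\v}{\bgamma}/A_1+\ones^\top\X^\top\beps/A_1$ is a linear functional of $(\bgamma,\beps)$.

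I would decompose
\[
\hat\mu_2-\mu_2=\underbrace{\frac{\Norm{\w}^2-\sigma^2\Fnorm{\X}^2}{A_2}-\mu_2}_{=:R_1}\;+\;\underbrace{\frac{-2\delta\iprod{\w}{\v}+\delta^2\Norm{\v}^2}{A_2}}_{=:R_2}.
\]

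$R_1$ is the centered version of a quadratic form
\[
\Norm{\w}^2=\bgamma^\top\H^2\bgamma+2\bgamma^\top\H\X^\top\beps+\beps^\top\X\X^\top\beps,
\]
with mean $\mu_2\tr(\H^2)+\sigma^2\tr(\X\X^\top)=\mu_2A_2+\sigma^2\Fnorm{\X}^2$, so $\E R_1=0$. Its variance is controlled by the standard bound for quadratic forms in independent mean-zero vectors with bounded fourth moments: $\Var(\bgamma^\top\B\bgamma)\le C(\Fnorm{\B}^2)$. Applying this to each of the three terms gives
\[
\Var\Norm{\w}^2\lesssim\Fnorm{\H^2}^2+\sigma^2\Fnorm{\H^{3/2}}^2+\sigma^4\Fnorm{\X\X^\top}^2\lesssim\Norm{\X}^4A_2+\sigma^2\Norm{\X}^2A_2+\sigma^4A_2,
\]
using $\Fnorm{\H^2}^2\le\Norm{\H}^2\Fnorm{\H}^2$ and $\Fnorm{\H^{3/2}}^2\le\Norm{\H}\Fnorm{\H}^2$. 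Dividing by $A_2^2$ and using $\sigma\lesssim\Norm{\X}$ together with $A_2\ge c_0N\Norm{\X}^4$ yields $\E R_1^2\le C/N$.

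For $R_2$, Cauchy--Schwarz gives
\[
\E R_2^2\lesssim\frac{\E\bigl[\delta^2\iprod{\w}{\v}^2\bigr]+\Norm{\v}^4\,\E\delta^4}{A_2^2}.
\]
Both $\delta$ and $\iprod{\w}{\v}$ are linear in $(\bgamma,\beps)$, so by subgaussianity their fourth moments are bounded by squares of their second moments. We have $\E\delta^2\lesssim\Norm{\X}^2/A_1$ (from the mean part), $\Norm{\v}^2\le\Norm{\X}^4A_1$, and $\E\iprod{\w}{\v}^2\lesssim\Norm{\X}^4\Norm{\v}^2$. These give
\[
\E R_2^2\lesssim\frac{\Norm{\X}^4\cdot\Norm{\X}^8A_1^2/A_1^2}{A_2^2}\lesssim\frac{1}{N^2}.
\]

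The step I expect to be the main obstacle is the variance bound for $R_1$. It requires a careful quadratic-form variance estimate for non-Gaussian $\bgamma$: the diagonal fourth-moment terms must be bounded by $\sum_i(\H^2)_{ii}^2\le\Fnorm{\H^2}^2$. The cross term between $\bgamma$ and $\beps$ must also be handled, and everything must be expressed in terms of $A_2$ and $\Norm{\X}$ via the assumptions in \eqref{eq:first_two_assumption}. The $R_2$ term is the same kind of estimate, just bookkeeping, and is of lower order.
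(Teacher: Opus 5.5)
Your proposal is correct and is essentially the paper's proof: $R_1$ is exactly the oracle term $\bar\mu_2-\mu_2$ with the same three-term variance expansion, and $R_2$ is the paper's term (II), handled by the same Cauchy--Schwarz and fourth-moment bounds on linear forms in $(\bgamma,\beps)$. Two bookkeeping fixes are needed. First, $\Norm{\v}^2\le\opnorm{\X}^2A_1$ (as you used for the mean), not $\opnorm{\X}^4A_1$; with this correction both pieces of the numerator of $\E R_2^2$ are $\lesssim\opnorm{\X}^8$, whereas your displayed $\opnorm{\X}^{12}$ would only give $\opnorm{\X}^4/N^2$. Second, the fourth moment of $\a^\top\bgamma+\b^\top\beps$ should be bounded by $C(M^2\Norm{\a}^2+\sigma^2\Norm{\b}^2)^2$ rather than by the square of its actual second moment, since the kurtosis of $g$ is not controlled by $M$.
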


Consequently, under the condition (\ref{eq:first_two_assumption}), which is satisfied by typical random designs (more on this in \prettyref{lem:random_lsi} next),
% $(\hat m_1,\hat \mu_2)$ is consistent for $(m_1,\mu_2)$ 
both the mean and the variance can be estimated consistently, as long as $n$ and $p$ both tend to infinity individually, without imposing conditions on their relative growth. 

%We note that in (\ref{eq:first_two_assumption}), the condition on $A_1$ is identical to the general Assumption \ref{assump:design} below, but the one for $A_2$ is slightly stronger ($n\wedge p$ versus $n^2/p \wedge p$). It can be readily checked that this stronger assumption is still satisfied with high probability by the random assemble of Lemma \ref{lem:random_lsi} below, and is essential to achieve the minimal consistency condition $n\wedge p\rightarrow\infty$.
% \nbwu{This paragraph breaks the flow and should be relocated.. Maybe somewhere after Assumption 1 or even as a footnote there.

% Also, note that Assumption 1 only includes $k\geq 3$, so we need to say in Lemma 2 that \prettyref{eq:first_two_assumption} is also satisfied. This is awkward...
% I think there is a case to be made to absorb \prettyref{eq:first_two_assumption} into Assumption 1. I am not sure yet, but we need to think this through.
% }

% \ys{I added the $A_1,A_2$ condition in Assumption 1 for now and deleted the "minimal assumption of $n\wedge p \rightarrow \infty$" - this is not true, for block design, consistent estimation of $m_1$ is possible for $n = 1$ and $p \rightarrow \infty$.}

Next we state the error bound for estimating higher central moments $\{\mu_k\}_{k\geq 3}$ in (\ref{eq:central_moment}). We make the following assumptions on the design matrix $\X$. (Note that these assumptions are only needed for theoretical analysis, not by the actual algorithm.)
\begin{assumption}[Design matrix]\label{assump:design}
There exists some $c_0 > 0$ and positive integer $k_0$ such that $ \opnorm{\X}/\sigma \geq c_0$. Moreover, $A_1$ and $A_2$ satisfy the lower bounds in (\ref{eq:first_two_assumption}), and for all $3\leq k\leq k_0$,
\begin{align}\label{eq:design_lower}
A_k = \sum_{i,j=1}^p \H_{ij}^k \geq c_0^k \pnorm{\X}{\op}^{2k} \cdot p\Big(\frac{n}{p} \wedge 1\Big)^k.
\end{align}
%\ys{the rightside is the same as $\pnorm{\X}{\op}^{2k} p [(\frac{n}{p})^k\wedge 1]$. This amounts to $p (n/p)^k$ for iid Gaussian, which is the diagonal contribution as calculated below. For large $p$, this lower bound is not tight -- for $k=2$, the bound is $n^2/p$ but the true order of $A_2$ is $n^2/p + n$.}
%\ys{For general $k$, we expect $A_k \asymp n^k/p^{k-1} + n^{k/2}/p^{k-2}$ for iid Gaussian design, and $A_k \asymp n^{k-1}/p^{k-2}$ for block design. So for $k\geq 3$, the heuristic $A_k \rightarrow\infty$ suggests the sample-complexity condition $n>> p^{1-1/k}$ for Gaussian design and $n>> p^{1-1/(k-1)}$ for block design. To summarize, for $k\geq 3$ it's ok to just use $n^k/p^{k-1}$ as the lower bound under the condition $n >> p^{1-1/k}$, but for $k=2$, we need to use the lower bound $n^2/p + n$ to obtain the optimal sample complexity.}
\end{assumption}

Let us pause to interpret the  statistical meaning of Assumption \ref{assump:design}. 
\begin{itemize}
\item First of all, the effective signal-to-noise ratio (SNR) in the linear model $\y=\X\bbeta+\beps$
may be defined as $\frac{\E_{\bbeta}[\pnorm{\X\bbeta}{}^2]}{\E[\pnorm{\beps}{}^2]} = m_2\cdot\frac{\Fnorm{\X}^2}{n\sigma^2}$ (assuming the prior is centered) \cite{Verzelen2018Adaptive}.
Since we focus on the regime where $n$ is at most proportional to and may be far smaller than $p$, the lower bound on $\opnorm{\X}/\sigma$ is necessary for the model SNR to be non-vanishing.

\item The key assumption is \prettyref{eq:design_lower} that lower bounds $A_k$. Recall from \prettyref{eq:trace-expansion} that
$A_k$ is the leading coefficient in expanding the tensor product $\iprod{\M^{(k)}}{\T^{(k)}}$  and hence may be interpreted as the signal strength for estimating the $k$th moment.
 The required lower bound in \prettyref{eq:design_lower} is in fact very mild and allows for \textit{highly correlated designs}. 
To see this, 
let us build a ``bad'' design matrix by replicating a ``good'' design matrix multiple times so that the columns are perfectly correlated: 
\begin{equation}
\X = \frac{1}{\sqrt{m}} \underbrace{[\mathbf{O},\ldots,\mathbf{O}]}_{\text{$m=p/n$ times}}
    \label{eq:baddesign}
\end{equation}
where $\mathbf{O}$ is an $n\times n$ orthogonal design matrix, and the normalizing factor $\frac{1}{\sqrt{m}}$ ensures that $\opnorm{\X}=\opnorm{\mathbf{O}} = 1$.
Even for such an extreme case of correlated design, we have $A_k = p (n/p)^{k-1}$ for all $k\geq 1$, which satisfies \prettyref{assump:design}.
\item The factor $(\frac{n}{p}\wedge 1)^k$ in (\ref{eq:design_lower}) can in fact be relaxed to $(\frac{n}{p}\wedge 1)^{\lambda_k}$ for any non-negative sequence $\{\lambda_k\}$, under which the MSE bound in (\ref{def:Delta_k}) below will be replaced by $(Ck)^{5k^2}\frac{1}{n\wedge p}(\frac{p}{n}\vee 1)^{C\Lambda_k}$ with $\Lambda_k = \sum_{\ell=3}^k \lambda_\ell$ and some universal $C > 0$. We choose $\lambda_k = k$ in (\ref{eq:design_lower}) to be the smallest exponent that can accommodate random designs in Lemma \ref{lem:random_lsi} below. 
\end{itemize}

The next lemma shows that \prettyref{assump:design} is satisfied with high probability by a wide class of random design matrices, including Gaussian designs whose rows are sampled independently from $\N(0,\frac{\sigma^2}{n+p} \bSigma)$ for some 
$\bSigma$ with bounded spectrum; see Section \ref{subsec:proof_assumption} for a proof.

% \nbwu{Need to explain along the lines of either ``why the RHS of \prettyref{eq:design_lower} is natural'', or something like (Is this actual the case?) ``in fact, we may assume any lower bound
% $\Big(\frac{n}{p} \wedge 1\Big)^{m_k}$
% for any exponent $m_k$ growing with $k$, which will then lead to 
% Thm 3 below with $(\frac{p}{n}\vee 1)^{C M_k}$ with $M_k=m_1+\ldots+m_k$ and suffices for consistency under the sample complexity $n=p^{1-o(1)}$.''
% }

% As shown in \prettyref{eq:trace-expansion}, $A_k$ is the leading coefficient in expanding the tensor product $\iprod{\M^{(k)}}{\T^{(k)}}$ and hence may be interpreted as the signal strength of the problem. 
% The following lemma shows that Assumption \ref{assump:design} is satisfied with high probability by a wide class of random design matrices, which includes isotropic Gaussian design ($X_{ij} \iiddistr \N(0,\frac{1}{p})$) as a special case. Its proof is given in Section \ref{subsec:proof_assumption}.

\begin{lemma}\label{lem:random_lsi}
Suppose that 
\begin{align*}
\X = \lambda(n,p)\bar\X
\end{align*}
for some $\lambda(n,p) \geq \sigma/\sqrt{n+p}$, where $\bar \X$ satisfies the following assumptions.
\begin{enumerate}
\item The rows $(\bar\x_1,\ldots,\bar\x_n)$ of $\bar\X \in \R^{n\times p}$ are independent and subgaussian\footnote{A random vector $\x$ is subgaussian with constant $C$ if $\v^\top \x$ is subgaussian with constant $C$ for any unit-norm vector $\v$.} with constant $C_0$, and have mean zero and covariance $\bSigma$ satisfying $ \kappa \I_p \prec \bSigma \prec \kappa^{-1}\I_p $ for some $\kappa > 1$.
\item Each $\bar\x_i \in \R^p$ satisfies a Poincar\'e inequality with constant $C_{\PI} > 0$:
\begin{align*}
\Var(f(\bar\x_i)) \leq C_{\PI}\E\pnorm{\nabla f(\bar\x_i)}{}^2, \quad i\in [n]
\end{align*}
for every continuously differentiable $f:\R^p\rightarrow \R$.
\end{enumerate}
Further suppose that $n \geq p^{1-r}$ for some $r \geq 0$. Then with probability converging to $1$ as $n,p\rightarrow\infty$, Assumption \ref{assump:design} is satisfied with some $c_0 = c_0(\kappa, C_{\PI}, C_0) > 0$ and all $k_0 \leq c(r^{-1}\wedge n \wedge \frac{\log p}{\log\log p})$ for some small universal constant $c > 0$.
\end{lemma}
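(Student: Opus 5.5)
Everything scales homogeneously in $\lambda=\lambda(n,p)$. Each of $\opnorm{\X}^2$, $A_1/\opnorm{\X}^2$ and $A_k/\opnorm{\X}^{2k}$ is invariant under $\X\mapsto\lambda\X$. So it suffices to prove the ratio inequalities for $\bar\X$ and separately check $\opnorm{\X}\ge c_0\sigma$. Standard subgaussian matrix concentration gives, with high probability, $\opnorm{\bar\X}\asymp \sqrt n+\sqrt p$ with constants depending on $(\kappa,C_0)$. The upper bound comes from an $\epsilon$-net argument. The lower bound $\opnorm{\bar\X}^2\ge \Fnorm{\bar\X}^2/(n\wedge p)\gtrsim np/(n\wedge p)=n\vee p$ follows once we use $\Fnorm{\bar\X}^2\approx n\tr\bSigma\ge \kappa np$. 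Hence $\opnorm{\X}\ge \lambda\sqrt{c(n+p)}\ge c\sigma$. Below we write $\opnorm{\bar\X}^2\asymp n+p$.

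For $A_1$ we have $\pnorm{\bar\X\ones}{}^2=\sum_i(\bar\x_i^\top\ones)^2$, a sum of $n$ independent subgaussian squares with mean $\ones^\top\bSigma\ones\ge\kappa p$. A Bernstein bound gives $A_1\gtrsim np$ w.h.p., and $np\gtrsim (n\wedge p)(n+p)$. For $A_k$ with $k\ge2$, I keep only the diagonal terms, $A_k\ge\sum_j H_{jj}^k=\sum_j\pnorm{\bar\x^{(j)}}{}^{2k}$, where $\bar\x^{(j)}$ is the $j$th column of $\bar\X$. Each $\pnorm{\bar\x^{(j)}}{}^2$ is a sum of $n$ independent terms with mean $n\Sigma_{jj}\ge\kappa n$, so a lower-tail bound gives $\pnorm{\bar\x^{(j)}}{}^2\ge \kappa n/2$ with probability $1-e^{-cn}$. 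This bound alone is weak when $n$ is small, so I avoid a union bound over all $j$ and only require that at least half the columns satisfy it. That holds w.h.p. by Markov's inequality on the count of failures (expected failures $pe^{-cn}\le p/4$ once $n$ exceeds a constant; if $n$ is bounded, $k_0\le cn$ forces $k_0<3$ and only the $A_1,A_2$ conditions remain). Therefore $A_k\ge (p/2)(\kappa n/2)^k$.

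We need $A_k\ge c_0^k (n+p)^k p (\frac np\wedge1)^k$. Since $(n+p)(\frac np\wedge1)\le 2n$, the right side is at most $p(2c_0 n)^k$, so the diagonal bound works with $c_0=\kappa/8$. For $k=2$ we also need the stronger bound $A_2\ge c_0(n\wedge p)(n+p)^2$. When $n\le p$ the diagonal gives only $pn^2$, which suffices since $(n\wedge p)(n+p)^2\asymp np^2$ requires $A_2\gtrsim np^2$. Here we need the off-diagonal contribution $\sum_{i\ne j}H_{ij}^2\approx\tr(\bar\X^\top\bar\X)^2\ge \Fnorm{\bar\X}^4/(n\wedge p)$, by Cauchy–Schwarz on the eigenvalues of the rank-$(n\wedge p)$ matrix $\bar\X^\top\bar\X$. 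This gives $A_2=\Fnorm{\bar\X^\top\bar\X}^2\gtrsim (np)^2/(n\wedge p)=(n\vee p)\,np\gtrsim(n\wedge p)(n+p)^2$, which settles $A_2$ deterministically given the Frobenius concentration.

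The main obstacle is the uniformity over $3\le k\le k_0$ together with the restriction on $k_0$. In my argument the constants in the diagonal bound do not depend on $k$, because a single event (half the columns are large) handles all $k$ at once. So the stated range $k_0\le c(r^{-1}\wedge n\wedge\log p/\log\log p)$ appears either to be slack or to be needed for the Poincaré-based concentration if one instead proves the matching upper bounds. Here I expect the Poincaré inequality will be needed only to control $\Fnorm{\bar\X}^2$ and the column norms via Gaussian-type concentration of Lipschitz functions, namely $\pnorm{\bar\x^{(j)}}{}$ and $\Fnorm{\bar\X}$, which are $1$-Lipschitz in the independent rows. I would double-check the small-$n$ edge case and, if a sharper form is wanted, track the $k$-dependence explicitly to confirm consistency with the stated $k_0$.
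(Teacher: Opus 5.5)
Your lower bound $A_k\ge\sum_j H_{jj}^k$ holds only for even $k$. For odd $k$, each off-diagonal term $H_{ij}^k$ takes the sign of $H_{ij}$, and your proposal never controls these terms. The failure is real, not a technicality.

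It fails deterministically: for $\H=\begin{pmatrix}1&-a\\-a&1\end{pmatrix}$ with $0<a<1$, one gets $A_3=2-2a^3<\sum_jH_{jj}^3$.

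It also fails with high probability for designs the lemma allows. Take Gaussian rows with $\bSigma$ block diagonal, each block $\begin{pmatrix}1&-\rho\\-\rho&1\end{pmatrix}$. Then $H_{ij}\approx-n\rho$ on paired columns, so $A_3\approx(1-\rho^3)pn^3$, strictly below $\sum_jH_{jj}^3\approx pn^3$. In general the off-diagonal sum can be negative and of the same order $pn^k$ as the diagonal. Even for $\bSigma=\I$ you would need a concentration bound showing it is not too negative, and your proposal has none.

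The target $A_k\gtrsim c^kpn^k$ is still true, but for odd $k$ you must use positivity in a different decomposition. The paper writes $A_k=\bigl\|\sum_{j=1}^p\bar\X_j^{\otimes k}\bigr\|_F^2$, where $\bar\X_j$ are the columns. This is a sum of squares indexed by row tuples $(a_1,\ldots,a_k)\in[n]^k$. Keeping only pairwise-distinct tuples and using $\lambda_{\min}(\bSigma^{\circ k})\ge\lambda_{\min}(\bSigma)^k$ gives
$\E A_k\ge n(n-1)\cdots(n-k+1)\,\ones^\top\bSigma^{\circ k}\ones\ge(n-k+1)^kp\,\lambda_{\min}(\bSigma)^k$.
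The paper then concentrates $A_k$ around this mean using the Poincar\'e inequality (with gradient $2k\bar\X(\bar\X^\top\bar\X)^{\circ(k-1)}$), Chebyshev, and a union bound over $k\le k_0$.

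That concentration step is where the restriction $k_0\le c(r^{-1}\wedge n\wedge\log p/\log\log p)$ comes from. You need $k\le n/2$ to get $(n-k+1)^k\ge(n/2)^k$, and the summed Chebyshev bounds $p^{-2}(Ck^2)^k(1+p/n)^{2k}$ must vanish. So the range of $k_0$ is not slack; it only looks slack because of the invalid odd-$k$ step.

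The rest of your proposal is sound:
the scale reduction;
the bounds $\opnorm{\bar\X}^2\asymp n+p$;
the Bernstein argument for $A_1$;
the column-norm argument for even $k$;
the $A_2$ bound via $\Fnorm{\bar\X^\top\bar\X}^2\ge\Fnorm{\bar\X}^4/(n\wedge p)$.
Keep the $A_2$ argument. It gives the required $A_2\gtrsim(n\wedge p)\opnorm{\X}^4$ when $n<p$, which the paper's uniform-in-$k$ argument does not supply on its own, since it only yields $A_2\gtrsim\opnorm{\X}^4\,p(\frac np\wedge1)^2$.
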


Equipped with Assumptions \ref{assump:prior} and \ref{assump:design}, we are ready to state the main result of the paper, which bounds the MSE of the EBMoM estimator truncated on a high-probability event. See Section \ref{subsec:proof_upper_main} for a proof.
\begin{theorem}\label{thm:main_moment}
Suppose that Assumption \ref{assump:design} holds for some $c_0, k_0$, and Assumption \ref{assump:prior} holds with some $M$. Let
\begin{align}\label{def:Delta_k}
\Delta_k = (Ck)^{5k^2}  \frac{1}{n \wedge p}\Big( \frac{p}{n} \vee 1 \Big)^{2k^2},
\end{align}
where $C>0$ is a large enough constant depending only on $c_0,M$.
Then for any $2\leq k\leq k_0$, there exists a $(\X,\y)$-measurable event $\cC_k$ with probability at least $1-\Delta_k$, such that
\begin{align}\label{eq:MSE}
\E[(\hat\mu_k - \mu_k)^2\bm{1}_{\cC_k}] \leq \Delta_k.
\end{align}
\end{theorem}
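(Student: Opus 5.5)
We argue by induction on $k$, starting from Proposition~\ref{prop:MSE_first_two}, which gives the case $k=2$ with error $C/(n\wedge p)$. The event $\cC_k$ will be the intersection of $\cC_{k-1}$ with the event that $|\hat m_1-m_1|$ and $|\hat\mu_\ell-\mu_\ell|$ are at most $\Delta_\ell^{1/4}$ for all $\ell<k$. On this event the lower-order estimates are bounded by $2(M\sqrt\ell)^\ell$. By Markov's inequality applied to the inductive MSE bounds, $\P(\cC_k^c)\le\sum_{\ell<k}\Delta_\ell^{1/2}$, and the growth of the prefactor in $\Delta_k$ absorbs this after adjusting constants. If needed, the definition can be sharpened (e.g.\ thresholds $\Delta_\ell^{1/3}$) so that the probability bound is $\le\Delta_k$.

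For the inductive step we decompose $\hat\mu_k-\mu_k$ into three pieces.
\begin{enumerate}
\item \textbf{Stochastic fluctuation.} The first piece is $\big(F_k(\y^\circ)-\E F_k(\y^\circ)\big)/A_k$, where $\y^\circ=\y-m_1\X\ones$ is the oracle centering, so that $\E F_k(\y^\circ)$ is given by \eqref{eq:trace-expansion} with $m_\ell$ replaced by $\mu_\ell$.
\item \textbf{Centering error.} The second piece is $(F_k(\hat\y)-F_k(\y^\circ))/A_k$. We write $\hat\y=\y^\circ-(\hat m_1-m_1)\X\ones$ and expand $H_k$ via the Appell identity $H_k(x+h)=\sum_j\binom kj h^{k-j}H_j(x)$. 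Each term is then a product of $(\hat m_1-m_1)^{k-j}$, which is $O((n\wedge p)^{-(k-j)/2})$, with a lower-order statistic $F_j$-type sum weighted by $(\x_i^\top\X\ones)^{k-j}$. We bound these by Cauchy--Schwarz, using $|\x_i^\top\X\ones|\le\opnorm{\X}^2\sqrt p$ together with the moment bounds from step (1) at order $j$.
\item \textbf{Plug-in error.} The third piece is the difference of the correction sums in \eqref{eq:moment_def} with estimated versus true $\mu_d$. We telescope the products $\hat\mu_{d_1}\cdots\hat\mu_{d_t}-\mu_{d_1}\cdots\mu_{d_t}$. On $\cC_k$ each factor is bounded, so this piece is at most $\sum_{\ell<k}|\hat\mu_\ell-\mu_\ell|$ times $\big|\sum_{s}\tilde T^{(d)}_{s}\big|/A_k$. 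We bound that sum crudely by $k^k\,|\ones^\top\T^{(k)}\cdots|\le k^k p\,\opnorm{\X}^{2k}p^{k/2}$, or more carefully by $p\cdot\max_j(\sum_s|H_{js}|)^k\le p\,(\sqrt p\,\opnorm{\X}^2)^k$. Dividing by \eqref{eq:design_lower} gives an amplification factor of order $(p/n\vee1)^{Ck}$. Squaring and summing against $\Delta_{\ell}$ for $\ell<k$ produces the recursion $\Delta_k\approx(Ck)^{O(k)}(p/n\vee1)^{O(k)}\Delta_{k-1}$, which is consistent with the $k^2$ exponents in \eqref{def:Delta_k}.
\end{enumerate}

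The main obstacle is piece (1), the variance of $F_k(\y^\circ)$, because the projections $\x_j^\top\y$ are correlated both through $\bbeta$ and through $\beps$. We compute it via the law of total variance. Conditional on $\bbeta$, the summands are Hermite polynomials of correlated Gaussians with correlation matrix built from $H_{ij}/(\|\x_i\|\|\x_j\|)$. Mehler's formula, $\E[H_k(Z_i)H_k(Z_j)]$ expressed through polynomials in $\rho_{ij}$ and the means, lets us bound the conditional variance by a polynomial in $\opnorm{\X}$, $p$, and powers of $\X^\top\X\bbeta$. The remaining term is $\Var\big(\sum_j(\x_j^\top\X\bbeta)^k\big)$, a degree-$k$ polynomial in the independent subgaussian $\beta_i$. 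We would bound it with the Efron--Stein inequality or hypercontractivity for polynomial chaos. For polynomial chaos the variance is controlled by the squared norms of the coefficient tensors of $\T^{(k)}$ contracted over partial diagonals, each of which is at most $p\,\opnorm{\X}^{4k}\cdot(\text{poly in }p)$.

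Dividing by $A_k^2$ and using \eqref{eq:design_lower}, we aim for the fluctuation bound $(Ck)^{Ck}(n\wedge p)^{-1}(p/n\vee1)^{2k}$. The delicate point is obtaining the $1/(n\wedge p)$ factor rather than a constant. This requires exploiting that each contraction is dominated by $\opnorm{\H}^{2k}\Tr$-type quantities of order $p\opnorm{\X}^{4k}$, while $A_k^2\gtrsim p^2\opnorm{\X}^{4k}(n/p\wedge1)^{2k}$. The $(Ck)^{5k^2}$ prefactor then comes from accumulating the combinatorial constants across the $k$ levels of the recursion.
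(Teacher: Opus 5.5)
Your decomposition of $\widehat\mu_k-\mu_k$ into three pieces is exact: the fluctuation $(F_k(\bar\y)-\E F_k(\bar\y))/A_k$, the centering error $(F_k(\widehat\y)-F_k(\bar\y))/A_k$, and the plug-in error measured against the \emph{true} $\mu_d$. It is a leaner alternative to the paper's route. The paper first runs the recursion for the oracle $\bar\mu_k$, controls its bias on truncation events using the moment bound of Lemma~\ref{lem:estimate_L2}, and only then compares $\widehat\mu_k$ with $\bar\mu_k$.

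However, the estimates you propose for the three pieces do not close, and they fail for a common reason. You bound sums over the $p$ coordinates through row $\ell_1$-norms or entrywise bounds, which lose powers of $\sqrt p$. What is needed are $\ell_2$-type bounds: $\pnorm{\H_{j\cdot}}{}\le\opnorm{\X}^2$, $\Fnorm{\H}^2\le(n\wedge p)\opnorm{\X}^4$, and $\pnorm{\H\ones}{}\le\opnorm{\X}\pnorm{\X\ones}{}$. For the correction tensors these are available precisely because centering makes every exponent $d\ge2$.

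(i) Plug-in step. Your bound $|\gamma^{(d_1,\ldots,d_t)}|\le p(\sqrt p\,\opnorm{\X}^2)^k$, divided by \eqref{eq:design_lower}, gives an amplification $c_0^{-k}p^{k/2}(\frac pn\vee1)^k$, not $(\frac pn\vee1)^{Ck}$. For $n\ge p$ the plug-in term is then only bounded by $p^k\Delta_{k-1}\approx p^{k-1}$, and the recursion breaks. Using $\sum_s|H_{js}|^d\le\pnorm{\H_{j\cdot}}{}^d$ for $d\ge2$ instead gives $|\gamma^{(d_1,\ldots,d_t)}|\le\binom{k}{d_1,\ldots,d_t}\opnorm{\X}^{2k-4}\Fnorm{\H}^2\le k!\,(n\wedge p)\opnorm{\X}^{2k}$ (Lemma~\ref{lem:gamma}). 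That yields an amplification of $(\frac pn\vee1)^{k-1}$.

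(ii) Centering step. The entrywise bound $|\x_i^\top\X\ones|\le\opnorm{\X}^2\sqrt p$ only shows that the $j=k-1$ term is $O(A_k)$ when $n\ge p$, so it gives no decay. You need the $\ell_2$ bound $\pnorm{\X^\top\X\ones}{}\le\opnorm{\X}\pnorm{\X\ones}{}$ paired with $\E(\widehat m_1-m_1)^4\lesssim\opnorm{\X}^4/\pnorm{\X\ones}{}^4$. This is how the paper gets $\E(F_k(\widehat\y)-F_k(\bar\y))^2\lesssim p\,\opnorm{\X}^{4k}$.

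(iii) Fluctuation step. The bound $\Var\langle\T^{(k)},\bar\bbeta^{\otimes k}\rangle\lesssim(n\wedge p)\opnorm{\X}^{4k}$ that you ``aim for'' is the heart of the proof, and it is left unproved; any extra ``poly in $p$'' factor would be fatal. The paper obtains it in three moves:
\begin{itemize}
\item it writes $\bar\beta_s^d=(\bar\beta_s^d-\mu_d)+\mu_d$ in the diagonal-free expansion;
\item it notes that $\mu_1=0$ forces every frozen index to have $d\ge2$;
\item it bounds contractions over the fluctuating indices by operator norms of Hadamard powers (Lemma~\ref{lem:Matrix_Product} with $\opnorm{\H^{(d)}}\le\opnorm{\H}^d$), and contractions over the frozen indices by $\pnorm{\H_{i\cdot}}{}^2$.
\end{itemize}

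Separately, the probability bound for your event fails. With thresholds $\Delta_\ell^{1/4}$, Markov only gives $\P(\cC_k^c)\lesssim\sum_{\ell<k}\Delta_\ell^{1/2}$, which decays like $(n\wedge p)^{-1/2}$, while $\Delta_k$ decays like $(n\wedge p)^{-1}$. No adjustment of constants gives $\Delta_{k-1}^{1/2}\le\Delta_k$ when $n\ge p\to\infty$, and smaller thresholds such as $\Delta_\ell^{1/3}$ make it worse. Use constant thresholds instead, e.g.\ $|\widehat\mu_\ell-\mu_\ell|\le K_\ell=M^\ell\ell^{\ell/2}$ as in the paper. Markov then gives $\Delta_\ell/K_\ell^2$, and boundedness of the $\widehat\mu_\ell$ on the event is all the telescoping in the plug-in step requires. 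With these repairs your direct decomposition goes through, and it avoids the paper's truncated-bias analysis.
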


Using Markov's inequality, 
it is straightforward to convert the above truncated MSE into a high-probability error bound, which states that the typical order of the error $|\hat\mu_k - \mu_k|$ is $\sqrt{\Delta_k}$.
As such, if $n\geq p^{1-\eps}$ and $p\to\infty$, the EBMoM method consistently and simultaneously estimates the first $\Theta(\frac{1}{\sqrt{\eps}})$ moments. We note that this $\eps$-dependence, or equivalently, the $k$-dependence in the exponent of (\ref{def:Delta_k}), is not tight; we refer to Appendix \ref{app:m3} for a refined analysis of $\hat\mu_3$ in the special case of i.i.d.\ Gaussian design, where we show that $n \gg p^{2/3}$ suffices for consistency.

The above reasoning suggests that if $n\wedge p\rightarrow\infty$ and $n\geq p^{1-\eps}$ for any $\eps = o(1)$, the prior can be estimated consistently. This is formalized by the next corollary (proved in \prettyref{sec:pf-cor_prior}), which converts the statistical guarantee on estimating the moments to that of estimating the prior under the 1-Wasserstein loss.\footnote{The $W_1$ distance between one-dimensional distributions is the $L_1$-distance between their cumulative distribution functions.}

% The next following corollary which we convert the statistical guarantee on estimating the moments to that of estimating the prior.
% If $n=p^{1-o(1)}$, the moment-based estimator is consistent under the 1-Wasserstein loss.\footnote{The $W_1$ distance between one-dimensional distributions is the $L_1$-distance between their cumulative distribution functions.}

% Finally, we convert the statistical guarantee on estimating the moments to that of estimating the prior. If $n=p^{1-o(1)}$, the moment-based estimator is consistent under the 1-Wasserstein loss.\footnote{The $W_1$ distance between one-dimensional distributions is the $L_1$-distance between their cumulative distribution functions.}

\begin{corollary}\label{cor:prior}
Suppose that Assumptions~\ref{assump:prior}--\ref{assump:design} hold, and let
\[
N:=n\wedge p.
\]
Suppose that $0<\varepsilon\le 1/2$ and $n\ge p^{1-\varepsilon}$.
Let
\begin{align}\label{eq:k_def}
k
=
\left\lfloor
c\min\left\{
\frac{1}{\sqrt{\varepsilon}},
\sqrt{\frac{\log N}{\log\log N}}
\right\}
\right\rfloor,
\qquad
L=2k-1,
\end{align}
where $c>0$ is any sufficiently small constant, and suppose that
$L\le k_0$. Let
\[
A=\sqrt{C_A k\log k},
\]
where $C_A>0$ is sufficiently large. Let $\widetilde g$ be the DMM estimator obtained by applying
\eqref{eq:DMM} to the centered moment vector
\[
(0,\widehat\mu_2,\ldots,\widehat\mu_L)
\]
on the interval $[-A,A]$, and define the final estimator
\[
\widehat g
=
\tau_{\widehat m_1}\#\widetilde g,
\qquad
\tau_a(x):=x+a.
\]
Then there exist constants $C_1,C_2,C>0$, depending only on
$c_0$ and $M$, such that
\[
\mathbb P\left(
W_1(g,\widehat g)
>
C\sqrt{\frac{\log k}{k}}
\right)
\le
\frac{k^{C_1k^2}}{N}
+
k^{-C_2}.
\]
\end{corollary}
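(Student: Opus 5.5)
We combine Theorem~\ref{thm:main_moment} and Proposition~\ref{prop:MSE_first_two} with the standard moment-comparison machinery of the denoised method of moments \cite{wu2020optimal}. Let $g_0$ denote the centered prior, i.e., the law of $\beta-m_1$. The triangle inequality and the translation property of $W_1$ give
\[
W_1(g,\widehat g)\le W_1(g_0,\widetilde g)+|\widehat m_1-m_1|,
\]
so it suffices to control the two terms on the right.

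\textbf{Mean term.} By Proposition~\ref{prop:MSE_first_two} and Chebyshev, $|\widehat m_1-m_1|\le N^{-1/4}$ with probability at least $1-CN^{-1/2}$. This is far below $\sqrt{\log k/k}$, since $k\lesssim\sqrt{\log N}$.

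\textbf{Moment event.} For each $2\le \ell\le L$, Theorem~\ref{thm:main_moment} plus Markov on the event $\cC_\ell$ gives $|\widehat\mu_\ell-\mu_\ell|\le \Delta_\ell^{1/4}$ except with probability $2\Delta_\ell^{1/2}+\Delta_\ell$. A union bound over $\ell\le L$ then yields failure probability at most
\[
L\,(CL)^{5L^2/2}N^{-1/2}(p/n\vee 1)^{L^2}.
\]
Since $n\ge p^{1-\eps}$, we have $p/n\le p^{\eps}\le N^{2\eps}$ (using $n\ge p^{1-\eps}$ to compare $p$ with $N$). Because $L^2\eps\le 4c^2$ by the choice of $k$, we get $(p/n)^{L^2}\le N^{8c^2}$, which is at most $N^{1/4}$ for $c$ small. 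The constants are bookkept so that the total is bounded by $k^{C_1k^2}/N$ (after squaring the exponents appropriately, or by running the argument with $\Delta_\ell$ rather than its root). On this event each $|\widehat\mu_\ell-\mu_\ell|\le \delta:=k^{C k^2}N^{-1/8}$. The constraint $k\le c\sqrt{\log N/\log\log N}$ makes $k^{Ck^2}\le N^{1/16}$, so $\delta\le N^{-1/16}$.

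\textbf{Truncation.} Let $g_A$ be $g_0$ conditioned on $[-A,A]$. Subgaussianity gives
\[
W_1(g_0,g_A)\lesssim A e^{-A^2/(2M^2)}\le k^{-C}
\]
for $C_A$ large. It also gives $|m_\ell(g_A)-\mu_\ell|\le (C\ell)^{\ell/2}e^{-cA^2}$ for $\ell\le L$. With $A^2=C_Ak\log k$ this is at most $k^{-C'k}$, since $(CL)^{L/2}\le k^{O(k)}$ is dominated once $C_A$ is large. Hence the moment vector of $g_A$ lies in $\cM_L([-A,A])$ within distance $\eta:=\sqrt L(\delta+k^{-C'k})$ of $\widehat\bmu_L$. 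By optimality of the projection, $\|\widetilde\bmu_L-\bm m_L(g_A)\|\le 2\eta$.

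\textbf{Moment comparison.} Both $\widetilde g$ and $g_A$ are supported on $[-A,A]$. The moment-comparison inequality of \cite{wu2020optimal} (Proposition 1/Lemma 4 there, by Jackson approximation of 1-Lipschitz functions by degree-$L$ polynomials) gives
\[
W_1(\widetilde g,g_A)\lesssim \frac{A}{L}+A\,(CA)^{L}\max_\ell|m_\ell(\widetilde g)-m_\ell(g_A)|^{1/\ell}\cdots,
\]
in the usual form $\frac{A}{L}+A\,3^{L}\max_\ell(\Delta m_\ell/A^\ell)^{1/\ell}$-type. The first term is $A/L\asymp\sqrt{\log k/k}$, which is the claimed rate. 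The second term is negligible since $\eta\le k^{-C'k}+N^{-1/17}$ and $3^L\le k^{O(1)k}$. This step is the main obstacle: the choice $A^2\asymp k\log k$ must balance the tail truncation (which needs $A^2\gtrsim k\log k$ so that moment perturbations $k^{-\Omega(k)}$ beat the $3^L$ amplification) against the Jackson term $A/L$. I would verify this tradeoff carefully with the exact form of the comparison lemma. The $k^{-C_2}$ term in the probability absorbs the tail contributions; alternatively it arises if one also requires a high-probability bound on the empirical-vs-population distinction.
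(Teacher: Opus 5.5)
Your skeleton (centering, a simultaneous moment event, truncation to $[-A,A]$, projection onto $\cM_L([-A,A])$, moment comparison) matches the paper. The final comparison step, however, takes a different route.

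\textbf{The comparison step.} The paper never compares $\widetilde g$ with the truncated prior directly. It passes to the $k$-point Gauss quadrature $q_A$ of $g_{0,A}$ and combines two bounds:
\begin{itemize}
\item the exact-matching Jackson bound $W_1(g_{0,A},q_A)\le CA/L$;
\item the comparison theorem for two $k$-atomic distributions, $W_1(\widetilde g,q_A)\le CkA\,\delta^{1/(2k-1)}$, which with threshold $\eta=k^{-4L}$ gives $CA/k^3$.
\end{itemize}

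Your direct comparison of $\widetilde g$ with $g_A$ is also valid and somewhat more economical. It needs no quadrature and does not use that $\widetilde g$ is atomic, so it would equally cover the spline fit. The cost is a $3^L$ amplification, which is harmless because your moment errors are $k^{-\Omega(k)}$.

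The inequality must, however, be used in its linear form:
\[
W_1(\widetilde g,g_A)\lesssim \frac{A}{L}+A\,3^L\max_{\ell\le L}\frac{|m_\ell(\widetilde g)-m_\ell(g_A)|}{A^\ell}.
\]
This follows from Jackson approximation plus the $O(3^L)$ bound on the coefficients of a bounded degree-$L$ polynomial on $[-1,1]$. The $(\cdot)^{1/\ell}$ form you display is not what Jackson gives, and it would not close. Your $\eta$ contains the truncation error $k^{-C'k}$, so that form gives $3^L\eta^{1/L}\approx 3^{2k}k^{-C'/2}\to\infty$. The root-type bound $\delta^{1/(2k-1)}$ is the $k$-atomic comparison theorem, which is exactly why the paper routes through $q_A$.

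\textbf{The probability bound.} This is where the real error lies.
\begin{itemize}
\item Thresholding at $\Delta_\ell^{1/4}$ gives failure probability of order $k^{Ck^2}N^{-1/2+8c^2}$.
\item Thresholding at $k^{-4L}$, as the paper does, gives $k^{C_0k^2}N^{-1+a}$ with $a=4\eps L^2$.
\end{itemize}
No bookkeeping brings either bound below $k^{C_1k^2}/N$ with $C_1=C_1(c_0,M)$. To see this, fix $\eps$ and let $N\to\infty$. Then $k=\lfloor c/\sqrt\eps\rfloor$ stays fixed, while $N^{1/2}$ (resp.\ $N^{a}$, with $a>0$ fixed) diverges.

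Handling this is exactly the role of the $k^{-C_2}$ term. It has nothing to do with truncation tails, since the truncation is deterministic, nor with any empirical-versus-population distinction.

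The paper closes the gap with a dichotomy:
\begin{itemize}
\item if $N^a\le k^{C_1k^2/2}$, the factor $N^a$ is absorbed into $k^{C_1k^2}/N$;
\item otherwise $N$ is so large that $k^{C_0k^2}N^{-1+a}\le k^{C_0k^2}N^{-3/4}\le k^{-C_2}$.
\end{itemize}

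Your version admits a shorter fix using a fact you already invoke. The constraint $k\le c\sqrt{\log N/\log\log N}$ gives both $k^{Ck^2}\le N^{Cc^2/2}$ and $\log N\ge k^2/c^2$. So for $c$ small, the total failure probability, including the $CN^{-1/2}$ from $\widehat m_1$, is $\lesssim N^{-1/4}\le e^{-k^2/(4c^2)}\le k^{-C_2}$.

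With these two repairs your argument proves the corollary.
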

As long as $\eps \rightarrow 0$ in the sample complexity condition $n\geq p^{1-\eps}$ as $n\wedge p \rightarrow\infty$, by taking the constant $c$ in (\ref{eq:k_def}) to be sufficiently small, $k$ diverges sufficiently slow such that the right side of the above display vanishes as $N\rightarrow\infty$, implying that $W_1(g,\hat g) \rightarrow 0$ with probability converging to $1$. In contrast, the closet sample complexity in the literature is \cite{fan2023gradient}, where it was shown that for a class of random sub-Gaussian designs, the idealized NPMLE is consistent for the true (non-parametric) prior when $n\geq c p$ for any fixed $c > 0$. While this already suggests that consistency may be possible at a sub-linear sample complexity, there is no known algorithms that provably compute the NPMLE for the linear model in general, and existing analysis of its implementable algorithms requires various extra assumptions \cite{fan2023gradient,fan2025dynamicalII}. To our best knowledge, Corollary \ref{cor:prior} gives the first theoretical guarantee for a computationally feasible and consistent prior estimator with the optimal sub-linear sample complexity.

% \begin{corollary}\label{cor:prior}
% Given Assumptions \ref{assump:design}--\ref{assump:prior} with parameters $c_0,\sigma,M$, there exist constants $c,C$ such that the following holds.
% Let $n\geq p^{1-\eps}$. 
% Let $\hat g$ be the DMM estimator \prettyref{eq:DMM} applied to the   $L=2k-1$ moments $\hat m_1,\hat\mu_2\ldots,\hat \mu_{L}$ estimated by the EBMoM, where
% \begin{align*}
% k = c \min\sth{\frac{1}{\sqrt{\eps}}, 
% \sqrt{\frac{\log n}{\log\log n}}},
% \quad
% A = \sqrt{C k \log k}.
% \end{align*}
% % Under the assumption of \prettyref{thm:main_moment}, 
% Then with probability $1-k^{Ck^2} n^{-\frac{1}{Ck}}$,
% \begin{align*}
% W_1(\hat g,g) \leq C \sqrt{ \frac{\log k}{k}}.
% \end{align*}
% \end{corollary}

\subsection{Computation of the EBMoM estimator}

We discuss the computational details of the moment estimates $\widehat\mu_k$ in (\ref{eq:moment_def}), 
where the correction term  involves a summation of 
\[\gamma^{(d_1,\ldots,d_t)} := \sum_{s_1,\ldots,s_t=1}^p \tilde T_{s_1,\ldots,s_t}^{(d_1,\ldots,d_t)},\]
over partitions of 
$k$ as $d_1 + \ldots + d_t$
with $d_i \geq 2$ and $t \leq k/2$. 
A na\"ive term-by-term calculation results in a time complexity of $O(p^{k/2})$. Nevertheless, we show below that 
it can be computed in linear time $O_k(p)$.
Taking into account the computation of the main term $F_k$'s, the overall time complexity for (\ref{eq:moment_def}) is $O_k(p(n+ p))$ once $\H=\X^\top\X$ is available.

To this end,  we group the ordered
partitions $(d_1,\ldots,d_t)$ of $k$ according to their underlying multiset.
For $2\le t\le \lfloor k/2\rfloor$, let
\[
\mathcal P_t(k)
:=
\left\{
(f_1,\ldots,f_t)\in\mathbb N^t:
2\le f_1\le\cdots\le f_t,\quad
f_1+\cdots+f_t=k
\right\}.
\]
For $\boldsymbol f=(f_1,\ldots,f_t)\in\mathcal P_t(k)$, let
\[
\operatorname{Orb}(\boldsymbol f)
:=
\left\{
(f_{\pi(1)},\ldots,f_{\pi(t)}):\pi\in\mathfrak S_t
\right\}
\]
denote the set of distinct permutations of $\boldsymbol f$, and define
\[
m_r(\boldsymbol f)
:=
\#\{a\in[t]:f_a=r\},
\qquad
n(\boldsymbol f)
:=
\prod_{r=2}^k m_r(\boldsymbol f)!.
\]
Although $\gamma^{(d_1,\ldots,d_t)}$ generally depends on the order of
$(d_1,\ldots,d_t)$, the moment product
$\widehat\mu_{d_1}\cdots\widehat\mu_{d_t}$ is invariant under permutations.
Consequently,
\begin{align}
\sum_{\substack{d_1+\cdots+d_t=k\\d_a\ge2}}
\widehat\mu_{d_1}\cdots\widehat\mu_{d_t}
\gamma^{(d_1,\ldots,d_t)} =
\sum_{\boldsymbol f\in\mathcal P_t(k)}
\widehat\mu_{f_1}\cdots\widehat\mu_{f_t}
\Gamma_{\boldsymbol f},
\label{eq:correction1}
\end{align}
where
\[
\Gamma_{\boldsymbol f}
:=
\sum_{\boldsymbol d\in\operatorname{Orb}(\boldsymbol f)}
\gamma^{\boldsymbol d}.
\]
Using the definition of $\widetilde T$ in~\eqref{eq:T_diag_free}, we have
\begin{align}
\Gamma_{\boldsymbol f}
&=
\binom{k}{f_1,\ldots,f_t}
\sum_{j=1}^p
\sum_{\boldsymbol d\in\operatorname{Orb}(\boldsymbol f)}
\sum_{1\le s_1<\cdots<s_t\le p}
H_{j,s_1}^{d_1}\cdots H_{j,s_t}^{d_t}
\nonumber \\
&=
\frac{\binom{k}{f_1,\ldots,f_t}}{n(\boldsymbol f)}
\sum_{j=1}^p
\sum_{\substack{s_1,\ldots,s_t\in[p]\\
s_1,\ldots,s_t\ \mathrm{pairwise\ distinct}}}
H_{j,s_1}^{f_1}\cdots H_{j,s_t}^{f_t}. \label{eq:gammaf}
\end{align}
Indeed, for each unordered set of $t$ distinct indices, every distinct
assignment of the exponent multiset $\{f_1,\ldots,f_t\}$ is counted
$n(\boldsymbol f)$ times in the sum over ordered, pairwise distinct
$(s_1,\ldots,s_t)$.

For fixed $j\in[p]$ and
$\boldsymbol f=(f_1,\ldots,f_t)\in\mathcal P_t(k)$, define
$x=(x_1,\ldots,x_p):=H_{j,\cdot}$, the $j$th row of $\H$.
The inner sum in \prettyref{eq:gammaf} is then
\begin{equation}
\sum_{\substack{s_1,\ldots,s_t\in[p]\\
s_1,\ldots,s_t\ \mathrm{pairwise\ distinct}}}
x_{s_1}^{f_1}\cdots x_{s_t}^{f_t}.
    \label{eq:coeff}
\end{equation}
As opposed to summing this term by term in 
$O(p^t)$ time,  Algorithm~\ref{alg:dp} below computes this coefficient in
$O(t2^t p)$ time and $O(2^t)$ memory.  Summing over $j\in[p]$,  $\Gamma_{\boldsymbol f}$ for each fixed $\boldsymbol f$ can be computed in $O(t2^t p^2)$ time. Since the number of
pairs $(t,\boldsymbol f)$ depends only on $k$, the entire \prettyref{eq:correction1}
can be computed in $O_k(p^2)$ time.

Note that \prettyref{eq:coeff} 
is the coefficient of $y_1\cdots y_t$ in
\begin{equation}\label{eq:y_poly}
\prod_{i=1}^p
\left(
1+\sum_{a=1}^t y_a x_i^{f_a}
\right).
\end{equation}
To compute this coefficient (and those for other monomials in $y$) in \eqref{eq:y_poly}, Algorithm~\ref{alg:dp} uses the following standard dynamic programming approach: We use 
\begin{align*}
\texttt{dp}: \{0,\ldots,N-1\} \rightarrow \R
\end{align*}
to store the coefficients of the $N = 2^t$ monomials $y_1^{b_1}\ldots y_t^{b_t}$ with $b_s \in \{0,1\}$, $1\leq s\leq t$. Each $b \in \{0,\ldots,N-1\}$, which we call a ``state", is identified with its $t$-bit binary expansion, where the $j$th bit indexes whether $y_j$ is chosen -  e.g., $\texttt{dp}[(0,\ldots,0,0)]$, $\texttt{dp}[(0,\ldots,0,1)]$, $\texttt{dp}[(0,\ldots,0,1,0,1)]$, $\texttt{dp}[(1,\ldots,1)]$ will represent the coefficients of $1, y_1, y_1y_3, y_1\ldots y_t$ respectively. We initialize the states as
\begin{align*}
\texttt{dp}[0] = 1, \quad \texttt{dp}[index] = 0,\quad 1\leq index \leq N-1.
\end{align*}
We then update this coefficient map by sequentially processing the $p$ terms $1 + \sum_{a=1}^t y_a x_i^{f_a}$ for $1\leq i\leq p$. When processing the $i$th term, we cycle through the $N$ states and their $t$-bits, and the key update is: for each state $0\leq index\leq N-1$ and bit $0\leq j \leq t-1$ such that the $j$th bit of $index$ is $0$, 
\begin{align*}
\texttt{new\_dp}[new\_index] \mathrel{+}= \texttt{dp}[index] \cdot x_i^{\,f_{j+1}}, 
\end{align*}
where $new\_index$ is obtained by changing the $j$th bit of $index$ from $0$ to $1$. After $p$ steps, we output $\texttt{dp}[N-1]$ as the desired coefficient of $y_1\ldots y_t$ in (\ref{eq:y_poly}). Since for each step we need to cycle through the $N$ states and their $t$-bits, the entire algorithm has time complexity $O(t2^tp)$.

\begin{algorithm}[H]
\caption{Dynamic Programming for computing the Coefficient of $y_1y_2\cdots y_t$ in \prettyref{eq:y_poly}}
\label{alg:dp}
\begin{algorithmic}[1]
\State \textbf{Input:} A list $x = \{x_1,\dots,x_p\}$ and an exponent vector $d = (f_1,\dots,f_t)$.
\State \textbf{Output:} The coefficient of 
$
y_1y_2\cdots y_t \  \text{in} \ F(y_1,\dots,y_t)=\prod_{i=1}^{p}\Bigl(1+\sum_{a=1}^{t}y_a\,x_i^{f_a}\Bigr).
$
\State $N \gets 2^t$  \Comment{$N$ is the total number of states}
\State Initialize $\texttt{dp}[0],\ldots,\texttt{dp}[N-1]$ as $\texttt{dp}[0]\gets 1$ and $\texttt{dp}[index]\gets 0$ for $index\neq 0$ 
\For{$i=1$ to $p$}
    \State $\texttt{new\_dp} \gets \texttt{dp}$
    \For{each $index \in \{0,\dots,N-1\}$}
        \For{$j=0$ to $t-1$}
            \If{$ \text{the } j\text{th bit of } index \text{ is } 0$}
                \State $new\_index \gets index \; | \; (1\ll j)$ \Comment{new\_index equals index with its $j$th bit set to 1}
                \State $\texttt{new\_dp}[new\_index] \mathrel{+}= \texttt{dp}[index] \cdot x_i^{\,f_{j+1}}$
            \EndIf
        \EndFor
    \EndFor
    \State $\texttt{dp} \gets \texttt{new\_dp}$
\EndFor
\State \Return $\texttt{dp}[N-1]$
\end{algorithmic}
\label{alg:better}
\end{algorithm}

\section{Lower bounds and sharp sample complexity}\label{sec:lb}
In this section, we provide a statistical lower bound for learning the prior that matches the sample complexity upper bound of the EBMoM estimator, certifying its optimality. 
% \nbwu{I will add a warmup part on block design that explains the main difference with sequence models: in addition to convolution with noise, the new phenomenon of self-convolution. In other words, in order to learn the prior $g$, one needs to deconvolve $g$ from both the noise distribution and itself. This example can also explain the $n=p^{1-o(1)}$ sample complexity using CLT -- using moment-matching construction to achieve super-CLT rate of convergence.}

Let us start by explaining the major distinction between the linear model and the sequence model from a deconvolution point of view. 
As an instructive example, let us revisit the extremely 
correlated design matrix  \prettyref{eq:baddesign} in \prettyref{sec:ub},  with $\tilde\X =\I_n$. 
In other words, consider the following \textit{block design}, where
% whose rows all have unit norm and disjoint support like so:
$p = n m$ for some integer $m \geq 1$, and each of the $n$ rows has exactly $m$ non-zeros all equal to $\frac{1}{\sqrt{m}}$ and disjoint support:
\begin{equation}
    \X =
    \frac{1}{\sqrt{m}}
%     \left[
% \begin{array}{c}
% \underbrace{1 \ \cdots \ 1}_{m} \ 0 \ \cdots \ 0 \ \cdots \ 0 \\[6pt]
% 0 \ \cdots \ 0 \ \underbrace{1 \ \cdots \ 1}_{m} \ \cdots \ 0 \\[6pt]
% \ddots \\[6pt]
% 0 \ \cdots \ 0 \ 0 \ \cdots \ 0 \ \underbrace{1 \ \cdots \ 1}_{m}
% \end{array}
% \right]
\left[
\begin{array}{c}
1 \ \cdots \ 1 \ 0 \ \cdots \ 0 \ \cdots \ 0 \\[6pt]
0 \ \cdots \ 0 \ 1 \ \cdots \ 1 \ \cdots \ 0 \\[6pt]
\ddots \\[6pt]
0 \ \cdots \ 0 \ 0 \ \cdots \ 0 \ 1 \ \cdots \ 1
\end{array}
\right]
\label{eq:blockdesign}
\end{equation}
%  Specifically, assume that $p = n m$ for some integer $m \geq 1$. The $n\times p$ design $\X$ satisfies that, for the first row
% $X_{1j}=\frac{1}{\sqrt{m}}$ for $j=1,\ldots,m$ and 0 otherwise; for the second row, $X_{2j}=\frac{1}{\sqrt{m}}$ for $j=m+1,\ldots,2m$ and 0 otherwise; etc.
Thus, $\btheta = \X \bbeta$ has i.i.d.\ coordinates drawn from 
\begin{equation}
g^{(m)} := \text{Law}\pth{\frac{\beta_1+\ldots+\beta_m}{\sqrt{m}}}, 
\quad \beta_i \iiddistr g,    \label{eq:self-conv}
\end{equation}
which is the $m$-fold convolution of $g$ with itself rescaled by $1/\sqrt{m}$.
 Then $\y = \btheta+\beps$ has i.i.d.\ coordinates drawn from $g^{(m)} * \N(0,\sigma^2)$. As such, the problem reduces to a sequence model, where $g^{(m)}$ plays the role of the prior, and
 in order to learn the prior $g$, one needs to deconvolve $g$ from both the noise distribution and itself. This 
new phenomenon of \textit{self-deconvolution} is a major distinction between linear models and sequence models.

For dense designs, the situation is more complicated as all $p$ regression coefficients $\beta_1,\ldots,\beta_p$ will participate in the self-convolution and contribute to all responses; nevertheless, this simple example of block design is already enough to demonstrate the sample complexity lower bound $n=p^{1-o(1)}$. To see this, the main observation is that if $g$ is centered and has variance 1, due to the Central Limit Theorem (CLT), the $m$-fold self-convolution $g^{(m)}$ in \prettyref{eq:self-conv} approaches the standard normal as $m$ grows.
The typical convergence rate (Berry-Esseen) in CLT is $\frac{1}{\sqrt{m}}$, but this can be sped up arbitrarily by choosing $g$ to have many matching moments with the normal limit. The same idea can be used to construct a pair of priors $g,g'$ matching a constant $L$ number of moments, such that the moments of their self-convolutions $g^{(m)},g'^{(m)}$ are even closer, leading to the indistinguishability of the Gaussian convolutions 
$g^{(m)} * \N(0,\sigma^2)$ and $g'^{(m)} * \N(0,\sigma^2)$. This is formalized by the next result proved in \prettyref{sec:lower_bound-block}.

\begin{proposition}\label{prop:lower_bound-block}
Consider the block design \prettyref{eq:blockdesign}, with $m=\frac{p}{n} \in \naturals$.
For every $L\geq 2$, there exist some small universal constant $c > 0$ and a pair of distinct priors $g,g'$ on $[-c,c]$, such that
\begin{equation*}
\TV\big(P_g(\y), P_{g'}(\y)\big) \leq Cnm^{-\frac{L}{2}+1}
\end{equation*}
where $P_g(\y), P_{g'}(\y)$ denote the law of $\y$ (conditioning on the block design $\X$) under prior $g,g'$ respectively, and $C$ is a universal constant.
\end{proposition}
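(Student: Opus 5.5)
The plan is to reduce to a one-dimensional sequence model, tensorize, and bound the single-coordinate total variation through characteristic functions. Under the block design \prettyref{eq:blockdesign}, the coordinates of $\y$ are i.i.d.\ with law $g^{(m)} * \N(0,\sigma^2)$, where $g^{(m)}$ is the rescaled self-convolution \prettyref{eq:self-conv}. By subadditivity of total variation over product measures,
\[
\TV\big(P_g(\y),P_{g'}(\y)\big)\le n\,\TV\big(g^{(m)}*\N(0,\sigma^2),\,g'^{(m)}*\N(0,\sigma^2)\big).
\]
It therefore suffices to construct $g\neq g'$ on $[-c,c]$ for which the single-coordinate distance is at most $Cm^{-L/2+1}$.

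\textbf{Construction of the priors.} Take any two distinct distributions $g,g'$ on $[-c,c]$ whose first $L$ moments agree. Such a pair exists by a standard dimension count: the moment map from distributions on $L+2$ fixed atoms to $\R^L$ has a nontrivial fibre. One can also take two Gauss-quadrature-type discrete distributions with matching moments, as in \cite{wu2020optimal}.

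\textbf{Characteristic function bound.} Write $\varphi,\varphi'$ for the characteristic functions of $g,g'$. Since the first $L$ moments agree, Taylor expansion gives
\[
|\varphi(u)-\varphi'(u)|\le \frac{(c|u|)^{L+1}}{(L+1)!},
\]
and the same holds with one fewer power for $|\varphi'(u)-\varphi'_{\,}(u)|$ at the level of first derivatives, namely $|\varphi^{(1)}(u)-\varphi'^{(1)}(u)|\le c(c|u|)^{L}/L!$. The characteristic function of $g^{(m)}$ is $\varphi(t/\sqrt m)^m$. Telescoping with $|\varphi|,|\varphi'|\le1$ gives
\[
\big|\varphi(t/\sqrt m)^m-\varphi'(t/\sqrt m)^m\big|\le m\,C|t|^{L+1}m^{-(L+1)/2}=C|t|^{L+1}m^{-(L-1)/2}.
\]
Next, the derivative in $t$ equals $\sqrt m\,\varphi(t/\sqrt m)^{m-1}\varphi^{(1)}(t/\sqrt m)$, and the analogous expression for $\varphi'$. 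I would telescope again, using $|\varphi^{(1)}|\le c$ and the Taylor bounds above. This bounds the derivative of the difference by $C(|t|^{L}+|t|^{L+2})m^{-L/2+1}$, which loses exactly one factor of $\sqrt m$ relative to the previous display.

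\textbf{From Fourier to total variation.} Let $h$ be the difference of the two densities of $g^{(m)}*\N(0,\sigma^2)$ and $g'^{(m)}*\N(0,\sigma^2)$. Its Fourier transform is the difference above multiplied by $e^{-\sigma^2t^2/2}$. By Plancherel, $\|h\|_2$ and $\|xh\|_2$ are controlled by the $L^2$ norms of the transform and of its derivative. The Gaussian factor makes every polynomial weight in $t$ integrable, so both norms are $O_{L,\sigma}(m^{-L/2+1})$. Cauchy--Schwarz with the weight $(1+x^2)^{-1}$ then gives
\[
\|h\|_1\le C\big(\|h\|_2+\|xh\|_2\big)\lesssim m^{-L/2+1},
\]
which combined with the tensorization yields the claimed bound.

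\textbf{Main obstacle.} The delicate step is the derivative bound that controls $\|xh\|_2$, that is, the tails of $h$. One must check that the telescoping loses no more than a single factor of $\sqrt m$; this loss is precisely why the exponent is $-L/2+1$ rather than $-(L-1)/2$. Along the way, the constants coming from $c^{L}$ and the Gaussian moments must be tracked so that the final constant is universal, which is what requires $c$ to be small.
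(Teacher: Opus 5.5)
Your proof is correct and gives the stated rate, but it takes a different route from the paper. The paper chooses symmetric $g,g'$ on $[-c,c]$ matching $2L-1$ moments (the uniform law versus its $L$-point Gauss quadrature) and tensorizes as you do. It then bounds the one-dimensional distance by Cauchy--Schwarz against the inverse standard normal density, and the Hermite expansion turns this into $\sum_k \frac{1}{k!}\bigl(m_k(g^{(m)})-m_k(g'^{(m)})\bigr)^2$. The moment differences of the self-convolutions are controlled combinatorially: in the set-partition expansion of $\E[(\beta_1+\cdots+\beta_m)^k]$ only partitions with a block of size at least $2L$ contribute. These are counted through moments of Rademacher sums, giving $|m_k(g^{(m)})-m_k(g'^{(m)})|\le m^{-(L-1)}(C_1c)^kk^{k/2}e^{C_0\sqrt k}$. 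Summing this series is where the smallness of $c$ is used, and the outcome, $\TV\bigl(P_g(\y),P_{g'}(\y)\bigr)\le Cnm^{-(L-1)}$, is stronger than the proposition. You instead apply Cauchy--Schwarz with the weight $1+x^2$, so Plancherel moves everything to the Fourier side. There a Taylor bound on characteristic functions, the telescoping $|a^m-b^m|\le m|a-b|$, and the Gaussian noise factor do all the work. This avoids the Hermite identity and the partition counting, works for any moment-matching pair, and needs no smallness of $c$; the paper's route in exchange yields a $\chi^2$-type bound and exposes the moment structure of the self-convolution directly. Two refinements to your write-up. First, your constants are better than $O_{L,\sigma}$: the factorials $1/(L+1)!$ from the Taylor step dominate the Gaussian moments $\Gamma(L+3/2)$. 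So for $c\le 1$ and $\sigma=1$ the constant is uniform in $L$, which the statement's ``universal $C$'' requires. Second, the lost $\sqrt m$ in your derivative bound is an artifact of using $|\varphi^{(1)}|\le c$, not the reason for the exponent $-L/2+1$, which is simply not sharp. For centered (e.g.\ symmetric) $g,g'$ one has $|\varphi^{(1)}(u)|\le c^2|u|$, and the derivative bound improves to $O(m^{-(L-1)/2})$. Matching $2L-1$ moments then recovers the paper's rate $m^{-(L-1)}$ within your Fourier framework.
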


\prettyref{prop:lower_bound-block} provides a (Le Cam) two-point lower bound for estimating the prior.
Suppose that $n\leq p^{1-\delta}$ for any constant $\delta$. We can choose  $g \neq g'$ with $L=4/\delta$, such that $\TV(P_g(\y), P_{g'}(\y)) =o(1)$; that is, one cannot distinguish whether $g$ or $g'$ is the true prior based on $\y$.
% which implies the impossibility of consistently estimating the prior $g$. 
This shows that for the block design (\ref{eq:blockdesign}), consistent estimation of the prior requires sample size at least $n=p^{1-o(1)}$.

% first $L$ entries are all equal to $\frac{1}{L}$ and the rest are zero; for the second row, the 

% first $L$ entries are all equal to $\frac{1}{L}$ and the rest are zero; for the second row, the 

% A much more general result than \prettyref{prop:lower_bound-block} 
% is the following theorem that applies to general designs. 
The following theorem extends this  lower bound to more general designs. Compared with the block design example in \prettyref{prop:lower_bound-block}, the major technical difficulty is that $\y$ no longer has a product law. See \prettyref{sec:lower_bound-pf} for a proof.
\begin{theorem}\label{thm:lower_bound}
Suppose that $n\leq p^{1-\delta}$ for some fixed $\delta > 0$ and the design $\X$ satisfies 
%\nbwu{There is some notational clash with $\tilde\X$ in
%\prettyref{eq:baddesign}.}
\begin{align}\label{eq:design_assumption_lower}
\max_{j\in[p]}\pnorm{\tilde \X \e_j}{} \leq C_0 \Big(\frac{n}{p}\Big)^\alpha (\log p)^\gamma, \quad \text{ where } \tilde \X = (\X\X^\top + \sigma^2 \I_n)^{-1/2}\X
\end{align}
for some fixed $\alpha  > 0$ and $C_0,\gamma \geq 0$. Then there exists a pair of distinct 1-subgaussian distributions $g,g'$ such that, for all large enough $n,p$,
\begin{align*}
\dTV\big(P_g(\y), P_{g'}(\y)\big) \leq 0.1.
\end{align*}
\end{theorem}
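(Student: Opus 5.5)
Our plan is a moment-matching construction combined with a Gaussian comparison, in the spirit of \prettyref{prop:lower_bound-block}. Write $\y=\X\bbeta+\beps$ and whiten: $\tilde\y:=(\X\X^\top+\sigma^2\I_n)^{-1/2}\y$ is an invertible function of $\y$, so TV is unchanged. Fix a large integer $L$ depending on $(\alpha,\delta,\gamma)$. Take $g$ and $g'$ to be distinct, centered, unit-variance, compactly supported (hence $1$-subgaussian after rescaling) distributions whose first $L$ moments agree with those of $\N(0,1)$. Such priors exist, e.g.\ two different Gauss-type quadrature rules, or perturbations $\N(0,1)$-like densities times $(1+\eta\,h)$ with $h$ orthogonal to polynomials of degree $\le L$, truncated. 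By the triangle inequality it then suffices to show $\TV(P_g(\y),P_{\N(0,1)}(\y))\le 0.05$. Under the Gaussian prior we have $\tilde\y\sim\N(0,\I_n)$, since $\X\X^\top+\sigma^2\I$ is exactly the covariance.

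Under prior $g$, write $\tilde\y=\sum_{j=1}^p \tilde\X\e_j\,\beta_j+\tilde\beps$ with $\tilde\beps\sim\N(0,\sigma^2(\X\X^\top+\sigma^2\I)^{-1})$. This is a sum of independent vectors $\v_j\beta_j$ with $\v_j=\tilde\X\e_j$ small by \eqref{eq:design_assumption_lower}, plus a Gaussian. The comparison will be done via a Lindeberg swapping argument on a smooth divergence rather than on TV directly. Concretely, I would bound $\chi^2$ or KL via the density of $\tilde\y$. Its density equals $\E_\bbeta[\phi_{\Sigma_0}(\tilde\y-\sum_j\v_j\beta_j)]$, where $\Sigma_0=\sigma^2(\X\X^\top+\sigma^2\I)^{-1}$. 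Replacing the $\beta_j$ one at a time by standard Gaussians $z_j$ and Taylor-expanding the Gaussian kernel in $\v_j\beta_j$ to order $L$, the first $L$ terms cancel by moment matching. Each swap therefore costs a remainder of order $\|\v_j\|^{L+1}$ times Hermite-type factors.

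Summing over $j$, the total error should be about $\sum_j \|\v_j\|^{L+1}\cdot(\text{poly factors in }n)$. We have $\sum_j\|\v_j\|^2=\tr(\tilde\X\tilde\X^\top)\le n$, so this sum is at most $n\max_j\|\v_j\|^{L-1}\lesssim n (n/p)^{\alpha(L-1)}(\log p)^{\gamma(L-1)}$. With $n\le p^{1-\delta}$, this is $\le p^{1-\alpha\delta(L-1)}$ up to logs, which vanishes once $L>1/(\alpha\delta)+1$. One way to make this rigorous is the second-moment method. Compute $1+\chi^2(P_g\|\N(0,\I))=\E_{\bbeta,\bbeta'}\exp(\langle \tilde\X\bbeta,\Sigma_0^{-1}\ldots\rangle)$-type expressions. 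More cleanly, condition and use the identity that the $\chi^2$ of a Gaussian location mixture against the Gaussian equals $\E\exp(\bbeta^\top \tilde\X^\top\Sigma_0^{-1}\tilde\X\bbeta')$ minus corrections. Then expand $\exp$ in a power series and use that mixed moments of order $\le L$ in each coordinate match Gaussian ones. The alternative is to bound TV directly by a Lindeberg argument with test functions $f=\Indc_E$ smoothed by the residual Gaussian noise $\tilde\beps$. I would try the Lindeberg route first, because the noise $\tilde\beps$ has covariance bounded below by $\sigma^2/(\opnorm{\X}^2+\sigma^2)$ and so provides the smoothing.

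The main obstacle is that the remainder terms, after each swap, involve high derivatives of the smoothed indicator in dimension $n$. Naively, these derivatives grow with $n$ and with the noise scale, and the difficulty is controlling them uniformly over the event. I would handle this by working with the Hermite (Wiener chaos) expansion: the $r$th directional derivative along $\v_j$ of a Gaussian-smoothed bounded function is bounded by $\sqrt{r!}\,\|\Sigma_0^{-1/2}\v_j\|^r$, uniformly in dimension. The quantity $\|\Sigma_0^{-1/2}\v_j\|$ stays comparable to $\|\v_j\|$ up to a constant depending on $\opnorm{\X}/\sigma$. If that constant is not controlled, I would instead compare to the Gaussian prior only on the high-probability event where the intermediate sums are bounded.
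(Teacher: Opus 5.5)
There is a genuine gap, and it sits exactly where you hedge about $\opnorm{\X}/\sigma$.

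\textbf{The smoothing scale is wrong.} Your priors have no Gaussian component. So the only Gaussian smoothing available, at least in the first swaps, is the whitened noise $\tilde{\beps}\sim\N(0,\Sigma_0)$ with $\Sigma_0=\sigma^2(\X\X^\top+\sigma^2\I_n)^{-1}$. The scale that governs a swap is then $\|\Sigma_0^{-1/2}\tilde\X\e_j\|=\|\x_j\|/\sigma$, not $\|\tilde\X\e_j\|$. This holds whether you use your Hermite derivative bound or any reduction to one dimension. Your second-moment formula has the same defect, since $\tilde\X^\top\Sigma_0^{-1}\tilde\X=\X^\top\X/\sigma^2$.

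Hypothesis \eqref{eq:design_assumption_lower} controls only $\|\tilde\X\e_j\|$. The ratio of the two scales can be as large as $(1+\opnorm{\X}^2/\sigma^2)^{1/2}$, and the theorem does not bound this.

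\textbf{For your discrete priors the intermediate claim is false.} Take $\X=\lambda\X_{\rm b}$, with $\X_{\rm b}$ the block design \eqref{eq:blockdesign} and $m=p/n$.
\begin{itemize}
\item Then $\X\X^\top=\lambda^2\I_n$ and $\|\tilde\X\e_j\|=\lambda(\lambda^2+\sigma^2)^{-1/2}\sqrt{n/p}\le\sqrt{n/p}$. So \eqref{eq:design_assumption_lower} holds with $\alpha=1/2$, $\gamma=0$, $C_0=1$ for every $\lambda$.
\item Let $g$ be discrete with $k$ atoms, as for your Gauss-type quadratures. Then $y_1-\varepsilon_1=\lambda m^{-1/2}(\beta_1+\cdots+\beta_m)$ takes at most $(m+1)^k$ values.
\item If $\lambda$ grows fast enough (e.g.\ $\lambda=e^p$), $y_1$ lies with high probability in a union of $O(\sigma\sqrt{\log p})$-neighbourhoods of these values. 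That set has vanishing $\N(0,\lambda^2+\sigma^2)$-probability.
\end{itemize}
Hence $\TV(P_g,P_{\N(0,1)})\to 1$, so the claim $\TV(P_g,P_{\N(0,1)})\le 0.05$ fails. Restricting to a high-probability event cannot repair this. For continuous compactly supported priors, your argument as written still pays $(1+\opnorm{\X}^2/\sigma^2)^{(L+1)/2}$ and gives nothing. Under an extra assumption $\opnorm{\X}\lesssim\sigma$, which holds for the designs of Lemma~\ref{lem:lower_gaussian}, your swapping argument would go through.

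\textbf{The missing idea is to build the Gaussian into the priors.} This is how the paper argues.
\begin{itemize}
\item Take $g=h*\N(0,1)$ and $g'=h'*\N(0,1)$, with $h,h'$ centered on $[-1/2,1/2]$ and sharing their first $L$ moments.
\item Write $\bbeta=\bgamma+\z$ with $\z\sim\N(0,\I_p)$. The Gaussian part merges with the noise: $\X\z+\beps\sim\N(0,\X\X^\top+\sigma^2\I_n)$, which whitens to exactly $\N(0,\I_n)$. The non-Gaussian part becomes $\tilde\X\bgamma$.
\item The smoothing is now the identity however large $\opnorm{\X}/\sigma$ is. This is precisely why the hypothesis is phrased through $\tilde\X$.
\end{itemize}

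This also removes your ``main obstacle.'' In the hybrid argument that replaces $\gamma_k$ by $\gamma_k'$ one coordinate at a time, all other summands are independent of the swapped pair. So $\TV(A+R,B+R)\le\TV(A,B)$ lets you discard them. Rotational invariance then reduces each step to $\TV(\|\tilde\x_k\|\gamma_k+w,\|\tilde\x_k\|\gamma_k'+w)$ with $w\sim\N(0,1)$.

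The one-dimensional moment-matching bound $\chi^2\le(C\|\tilde\x_k\|)^{2(L+1)}$, summed over $k$, gives $p\,[CC_0(n/p)^\alpha(\log p)^\gamma]^{L+1}\le 0.1$ for $L$ large. No high-dimensional derivatives, Taylor remainders, or intermediate Gaussian prior are needed.
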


Both Assumption \ref{assump:design} needed in Corollary \ref{cor:prior} and the condition (\ref{eq:design_assumption_lower})  are satisfied by a broad variety of design matrices, for which the tight sample complexity for estimating the prior is shown to be $n=p^{1-o(1)}$. For example, it can be readily checked that (\ref{eq:design_assumption_lower}) is satisfied by the block design (\ref{eq:blockdesign}) with $\alpha = 1/2$ and $\gamma = 0$. The following lemma (proved in  Section \ref{subsec:proof_lower_gaussian}) confirms that (\ref{eq:design_assumption_lower}) is satisfied
with high probability by various examples of random designs considered in Lemma \ref{lem:random_lsi}. 

\begin{lemma}\label{lem:lower_gaussian}
Suppose that $\X = \frac{\sigma}{\sqrt{n+p}}\bar \X$, where the rows of $\bar \X$ are independent and sub-Gaussian with constant $C_0$, and have mean zero and covariance $\bSigma$ satisfying $ \kappa^{-1} \I_p \prec \bSigma \prec \kappa\I_p $ for some $\kappa > 1$. Then with probability converging to $1$, the condition (\ref{eq:design_assumption_lower}) is satisfied with $\alpha = \gamma = 1/2$.
%\nbwu{The sentence above says ``$\alpha = 1/2$ and $\gamma = 0$''. Typo?}
\end{lemma}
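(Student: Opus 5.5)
The plan is to reduce the leverage-type quantity $\pnorm{\tilde\X\e_j}{}^2$ to the squared norm of a single column of $\X$, and then control all $p$ columns at once with a Bernstein bound and a union bound. Write $\x_j=\X\e_j$ for the $j$th column. Then
\[
\pnorm{\tilde\X\e_j}{}^2=\x_j^\top(\X\X^\top+\sigma^2\I_n)^{-1}\x_j .
\]
I will use two deterministic bounds on this quadratic form. First, $\X\X^\top+\sigma^2\I_n\succeq\sigma^2\I_n$, which gives
\[
\pnorm{\tilde\X\e_j}{}^2\le \sigma^{-2}\pnorm{\x_j}{}^2=\frac{\pnorm{\bar\X\e_j}{}^2}{n+p}.
\]
Second, $\x_j\x_j^\top\preceq\X\X^\top\preceq \X\X^\top+\sigma^2\I_n$, which gives $\pnorm{\tilde\X\e_j}{}^2\le 1$. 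This follows from the operator inequality via $\x^\top(\x\x^\top+B)^{-1}\x<1$ for $B\succ0$.

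In the regime $n\ge p$, the second bound already yields $\pnorm{\tilde\X\e_j}{}\le 1\le (n/p)^{1/2}(\log p)^{1/2}$ once $p\ge 3$, so the condition holds deterministically. The substantive case is therefore $n<p$, which is also the regime $n\le p^{1-\delta}$ in which \prettyref{thm:lower_bound} is applied. There it suffices to show that, with probability tending to $1$,
\[
\max_{j\in[p]}\pnorm{\bar\X\e_j}{}^2\le C(n+\log p).
\]
Given this, $\frac{n+\log p}{n+p}\le \frac{2n\log p}{p}$ for $n\ge1$ and $\log p\ge1$, and taking square roots gives (\ref{eq:design_assumption_lower}) with $\alpha=\gamma=1/2$ and a constant $C_0$ depending on $(\kappa,C_0)$.

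For the column-norm bound, observe that the entries $\bar X_{1j},\ldots,\bar X_{nj}$ of the $j$th column are $\e_j^\top\bar\x_1,\ldots,\e_j^\top\bar\x_n$. These are independent because the rows are independent. Each is mean zero and subgaussian with constant $C_0$, since $\e_j$ is a unit vector, and each has variance $\Sigma_{jj}\le\kappa$. Hence the variables $\bar X_{ij}^2-\Sigma_{jj}$ are independent, centered and sub-exponential with $\psi_1$-norm $O(C_0^2)$. Bernstein's inequality then gives, for each $j$,
\[
\P\Big(\pnorm{\bar\X\e_j}{}^2\ge \kappa n + C\big(\sqrt{n\cdot 3\log p}+3\log p\big)\Big)\le 2p^{-3}.
\]
A union bound over $j\in[p]$ yields the maximal bound with probability at least $1-2p^{-2}\to1$, and $\sqrt{n\log p}\le n+\log p$ puts it in the form $C(n+\log p)$.

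I do not expect a genuine obstacle here; the argument is essentially a one-line comparison plus a standard concentration step. The one point requiring care is the $\log p$ factor. It is forced by the union bound over $p$ columns, and it is only genuinely needed when $n\lesssim \log p$, where the fluctuation term dominates $\kappa n$. This is exactly why the lemma asserts $\gamma=1/2$ rather than $\gamma=0$. I would also remark that only the upper bound $\bSigma\preceq\kappa\I_p$ and the subgaussianity of the rows are used; the lower bound on $\bSigma$ and any Poincar\'e-type assumption as in \prettyref{lem:random_lsi} are unnecessary for this direction.
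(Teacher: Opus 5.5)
Your proof is correct and follows essentially the same route as the paper. Both arguments use the contraction $\pnorm{\tilde\X\e_j}{}^2\le \pnorm{\bar\X\e_j}{}^2/(n+p)$ coming from $\X\X^\top+\sigma^2\I_n\succeq\sigma^2\I_n$, then Bernstein's inequality for the centered sub-exponential variables $\bar X_{ij}^2-\Sigma_{jj}$ with a union bound over $j$, and finally the elementary bound $\frac{n+\log p}{n+p}\le \frac{2n\log p}{p}$. Your additional bound $\pnorm{\tilde\X\e_j}{}\le 1$ and the case split on $n\ge p$ are harmless but unnecessary, because that last inequality already holds for all $n\ge 1$ and $p\ge 3$.
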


\section{Experiments}
\label{sec:exp}

% \begin{figure}[ht]
%     \centering
%     \includegraphics[width=0.5\linewidth]{figs/W1_dmm_rademacher_vs_p_k4_T50_seed42_sigma1.0_osigma1.0_rho0.0.pdf}
%     \caption{Rademacher, iid design.}
%     \label{fig:rademacher-dmm}
% \end{figure}

% \begin{figure}[!ht]
%     \centering
%     \begin{subfigure}[t]{0.48\linewidth}
%         \centering
%         \includegraphics[width=\linewidth]{block_iid_m3_1.pdf}
%         \caption{Third moment $\hat m_3$}
%         \label{fig:m3}
%     \end{subfigure}
%     \hfill
%     \begin{subfigure}[t]{0.48\linewidth}
%         \centering
%         \includegraphics[width=\linewidth]{block_iid_m4_1.pdf}
%         \caption{Fourth moment $\hat m_4$}
%         \label{fig:m4}
%     \end{subfigure}
%     \caption{Histograms of the estimates $\hat m_3$ and $\hat m_4$ for i.i.d.~and correlated block design with $\rho = 0.9$}
%     \label{fig:m3m4}
% \end{figure}

\begin{figure}[ht]
     \centering
     \begin{subfigure}[b]{0.45\textwidth}
         \centering
         \includegraphics[width=\textwidth]{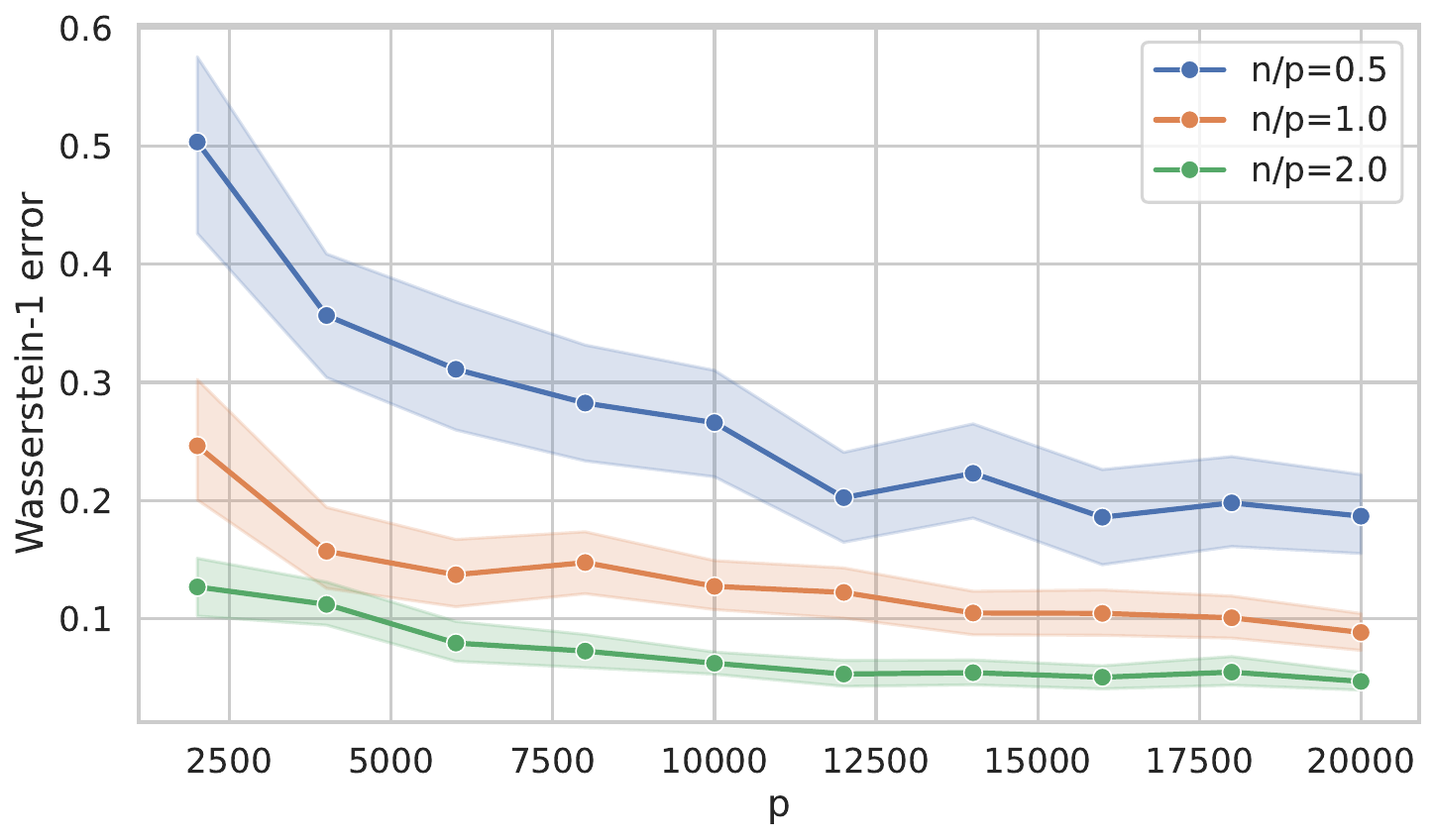}
         \caption{i.i.d.~design}
         \label{fig:rademacher-dmm_rho0.0}
     \end{subfigure}
     \hfill
     \begin{subfigure}[b]{0.45\textwidth}
         \centering
         \includegraphics[width=\textwidth]{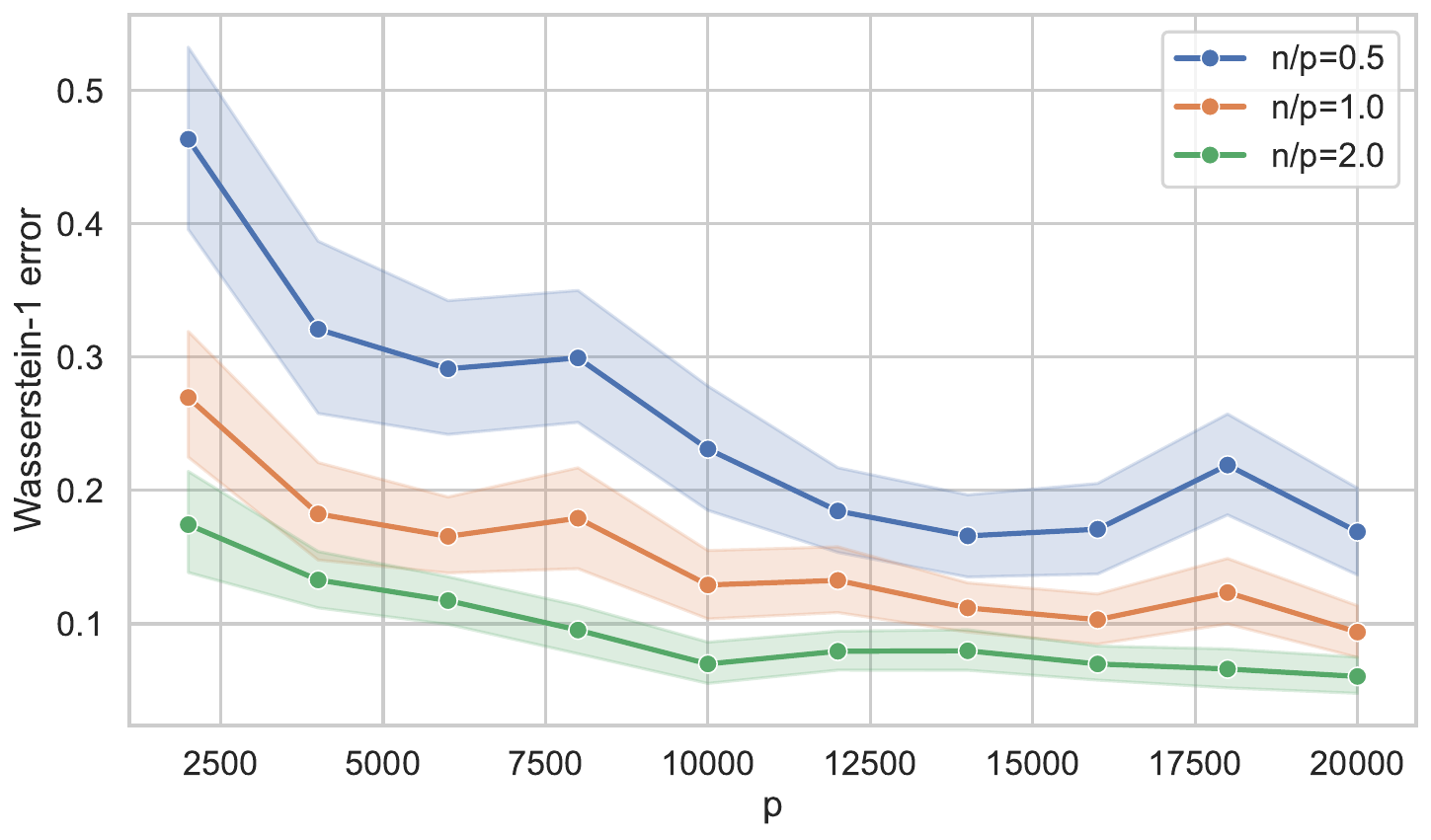}
         \caption{Correlated block design with $\rho = 0.9$}
         \label{fig:rademacher-dmm_rho0.2}
     \end{subfigure}
        \caption{Rademacher prior. Method: DMM.}
        \label{fig:rademacher-dmm}
\end{figure}

\begin{figure}[ht]
     \centering
     \begin{subfigure}[b]{0.45\textwidth}
         \centering
         \includegraphics[width=\textwidth]{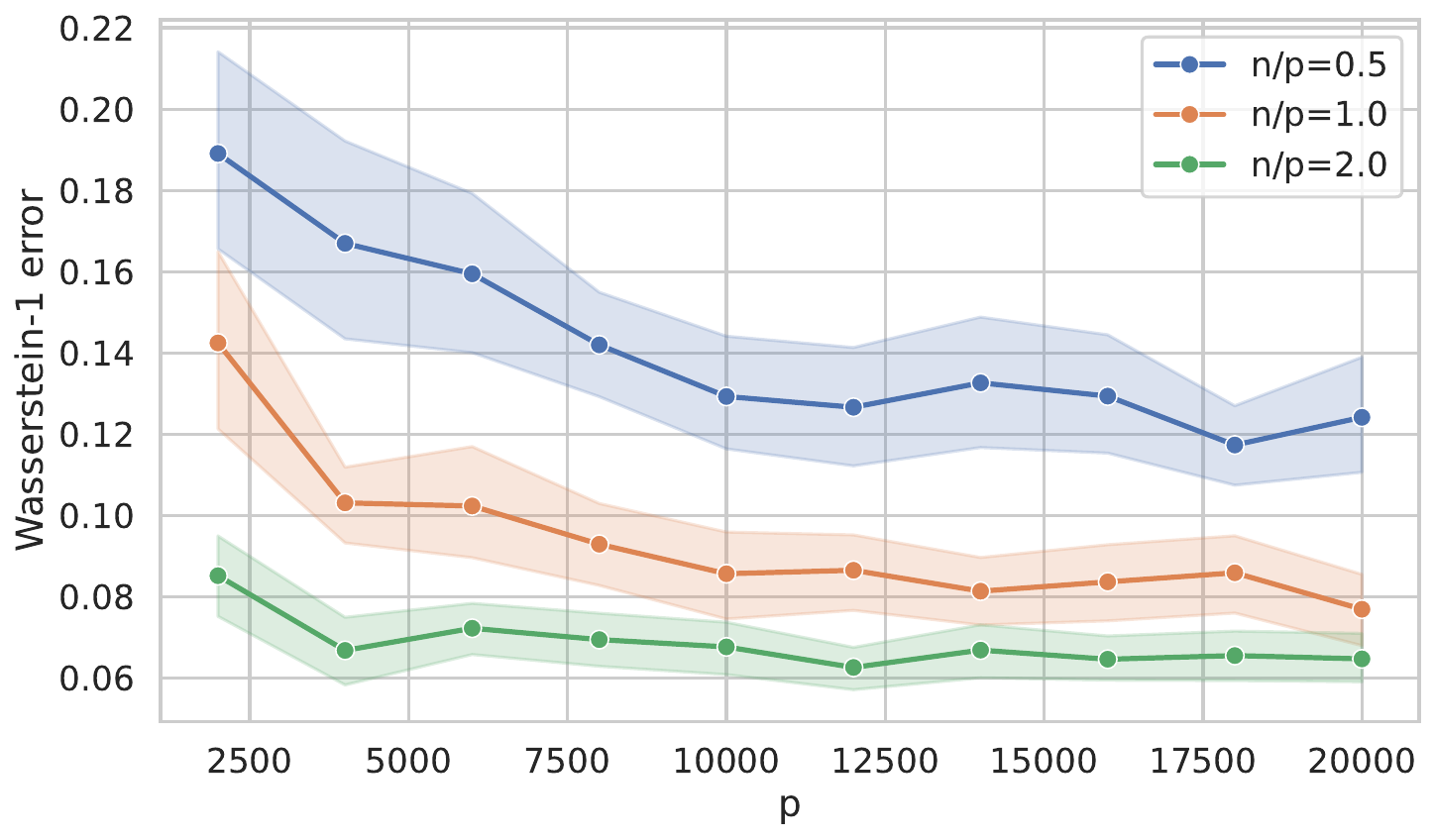}
         \caption{i.i.d.~design}
         \label{fig:sparse-dmm_rho0.0}
     \end{subfigure}
     \hfill
     \begin{subfigure}[b]{0.45\textwidth}
         \centering
         \includegraphics[width=\textwidth]{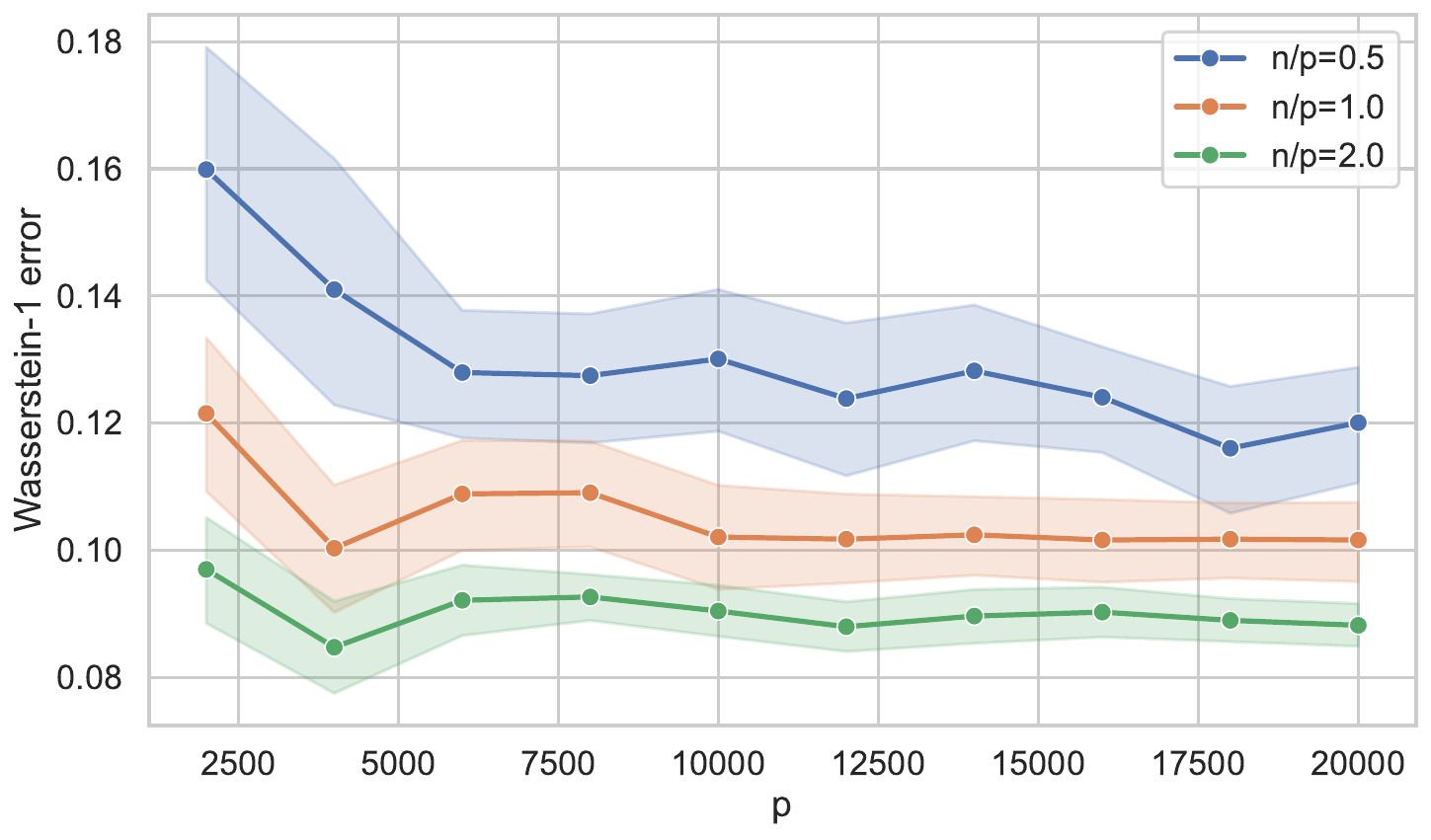}
         \caption{Correlated block design with $\rho = 0.9$}
         \label{fig:sparse-dmm_rho0.2}
     \end{subfigure}
        \caption{Sparse prior. Method: DMM.}
        \label{fig:sparse-dmm}
\end{figure}

\begin{figure}[ht]
     \centering
     \begin{subfigure}[b]{0.45\textwidth}
         \centering
         \includegraphics[width=\textwidth]{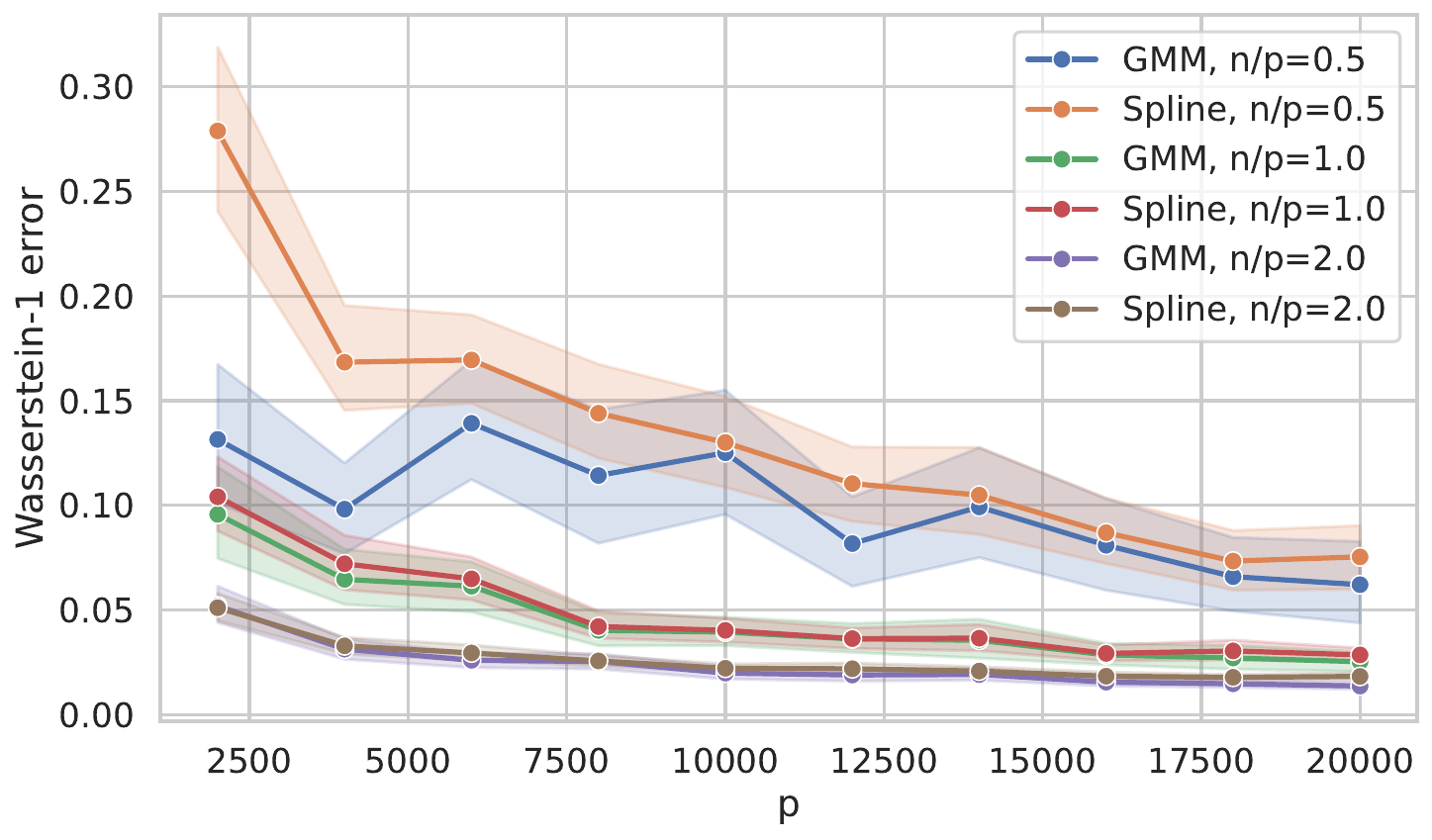}
         \caption{i.i.d.~Design }
         \label{fig:gaussian-dmm_rho0.0}
     \end{subfigure}
     \hfill
     \begin{subfigure}[b]{0.45\textwidth}
         \centering
         \includegraphics[width=\textwidth]{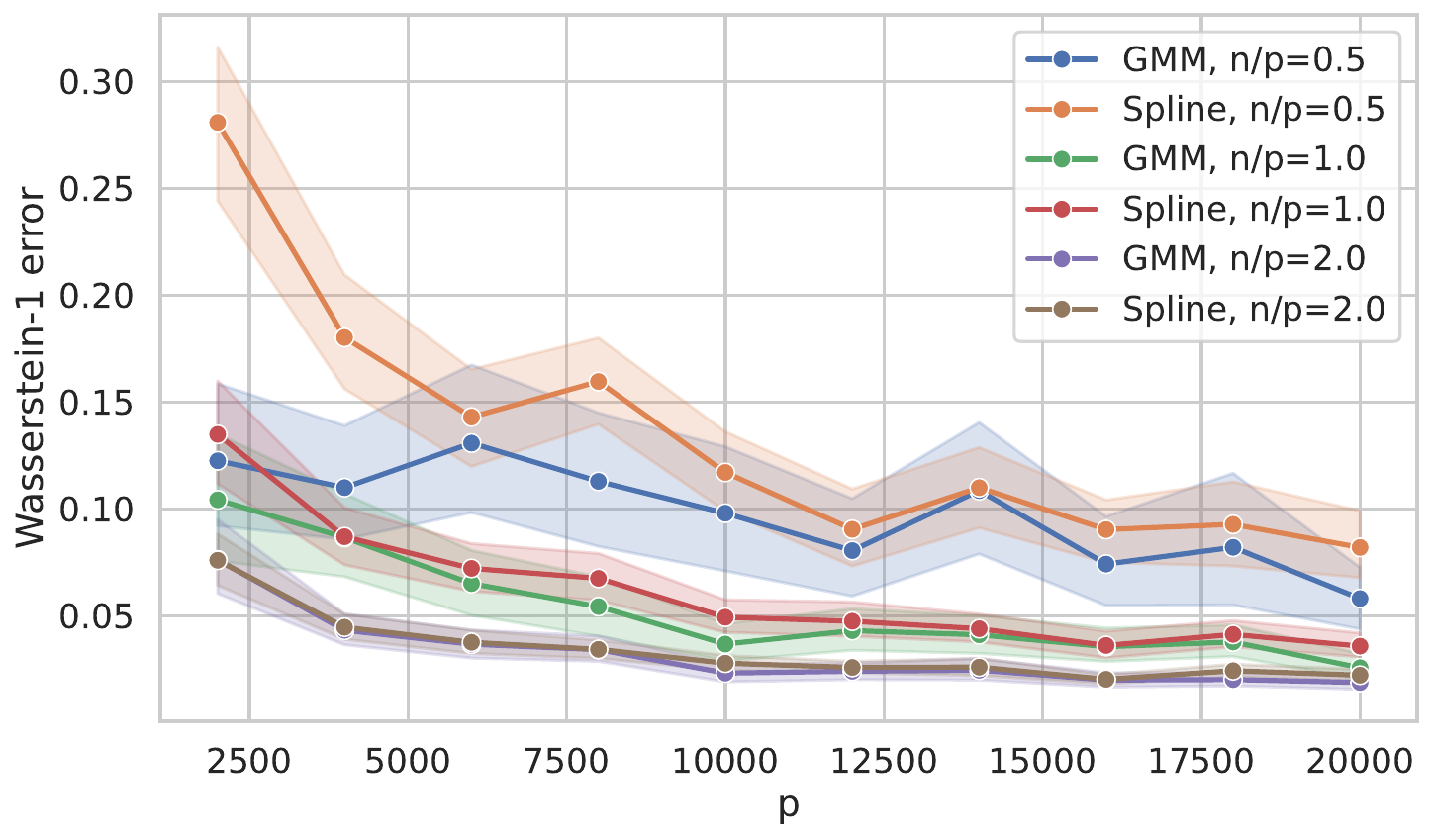}
         \caption{Correlated block design with $\rho = 0.9$}
         \label{fig:gaussian-dmm_rho0.2}
     \end{subfigure}
        \caption{Gaussian prior. Method: GMM and spline.}
        \label{fig:gaussian}
\end{figure}

\begin{figure}[ht]
     \centering
     \begin{subfigure}[b]{0.45\textwidth}
         \centering
         \includegraphics[width=\textwidth]{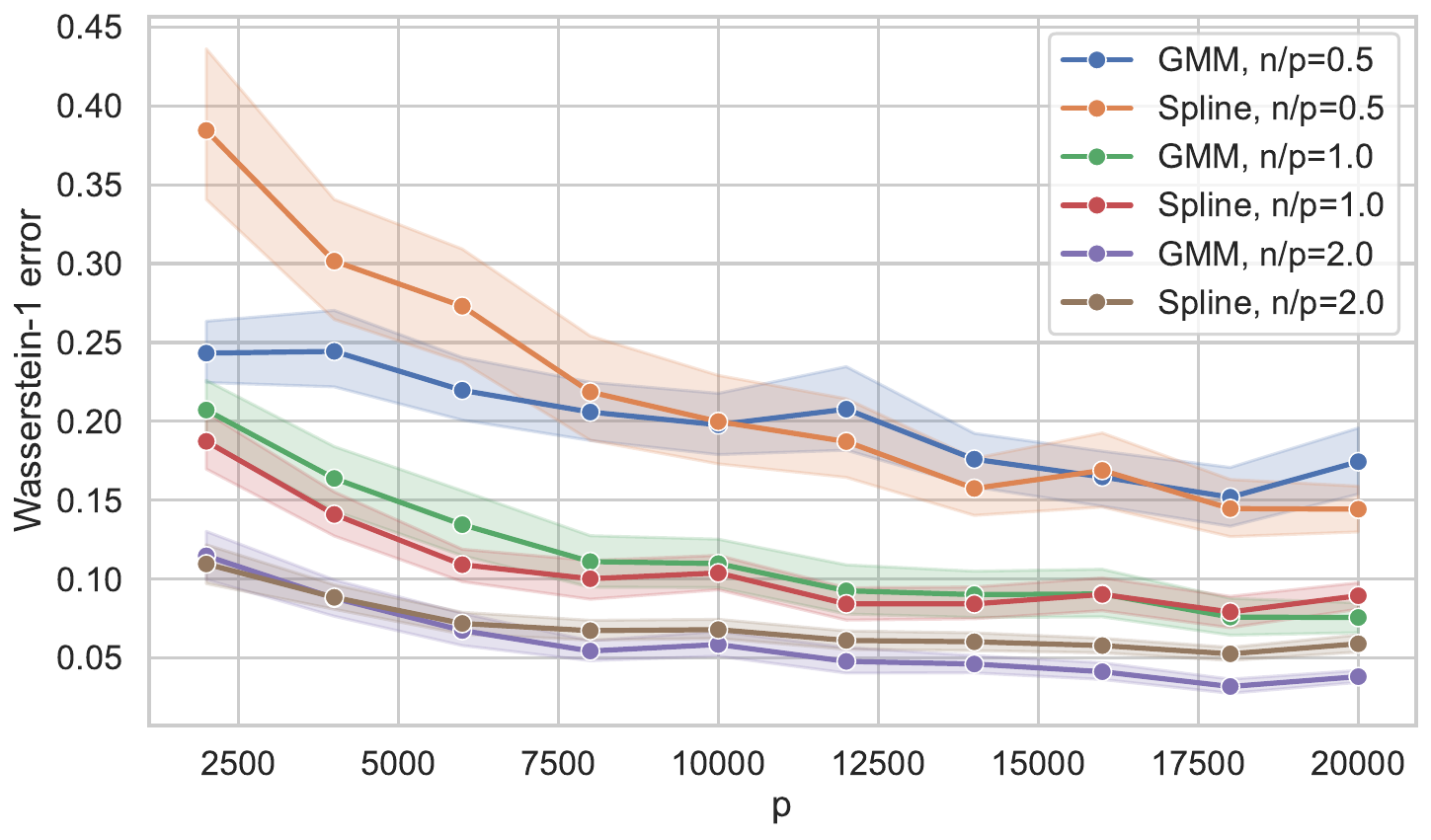}
         \caption{i.i.d.~design}
         \label{fig:bimodal-dmm_rho0.0}
     \end{subfigure}
     \hfill
     \begin{subfigure}[b]{0.45\textwidth}
         \centering         \includegraphics[width=\textwidth]{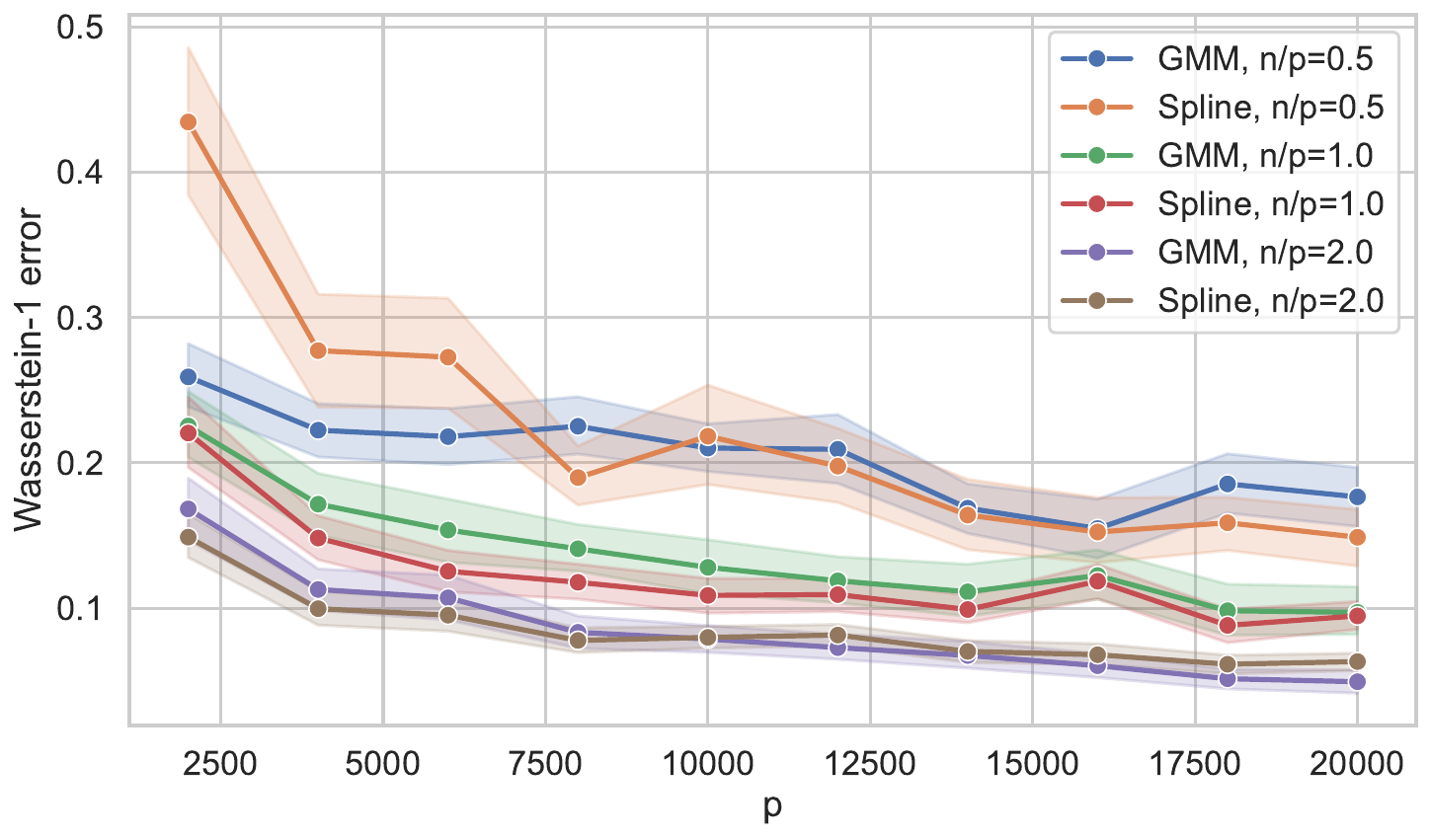}
         \caption{Correlated block design with $\rho = 0.9$}
         \label{fig:bimodal-dmm_rho0.2}
     \end{subfigure}
        \caption{Bimodal prior. Method: GMM and spline.}
        \label{fig:bimodal}
\end{figure}

\paragraph{Experimental setup.}
We evaluate the finite-sample performance of the proposed EBMoM procedure under both independent and correlated Gaussian designs. The independent design $\X_{\tiid}$ has i.i.d.~$\N(0,1/p)$ entries. 
Following \cite{fan2023gradient}, we consider the following  correlated block design: 
% $\X_{\block,\rho}$ is defined by
\begin{align*}
\X_{\block,\rho}=[\X_1,\X_2], \qquad \X_2=\rho \X_1+\sqrt{1-\rho^2}\,\X',
\end{align*}
with $\rho = 0.9$, where $\X_1,\X'\in\R^{n\times (p/2)}$ are independent matrices
with i.i.d.\ $\mathcal{N}(0,1/p)$ entries. Thus, corresponding columns
in the two blocks have correlation $\rho = 0.9$, while each entry retains
marginal variance $1/p$. %We use $\rho=0.2$ and $\rho = 0.9$ for the correlated block design. 
Throughout the experiments, we set the noise
level to $\sigma=1$.

We consider the following four types of prior distributions:
\begin{itemize}
\item Rademacher: $g = \frac{1}{2}\delta_{-1} + \frac{1}{2}\delta_1$;
\item Sparse: $g = 0.9 \delta_0 + 0.1 \delta_1$;
\item Gaussian: $g = \N(0,1)$;
\item Bimodal: $g = \frac{1}{2}\N(-1, 0.25) + \frac{1}{2}\N(1, 0.25)$.
\end{itemize}

We first apply EBMoM to estimate the prior moments, which are then converted to estimates for the prior. Specifically, for the Rademacher and sparse (both binary), we estimate the first three moments and then apply the  DMM method in (\ref{eq:DMM}) to denoise the noisy estimates and obtain a binary distribution $\hat g$. For the continuous Gaussian and bimodal priors, we fit  a Gaussian mixture model (GMM) using Lindsay's algorithm \cite{lindsay1989moment} (see also \cite[Algorithm 3]{wu2020optimal}), which fits a 2-compoment homoskedastic Gaussian mixture based on matching the first 4 moments. 
In addition, we also consider a more nonparametric spline-based fit, which solves the best moment fit penalized by a spline penalty (see Appendix \ref{sec:implementation} for the details).
% For the prior estimate, we report the Wasserstein-1 error averaged over \red{X} trials. 

% \paragraph{Moment estimates}
% We first evaluate the quality of third- and fourth-moment estimation by EBMoM, with the histograms shown in Figures~\ref{fig:m3}
% and~\ref{fig:m4}. In these experiments, we take $n=p=3000$ and true prior $g=\mathcal{N}(0,1)$, where the true moments are $m_3 = 0$ and $m_4 = 3$. We consider both the i.i.d.~design and correlated block design with $\rho = 0.9$, and conduct $T=300$ trials.

% \begin{table}[h]
% \centering
% \label{tab:moment-mean-std}
% \begin{tabular}{lcccc}
% \toprule
% & \multicolumn{2}{c}{$\hat m_3$ (truth $=0$)}
% & \multicolumn{2}{c}{$\hat m_4$ (truth $=3$)} \\
% \cmidrule(lr){2-3}\cmidrule(lr){4-5}
% Design & Mean & Std & Mean & Std \\
% \midrule
% i.i.d.~& $0.004$ & $0.298$ & $2.942$ & $0.891$ \\
% Correlated ($\rho=0.9$)          & $-0.026$ & $0.340$ & $2.969$ & $1.200$ \\
% \bottomrule
% \end{tabular}
% \caption{Mean and standard deviation of $\hat m_3, \hat m_4$.}
% \end{table}

\paragraph{Prior estimate}

In Figures~\ref{fig:rademacher-dmm}--\ref{fig:bimodal}, we fix three aspect ratios $n/p \in \{0.5,1,2\}$, and the dimension $p$ ranges from $2000$ to $20000$. The Wasserstein-1 error is averaged over $T= 50$ trials. As seen in all four figures, the correlated block design brings more variability to the EBMoM estimates as expected %\zf{show results for $\rho=0.9$ instead of 0.2? Is there a bigger difference for 0.9?}, 
%\nbwu{I have added 4 figures for 0.9. Please update. My impression is they appear similar.}
and the estimation error decreases as $n,p$ grows at a proportional speed. This validates the main finding of Theorem \ref{thm:main_moment} that a sub-linear sample complexity is sufficient for consistent estimation of the prior. 

\paragraph{Comparison with EBflow.} We next compare the EBMoM estimates with ones obtained via a maximum marginal likelihood approach. Specifically, we compare EBMoM estimates using both GMM and nonparametric splines for the Gaussian and bimodal priors with the nonparametric EBflow method of \cite{fan2023gradient}, and the discrete EBMoM fits using DMM for the Rademacher and sparse priors with the parametric EBflow method; we describe details of the EBflow procedure in Appendix \ref{sec:EBflow}.

As the marginal log-likelihood optimization landscape is in general non-convex, and results of EBflow depend on algorithm initialization, we test EBflow initialized both with an uninformative guess for the prior and with the prior estimate of EBMoM. The latter corresponds to using EBflow to perform a likelihood-based refinement of the method-of-moments estimate, rather than to perform ab initio estimation.

Figures~\ref{fig:compare_ebflow_rho0}
and~\ref{fig:compare_ebflow_rho9} compare the Wasserstein-1 errors of EBMoM and EBflow, the latter using both uninformative and EBMoM initializations, for the i.i.d.~design and correlated block design with $\rho = 0.9$. We observe that EBMoM is competitive with EBflow, and in fact yields lower estimation error than EBflow with an uninformative initialization in many settings. This error is notably lower for the sparse and bimodal priors, where we hypothesize that the dynamics of EBflow often become trapped in a sub-optimal local minimum of the optimization landscape. In these settings, we observe that the estimation error of EBflow is substantially improved when initialized with the EBMoM estimate; furthermore, in most (but not all) tested settings, EBflow refinement yielded a further reduction of error over the EBMoM estimate.

% For the Gaussian prior, the errors of the GMM and spline
% reconstructions decrease steadily with $p$, and both are below the
% error of EBflow over the displayed range. For the bimodal prior, the
% GMM reconstruction gives the best overall performance, while the
% spline method also improves with dimension. In contrast, EBflow
% exhibits a larger residual error, especially under the strongly
% correlated design.

% The difference is more pronounced for the discrete priors. For both
% the Rademacher and sparse priors, the DMM error decreases as $p$
% increases, whereas the EBflow error remains nearly constant over the
% displayed dimensions. In particular, the gap is substantial for the
% Rademacher prior. These results indicate that the EBMoM--DMM pipeline
% is particularly well adapted to recovering atomic distributions and
% that its advantage persists in the presence of strong column
% correlation.

Overall, this comparison shows that EBMoM can perform favorably in comparison to ab initio estimation using maximum marginal-likelihood methods, with lower computational cost and less parameter tuning, and can also serve as a warm start for likelihood-based refinement.

\begin{figure}[!p]
\centering
\begin{subfigure}[t]{0.78\textwidth}
    \centering
    \includegraphics[width=\textwidth,trim={0 3.5cm 0 0}]{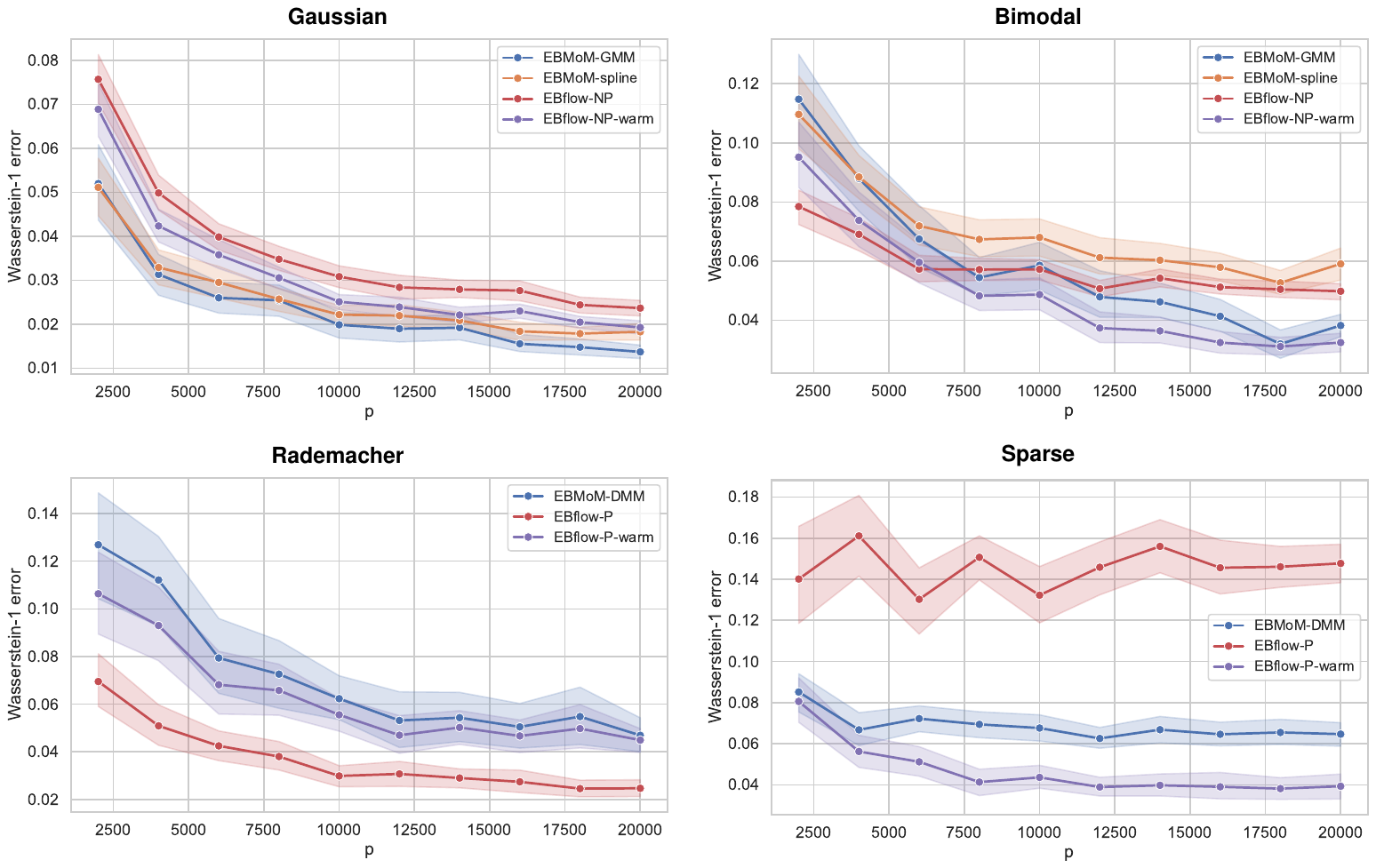}
    \caption{i.i.d.\ Gaussian design.}
    \label{fig:compare_ebflow_rho0}
\end{subfigure}

\medskip
\begin{subfigure}[t]{0.78\textwidth}
    \centering
    \includegraphics[width=\textwidth,trim={0 3.5cm 0 0}]{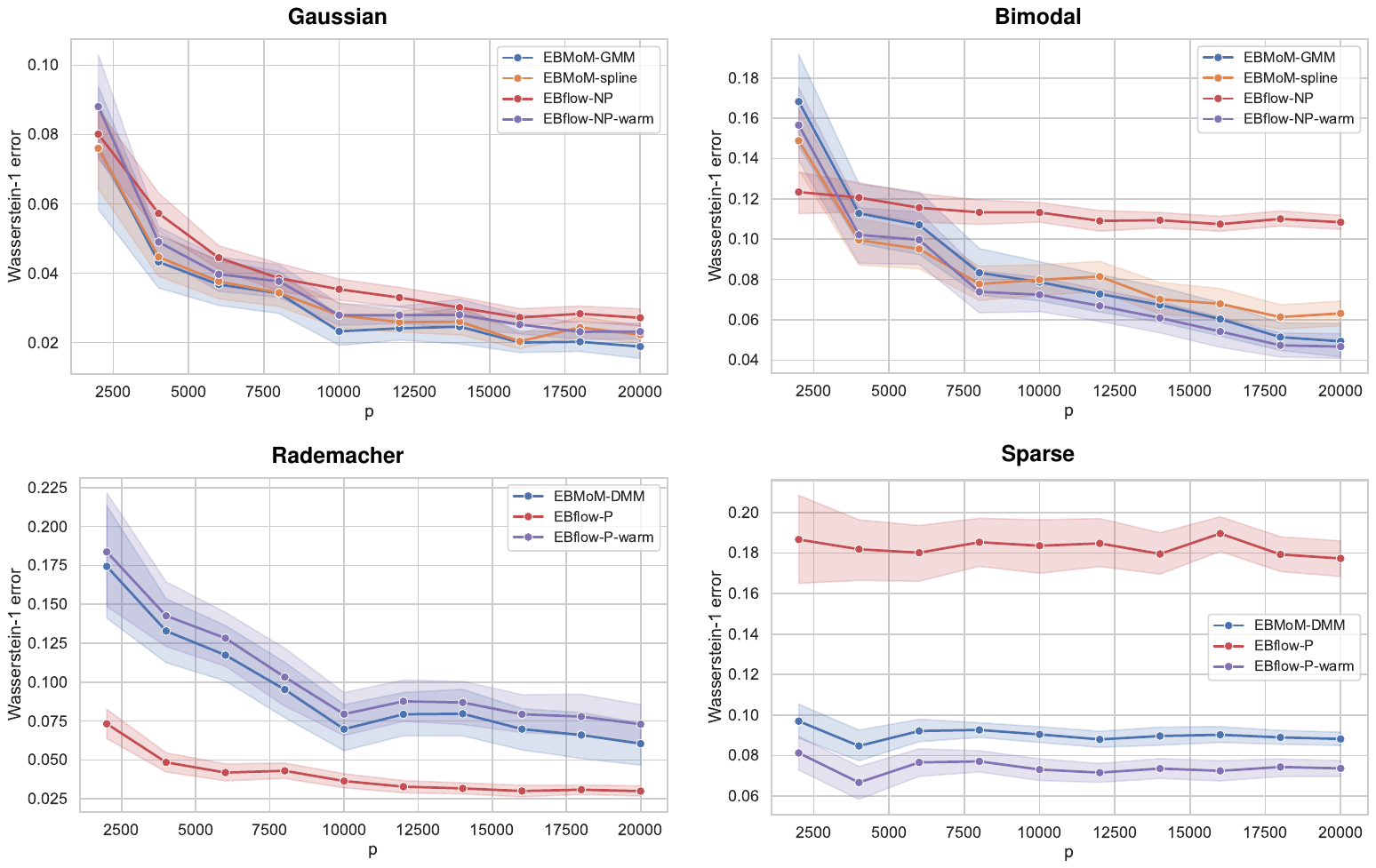}
    \caption{Correlated block design with $\rho=0.9$.}
    \label{fig:compare_ebflow_rho9}
\end{subfigure}

\caption{Comparison with EB flow under i.i.d.\ Gaussian and correlated block designs.}
\label{fig:compare_ebflow}
\end{figure}

% \begin{figure}[ht]
% \centering
% \includegraphics[width=0.9\linewidth]{figs/combine_ebflowall_rho0.0.pdf}   
% \caption{Comparison with EB flow for i.i.d. Gaussian design}
% \label{fig:compare_ebflow_rho0}
% \end{figure}

% \begin{figure}[ht]
% \centering
% \includegraphics[width=0.8\linewidth]{figs/combine_ebflowall_rho0.9.pdf}   
% \caption{Comparison with EB flow for correlated block design with $\rho = 0.9$}
% \label{fig:compare_ebflow_rho9}
% \end{figure}

\paragraph{Posterior mean estimation.}
We finally examine whether the EBMoM prior estimate is accurate enough for
downstream coefficient estimation. As a benchmark for posterior mean estimation, we run classical approximate message
passing (AMP) with the scalar Bayes denoiser %\zf{meaning --- AMP to do posterior mean estimation?} 
associated with either the true
prior or the EBMoM estimate \cite{bayati2011dynamics,barbier2020mutual}.  For
this experiment we use the canonical AMP normalization
$X_{ij}\stackrel{\mathrm{i.i.d.}}{\sim}\N(0,1/n)$, fix $n/p=0.5$ and
$\sigma=1$.
 %and take $p\in\{1000,2000,3000,4000,5000\}$.  
The correlated
design uses the block construction above, rescaled to marginal entry variance
$1/n$, with $\rho=0.9$. 
The moment-to-prior methodology is the same as before: 
For the Rademacher and sparse priors, the AMP denoiser uses the
DMM reconstruction from the first three centered moments; for the Gaussian
and bimodal priors, it uses a two-component Gaussian mixture with common
within-component variance, fitted using the first four moments. Both oracle-prior and EBMoM-prior AMP converged in all 300
simulated instances for each design: no damping was needed for the i.i.d.
design, while damping $0.7$ stabilized every run for the correlated block design with $\rho=0.9$.\footnote{We note that formal AMP theory \cite{bayati2011dynamics,barbier2020mutual} does not directly extend to correlated block designs, and thus we treat the correlated-design results as an empirical comparison without recognizing AMP as a certified posterior mean estimator.}

\prettyref{fig:amp-ebmom-np0p5} reports the normalized coefficient MSE
$p^{-1}\|\widehat{\bbeta}-\bbeta\|_2^2$.  In addition to oracle-prior and
EBMoM-prior AMP, we include ridge regression with its penalty selected by
generalized cross-validation and Lasso with three-fold
cross-validation.  For the Gaussian prior we
also report the exact posterior mean
$(\X^\top\X+\I_p)^{-1}\X^\top\y$, which corresponds to the ridge estimator with penalty level equal to $1$.  Under the i.i.d.~design, AMP with the
EBMoM prior performs better than ridge for the sparse and Rademacher prior, and roughly the same as ridge for the bimodal prior. For the Gaussian prior, the exact posterior
mean gives the smallest risk, while GCV-tuned ridge remains close to it and
retains a small advantage over EBMoM-prior AMP, as expected. The same
qualitative ordering persists under the strongly
correlated design.    Thus, in
this experiment, estimating the prior by EBMoM preserves most of the
estimation gain available from prior-aware AMP, empirically even
outside the standard i.i.d.~design setting.

\begin{figure}[!p]
    \centering
    \begin{subfigure}[t]{0.78\textwidth}
        \centering
        \includegraphics[width=\textwidth]{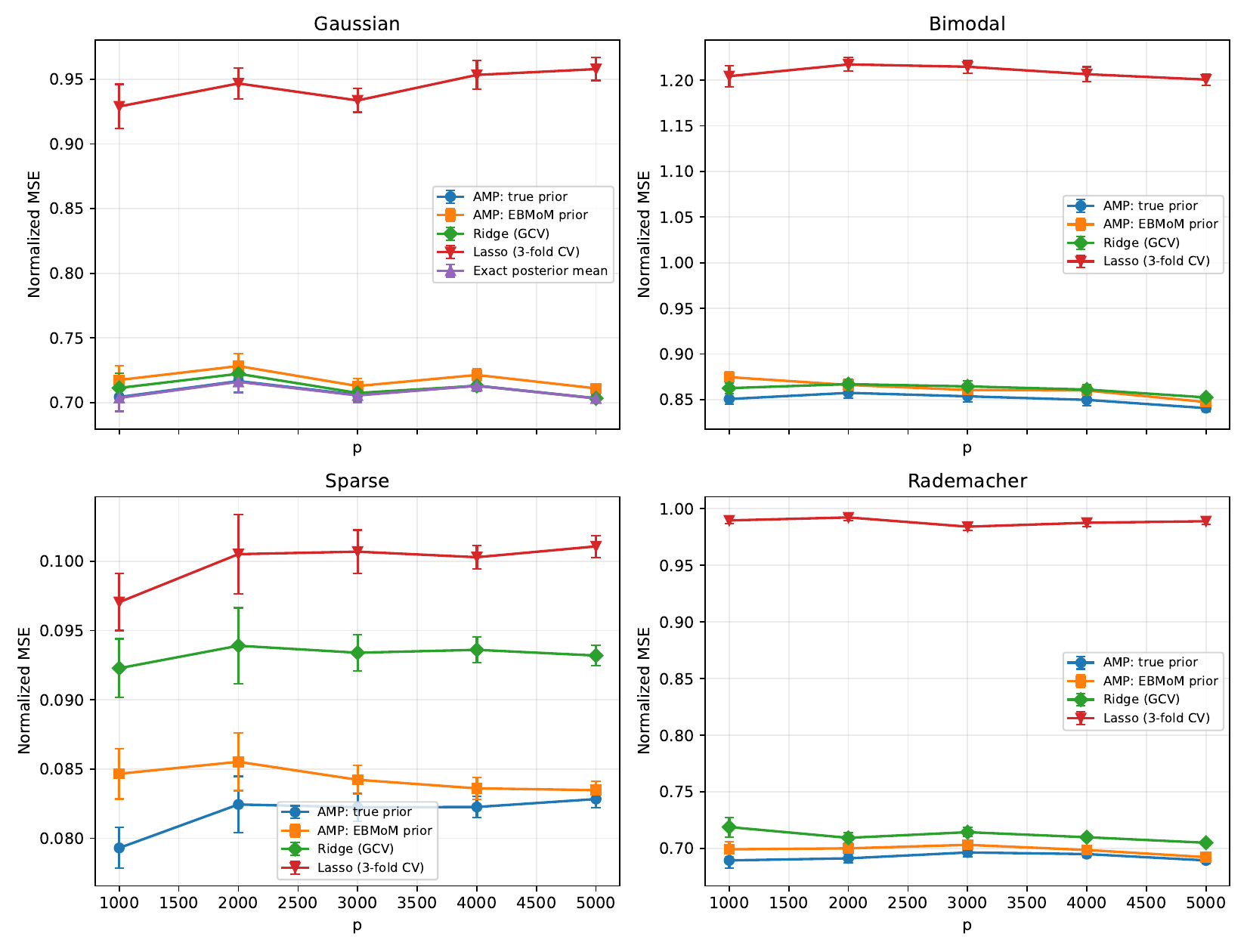}
        \caption{i.i.d.~Gaussian design.}
        \label{fig:amp-ebmom-np0p5-rho0}
    \end{subfigure}

    \medskip
    \begin{subfigure}[t]{0.78\textwidth}
        \centering
        \includegraphics[width=\textwidth]{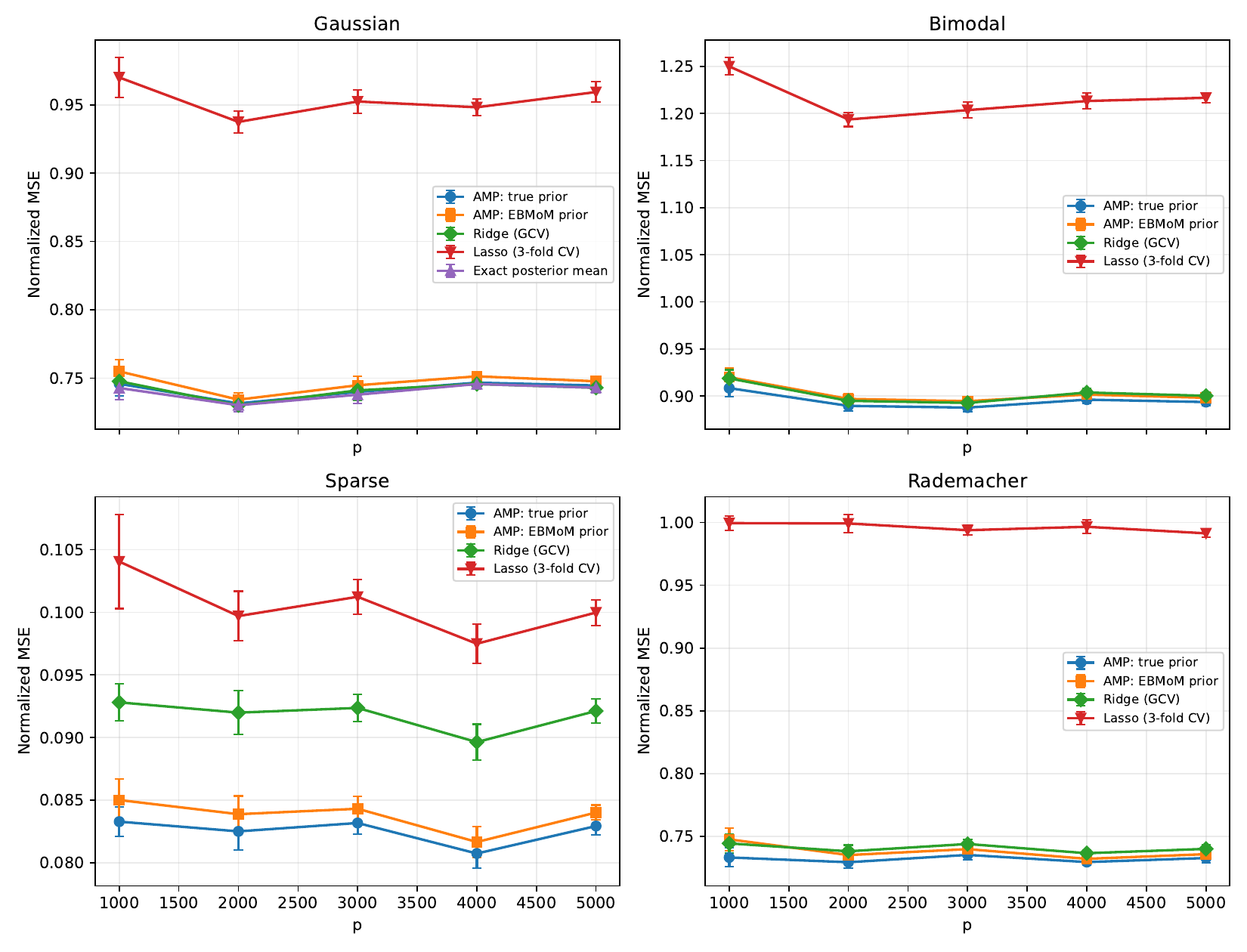}
        \caption{Correlated block design with $\rho=0.9$.}
        \label{fig:amp-ebmom-np0p5-rho0p9}
    \end{subfigure}
    \caption{Normalized coefficient MSE for posterior mean estimation at
    $n/p=0.5$ and $\sigma=1$.  Points and error bars show the mean and standard error over 15 trials.}
    \label{fig:amp-ebmom-np0p5}
\end{figure}

%\begin{remark}
% We show that
% \begin{align*}
% 0 \leq \E\pnorm{\bbeta - \E[\bbeta]}{}^2 - \E\pnorm{\bbeta - \E[\bbeta|\y]}{}^2 \leq C\pnorm{\X}{F}^2, 
% \end{align*}
% where $\E\pnorm{\bbeta - \E[\bbeta]}{}^2 \asymp p$ is the naive MSE. Consequently, if $\pnorm{\X}{F}^2 = o(p)$, the Bayes estimator essentially has no improvement over the naive mean estimator, and this happens, e.g., for iid Gaussian design $\N(0,1/p)$ with high probability when $n = o(p)$. To this, note that
% \begin{align*}
% \E\pnorm{\bbeta - \E[\bbeta]}{}^2 - \E\pnorm{\bbeta - \E[\bbeta|\y]}{}^2 = \E\pnorm{\E[\bbeta] - \E[\bbeta\y]}{}^2 = \sum_{j=1}^p \E[(\E[\beta_j] - \E[\beta_j|\y])]^2 \leq C\sum_{j=1}^p I(\beta_j;\y), 
% \end{align*}
% where the last step follows from Tao's inequality. Let $\tilde\y = \x_j \beta_j + \beps$, then by data processing inequality and rotational invariance of the Gaussian distribution,
% \begin{align*}
% I(\beta_j;\y) \leq I(\beta_j;\tilde\y + \beps) = I(\beta_j; \pnorm{\x_j}{}\beta + N(0,1)).
%  \end{align*}
% It is well-known that in the AWGN channel $Y = \sqrt{\gamma}X + Z$ with $X \perp Z$ and $Z\sim \N(0,1)$, $I(X;Y) \leq \frac{1}{2}\gamma \Var(X)$, hence we conclude that $I(\beta_j;\y) \leq C\pnorm{\x_j}{}^2$, which further implies the desired claim. 
% \end{remark}

\section{Discussion}\label{sec:discussion}
In this paper, we propose a computationally efficient method that iteratively estimates the moments of the prior distribution for the Bayesian linear regression model. For a broad class of design matrices, we show that these moment estimates can be combined into an estimator of the prior that is consistent given $n = p^{1-o(1)}$ samples for any $o(1)$ factor as $n\wedge p\rightarrow\infty$, establishing that consistent prior estimation is possible in the sub-linear regime $n \ll p$. We complement this with a matching information-theoretic lower bound: for a broad class of design matrices and any fixed $\eps > 0$, there exist two distinct priors whose induced observations are indistinguishable from $n = p^{1-\eps}$ samples.

We conclude with some further discussion on assumptions and open questions.

\paragraph{Unknown noise variance}
The current EBMoM estimator (\ref{eq:F_k}) assumes that the noise variance $\sigma^2$ is given. On the one hand, certain assumptions are needed to estimate $\sigma^2$ because 
if $\X\X^\top \propto \I_n$, then $\sigma^2$ is not identifiable when the prior is Gaussian. 
On the other hand, under mild assumptions, $\sigma^2$ can be estimated at a much lower sample complexity than that for estimating higher moments or the prior itself. More precisely, similar to Rao's MIVQUE, we may choose two quadratic forms $\y^\top \Q\y$ with $\Q=\I_n$ and $\Q=\X\X^\top$ and solve for the prior and noise variances, resulting in the unbiased estimator
\[\hat{\sigma}^2 = \frac{t_2 \|\y\|^2- t_1\|\X^\top\y\|^2}{nt_2^2-t_1^2},    \quad t_1\equiv\Fnorm{\X}^2, \quad t_2\equiv\Fnorm{\X\X^\top}^2\]
Under the assumption of \prettyref{prop:MSE_first_two} and the extra condition that the spectrum of $\X$ is not too spiky, namely
$\Fnorm{
\X\X^\top
-
\frac{\Tr(\X\X^\top)}{n}I_n
}^2
\gg \opnorm{\X}^4$,
similar calculation shows that this $\hat \sigma^2$ is consistent when $n\gg \sqrt{p}$.
Note that, for isotropic Gaussian design $X_{ij} \iiddistr \N(0,1/p)$, using
$t_1 \approx n$ and 
$t_2 \approx n(p+n+1)/p$ results in the simplified version:
\[\hat{\sigma}^2 = \frac{(p+n+1)\|\y\|^2- p\|\X^\top\y\|^2}{n(n+1)},\]
which was proposed and shown to be consistent whenever $n\gg \sqrt{p}$ in \cite{Dicker2014Variance}. As shown by Theorems \ref{thm:main_moment} and \ref{thm:lower_bound}, this sample complexity is much lower than what is required by EBMoM for estimating the full prior. 

We note that the EB regression model assumes that $\bbeta$ has i.i.d.\ coordinates. If $\bbeta$ is treated as a fixed vector, a substantially stronger condition on the design and the sample complexity is needed for estimating $\|\bbeta\|^2$ and $\sigma^2$. For example, even for random design $\X$ where the rows are drawn independently from a well-conditioned $\bSigma$, unless $\bSigma$ is known, consistent estimation of $\sigma^2$ requires nearly linear sample complexity $n=p^{1-o(1)}$ much higher than $\sqrt{p}$ \cite{Verzelen2018Adaptive,kong2018estimating}.

\paragraph{Sample complexity of estimating parametric priors} In this paper, we have focused on the nonparametric class of priors where $g$ is an arbitrary subgaussian distribution, and the optimal sample complexity for consistently estimating $g$ is $n=p^{1-o(1)}$. For parametric priors, the sample complexity is conceivably much lower but this remains an open question.

Specifically, suppose that $g$ is identified within the parametric family by its first $k$ moments, and, for simplicity, the design $\X$ has i.i.d.~$\N(0,1/p)$ entries. Our result shows that for $k\geq 3$, the EBMoM method requires $n \gg p^{1-\frac{1}{1+2k^2}}$ for consistency, while our current lower bound argument (for nonparametric priors) can be adapted to yield a lower bound of $n \gg p^{1-\frac{2}{k}}$. We conjecture that neither is tight for fixed $k \geq 3$ %\nbwu{Confirm? $k=1,2$ are resolved and the conjecture is not true per Proposition 1?}, 
and the optimal sample complexity is $n \gg p^{1-\frac{1}{k}}$, which is the threshold above which the signal strength parameter $A_k$ in Assumption \ref{assump:design} diverges. 
In other words, for discrete prior $g$ with $m$ atoms (which is identified by $k=2m-1$ moments), the conjectured sample complexity is $n \gg p^{\frac{2m-2}{2m-2}}$.
In the special case of $k=3$ and i.i.d.\ Gaussian design, we present in Appendix \ref{app:m3} a more refined analysis of EBMoM that achieves the conjectured $p^{2/3}$ threshold, but the matching lower bound and more general results are still open.

\paragraph{Regret in EB regression}
A central quantity in EB and compound decision theory is the \textit{regret}, which measures how close a data-driven estimator approaches the optimal Bayes estimator (posterior mean) in terms of risk. For a linear model with a fixed design $\X \in \R^{n\times p}$ and a class of prior distributions $\calG$, the EB regret is defined by 
\begin{equation}
    \mathsf{Regret}(\X,\calG):=
    \inf_{\hat\bbeta}\sup_{g \in \calG}\Big\{
    \Expect[\|\bbeta-\hat\bbeta\|^2] - 
    \Expect[\|\bbeta-
    \Expect[\bbeta|\X,\y]\|^2]\Big\}
\end{equation}
where $\bbeta \iiddistr g \in \calG$, the infimum is taken over all estimators $\hat\bbeta = \hat\bbeta(\X,\y)$, and $\E[\bbeta|\X,\y]$ is the (oracle) Bayes estimator. 
Of special interest is to characterize the conditions under which $\mathsf{Regret}(\X,\calG)=o(p)$ holds, so that the average regret of estimating each regression coefficient is amortized in high dimensions. Estimators achieving such sublinear regret are referred to as \emph{asymptotically optimal} in the empirical Bayes literature \cite{robbins1956empirical,zhang2003compound}.

% It is worth emphasizing that this objective is substantially more demanding than consistent estimation of the prior itself. As shown in Sections \ref{sec:method}--\ref{sec:lb}, the prior can be estimated consistently with sublinear sample size $n=p^{1-o(1)}$, whereas achieving sublinear regret requires the linear sample size $n=\Theta(p)$ even for simple iid design (see \prettyref{app:eblb} for a proof). Thus, accurately learning the prior does not by itself guarantee that one can approach the oracle Bayes risk with vanishing regret per coordinate.

% It is worth distinguishing consistent estimation of the prior from nontrivial estimation of the regression coefficients. 
One subtle point worth emphasizing is the distinction between consistent estimation of the prior and nontrivial estimation of the regression coefficients. As shown in Sections \ref{sec:method}--\ref{sec:lb}, the prior can already be estimated consistently with sublinear sample complexity $n=p^{1-o(1)}$. In contrast, it is only meaningful to ask whether an empirical Bayes estimator can achieve the Bayes error up to sublinear regret when the Bayes error itself is nontrivially smaller than the trivial MSE, which in general requires a linear sample size.\footnote{To see this, note that for $g=N(0,1)$, the Bayes squared error of estimating $\bbeta$ equals $p - \sum \frac{s_i^2}{1+s_i^2} \geq p-n$, where $s_i$ is the singular value of $\X$. Thus the variance reduction is trivial unless $n=\Theta(p)$. This can be extended to other priors by applying the mutual information method \cite[Chapter 30]{PW-it}.} This makes the linear-sample regime $n=\Theta(p)$ the natural setting for studying regret in EB regression.

The classical literature on empirical Bayes mostly focuses on the sequence model corresponding to $\X=\I$ and $n=p$. In this special case, the behavior of the optimal regret is relatively well-understood. For example, for the nonparametric class of priors with a compact support or subgaussian tails, this regret is not just sub-linear but in fact \emph{poly-logarithmic} in $p$, attained by either NPMLE \cite{jiang2009general} or kernel methods \cite{li2005convergence} (see  \cite{polyanskiy2021sharp} 
 for lower bound.) 
 
 For linear models, the recent work \cite{fan2023gradient} showed that, 
 under appropriate conditions on the design $\X$ that is satisfied by well-conditioned random design with $n=\Theta(p)$, 
for the class $\calG$ of compactly support priors, 
the NPMLE attains a sublinear regret 
$\mathsf{Regret}(\X,\calG)=o(p)$; 
 see \cite[Corollaries 3.3 and 3.11]{fan2023gradient}.
 However, due to the non-separability of the likelihood in the linear model, the NPMLE is difficult to compute. It remains a challenging problem to determine how the optimal regret scales with $p$ and how to attain it with computationally efficient means.
 
%  It remains a challenging problem to understand the behavior of the optimal regret 
% % $\mathsf{Regret}(\X,\calG)$ 
% even for random designs and $n$ proportional to $p$.

% Finally, we clarify a subtle point: 
% As shown in Sections \ref{sec:method}--\ref{sec:lb}, the prior can be estimated consistently at a sublinear sample complexity of $n=p^{1-o(1)}$; nevertheless, achieving a sublinear regret requires linear sample complexity of $n=\Theta(p)$.

\section{Proofs of upper bounds}\label{sec:proof_ub}

\subsection{Proof of Proposition \ref{prop:MSE_first_two}}\label{sec:proof_first_two}
Simple calculations yield that $\hat m_1$ is unbiased and
\begin{align*}
\Var(\hat m_1) = \frac{\sigma^2 \|\X\ones\|^2 + s^2 \|\X^\top\X\ones\|^2}{\|\X\ones\|^4}
\end{align*}
where $s^2=m_2-m_1^2$ is the variance of the prior. Using $\pnorm{\X^\top \X\bm{1}}{}^2 \leq \pnorm{\X}{\op}^2\pnorm{\X\bm{1}}{}^2$ and $\pnorm{\X}{\op} \geq c\sigma$, we have
\begin{align*}
\Var(\hat m_1) \leq \pnorm{\X}{\op}^2\Big(s^2 + \frac{\sigma^2}{\pnorm{\X}{\op}^2}\Big)\frac{1}{\pnorm{\X\bm{1}}{}^2} \leq C\frac{\pnorm{\X}{\op}^2}{\pnorm{\X\bm{1}}{}^2}, 
\end{align*}
and the conclusion follows from the condition on $\pnorm{\X\bm{1}}{}^2$.

Next we bound $\E[(\hat\mu_2 - \mu_2)^2]$. Using $\bar\mu_2 = F_2(\bar\y) = \frac{\|\X^\top \bar{\y}\|^2-\sigma^2\Fnorm{\X}^2}{\Fnorm{\X^\top \X}^2}$ with $\bar \y = \y - m_1\X\bm{1}$ as an intermediary, we have
\begin{align*}
\E[(\hat \mu_2 - \mu_2)^2] \leq 2\underbrace{\E[(\bar \mu_2 - \mu_2)^2]}_{(I)} + 2\underbrace{\E[(\hat\mu_2 - \bar\mu_2)^2]}_{(II)}. 
\end{align*}
To bound $(I)$, note that $\E[\bar\mu_2] = \mu_2$, and 
    $\|\X^\top \bar\y\|^2 = \|\X^\top \X\bar\bbeta\|^2 + 
    \|\X^\top \beps\|^2 + 2 \iprod{\X^\top\X \bar\bbeta}{\X^\top \beps}
    $. 
    Recalling that $\H=\X^\top\X$, we have    
     $\var(\|\X^\top \bar\y\|^2) = \var(\|\H \bar\bbeta\|^2) + 
    \var(\|\X^\top \beps\|^2) + 4 \var(\iprod{\H \bar\bbeta}{\X^\top \beps})
    $, because odd moments of $\beps$ vanish. It can be verified that
    \[
    \var\Big(\iprod{\H \bar\bbeta}{\X^\top \beps}\Big)
    = \sigma^2 \mu_2  \tr(\H^3), 
    \quad
    \var(\|\X^\top \beps\|^2)
    = 2 \sigma^4 \tr(\H^2).
    \]
    Finally, with $\S \equiv \H^2$,
 \begin{align*}
\Expect[\|\H \bar\bbeta\|^4]
= & \sum_{a,b,c,d=1}^p 
S_{ab} S_{cd} \Expect[\bar\beta_a\bar\beta_b\bar\beta_c\bar\beta_d] \\
= & 
\mu_4 \sum_{a=1}^p S_{aa}^2
+ \mu_2^2  \sum_{a\neq c=1}^p S_{aa}S_{cc}
+ 2 \mu_2^2  \sum_{a \neq b=1}^p S_{ab}^2\\
= & 
(\mu_4 -3\mu_2^2) \underbrace{\sum_{a=1}^p S_{aa}^2}_{\leq \Fnorm{\S}^2 =\tr(\H^4)}
+ \mu_2^2  \underbrace{\sum_{a, c=1}^p S_{aa}S_{cc}}_{\tr(\H^2)^2}
+ 2 \mu_2^2  \underbrace{\sum_{a,b=1}^p S_{ab}^2}_{\tr(\H^4)}.
 \end{align*}
Applying $\Expect[\|\H \bar\bbeta\|^2]=\mu_2 \Fnorm{\H}^2$, we get
\begin{align*}
\Var(\|\H \bar\bbeta\|^2) = (\mu_4 - 3\mu_2^2)\sum_{a=1}^p S_{aa}^2 + 2\mu_2^2 \Tr (\H^4) \leq (\mu_4  + 2\mu_2^2)   \Tr(\H^4).
\end{align*}
Overall, we get 
\begin{equation}\label{eq:m2var}
\begin{aligned}
\var(\bar \mu_2) &= \frac{(\mu_4 -3\mu_2^2) \sum_{a=1}^p S_{aa}^2 + 2 \mu_2^2 \tr(\H^4) + 4  \sigma^2 \mu_2  \tr(\H^3) + 2 \sigma^4 \tr(\H^2) }{\tr(\H^2)^2}     \\
& \leq C\frac{\tr(\H^4) + \sigma^2  \tr(\H^3)
+  \sigma^4 \tr(\H^2)}{\tr(\H^2)^2}\\
&\leq C\frac{\Tr(\H^4) + \sigma^4 \Tr (\H^2)}{\Tr(\H^2)^2},   
\end{aligned}
\end{equation}
where in the last step we apply $\Tr (\H^3)^2 \leq \pnorm{\H}{F}^2\pnorm{\H^2}{F}^2 = \Tr(\H^2)\Tr(\H^4)$. Now applying $\Tr(\H^4) \leq (n\wedge p)\pnorm{\X}{\op}^8$ and $\Tr(\H^2)\leq (n\wedge p)\pnorm{\X}{\op}^4$ along with the lower bound of $\pnorm{\H}{F}^2$ and $\|\X\|_\op/\sigma$ in (\ref{eq:first_two_assumption}), we have
 \begin{align*}
 (I) = \Var (\bar\mu_2) \leq C\frac{\pnorm{\X}{\op}^8\Big(1 + \frac{\sigma^4}{\pnorm{\X}{\op}^4}\Big)(n\wedge p)}{\pnorm{\H}{F}^4} \leq \frac{C}{n\wedge p}.
 \end{align*}
 Next to bound $(II)$, using $\hat\y - \bar\y = -(\hat m_1 - m_1)\X\bm{1}$, we have 
 \begin{align*}
 \hat\mu_2 - \bar\mu_2 &= \frac{\pnorm{\X^\top\hat\y}{}^2 - \pnorm{\X^\top\bar\y}{}^2}{\pnorm{\X^\top \X}{F}^2} = \frac{\langle \X^\top (\hat\y - \bar\y), \X^\top (\hat\y + \bar\y)\rangle}{\pnorm{\X^\top \X}{F}^2}\\
 &= -(\hat m_1 - m_1)\Big[\frac{\langle \X^\top \X\bm{1}, \X^\top (\X(2\bbeta - (\hat m_1 + m_1)\bm{1}) + 2\beps) \rangle}{\pnorm{\X^\top \X}{F}^2}\Big],
 \end{align*}
 which implies that $(II) = \E[(\hat\mu_2 - \bar\mu_2)^2] \leq C((II_1) + (II_2))$, where
 \begin{align*}
 (II_1) &=\frac{1}{\pnorm{\X^\top \X}{F}^4} \E\Big[(\hat m_1 - m_1)^2\langle \X^\top \X\bm{1}, \X^\top \beps\rangle^2\Big],\\
 (II_2) &= \frac{1}{\pnorm{\X^\top \X}{F}^4} \E\Big[(\hat m_1 - m_1)^2\langle \X^\top \X\bm{1}, \X^\top \X(2\bbeta - (\hat m_1 + m_1)\bm{1})\rangle^2\Big].
 \end{align*}
 To bound $(II_1)$, we use $\hat m_1 - m_1 = \frac{\bm{1}^\top \X^\top \bar\y}{\pnorm{\X\bm{1}}{}^2}$ to get
 \begin{align*}
 (II_1) &= \frac{1}{\pnorm{\X^\top \X}{F}^4\pnorm{\X \bm{1}}{}^4}\E\Big[(\bm{1}^\top \X\bar\y)^2 \langle\X^\top \X\bm{1}, \X^\top \beps\rangle^2\Big]\\
 &\leq \frac{1}{\pnorm{\X^\top \X}{F}^4\pnorm{\X \bm{1}}{}^4} \E^{1/2}[(\bm{1}^\top \X\bar\y)^4]\E^{1/2}\Big[\langle\X^\top \X\bm{1}, \X^\top \beps\rangle^4\Big].
 \end{align*}
 Since $\bm{1}^\top \X^\top \bar\y = \bm{1}^\top \X^\top\X \bar\bbeta + \bm{1}^\top \X^\top \beps$, where $\bm{1}^\top \X^\top\X \bar\bbeta$ is subgaussian with constant $2M\pnorm{\X^\top \X\bm{1}}{}$ and $\bm{1}^\top \X^\top \beps \sim \N(0, \sigma^2\pnorm{\X\bm{1}}{}^2)$, we have $\E[(\bm{1}^\top \X\bar\y)^4] \leq C(M^4 \pnorm{\X^\top \X\bm{1}}{}^4 + \sigma^4\pnorm{\X\bm{1}}{}^4)$. Similarly, $\langle \X^\top \X\bm{1}, \X^\top \beps\rangle \sim \N(0,\sigma^2\pnorm{\X\X^\top \X\bm{1}}{}^2)$, so $\E[\langle\X^\top \X\bm{1}, \X^\top \beps\rangle^4] \leq C\sigma^4\pnorm{\X\X^\top \X\bm{1}}{}^4$. Combining the estimates yields 
\begin{align*}
(II_1) &\leq C\frac{M^2\sigma^2\pnorm{\X^\top \X\bm{1}}{}^2\pnorm{\X\X^\top\X\bm{1}}{}^2 + \sigma^4\pnorm{\X\bm{1}}{}^2\pnorm{\X\X^\top\X\bm{1}}{}^2}{\pnorm{\X^\top \X}{F}^4\pnorm{\X\bm{1}}{}^4}\\
&\leq C\frac{\sigma^2\pnorm{\X}{\op}^6 + \sigma^4\pnorm{\X}{\op}^4}{\pnorm{\X^\top\X}{F}^4} \leq \frac{C}{(n\wedge p)^2},
\end{align*}
where we apply the lower bound of $\pnorm{\X^\top \X}{F}^2$ and $\|\X\|_\op/\sigma$ in (\ref{eq:first_two_assumption}). A similar argument yields the same bound for $(II_2)$, concluding that $(II) \leq \frac{C}{(n\wedge p)^2}$. Combining the bounds of $(I)$ and $(II)$ concludes the proof.

\subsection{Proof of Theorem \ref{thm:main_moment}}\label{subsec:proof_upper_main}
The proof strategy consists of the following two steps:
\begin{enumerate}[(i)]
\item We consider a centered model $ \overline{\y} = \X(\bbeta - m_1 \bm{1}) + \beps$ assuming the true mean $m_1$ is known. Based on observations $(\X,\bar{\y})$, we apply the 
recipe in \eqref{eq:moment_def} to estimate the centered moments $\mu_k := \E[(\beta_1 - m_1)^k]$. More precisely, let $\{\bar \mu_\ell\}_{\ell\geq 2}$ be recursively defined by
\begin{align}\label{eq:centered_moment_ideal}
\bar\mu_\ell := \frac{1}{A_\ell}\Big[F_\ell(\overline{\y}) - \sum_{t=2}^\ell \sum_{d_1 + \ldots + d_t = \ell: d_j \geq 2} \bar\mu_{d_1} \ldots \bar\mu_{d_t} \sum_{s_1,\ldots,s_t = 1}^p \tilde{T}^{(d_1,\ldots,d_t)}_{s_1,\ldots,s_t}\Big].
\end{align}
The truncated MSE bound of $\bar\mu_\ell$ for $\mu_\ell$ is detailed in Section \ref{subsection:MSE_centered}. Because $\bar{\y}$ uses the true mean for centering, $\bar\mu_\ell$ is 
henceforth referred to as an \textit{oracle estimator} which is only used as a proof device.
% We emphasize that, because $\bar{\y}$ uses the true mean for centering, $\bar\mu_\ell$ is not an actual estimator and is only used as a proof device.
\item Since $\hat{m}_1$ given in  (\ref{eq:m1hat}) is consistent for $m_1$, we expect $\hat\y = \X (\bbeta - \hat{m}_1 \bm{1}) + \beps $ to be a good proxy for $\bar{\y}$, and therefore $\hat\mu_\ell$ in \eqref{eq:moment_def}, namely, 
\begin{align}\label{eq:centered_moment_est}
\hat\mu_\ell := \frac{1}{A_\ell}\Big[F_\ell(\hat\y) - \sum_{t=2}^\ell \sum_{d_1 + \ldots + d_t = \ell: d_j \geq 2} \hat\mu_{d_1} \ldots \hat\mu_{d_t} \sum_{s_1,\ldots,s_t = 1}^p \tilde{T}^{(d_1,\ldots,d_t)}_{s_1,\ldots,s_t}\Big],
\end{align}
to be close to the oracle $\bar\mu_\ell$. We quantify this approximation error in Section \ref{subsection:perturbation_noncentered}. When combined with the previous step, this yields the truncated MSE of $\hat\mu_\ell$.
% $\{\hat\mu_\ell\}_{\ell\geq 2}$ for $\{\mu_\ell\}_{\ell\geq 2}$.
%\item Since $m_\ell = \sum_{k=0}^\ell {\ell\choose k}\mu_km_1^{\ell-k}$ by the binomial formula, we can bound the MSE of $\hat m_\ell$ in (\ref{eq:moment_def}) via the above MSE of $\hat m_1$ and $\{\hat\mu_\ell\}_{\ell\geq 2}$.
\end{enumerate}
To simplify notations, 
% we use $\mu_\ell = \E[(\beta_1 - m_1)^\ell]$ to denote the centered moments. With $\hat{m}_1$ given by (\ref{eq:centered_moment_est}), 
let 
\begin{align}
\hat{\bbeta} := \bbeta - \hat{m}_1 \bm{1}_p, \qquad \overline{\bbeta} := \bbeta - m_1 \bm{1}_p
\end{align}
where $\hat{m}_1$ is given in (\ref{eq:m1hat}). In the following, unless specified otherwise $C > 0$ will denote a universal constant whose value will change from place to place.

%Note that both $ \hat{\mu}_\ell $ and $ \overline{m}_\ell $ are estimators for the centered moment $ \mu_\ell$, not the original moment $m_\ell$.

\subsubsection{MSE for centered model}\label{subsection:MSE_centered}

The following result bounds the MSE of the oracle estimator $\bar\mu_k$ in (\ref{eq:centered_moment_ideal}) for the centered moments $\mu_k$ truncated on a high probability event. For each $k\geq 2$, define the event
\begin{align}\label{def:event_Ek}
\cE_k := \{|\bar\mu_\ell - \mu_\ell| \leq K_\ell, \ell = 2,\ldots,k\}, \quad \text{ where } K_\ell := M^\ell \ell^{\ell/2}.
\end{align}
For convenience, let $\cE_1$ denote the entire sample space.
% (so that $\cE_1$ always holds).

\begin{proposition}\label{prop:MSE_centered}
Suppose that Assumption \ref{assump:design} holds for some $c_0 > 0$ up to some integer $k_0\geq 2$, and Assumption \ref{assump:prior} holds for some $M > 0$. Then for any $2\leq k\leq k_0$, it holds that 
\begin{align*}
\E[(\bar\mu_k - \mu_k)^2\bm{1}_{\cE_{k-1}}] \leq \Delta_k,
\end{align*}
where, for some constant $C_0  = C_0(M,c_0) > 0$,
\begin{align}\label{eq:MSE_centered}
\Delta_k = (C_0 k)^{5k^2}  \frac{1}{n \wedge p}\Big( \frac{p}{n} \vee 1 \Big)^{2k^2}.
\end{align}
Moreover, event $\cE_k$ holds with probability at least $1-\Delta_k$.
%\nbwu{It should be $\E[(\bar\mu_k - \mu_k)^2\bm{1}_{\cE_{k-1}}]$ on the LHS? Also, missing probability bound of the event?} 
\end{proposition}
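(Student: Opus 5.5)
We argue by induction on $k$. The base case $k=2$ is Proposition~\ref{prop:MSE_first_two}'s bound on term $(I)$: $\bar\mu_2$ is unbiased and $\Var(\bar\mu_2)\le C/(n\wedge p)$, with $\cE_1$ the whole space.

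For the inductive step, write $\bar\mu_k-\mu_k=U_k+V_k$, where
\begin{align*}
U_k&=\frac{F_k(\bar\y)-\E F_k(\bar\y)}{A_k},\\
V_k&=-\frac{1}{A_k}\sum_{t\ge2}\sum_{\d}\big(\bar\mu_{d_1}\cdots\bar\mu_{d_t}-\mu_{d_1}\cdots\mu_{d_t}\big)\gamma^{(\d)}.
\end{align*}
This decomposition uses \eqref{eq:trace-expansion} applied to $\bar\bbeta$, which is centered.

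\textbf{The fluctuation $U_k$.} We plan to bound $\Var(F_k(\bar\y))$ directly. Conditionally on $\bbeta$, $F_k$ is a sum of Hermite polynomials of the jointly Gaussian variables $\x_j^\top\bar\y/(\sigma\|\x_j\|)$, whose correlations are $H_{ij}/(\|\x_i\|\|\x_j\|)$. Mehler's formula then gives the conditional covariance exactly, via $\E[H_k(Z_i)H_k(Z_j)\mid\cdot]$ expanded in powers of the correlation times the means. The variance over $\bbeta$ of $\langle\bar\bbeta^{\otimes k},\T^{(k)}\rangle$ is a sum over index pairings with moments bounded by $K_{2k}\lesssim (Mk)^{k}\cdot$const, and each term is controlled through $\|\H\|_\op\le\|\X\|_\op^2$.

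The crude but robust bound we aim for is
\[
\Var F_k(\bar\y)\le (Ck)^{Ck}\,p\,\|\X\|_\op^{4k}\,(1+\sigma/\|\X\|_\op)^{2k}.
\]
The factor $p$ comes from summing over $j$ with row norms $\|\H e_j\|\le\|\X\|_\op^2$. Dividing by $A_k^2\ge c_0^{2k}\|\X\|_\op^{4k}p^2(n/p\wedge1)^{2k}$ gives
\[
\E U_k^2\le (Ck)^{Ck}\,\frac{1}{p}\Big(\frac pn\vee1\Big)^{2k}.
\]
Here $1/p$ can be upgraded to $1/(n\wedge p)$ where needed, using $\Tr\H^{j}\le (n\wedge p)\|\X\|_\op^{2j}$ for the diagonal-type contributions.

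\textbf{The bias propagation $V_k$.} On $\cE_{k-1}$ we telescope the product difference. Each factor satisfies $|\bar\mu_{d}|\le|\mu_d|+K_d\le 2K_d$, so
\[
|V_k|\le \frac{1}{A_k}\sum_{\d}\prod 2K_{d_i}\,|\gamma^{(\d)}|\sum_i|\bar\mu_{d_i}-\mu_{d_i}|.
\]
We bound $|\gamma^{(\d)}|\le\binom{k}{\d}\,p\,\|\X\|_\op^{2k}$. This uses $\sum_s|H_{js}|^{d}\le\|\H e_j\|^2\|\H\|_\op^{d-2}$ for $d\ge2$, which is exactly why $d_i\ge2$ matters. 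Hence $|\gamma^{(\d)}|/A_k\le C^k k^k(p/n\vee1)^k$. Squaring and using the induction hypothesis $\E[(\bar\mu_d-\mu_d)^2\bm 1_{\cE_{d-1}}]\le\Delta_d$ with $\cE_{k-1}\subset\cE_{d-1}$, we get
\[
\E[V_k^2\bm1_{\cE_{k-1}}]\le (Ck)^{4k}\Big(\frac pn\vee1\Big)^{2k}\max_{d\le k-2}\Delta_d.
\]
The exponent recursion $2(k-1)^2+2k\le 2k^2$ and $5(k-1)^2+4k+O(1)\le 5k^2$ (for $C_0$ large) closes the bound $\Delta_k$.

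\textbf{The probability of $\cE_k$.} We have $\P(\cE_k^c)\le\P(\cE_{k-1}^c)+\P(\cE_{k-1}\cap\{|\bar\mu_k-\mu_k|>K_k\})$. The second term is at most $\Delta_k/K_k^2$ by Chebyshev, and summing the induction yields the claimed $1-\Delta_k$ after absorbing constants (using $\sum_{\ell\le k}\Delta_\ell\le 2\Delta_k$).

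\textbf{Main obstacle.} The hardest step is the variance bound for $F_k(\bar\y)$. The columnwise statistics are correlated both through $\beps$ (via $\H$) and through the shared $\bbeta$. Bookkeeping the Hermite/Mehler cross terms and the $\bbeta$-pairings without losing more than $(p/n)^{O(k)}$ requires care. I would first try a unified Gaussian-integration-by-parts / moment-tensor expansion, bounding every contraction by operator norms and accepting losses of $(Ck)^{Ck}$.
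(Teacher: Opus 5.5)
Your induction skeleton is correct, and it is leaner than the paper's. You write $\bar\mu_k-\mu_k=U_k+V_k$ via Lemma~\ref{prop:Bias} and bound $\E[(U_k+V_k)^2\bm 1_{\cE_{k-1}}]\le 2\E[U_k^2]+2\E[V_k^2\bm 1_{\cE_{k-1}}]$. This sidesteps the paper's bias--variance split of the truncated error, and with it the terms that need $\Prob(\cE_{k-1}^c)\,\E[F_k^2(\bar\y)]$ (Lemma~\ref{lem:estimate_L2}) and the inequality for $\Var(\bm 1_{\cE}X)$. The following parts are fine:
\begin{itemize}
\item your treatment of $V_k$, namely telescoping with $|\bar\mu_d|\le 2K_d$ on $\cE_{k-1}$, the row-norm bound on $\gamma^{(d_1,\ldots,d_t)}$, and $\cE_{k-1}\subset\cE_{d-1}$;
\item the exponent recursion;
\item the Chebyshev step for $\cE_k$, which mirrors the paper's.
\end{itemize}
Incidentally, $1/p\le 1/(n\wedge p)$ already, so no ``upgrade'' is needed.

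The gap is the step you yourself flag. The bound $\Var F_k(\bar\y)\lesssim (Ck)^{Ck}p\,\opnorm{\X}^{4k}$ is only asserted. The reason you offer (the factor $p$ comes from summing over $j$ with $\|\H\e_j\|\le\opnorm{\X}^2$) does not deliver it for the $\bbeta$-fluctuation $\Var\big(\sum_j(\H\bar\bbeta)_j^k\big)$.

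Summing over $j$ directly with $w_j=(\H\bar\bbeta)_j^k$ gives $\E(\sum_j w_j)^2\le p\sum_j\E w_j^2\lesssim (Ck)^k\,p\,(n\wedge p)\opnorm{\X}^{4k}$. After dividing by $A_k^2$ this leaves $(Ck)^k\frac{n\wedge p}{p}(\frac pn\vee 1)^{2k}$, which does not vanish. Bounding the $p^2$ covariances $\Cov(w_j,w_{j'})$ in absolute value does not help either. For odd $k$ they contain terms linear in $(\H^2)_{jj'}$, and these are summable over $j,j'$ only as a signed quadratic form controlled by $\opnorm{\H}$. The missing factor of $p$ has to come from the centering of $\bar\bbeta$ combined with sign-sensitive operator-norm bounds.

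The paper's device is Lemma~\ref{lem:var_of_exp}:
\begin{itemize}
\item Expand $\iprod{\T^{(k)}}{\bar\bbeta^{\otimes k}}$ through the diagonal-free tensors $\tilde{\T}^{(d_1,\ldots,d_t)}$.
\item Write $\beta_s^{d}=(\beta_s^{d}-m_d)+m_d$. Each piece is then a multilinear form in independent centered variables over distinct indices, whose variance is exactly a sum of squared partial contractions.
\item Bound these by Lemma~\ref{lem:Matrix_Product} and $\opnorm{\H^{\circ d}}\le\opnorm{\H}^d$. Centering ($m_1=0$) forces every uncentered factor to have exponent at least $2$, which is what gives the bound on $\lambda_{i;j}$.
\end{itemize}

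For the conditional noise variance, Mehler's formula is what the paper uses as well. However, the double sum over $j,j'$ must be handled as the quadratic form $w^\top\H^{\circ\ell}w$ with $w=(\H\bbeta)^{\circ(k-\ell)}$ (Lemma~\ref{lem:exp_of_var}), not entrywise. Gaussian integration by parts is not available for the non-Gaussian $\bbeta$. Until this variance estimate is actually proved, the $U_k$ term, and with it the proposition, is not established.
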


We first state a few preliminary results for the proof of Proposition \ref{prop:MSE_centered}. The next two results are useful for controlling the bias.

\begin{lemma}\label{prop:Bias}
For any positive integers $d_1,\ldots,d_t$, let $\gamma^{(d_1,\ldots,d_t)} = \sum_{s_1,\ldots,s_t=1}^p \tilde T_{s_1,\ldots,s_t}^{(d_1,\ldots,d_t)}$, where $\tilde\T^{(d_1,\ldots,d_t)}$ is given by (\ref{eq:T_diag_free}). Then for any $k\geq 2$, with $A_k$ given by (\ref{eq:Ak}), 
\begin{align}
\E[F_k(\bar{\y})] = A_k\mu_k + \sum_{t=2}^k \sum_{d_1+\cdots+d_t = k: d_j \geq 2} \gamma^{(d_1,\ldots,d_t)} \mu_{d_1} \cdots \mu_{d_t}.
\end{align}
\end{lemma}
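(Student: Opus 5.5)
We first condition on $\bbeta$ and average over the noise. Since $\bar\y = \X\bar\bbeta + \beps$, for each $j$ we have $\x_j^\top\bar\y/(\sigma\pnorm{\x_j}{}) \mid \bbeta \sim \N(\x_j^\top\X\bar\bbeta/(\sigma\pnorm{\x_j}{}),1)$. The Hermite identity \eqref{eq:hermite-unbiased} then gives
\[
\E[F_k(\bar\y)\mid\bbeta] = \sum_{j=1}^p (\x_j^\top \X\bar\bbeta)^k = \sum_{j=1}^p \Big(\sum_{s=1}^p H_{j,s}\bar\beta_s\Big)^k,
\]
exactly as in the motivating computation leading to \eqref{eq:F_k2}, with $\bbeta$ replaced by $\bar\bbeta$. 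The coordinates $\bar\beta_s$ are i.i.d.\ with mean zero and central moments $\mu_\ell$, and all moments are finite by Assumption~\ref{assump:prior}. Hence Fubini applies and $\E[F_k(\bar\y)] = \sum_j \E(\sum_s H_{j,s}\bar\beta_s)^k$.

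Next, for fixed $j$, we expand $(\sum_s H_{j,s}\bar\beta_s)^k$ by the multinomial theorem, grouping terms by the set of distinct indices that appear. Every monomial can be written uniquely as $\prod_{a=1}^t (H_{j,s_a}\bar\beta_{s_a})^{d_a}$ with $s_1<\dots<s_t$ and $d_a\ge1$, $\sum_a d_a=k$, and its coefficient is $\binom{k}{d_1,\ldots,d_t}$. Taking expectation and using independence gives $\E\prod_a\bar\beta_{s_a}^{d_a}=\prod_a \mu_{d_a}$. Because $\mu_1=0$, any term with some $d_a=1$ vanishes. We therefore obtain
\[
\E[F_k(\bar\y)] = \sum_{t=1}^k\sum_{\substack{d_1+\cdots+d_t=k\\ d_a\ge 2}} \binom{k}{d_1,\ldots,d_t}\mu_{d_1}\cdots\mu_{d_t}\sum_{j=1}^p\sum_{s_1<\cdots<s_t} H_{j,s_1}^{d_1}\cdots H_{j,s_t}^{d_t}.
\]
By \eqref{eq:T_diag_free}, the inner double sum times the multinomial coefficient is precisely $\gamma^{(d_1,\ldots,d_t)}=\sum_{s_1,\ldots,s_t}\tilde T^{(d_1,\ldots,d_t)}_{s_1,\ldots,s_t}$, since $\tilde\T$ vanishes off the strictly increasing index set.

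Finally, we isolate $t=1$. The only composition is $d_1=k$, with coefficient $1$, and its sum equals $\sum_{j,s}H_{j,s}^k = A_k$ by \eqref{eq:Ak}. This produces the term $A_k\mu_k$, and the remaining $t\ge2$ terms give the stated sum.

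The one step that needs care is the bookkeeping claim that each monomial is counted exactly once. We pair ordered compositions $(d_1,\ldots,d_t)$ with increasing indices $s_1<\cdots<s_t$, so the exponent attached to the $a$-th smallest index is $d_a$. Each assignment of exponents to a set of distinct indices thus corresponds to exactly one pair (ordered composition, increasing tuple), with no symmetry factor. This is consistent with the ordered-tuple summation in \eqref{eq:moment_def}. Apart from this, the argument is routine.
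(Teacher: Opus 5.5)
Your proof is correct and follows essentially the same route as the paper: condition on $\bbeta$, use the Hermite unbiasedness identity to get $\E[F_k(\bar\y)\mid\bbeta]=\sum_j(\x_j^\top\X\bar\bbeta)^k=\iprod{\T^{(k)}}{\bar\bbeta^{\otimes k}}$, expand over ordered compositions paired with strictly increasing indices, drop terms with some $d_a=1$ since $\mu_1=0$, and isolate $t=1$ to obtain $A_k\mu_k$. The only difference is that the paper asserts the diagonal-free expansion (\ref{eq:T_expansion}) as "readily checked," whereas you verify it explicitly via the multinomial theorem.
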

%We are now ready to prove Proposition \ref{prop:Bias}.
\begin{proof}%[Proof of Proposition \ref{prop:Bias}]
First note that if $\overline{\y} = \btheta + \beps$ with $\beps \sim \N(0, \sigma^2 I_n)$, then for any $\a \in \bS^{n-1}$, $\E_{\beps} H_k(\a^\top \bar{\y}/\sigma) = (\a^\top \btheta/\sigma)^k$. In our case, we have $ \btheta = \X \bar{\bbeta}
$. Moreover, with $\T^{(k)}$ and $\tilde{\T}^{(d_1,\ldots,d_t)}$ defined in (\ref{eq:T_tensor}) and (\ref{eq:T_diag_free}), it can be readily checked that for any vector $\bbeta\in\R^p$,
\begin{align}\label{eq:T_expansion}
\Tr (\X^\top \X\bbeta)^{\otimes k} = \iprod{\T^{(k)}}{\bbeta^{\otimes k}} = \sum_{t=1}^k \sum_{d_1,\ldots,d_t = k: d_j \geq 1} \sum_{s_1,\ldots,s_t = 1}^p \tilde{T}^{(d_1,\ldots,d_t)}_{s_1,\ldots,s_t}\beta^{d_1}_{s_1}\ldots \beta^{d_t}_{s_t},
\end{align}
where the summation over $(d_1,\ldots,d_t)$ is ordered. Applying the above two facts, the first term in the definition (\ref{eq:moment_def}) satisfies
\begin{align}\label{eq:Fk_expectation}
\notag\E[F_k(\bar{\y})] &=\E_{\bbeta}\E_{\beps}\Big[\sum_{j=1}^p (\sigma\pnorm{\x_j}{}^k)H_k\Big(\frac{\x_j^\top \bar{\y}}{\sigma\pnorm{\x_j}{}}\Big)\Big] = \E_{\bbeta}\Big[\sum_{j=1}^p (\x_j^\top \btheta)^k\Big] = \E_{\bbeta}[\Tr (\X^\top \X\bar\bbeta)^{\otimes k}]\\
\notag&= \E_{\bbeta}\bigg[\sum_{t=1}^k \sum_{d_1,\ldots,d_t = k: d_j \geq 1} \sum_{s_1,\ldots,s_t = 1}^p \tilde{T}^{(d_1,\ldots,d_t)}_{s_1,\ldots,s_t} \bar{\beta}^{d_1}_{s_1}\ldots \bar{\beta}^{d_t}_{s_t}\bigg]\\
&\notag= \sum_{t=1}^k \sum_{d_1+\cdots+d_t = k: d_j \geq 2} \gamma^{(d_1,\ldots,d_t)} \mu_{d_1} \cdots \mu_{d_t}\\
&= A_k\mu_k + \sum_{t=2}^k \sum_{d_1+\cdots+d_t = k: d_j \geq 2} \gamma^{(d_1,\ldots,d_t)} \mu_{d_1} \cdots \mu_{d_t},
\end{align}
as desired.
\end{proof}

\begin{lemma}\label{lem:gamma}
Let $\gamma^{(d_1,\ldots,d_t)} = \sum_{s_1,\dots,s_t = 1}^p \tT^{(d_1,\dots,d_t)}_{s_1,\dots,s_t}$. For any positive integers $t,k$, and $ d_1,\dots,d_t $ satisfying $ d_j \geq 2 $ and $ d_1 + \cdots + d_t = k $, it holds that
\begin{equation}
|\gamma^{{(d_1,\dots,d_t)}}| \leq \binom{k}{d_1,\dots,d_t} (n \wedge p) \pnorm{\X}{\op}^{2k} \leq k! (n \wedge p) \pnorm{\X}{\op}^{2k}.
\end{equation}
%\ys{This can be improved to $\pnorm{\X}{\op}^{2k-2}\pnorm{\X}{F}^2$.}
% \nb{I haven't worked it out for if we relax the assumptions to $ d_j \geq 1 $, which is needed for the non-centered case. I'm not sure if an additional assumption $ \|H \1\|_\infty <= C\opnorm{X} $ can solve the problem.}
\end{lemma}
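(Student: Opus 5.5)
We need to bound $|\gamma^{(d_1,\ldots,d_t)}| = \binom{k}{d_1,\ldots,d_t}\big|\sum_{j=1}^p \sum_{s_1<\cdots<s_t} H_{j,s_1}^{d_1}\cdots H_{j,s_t}^{d_t}\big|$. The plan is to strip the ordering constraint and bound the result by a product of row sums of powers, using only that every $d_a\ge 2$.

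\emph{Reduction to row sums.} Since each $d_a\ge 2$, we have $|H_{j,s}|^{d_a}\le H_{j,s}^2 \opnorm{\H}^{d_a-2}$. Moreover $|H_{j,s}|\le \opnorm{\H}=\opnorm{\X}^2$. First drop the restriction $s_1<\cdots<s_t$ by the triangle inequality, extending to all tuples, which only enlarges the sum of absolute values. This gives
\[
|\gamma^{(d_1,\ldots,d_t)}| \le \binom{k}{d_1,\ldots,d_t}\sum_{j=1}^p \prod_{a=1}^t \Big(\sum_{s=1}^p |H_{j,s}|^{d_a}\Big)
\le \binom{k}{d_1,\ldots,d_t}\opnorm{\X}^{2(k-2t)}\sum_{j=1}^p \Big(\sum_{s}H_{j,s}^2\Big)^t .
\]

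\emph{Bounding the row norms.} The quantity $\sum_s H_{j,s}^2 = (\H^2)_{jj}\le \opnorm{\X}^4$, so
\[
\sum_j \big((\H^2)_{jj}\big)^t \le \opnorm{\X}^{4(t-1)}\Tr(\H^2).
\]
Next, $\Tr(\H^2)\le \mathrm{rank}(\H)\opnorm{\H}^2 \le (n\wedge p)\opnorm{\X}^4$. Collecting exponents gives $2(k-2t)+4(t-1)+4=2k$, which yields the first inequality.

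\emph{Final inequality.} The second inequality follows from $\binom{k}{d_1,\ldots,d_t}=k!/\prod_a d_a!\le k!$.

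\emph{Main obstacle.} The only delicate point is that one must not lose a factor $p$ from the outer sum over $j$. This is avoided by keeping one full factor $\sum_s H_{j,s}^2$ inside the $j$-sum, so that the outer sum collapses to $\Tr(\H^2)$, whose rank bound supplies the factor $n\wedge p$. The assumption $d_a\ge 2$ is used precisely so that each factor is controlled by squared entries.
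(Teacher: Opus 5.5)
Your proof is correct and is essentially the paper's argument: bound $|H_{j,s}|^{d_a}\le \opnorm{\H}^{d_a-2}H_{j,s}^2$, relax the ordering $s_1<\cdots<s_t$, control all but one of the row sums $\sum_s H_{j,s}^2=(\H^2)_{jj}$ by $\opnorm{\X}^4$, and let the remaining one sum over $j$ to $\Fnorm{\H}^2\le (n\wedge p)\opnorm{\X}^4$. The only difference is cosmetic: you drop the ordering constraint all at once and factorize into a product of row sums, whereas the paper peels off the indices $s_t, s_{t-1},\ldots$ one at a time.
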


\begin{proof}
Recall from (\ref{eq:T_diag_free}) that $\tT^{(d_1,\dots,d_t)}_{s_1,\dots,s_t} = \binom{k}{d_1,\dots,d_t} \sum_{i=1}^p H_{i,s_1}^{d_1} \cdots H_{i,s_t}^{d_t}$ for $1\leq s_1 < \ldots < s_t \leq p$ and zero otherwise.
We have
\begin{align*}
|\gamma^{{(d_1,\dots,d_t)}}| &\leq \binom{k}{d_1,\dots,d_t} \abs{\sum_{i=1}^p \sum_{1 \leq s_1 < \cdots < s_t \leq p}  H_{i,s_1}^{d_1} \cdots H_{i,s_t}^{d_t}} \\
&\leq \binom{k}{d_1,\dots,d_t} \sum_{i=1}^p \abs{ \opnorm{\H}^{d_1 - 2} \cdots \pnorm{\H}{\op}^{d_t - 2} \sum_{1 \leq s_1 < \cdots < s_t \leq p}  H_{i,s_1}^{2} \cdots H_{i,s_t}^{2}} \\
&\leq \binom{k}{d_1,\dots,d_t} \pnorm{\H}{\op}^{k - 2t} \sum_{i=1}^p \abs{ \sum_{1 \leq s_1 < \cdots < s_{t-1}} H_{i,s_1}^2 \cdots H_{i,s_{t-1}}^2 \sum_{s_t = 1}^p H_{i,s_t}^2 }\\
&\leq \binom{k}{d_1,\dots,d_t} \pnorm{\H}{\op}^{k - 2t} \|\H\|_\op^2 \sum_{i=1}^p \abs{ \sum_{1 \leq s_1 < \cdots < s_{t-1}} H_{i,s_1}^2 \cdots H_{i,s_{t-1}}^2  } \\
&\leq \ldots \leq \binom{k}{d_1,\dots,d_t} \pnorm{\H}{\op}^{k - 2t} \|\H\|_\op^{2(t-1)} \sum_{i=1}^p \norm{\H_{i\cdot}}^2\\
&= \binom{k}{d_1,\dots,d_t} \pnorm{\X}{\op}^{2k - 4}\pnorm{\H}{F}^2\leq \binom{k}{d_1,\dots,d_t} \pnorm{\X}{\op}^{2k} (n \wedge p),
\end{align*}
as desired.
\end{proof}

% \subsection{Variance bound for centered model}\label{subsection:variance_centered}

%Next we bound $\Var(\bar{m}_k)$, for which it suffices to bound $\Var(F_k(\bar{\y}))$ and $\Var(\sum_{t=2}^\ell \sum_{d_1 + \ldots + d_t = \ell: d_j \geq 2} \overline{m}_{d_1}^\circ \ldots \overline{m}_{d_t}^\circ \sum_{s_1,\ldots,s_t = 1}^p \tilde{T}^{(d_1,\ldots,d_t)}_{s_1,\ldots,s_t})$ separately. We first consider the typical term in the summation \eqref{eq:moment_def}, i.e. $ \sum_{d_1+\cdots+d_t = k, d_j \geq 2} \gamma^{(d_1,\dots,d_t)} \tm_{d_1} \cdots \tm_{d_t} $. Thanks to the uniform bound Lemma \ref{lem:gamma}, it suffices to estimate $ \Var(\tm_{d_1} \cdots \tm_{d_t}) $. An important observation is that the truncated estimator $ \tm_d $ is bounded, which facilitates us to apply the following lemma.

The next few results are useful for bounding the variance of $\bar\mu_k$.

\begin{lemma}\label{lem:variance_bound}
Suppose that Assumption \ref{assump:prior} holds for some $M > 0$ and $\pnorm{\X}{\op}\geq c_0\sigma$ for some $c_0 > 0$. Then for any $k\geq 2$ and some $C_0 = C_0(c_0,M) > 0$, 
\begin{align}
\Var(F_k(\bar\y)) \leq (C_0k)^{5k}(n\wedge p)\pnorm{\X}{\op}^{4k}.
\end{align}
\end{lemma}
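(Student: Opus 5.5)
We use the law of total variance, conditioning on $\bbeta$:
\[
\Var(F_k(\bar\y)) = \E_{\bbeta}\big[\Var(F_k(\bar\y)\mid\bbeta)\big] + \Var_{\bbeta}\big(\E[F_k(\bar\y)\mid\bbeta]\big).
\]
We bound the two terms separately. Write $\btheta=\X\bar\bbeta$, $\a_j=\x_j/\pnorm{\x_j}{}$, $u_j = \a_j^\top\btheta/\sigma$, and $z_j=\a_j^\top\beps/\sigma$. Then $F_k(\bar\y)=\sum_j(\sigma\pnorm{\x_j}{})^kH_k(u_j+z_j)$.

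\textbf{Signal term.} Here $\E[F_k(\bar\y)\mid\bbeta]=\sum_j(\e_j^\top\H\bar\bbeta)^k =: G(\bar\bbeta)$. The coordinates of $\bar\bbeta$ are i.i.d., centered and $M$-subgaussian, so we apply the Efron--Stein inequality:
\[
\Var G\le \tfrac12\sum_{s=1}^p\E\big(G(\bar\bbeta)-G(\bar\bbeta^{(s)})\big)^2 ,
\]
where $\bar\bbeta^{(s)}$ replaces coordinate $s$ by an independent copy. Replacing coordinate $s$ changes $\H\bar\bbeta$ by $\H\e_s(\bar\beta_s-\bar\beta_s')$. By the mean value theorem, the change in $G$ is at most $k|\bar\beta_s-\bar\beta_s'|\sum_j |H_{js}|\,|w_j|^{k-1}$ for some intermediate vector $w$. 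Cauchy--Schwarz gives
\[
\sum_s\Big(\sum_j|H_{js}||w_j|^{k-1}\Big)^2\le\opnorm{\H}^2\sum_j |w_j|^{2k-2}.
\]
It then suffices to bound $\E\sum_j(\e_j^\top\H\bar\bbeta)^{2k-2}$. Each $\e_j^\top\H\bar\bbeta$ is subgaussian with parameter $M\pnorm{\H\e_j}{}$, so its $(2k-2)$th moment is at most $(CM^2k)^{k-1}\pnorm{\H\e_j}{}^{2k-2}$. Summing over $j$ with $\pnorm{\H\e_j}{}^{2k-2}\le\opnorm{\H}^{2k-4}\pnorm{\H\e_j}{}^2$ and $\sum_j\pnorm{\H\e_j}{}^2=\Fnorm{\H}^2\le(n\wedge p)\opnorm{\X}^4$ gives $(Ck)^{Ck}(n\wedge p)\opnorm{\X}^{4k}$. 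The intermediate point $w$ involves the extra variable $\bar\beta_s'$; we handle it by bounding $|w_j|\le|(\H\bar\bbeta)_j|+|H_{js}|(|\bar\beta_s|+|\bar\beta_s'|)$ and expanding. The extra pieces are lower order, since $|H_{js}|\le\opnorm{\H}$ and $\sum_{j,s}H_{js}^2 = \Fnorm{\H}^2$. Alternatively, one could use a Poincar\'e-type bound, but the prior need not satisfy Poincar\'e, which is why we choose Efron--Stein.

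\textbf{Noise term.} Conditional on $\bbeta$, $F_k$ is a polynomial in the Gaussian vector $\beps$. We expand each Hermite term in Hermite polynomials of the noise:
\[
H_k(u+z)=\sum_{r=0}^k\binom kr u^{k-r}H_r(z).
\]
The $r=0$ piece is the conditional mean. Each $r\ge1$ piece lies in the $r$th Wiener chaos, so different $r$ are orthogonal and
\[
\Var(F_k\mid\bbeta)=\sum_{r\ge1}\binom kr^2\sum_{j,l}(\sigma\pnorm{\x_j}{})^k(\sigma\pnorm{\x_l}{})^k u_j^{k-r}u_l^{k-r}\,r!\,\rho_{jl}^r,
\]
where $\rho_{jl}=\a_j^\top\a_l$. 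We use $r!\,\rho_{jl}^r\le r!\,\rho_{jl}^2$ when $r\ge2$ (since $|\rho_{jl}|\le1$), and for $r=1$ we keep $\rho_{jl}$ itself. Writing $v_j=\sigma^k\pnorm{\x_j}{}^{k}u_j^{k-r}$, the inner double sum becomes $\sum_{j,l}v_jv_l\rho_{jl}^{r}$.
\begin{itemize}
\item For $r=1$ it equals $\pnorm{\sum_j v_j\a_j}{}^2\le\opnorm{\X}^2\sum_j v_j^2/\pnorm{\x_j}{}^2$.
\item For $r\ge2$ it equals $v^\top(\rho^{\circ r})v$, and $\rho^{\circ r}\succeq0$ has operator norm at most $\opnorm{\rho}\le \opnorm{\X}^2/\min_j\pnorm{\x_j}{}^2$.
\end{itemize}
Small column norms make this problematic. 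We avoid dividing by $\pnorm{\x_j}{}$ by absorbing the normalizations: $\pnorm{\x_l}{}^r\rho_{jl}^r\pnorm{\x_j}{}^r=(\x_j^\top\x_l)^r=H_{jl}^r$. The kernel $(H_{jl}^r)$ is PSD with operator norm at most $\max_j H_{jj}\,\opnorm{\H}$, by Schur product bounds. The remaining weights are $\sigma^{k}\pnorm{\x_j}{}^{k-r}u_j^{k-r}=\sigma^{r}(\x_j^\top\btheta)^{k-r}$. So the $r$th term is at most $\binom kr^2 r!\,\sigma^{2r}\opnorm{\X}^{2r}\sum_j(\x_j^\top\btheta)^{2(k-r)}$. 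Taking expectation over $\bbeta$ with the same subgaussian moment bound as before, and using $\sigma\le c_0^{-1}\opnorm{\X}$, gives $(Ck)^{Ck}(n\wedge p)\opnorm{\X}^{4k}$. The $r=k$ case is the pure-noise term, $\sigma^{2k}k!\sum_{j,l}H_{jl}^k\le k!\sigma^{2k}\opnorm{\X}^{2k-4}\Fnorm{\H}^2$.

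\textbf{Main obstacle.} The delicate point is the $r\ge2$ chaos sums. We need to control the Hadamard-power kernels by $\opnorm{\X}$ and $\Fnorm{\H}^2$, not by $p$, and check that $\sum_j(\x_j^\top\btheta)^{2m}$ picks up only the factor $(n\wedge p)$ through $\Fnorm{\H}^2$. Tracking constants gives at most $k^{5k}$ from the factors $\binom kr^2r!\le k^{2k}$ and the subgaussian moments $k^{k}$.
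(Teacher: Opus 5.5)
Your noise term is essentially the paper's Lemma~\ref{lem:exp_of_var}: the Hermite covariance formula, followed by $\opnorm{\H^{\circ r}}\le\opnorm{\H}^r$ applied to the \emph{signed} quadratic form. Your signal term departs from the paper, and as written it has a genuine gap. After the mean value theorem you pass to absolute values and claim
\[
\sum_s\Big(\sum_j|H_{js}|\,|w_j|^{k-1}\Big)^2\le\opnorm{\H}^2\sum_j|w_j|^{2k-2}.
\]
This inequality is false. The left side is controlled by the operator norm of the entrywise absolute value $|\H|$, not of $\H$. Cauchy--Schwarz only gives $\Fnorm{\H}^2$ in place of $\opnorm{\H}^2$.

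The loss is real, not an artifact of the bounding. Take $\X$ with i.i.d.\ $\N(0,1/p)$ entries and $n=p$. Then $\opnorm{\H}=O(1)$, but off-diagonal entries satisfy $|H_{js}|\asymp p^{-1/2}$, so $\sum_j|H_{js}|\asymp\sqrt p$. Already with $|w_j|^{k-1}\equiv1$ the left side is of order $p^2$, while the right side is of order $p$. With the correct Frobenius bound, your Efron--Stein estimate becomes $(n\wedge p)^2\opnorm{\X}^{4k}$. That extra factor $n\wedge p$ keeps $\Var(F_k(\bar\y))/A_k^2$ from vanishing (for $n\ge p$ it is of constant order), which breaks Proposition~\ref{prop:MSE_centered}. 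The ``lower-order'' pieces you list are harmless, because they carry $|H_{js}|^q$ with $q\ge2$ and $\sum_jH_{js}^2=\pnorm{\H\e_s}{}^2$. The problem is the first-order piece, which is the main term. So the delicate step is here, not in the chaos sums.

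The missing idea is to preserve sign cancellations in the linear term, exactly as you already do in the noise term. With $\Delta_s:=\bar\beta_s'-\bar\beta_s$, expand exactly:
\[
G(\bar\bbeta^{(s)})-G(\bar\bbeta)=\sum_{r=1}^k\binom kr\Delta_s^r\big(\H^{\circ r}(\H\bar\bbeta)^{\circ(k-r)}\big)_s .
\]
Here the vector $\H\bar\bbeta$ is common to all $s$, so $\sum_s\big(\H(\H\bar\bbeta)^{\circ(k-1)}\big)_s^2\le\opnorm{\H}^2\sum_j(\H\bar\bbeta)_j^{2k-2}$ uses the signed operator norm. For $r\ge2$, absolute values are harmless: $|\H^{\circ r}|\le\opnorm{\H}^{r-2}\H^{\circ2}$ entrywise, and $\H^{\circ 2}$ is nonnegative with norm at most $\opnorm{\H}^2$. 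You must still decouple $\Delta_s$ from $\H\bar\bbeta$, which contains $\bar\beta_s$. One way is to write $\H\bar\bbeta=\H\bar\bbeta_{-s}+\bar\beta_s\H\e_s$ and check that every correction term again carries $H_{js}^q$ with $q\ge2$.

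The paper avoids this bookkeeping. It expands $\iprod{\T^{(k)}}{\bar\bbeta^{\otimes k}}$ over the diagonal-free tensors $\tilde{\T}^{(d_1,\ldots,d_t)}$ and centers each $\bar\beta_s^{d}$. It then computes the variance of the resulting multilinear forms in independent centered variables exactly, as a sum of squared coefficients. Those sums are bounded through Lemma~\ref{lem:Matrix_Product} with the signed Hadamard powers $\H^{\circ d}$. This is the same cancellation mechanism, made exact. Your Efron--Stein route, once repaired, would work under the same subgaussian assumption, but it is not shorter.
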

\begin{proof}
Note that $\Var(F_k(\bar{\y})) =  \Var(\E[F_k(\bar{\y})|\bar{\bbeta}]) + \E[\Var(F_k(\bar{\y})|\bar{\bbeta})]$. For the first term, recall from (\ref{eq:Fk_expectation}) that $\E[F_k(\bar{\y}) | \bar{\bbeta}] = \iprod{\T^{(k)}}{\bar{\bbeta}^{\otimes k}}$, so by Lemma \ref{lem:var_of_exp} below, 
\begin{align}\label{eq:Fk_var_1}
\Var(\E[F_k(\bar{\y})|\bar{\bbeta}]) = \Var(\langle \T^{(k)},\bar{\bbeta}^k\rangle)\leq 8^k k^{5k}M^{2k}(n\wedge p)\pnorm{\X}{\op}^{4k}.
\end{align}
On the other hand, by Lemma \ref{lem:exp_of_var}, we have
\begin{align}\label{eq:Fk_var_2}
\E[\Var(F_k(\bar{\y})|\bar{\bbeta})] \leq \Big(\frac{CMk}{c_0}\Big)^{2k+2}  (n \wedge p) \opnorm{\X}^{4k}
\end{align}
for some universal $C > 0$. Combining these yields the conclusion.
\end{proof}

\begin{lemma}\label{lem:Product_Variance}
Suppose $X_1,\dots,X_n$ are bounded random variables for some positive integer $n$. Then
\begin{equation}
\Var(X_1 \cdots X_n) \leq n 2^{n-1} \max_{i\in[n]} \|X_i\|_\infty^{2(n-1)} \cdot \max_{i\in[n]} \Var(X_i),
\end{equation}
where $\pnorm{X}{\infty}$ denotes the essential supremum of $X$.
\end{lemma}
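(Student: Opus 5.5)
The plan is a telescoping decomposition combined with the Efron--Stein-type inequality for variances of functions of independent coordinates, or more elementarily the bound $\Var(U+V)\le 2\Var(U)+2\Var(V)$. Write $B:=\max_i\|X_i\|_\infty$ and $\mu_i:=\E X_i$, so that $|\mu_i|\le B$. Telescope
\[
X_1\cdots X_n-\mu_1\cdots\mu_n=\sum_{i=1}^n \mu_1\cdots\mu_{i-1}\,(X_i-\mu_i)\,X_{i+1}\cdots X_n .
\]
Since $\Var(Z)\le \E(Z-c)^2$ for any constant $c$, taking $c=\mu_1\cdots\mu_n$ gives $\Var(X_1\cdots X_n)\le \E\big(\sum_i D_i\big)^2$, where $D_i$ denotes the $i$th telescoping term.

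Bounding each term is immediate: $|D_i|\le B^{n-1}|X_i-\mu_i|$, hence $\E D_i^2\le B^{2(n-1)}\Var(X_i)$. Cauchy--Schwarz over the $n$ terms would then give $\E(\sum_i D_i)^2\le n\sum_i\E D_i^2\le n^2B^{2(n-1)}\max_i\Var(X_i)$.

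This yields a factor $n^2$ instead of $n2^{n-1}$. For $n\le 2$ we have $n^2\le n2^{n-1}$ with equality at $n=1,2$ (check $n=2$: $4\le 4$), and for $n\ge 3$, $n\le 2^{n-1}$, so $n^2\le n2^{n-1}$ always. The stated bound therefore follows directly, and no step is a real obstacle.

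An alternative route matching the $2^{n-1}$ constant is induction with $\Var(UV)\le 2\|U\|_\infty^2\Var(V)+2\|V\|_\infty^2\Var(U)$. This inequality comes from $UV-\E U\,\E V=(U-\E U)V+\E U\,(V-\E V)$ together with $(a+b)^2\le 2a^2+2b^2$. Applied with $U=X_1\cdots X_{n-1}$ and $V=X_n$, it gives $\Var_n\le 2B^{2(n-1)}\max\Var+2B^2\Var_{n-1}$, and induction yields a bound of the form $c_n B^{2(n-1)}\max_i\Var(X_i)$ with $c_n=2+2c_{n-1}$, so $c_n\le 2^n$. This is slightly worse than $n2^{n-1}$ for small $n$, which is why I would use the telescoping argument as the primary proof.
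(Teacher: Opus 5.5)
Your telescoping argument is correct, and it differs from the paper's. The paper proves the two-factor bound $\Var(UV)\le 2\|V\|_\infty^2\Var(U)+2\|U\|_\infty^2\Var(V)$ and then inducts on $n$. It splits $U=X_1$ from $V=X_2\cdots X_n$ and closes the induction via $(n-1)2^{n-1}+2\le n2^{n-1}$.

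You instead write the centered product as the sum of all $n$ increments at once, bound each one pointwise, and pay a single Cauchy--Schwarz factor $n$. This needs no induction and gives the polynomial constant $n^2$ in place of the exponential $n2^{n-1}$.

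Your route also keeps the individual sup-norms for free. Since $|D_i|\le\prod_{j\ne i}\|X_j\|_\infty\,|X_i-\mu_i|$, you get $\Var(X_1\cdots X_n)\le n\sum_i\Var(X_i)\prod_{j\ne i}\|X_j\|_\infty^2$. This per-factor form is what actually produces a factor of order $C^k(1\vee M)^{2k}k^k$ where the lemma is invoked for $\bm{1}_{\cE_{k-1}}\bar\mu_{d_1}\cdots\bar\mu_{d_t}$ with the unequal bounds $2K_{d_j}$. The max form in the statement replaces $\prod_{j\ne i}(2K_{d_j})^2$ by $\max_j(2K_{d_j})^{2(t-1)}$, which is far larger when the $d_j$ are unbalanced.

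Neither argument uses independence. That matters, because those factors are dependent. Your opening mention of Efron--Stein is therefore unnecessary and should be dropped.

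Your side remark about the alternative route contains an error. The recursion $c_n=2+2c_{n-1}$ with $c_1=1$ solves to $c_n=3\cdot 2^{n-1}-2$, which is not bounded by $2^n$ (already $c_3=10>8$). However, $3\cdot 2^{n-1}-2\le n2^{n-1}$ for every $n\ge 1$. So that route, which is essentially the paper's proof, gives exactly the stated bound rather than a worse one.
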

\begin{proof}
We prove by induction. For the base case $ n=2 $, we have
\begin{align}\label{eq:Product_Variance}
\Var(X_1 X_2) &\leq 2 \Var((X_1 - \E[X_1]) X_2) + 2 \Var(\E[X_1] X_2) \nonumber\\
&\leq 2 \E[((X_1 - \E[X_1])X_2)^2] + 2 \|X_1\|_\infty^2 \Var(X_2) \nonumber\\
&\leq 2 \|X_2\|_\infty^2 \E[(X_1 - \E[X_1])^2] + 2 \|X_1\|_\infty^2 \Var(X_2) \nonumber\\
&= 2 \|X_2\|_\infty^2 \Var(X_1) + 2 \|X_1\|_\infty^2 \Var(X_2)\\
\notag&\leq 4 \max(\pnorm{X_1}{\infty}^2, \pnorm{X_2}{\infty}^2)\cdot \max(\Var(X_1),\Var(X_2)),
\end{align}
which agrees with the desired bound. Let $A:= \max_{i\in[n]} \|X_i\|_\infty$.
Suppose the result holds for $ n-1 $, applying \eqref{eq:Product_Variance} yields
\begin{align*}
\Var(X_1 \cdots X_n) &\leq 2 \|X_1\|_\infty^2 \Var(X_2 \cdots X_n) + 2 \|X_2 \cdots X_n\|_\infty^2 \Var(X_1)\\
&\leq 2 \|X_1\|_\infty^2 \Big[(n-1) 2^{n-2} A^{2(n-2)} \max_{2\leq j \leq n} \Var(X_j)\Big] + 2 A^{2(n-1)} \Var(X_1)\\
&\leq n 2^{n-1} A^{2(n-1)} \cdot \max_{j\in[n]} \Var(X_j),
\end{align*}
which proves the desired bound.
\end{proof}

% In the context of variance estimation for $ \checkm_k $, we apply Lemma \ref{lem:Product_Variance} to the term $ \tm_{d_1} \cdots \tm_{d_t} $ and recall the truncation $ |\tm_d| \leq 2M^d $. As a consequence, it results in

\begin{lemma}\label{lem:var_of_exp}
Suppose that $\bbeta = (\beta_1,\ldots,\beta_p)$ has i.i.d.\ entries distributed as $g$, where $g$ is centered and satisfies Assumption \ref{assump:prior} for some $M > 0$. Then for any $k\geq 2$, we have
\begin{align*}
% \Var \pth{\iprod{T^{(k)}}{\beta^{\otimes k}}} \leq k^{4k+2} M^{2k} p \opnorm{X}^{4k}
\Var \Big(\iprod{\T^{(k)}}{\bbeta^{\otimes k}}\Big) \leq 8^k k^{5k}M^{2k}(n\wedge p)\pnorm{\X}{\op}^{4k}.
\end{align*}
\end{lemma}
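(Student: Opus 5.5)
The plan is to expand the polynomial $P(\bbeta):=\iprod{\T^{(k)}}{\bbeta^{\otimes k}}=\sum_{j=1}^p (\e_j^\top\H\bbeta)^k$ as a sum over index tuples and compute its variance via a covariance expansion over pairs of tuples. Write
\[
P(\bbeta)=\sum_{j=1}^p\sum_{s_1,\ldots,s_k=1}^p H_{j s_1}\cdots H_{j s_k}\,\beta_{s_1}\cdots\beta_{s_k},
\]
so that
\[
\Var(P)=\sum_{j,j'}\sum_{\mathbf s,\mathbf s'} \prod_{a}H_{js_a}\prod_b H_{j's'_b}\,\Cov\big(\beta_{s_1}\cdots\beta_{s_k},\,\beta_{s'_1}\cdots\beta_{s'_k}\big).
\]
Group the pair $(\mathbf s,\mathbf s')$ according to the partition $\pi$ of the $2k$ positions induced by equal indices. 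Because $g$ is centered and the $\beta$'s are independent, the covariance vanishes unless every block of $\pi$ has size at least $2$ and at least one block meets both $\mathbf s$ and $\mathbf s'$; otherwise the expectation factorizes and cancels with the product of means. For a surviving $\pi$ with blocks of sizes $b_1,\ldots,b_r$, the covariance is bounded in absolute value by $2\prod_i \E|\beta|^{b_i}\le 2\prod_i (M\sqrt{b_i})^{b_i}\le 2M^{2k}(2k)^{k}$ by Assumption \ref{assump:prior}. The number of partitions of $2k$ elements is at most $(2k)^{2k}$. Together these account for a factor of the form $8^k k^{3k}M^{2k}$.

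Next, for each fixed partition $\pi$, I bound the sum of $|\prod H|$ over index assignments. To handle the constraint that distinct blocks receive distinct indices, I will pass to sums over unrestricted labels (inclusion–exclusion over coarsenings, which costs at most another partition-count factor $k^{O(k)}$). It then suffices to bound, for each connected pairing pattern with free indices $u_1,\ldots,u_r$ (one per block),
\[
S_\pi:=\sum_{j,j'}\sum_{u_1,\ldots,u_r}\Big|\prod_{i} H_{j u_i}^{\alpha_i}H_{j' u_i}^{\alpha'_i}\Big|,
\qquad \alpha_i+\alpha'_i=b_i\ge 2,
\]
where at least one $i$ has $\alpha_i,\alpha'_i\ge1$. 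The key estimate is $S_\pi\le \|\H\|_{\op}^{2k-2}\Fnorm{\H}^2\le (n\wedge p)\pnorm{\X}{\op}^{4k}$, which is the same peeling used in the proof of Lemma \ref{lem:gamma}. Each block with $b_i\ge2$ and both exponents on the same row contributes $\sum_u H_{ju}^{b_i}\le \|\H\|_{\op}^{b_i-2}\sum_u H_{ju}^2\le\|\H\|_{\op}^{b_i}$, using $\|\H_{j\cdot}\|_2\le\|\H\|_{\op}$ and $|H_{ju}|\le\|\H\|_{\op}$. For a connecting block with $\alpha_i,\alpha'_i\ge1$, I bound all but one factor by entry bounds and keep $\sum_u |H_{ju}||H_{j'u}|$. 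Then $\sum_{j,j'}\sum_u|H_{ju}||H_{j'u}|=\sum_u\big(\sum_j|H_{ju}|\big)^2$, which is not directly controlled. Instead I keep $|H_{ju}||H_{j'u}|$ squared where possible, or use $\sum_{j,j'}\big(\sum_u|H_{ju}||H_{j'u}|\big)\cdot(\text{other bounded factors})$ and apply Cauchy–Schwarz in the form $\sum_{j,j'}\langle |\H_{j\cdot}|,|\H_{j'\cdot}|\rangle\,c_jc'_{j'}$ with row sums $c_j=\prod(\text{remaining } \sum_u H_{ju}^{\cdot})$.

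The main obstacle is this connecting step, since the absolute-value matrix $|\H|$ need not have a controlled operator norm. To avoid it I will not take absolute values on the connecting structure. Writing $\mathbf v^{(j)}$ for vectors built from entries of row $j$, the connecting sum becomes $\sum_{j,j'} \langle \mathbf v^{(j)},\mathbf w^{(j')}\rangle$ times scalars. This is controlled via the fact that $\sum_{j}\H_{j\cdot}\,\xi_j=\H\xi$ has norm at most $\|\H\|_{\op}\|\xi\|$, so $\|\H\|_{\op}$ is used instead of $\||\H|\|_{\op}$. Concretely, I expect to use the structure $P=\sum_j(\H\bbeta)_j^k$ together with an Efron–Stein-type bound
\[
\Var(P)\le\sum_s\E\big(\partial_sP\big)^2\cdot(\text{moment factors}),
\qquad \partial_s P=k\sum_j H_{js}(\H\bbeta)_j^{k-1}=k\,\big(\H\,\mathbf z\big)_s,\quad \mathbf z=\big((\H\bbeta)_j^{k-1}\big)_j .
\]
This gives $\sum_s(\partial_sP)^2\le k^2\|\H\|_{\op}^2\|\mathbf z\|^2$ and $\E\|\mathbf z\|^2=\sum_j\E(\H\bbeta)_j^{2k-2}\le p\,(CMk)^{2k-2}\|\H\|_{\op}^{2k-2}$ by subgaussianity. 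This alone yields the factor $p$ rather than $n\wedge p$. To upgrade to $n\wedge p$, I will instead use $(\H\bbeta)_j=\x_j^\top\X\bbeta$, so that $\sum_j (\H\bbeta)_j^{2k-2}\le \|\X\|_{\op}^{2k-4}\cdot\max|\cdot|^{\,}\cdots$. More precisely, $\E\sum_j(\x_j^\top\X\bbeta)^{2}\,(\cdot)^{2k-4}$ is bounded via $\sum_j\E(\x_j^\top\X\bbeta)^{2k-2}\le (CMk)^{2k-2}\sum_j\|\H_{j\cdot}\|^{2k-2}\le (CMk)^{2k-2}\|\H\|_{\op}^{2k-4}\Fnorm{\H}^2$, with $\Fnorm{\H}^2\le(n\wedge p)\|\X\|_{\op}^4$. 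Since Efron–Stein for non-product nonlinear $P$ requires discrete differences rather than gradients, I will apply it as $\Var(P)\le\frac12\sum_s\E(P-P^{(s)})^2$, expanding $P-P^{(s)}$ by the binomial theorem in $(\beta_s-\beta'_s)H_{js}$. The leading term matches the gradient bound above, and the higher terms carry extra factors $H_{js}^2$ that are summable by the same peeling. Collecting the $k^{O(k)}$ constants gives $8^kk^{5k}M^{2k}(n\wedge p)\pnorm{\X}{\op}^{4k}$.
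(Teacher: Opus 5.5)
Your final route is Efron--Stein, with the first-order term controlled through $\partial_sP=k(\H\z)_s$, and with $\sum_j\pnorm{\H_{j\cdot}}{}^{2k-2}\le\opnorm{\H}^{2k-4}\Fnorm{\H}^2$ supplying the factor $n\wedge p$. This differs from the paper's route and can work. However, the step you assert without argument (``the leading term matches the gradient bound'') is exactly where the difficulty of the lemma sits, and as stated it does not hold up.

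In $\frac12\sum_s\E(P-P^{(s)})^2$ the first-order term is $k^2\sum_s\E[(\beta_s-\beta_s')^2(\H\z)_s^2]$ with $\z=(\H\bbeta)^{\circ(k-1)}$. The weight $(\beta_s-\beta_s')^2$ is correlated with $(\H\z)_s$, because $\beta_s$ enters each $(\H\bbeta)_j$ through $H_{js}\beta_s$. So you cannot factor out $\E(\beta_s-\beta_s')^2=2\mu_2$ and then apply $\sum_s(\H\z)_s^2\le\opnorm{\H}^2\|\z\|^2$. If you instead move the base point to make the weight independent, the vector becomes $s$-dependent, $\z^{(s)}$, and $\sum_s(\H\z^{(s)})_s^2$ is no longer $\|\H\z\|^2$.

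The obvious patches all fail. Coordinatewise H\"older with moment bounds on $(\H\z)_s$ forces a triangle inequality in $j$, giving $\sum_s(\sum_j|H_{js}|a_j)^2$. That brings in either $\opnorm{|\H|}$, the very obstacle you identified (of order $\sqrt n$ versus $\opnorm{\H}=O(1)$ for i.i.d.\ $\N(0,1/p)$ designs with $n\ll p$), or $\Fnorm{\H}^2$ in place of $\opnorm{\H}^2$, which loses a factor $n\wedge p$. Pulling out $\max_s(\beta_s-\beta_s')^2$ costs $\log p$. There is also no $L^4$--$L^2$ hypercontractivity for general subgaussian priors (sparse ones, say).

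The gap closes with an explicit decoupling step. Use $\Var(P)\le\sum_s\E\,\Var(P\mid\bbeta_{-s})$ together with $\Var(P\mid\bbeta_{-s})\le\E[(P-P|_{\beta_s=0})^2\mid\bbeta_{-s}]$. Then $P-P|_{\beta_s=0}=\sum_{\ell\ge1}\binom{k}{\ell}\beta_s^\ell\sum_jH_{js}^\ell(y^{(s)}_j)^{k-\ell}$, where $\y^{(s)}=\H\bbeta-\H_{\cdot s}\beta_s$ is independent of $\beta_s$.
\begin{itemize}
\item Any term carrying at least two factors $H_{js}$ needs no coherence. The bound $(\sum_jH_{js}^{a+1}v_j)^2\le\opnorm{\H}^{2a}\sum_jH_{js}^2v_j^2$ followed by H\"older gives $\opnorm{\H}^{2k-2}\Fnorm{\H}^2\le(n\wedge p)\opnorm{\X}^{4k}$ up to $(CMk)^{O(k)}$. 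This holds even with $s$-dependent $v$ and correlated weights.
\item For $\ell=1$, re-expand $y^{(s)}_j=(\H\bbeta)_j-H_{js}\beta_s$ inside the now weight-free expectation. The zeroth-order piece is $(\H\z)_s$ with the $s$-independent $\z$, where your operator-norm bound applies. Every other piece has two factors $H_{js}$ and is handled as above.
\end{itemize}

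The paper avoids this issue altogether by computing the variance exactly. Through the diagonal-free tensors $\tilde{\T}^{(d_1,\ldots,d_t)}$ and the centering $\beta_s^{d}-m_d$, each piece becomes a multilinear form in independent centered variables over distinct indices. Its variance is exactly a sum of squared coefficients times variances, bounded via Lemma~\ref{lem:Matrix_Product} and $\opnorm{\H^{\circ d}}\le\opnorm{\H}^d$. Absolute values are taken only on factors $H_{is}^{d}$ with $d\ge2$, which is the same mechanism as in the fix above. That route uses only the scalar moments of $g$ and yields explicit constants.

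Your repaired route avoids the partition bookkeeping and plausibly gives a better power of $k$. But it relies on moment bounds for the linear forms $(\H\bbeta)_j$, which carry an unspecified universal constant. So your closing claim that everything collects to exactly $8^kk^{5k}M^{2k}$ is asserted, not derived. This is harmless downstream, since Lemma~\ref{lem:variance_bound} absorbs it into $(C_0k)^{5k}$.
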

\begin{proof}
Using the diagonal-free decomposition (\ref{eq:T_expansion}) of the tensor $\T^{(k)} $ and then expanding using $\beta_{s_j}^{d_j} = \beta_{s_j}^{d_j} - m_{d_j} + m_{d_j}$ (where $m_{d_j}$ denotes the $d_j$th moment of $\beta_1$), we have
\begin{align}\label{eq:T_beta_expansion}
\notag\iprod{\T^{(k)}}{\bbeta^{\otimes k}} &= \sum_{t=1}^k \sum_{d_1 + \ldots + d_t = k: d_j\geq 1} \sum_{s_1,\ldots,s_t = 1}^p \tilde{T}^{(d_1,\ldots,d_t)}_{s_1,\ldots,s_t} \beta^{d_1}_{s_1}\ldots \beta^{d_t}_{s_t}\\
&= \sum_{t=1}^k \sum_{d_1+\cdots+d_t = k: d_j \geq 1} \sum_{\cI \subseteq [t]}\sum_{s_1,\dots,s_t = 1}^p \tT_{s_1 \dots s_t}^{(d_1,\dots,d_t)} \prod_{i \in \cI}(\beta_{s_{i}}^{d_{i}} - m_{d_i}) \cdot \prod_{i\notin \cI} m_{d_i}. %\label{eq:Tensor_T_expansion}
\end{align}
We proceed to bound the variance of the innermost summation, which, up to a relabeling, is of the following form: for some $u\equiv |\cI|\in[t]$, 
\begin{align}\label{eq:var_typical}
\notag&\Var \Big(\sum_{s_1,\dots,s_t = 1}^p \tT_{s_1 \dots s_t}^{(d_1,\dots,d_t)} (\beta_{s_{1}}^{d_{1}} - m_{d_1}) \cdots (\beta_{s_{u}}^{d_{u}} - m_{d_u}) m_{d_{u+1}}\cdots m_{d_t}\Big)\\
\notag&= m_{d_{u+1}}^2 \cdots m_{d_t}^2 \Var \Big(\sum_{s_1,\dots,s_u=1}^p \Big(\sum_{s_{u+1},\dots,s_t=1}^p \tT_{s_1 \dots s_t}^{(d_1,\dots,d_t)}\Big) (\beta_{s_1}^{d_1} - m_{d_1}) \cdots (\beta_{s_u}^{d_u} - m_{d_u})\Big) \\
&= \Var(\beta_{1}^{d_1}) \cdots \Var(\beta_{1}^{d_u}) \, m_{d_{u+1}}^2 \cdots m_{d_t}^2 \sum_{s_1,\dots,s_u=1}^p \Big(\sum_{s_{u+1},\dots,s_t=1}^p \tT_{s_1 \dots s_t}^{(d_1,\dots,d_t)}\Big)^2,
\end{align}
and we note that, since $m_1\equiv 0$, this term is 0 unless $d_{u+1},\ldots, d_t \geq 2$.
%where the last line follows from Lemma \ref{lem:diag_free_variance}.
% \nb{We need the assumption $ m_1 = 0 $ to obtain the desired variance bound.}
% Recall the notation $ H^{(d)} := H^{\circ d} $ and the property $ \opnorm{H^{(d)}} \leq \opnorm{H}^d = \opnorm{X}^{2d} $. It suffices to control
% \[
% \sum_{s_1,\dots,s_u} \pth{\sum_{s_{u+1},\dots,s_t} \tT_{s_1 \dots s_t}^{(d_1,\dots,d_t)}}^2 = \sum_{1 \leq s_1 < \cdots < s_u} \pth{ \sum_{i=1}^p H^{(d_1)}_{i s_1} \cdots H^{(d_u)}_{i s_u} \sum_{(s_u) < s_{u+1} < \cdots < s_t <= p} H^{(d_{u+1})}_{is_{u+1}} \cdots H^{(d_t)}_{is_t} }^2.
% \]
%Thanks to the assumption $ m_1 = 0 $, only terms with $ d_{u+1}, \dots, d_t \geq 2 $ make non-trivial contribution to the variance. 
Next note that, the $d$th Hadamard product $\H^{(d)} = \H\circ\ldots\circ \H$ satisfies\footnote{A more general result than \prettyref{eq:Hadamardop} is $\opnorm{A\circ B} \leq \opnorm{A}\opnorm{B}$. Indeed, for any unit vector $x$,
$x^\top (A\circ B) x = 
\text{vec}(\diag(x))^\top (A\otimes B) \text{vec}(\diag(x))$, where $A\otimes B$ is the Kronecker product, so $\opnorm{A\circ B} \leq \opnorm{A\otimes B} = \opnorm{A}\opnorm{B}$.
} 
\begin{equation}
    \pnorm{\H^{(d)}}{\op}\leq \pnorm{\H}{\op}^d.
    \label{eq:Hadamardop}
\end{equation}
Therefore
\begin{align*}
&{k\choose d_1,\ldots,d_t}^{-2}\sum_{s_1,\dots,s_u} \Big(\sum_{s_{u+1},\dots,s_t} \tT_{s_1 \dots s_t}^{(d_1,\dots,d_t)}\Big)^2\\
&= \sum_{1 \leq s_1 < \cdots < s_u} \Big( \sum_{i=1}^p  \sum_{(s_u) < s_{u+1} < \cdots < s_t \leq p} H^{d_1}_{i,s_1} \cdots H^{d_u}_{i,s_u}H^{d_{u+1}}_{i,s_{u+1}} \cdots H^{d_t}_{i,s_t}\Big)^2\\
&= \sum_{1 \leq s_1 < \ldots < s_u}\Big(\sum_{i=1}^p H^{ d_1}_{i,s_1}\ldots H^{d_u}_{i,s_u}\underbrace{\sum_{(s_u) < s_{u+1} < \ldots < s_t\leq p} H^{d_{u+1}}_{i,s_{u+1}} \ldots H^{ d_{t}}_{i,s_{t}}}_{\lambda_{i;s_u}}\Big)^2\\
&\leq \sum_{s_1,\ldots,s_u = 1}^p \Big(\sum_{i=1}^p \lambda_{i;s_u}H^{ d_1}_{i,s_1}\ldots H^{d_u}_{i,s_u} \Big)^2 \\
&\stackrel{(*)}{\leq} \sum_{i=1}^p \max_{j\in[p]}\lambda_{i;j}^2 \cdot \pnorm{\H^{(d_1)}}{\op}^2\ldots \pnorm{\H^{(d_u)}}{\op}^2 \leq \pnorm{\X}{\op}^{4(d_1+\ldots+d_u)} \cdot \sum_{i=1}^p \max_{j\in[p]}\lambda_{i;j}^2,
\end{align*}
%\nbwu{what's semicolon in $\lambda_{i;j}$? Check everywhere}
%\hw{This could be also bounded by $ \opnorm{\H^{(d_1)}}^2 \pnorm{\H^{(d_2)}}{\infty}^2 \dots \pnorm{\H^{(d_u)}}{\infty}^2 \sum_{i=1}^p \max_{j \in [p]} \lambda_{i;j}^2 $. Note that $ \pnorm{\H^{(d)}}{\infty}^2 = \max_{i \in [p]} \sum_{j=1}^p \H_{ij}^{2d} \leq \max_{i \in [p]} (\sum_{j=1}^p \H_{ij}^2)^d = \max_{i \in [p]} \norm{\H_{i \cdot}}^{2d} = \pnorm{\H}{\infty}^{2d} $. Thus, the term above can be further bounded by $ \opnorm{\H}^{2d_1} \pnorm{\H}{\infty}^{2(d_2+\dots+d_u)} \sum_{i=1}^p \max_{j\in[p]}\lambda_{i;j}^2 $}
where $(*)$ follows from Lemma \ref{lem:Matrix_Product}, and the $j$ in $\lambda_{i;j}$ denotes the lower range of the summation over $s_{u+1},\ldots,s_t$. It remains to notice that: for $d_{u+1},\ldots,d_t \geq 2$, we have the following bound uniform in $j$:
\begin{align}\label{eq:lambda_bound}
\notag|\lambda_{i;j}| &= \Big|\sum_{j< s_{u+1}<\ldots < s_t} H^{d_{u+1}}_{i,s_{u+1}}\ldots H_{i,s_t}^{d_t}\Big|\\
\notag&\leq \pnorm{\X}{\op}^{2((d_{u+1}-2)+\ldots+(d_t-2))}\sum_{j< s_{u+1}<\ldots < s_t} H^2_{i,s_{u+1}}\ldots H_{i,s_t}^2\\
\notag&\leq \pnorm{\X}{\op}^{2((d_{u+1}-2)+\ldots+(d_t-2))} \pnorm{\X}{\op}^4 \cdot \sum_{j < s_{u+1} < \ldots < s_{t-1}}H^2_{i,s_{u+1}}\ldots H_{i,s_{t-1}}^2\\
&\leq \ldots \leq \pnorm{\X}{\op}^{2(d_{u+1}+\ldots + d_t-2)}\sum_{j<s_{u+1}} H_{i,s_{u+1}}^2 \leq \pnorm{\X}{\op}^{2(d_{u+1}+\ldots + d_t-2)} \pnorm{\H_{i,\cdot}}{}^2,
\end{align}
%\hw{This could also be bounded by $\pnorm{\H}{\infty}^{2(d_{u+1} + \dots + d_{t})}$}
which implies that
\begin{align*}
&\sum_{s_1,\dots,s_u} \Big(\sum_{s_{u+1},\dots,s_t} \tT_{s_1 \dots s_t}^{(d_1,\dots,d_t)}\Big)^2 \leq {k\choose d_1,\ldots,d_t}^2 \pnorm{\X}{\op}^{4k-8}\sum_{i=1}^p \pnorm{\H_{i,\cdot}}{}^4\\
&\leq {k\choose d_1,\ldots,d_t}^2 \pnorm{\X}{\op}^{4k-4}\sum_{i=1}^p \pnorm{\H_{i,\cdot}}{}^2 = {k\choose d_1,\ldots,d_t}^2 \pnorm{\X}{\op}^{4k-4} \pnorm{\H}{F}^2 \leq k^{2k}(n\wedge p)\pnorm{\X}{\op}^{4k}.
\end{align*}
Recall that $K_d = M^d d^{d/2}$ and $m_d(g) \leq K_d$. Then using $\Var(\beta_1^d)\leq m_{2d}(g) \leq K_{2d}$, the above display further implies that the typical term in (\ref{eq:var_typical}) can be bounded by
\begin{align*}
&\Var \Big(\sum_{s_1,\dots,s_t = 1}^p \tT_{s_1 \dots s_t}^{(d_1,\dots,d_t)} (\beta_{s_{1}}^{d_{1}} - m_{d_1}) \cdots (\beta_{s_{u}}^{d_{u}} - m_{d_u}) m_{d_{u+1}}\cdots m_{d_t}\Big) \\
&\leq \prod_{i=1}^u M^{2d_i}(2d_i)^{d_i} \cdot \prod_{i = u+1}^t M^{2d_i}d_i^{d_i} \cdot k^{2k} (n\wedge p)\pnorm{\X}{\op}^{4k}\leq 2^kk^{3k}M^{2k}(n\wedge p)\pnorm{\X}{\op}^{4k},
\end{align*}
where we apply the fact that $\max_{d_1+\ldots + d_t \leq k} d_1^{d_1}\ldots d_t^{d_t} = (k/t)^k \leq k^k$. 
% Note that for $n$ not necessarily independent variables $X_1,\ldots,X_n$, we have
% \begin{align*}
% \Var\Big(\sum_{i=1}^n X_i\Big) = \E \Big(\sum_{i=1}^n (X_i - \E[X_i])\Big)^2 \leq \E\Big[n \cdot \sum_{i=1}^n (X_i - \E[X_i])^2\Big] = n \cdot\sum_{i=1}^n \Var(X_i).
% \end{align*}

Finally, applying Jensen's inequality to (\ref{eq:T_beta_expansion}) and noting that the total number of summands satisfies $\sum_{t=1}^k \sum_{d_1+\cdots+d_t = k, d_j \geq 1} \sum_{\cI \subseteq [t]} 1 \leq k^k 2^k$, we have
\begin{align*}
\Var\Big(\iprod{\T^{(k)}}{\bbeta^k}\Big) \leq (k^k 2^k)^2 \cdot 2^kk^{3k}M^{2k}(n\wedge p)\pnorm{\X}{\op}^{4k} = 8^k k^{5k}M^{2k}(n\wedge p)\pnorm{\X}{\op}^{4k},
\end{align*}
as desired.
\end{proof}

%\begin{lemma}\label{lem:diag_free_variance}
%For $ i \in [p] $, let $ (u_i^{(1)}, \dots, u_i^{(m)}) $ be a set of iid mean-zero random vectors in $ \R^m $. Let $ T \in (\R^p)^{\otimes m} $ be an upper triangular tensor (i.e., $T_{s_1,\ldots,s_m} = 0$ unless $1 \leq s_1 < \ldots < s_m \leq p$). Then,
%\begin{align*}
%\Var \Big(\sum_{i_1,\dots,i_m = 1}^p T_{i_1,\dots,i_m} u_{i_1}^{(1)} \cdots u_{i_m}^{(m)}\Big) = \sum_{i_1,\dots,i_m = 1}^p T_{i_1,\dots,i_m}^2 \prod_{j=1}^m \E[(u_1^{(j)})^2].
%\end{align*}
%\end{lemma}
%\begin{proof}
%By the upper-triangular assumption, the expression is mean-zero and the variance equals the second moment.
%\begin{align*}
%\Var \Big(\sum_{i_1,\dots,i_m = 1}^p T_{i_1,\dots,i_m} u_{i_1}^{(1)} \cdots u_{i_m}^{(m)}\Big) = \E \Big[\Big(\sum_{i_1,\dots,i_m = 1}^p T_{i_1,\dots,i_m} u_{i_1}^{(1)} \cdots u_{i_m}^{(m)}\Big)^2\Big].
%\end{align*}
%By independence of the random vectors, we further obtain
%\begin{align*}
%\E \Big[\Big(\sum_{i_1,\dots,i_m = 1}^p T_{i_1,\dots,i_m} u_{i_1}^{(1)} \cdots u_{i_m}^{(m)}\Big)^2\Big] = \sum_{i_1,\dots,i_m = 1}^p T_{i_1,\dots,i_m}^2 \prod_{j=1}^m \E[(u_1^{(j)})^2],
%\end{align*}
%as desired.
%\end{proof}

\begin{lemma}\label{lem:Matrix_Product}
Let $ A^{(1)},\dots,A^{(t)} $ be $m\times p$ matrices. For any set of real numbers $\{\lambda_{ij}$, $i\in[p], j\in[m]\}$, it holds that
\begin{align*}
\sum_{s_1,\dots,s_t=1}^m \Big(\sum_{i=1}^p \lambda_{i,s_t} A^{(1)}_{s_1,i} \cdots A^{(t)}_{s_t,i}\Big)^2 \leq \pnorm{A^{(1)}}{\op}^2 \cdots \pnorm{A^{(t)}}{\op}^2 \sum_{i=1}^p \max_{j \in [m]} \lambda_{ij}^2.
\end{align*}
\end{lemma}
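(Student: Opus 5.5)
We prove Lemma \ref{lem:Matrix_Product} by induction on $t$, peeling off one matrix at a time with the operator-norm bound $\sum_{s}(\sum_i A_{s,i} v_i)^2 = \pnorm{A v}{}^2 \le \pnorm{A}{\op}^2 \pnorm{v}{}^2$.

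\textbf{Base case $t=1$.} The left side is $\sum_{s_1=1}^m (\sum_i \lambda_{i,s_1} A^{(1)}_{s_1,i})^2$. Because $\lambda$ depends on the output index $s_1$, this is not literally $\pnorm{A^{(1)}v}{}^2$ for a single vector $v$. A naive bound gives a factor of $m$, so instead we use the rank-one trick $\sum_s (\sum_i \lambda_{i,s} a_{s,i})^2 = \sum_s \langle a_s, \lambda_{\cdot,s}\rangle^2$. The clean way to handle this is to note that in all applications (e.g.\ the proof of Lemma \ref{lem:var_of_exp}) the outermost index carrying $\lambda$ is $s_t$, and for $t\geq 2$ we can exploit the tensor structure. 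So we set up the induction for $t\ge 2$ with the $\lambda$-dependent index handled last.

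\textbf{Induction over $s_1,\dots,s_{t-1}$.} Fix $s_t$ and define $w_i := \lambda_{i,s_t}A^{(t)}_{s_t,i}$. Since $\lambda_{i,s_t}$ does not involve $s_1,\dots,s_{t-1}$, the vector $w$ depends only on $s_t$. The inner sum over $(s_1,\dots,s_{t-1})$ is $\sum_{s_1,\dots,s_{t-1}}(\sum_i w_i A^{(1)}_{s_1,i}\cdots A^{(t-1)}_{s_{t-1},i})^2$. This is the squared norm of $(A^{(1)}\odot\cdots\odot A^{(t-1)})w$, where $\odot$ is the column-wise Khatri--Rao product, whose column $i$ is $A^{(1)}_{\cdot,i}\otimes\cdots\otimes A^{(t-1)}_{\cdot,i}$. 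The Khatri--Rao product is a column submatrix of the Kronecker product $A^{(1)}\otimes\cdots\otimes A^{(t-1)}$ (restricted to diagonal input multi-indices), so its operator norm is at most $\prod_{r<t}\pnorm{A^{(r)}}{\op}$. This bounds the inner sum by $\prod_{r<t}\pnorm{A^{(r)}}{\op}^2 \sum_i \lambda_{i,s_t}^2 (A^{(t)}_{s_t,i})^2$.

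\textbf{Summing over $s_t$.} This is the remaining step. We bound $\lambda_{i,s_t}^2\le \max_j\lambda_{ij}^2$, giving $\sum_i \max_j\lambda_{ij}^2 \sum_{s_t}(A^{(t)}_{s_t,i})^2 = \sum_i \max_j\lambda_{ij}^2\,\pnorm{A^{(t)}_{\cdot,i}}{}^2$. A column norm is at most $\pnorm{A^{(t)}}{\op}$, which finishes the proof. For $t=1$ the same computation, with an empty Khatri--Rao product (whose inner sum is $w_i$ summed over $i$), needs care. In that case we apply Cauchy--Schwarz differently, or we note that the lemma is only invoked with the structure above; we would verify the $t=1$ case directly via the same column-norm argument after writing the sum as $\sum_s(\sum_i \lambda_{i,s}A_{s,i})^2$. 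Making this rigorous (it may require $\lambda$-weights to be absorbed via a diagonal scaling) is the only delicate point.

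The main obstacle is justifying the Khatri--Rao operator-norm bound cleanly. We would prove it by viewing $\sum_{s}(\sum_i w_i\prod_r A^{(r)}_{s_r,i})^2$ as $\pnorm{(A^{(1)}\otimes\cdots\otimes A^{(t-1)})\,\mathrm{vec}(D_w)}{}^2$, where $D_w$ is the diagonal tensor with entries $w_i$ at $(i,\dots,i)$. Then $\pnorm{\mathrm{vec}(D_w)}{}=\pnorm{w}{}$, mirroring the footnote argument used for \prettyref{eq:Hadamardop}.
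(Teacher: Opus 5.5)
Your argument for $t\ge 2$ is complete, but the $t=1$ case you flag as ``the only delicate point'' cannot be made rigorous, by diagonal rescaling or any other device, because the inequality is false there. Take $m=p=2$, $A^{(1)}=\frac{1}{\sqrt2}\begin{pmatrix}1&1\\1&-1\end{pmatrix}$ (so $\opnorm{A^{(1)}}=1$), and $\lambda_{i,s}=\mathrm{sign}(A^{(1)}_{s,i})$. Each inner sum $\sum_i\lambda_{i,s}A^{(1)}_{s,i}$ equals $\sqrt2$, so the left side is $4$ while the right side is $2$. A normalized $p\times p$ Hadamard matrix gives $p^2$ against $p$. The reason is that when $\lambda$ varies with the row index of the only matrix present, the sum is no longer $\|A^{(1)}v\|^2$ for a fixed $v$, and the operator norm alone cannot control it.

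The right repair is to assume $t\ge 2$ and drop the induction framing; your $t\ge 2$ argument is direct and never uses an inductive hypothesis. The paper's proof has the same implicit restriction. Its vector $\Lambda^{(s_2,\dots,s_t)}$, with entries $\lambda_{i,s_t}A^{(2)}_{s_2,i}\cdots A^{(t)}_{s_t,i}$, is free of $s_1$ only when $t\ge 2$. Your fallback that the lemma is only invoked in the good configuration is also not accurate. In the proof of Lemma~\ref{lem:var_of_exp} it is applied to $u=|\mathcal{I}|$ matrices, and $u=1$ with $\lambda_{i;s_1}$ genuinely depending on $s_1$ does occur. That case therefore needs a separate argument there rather than an appeal to this lemma.

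For $t\ge 2$, your route differs from the paper's only in how the sum over $s_1,\dots,s_{t-1}$ is controlled.
\begin{itemize}
\item You bound the whole Khatri--Rao product $A^{(1)}\odot\cdots\odot A^{(t-1)}$ at once. This is valid: it equals $(A^{(1)}\otimes\cdots\otimes A^{(t-1)})S$ with $S\colon e_i\mapsto e_i^{\otimes(t-1)}$ an isometry, which is exactly your $\mathrm{vec}(D_w)$ computation.
\item The paper instead fixes $s_2,\dots,s_t$ and uses the operator norm only for $A^{(1)}$. It then sums out $s_2,\dots,s_{t-1}$ exactly, producing column norms $\|A^{(r)}_{\cdot i}\|$ that are bounded by $\opnorm{A^{(r)}}$ only at the end.
\end{itemize}
The paper's version avoids Kronecker products entirely and yields the sharper intermediate bound $\opnorm{A^{(1)}}^2\sum_i\max_j\lambda_{ij}^2\prod_{r=2}^{t}\|A^{(r)}_{\cdot i}\|^2$. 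Yours expresses the same estimate in tensor language and pays the full operator norm for every $A^{(r)}$ with $r<t$, which is harmless for the stated bound.
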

\begin{proof}
For any $ s_2,\dots,s_t \in [m] $, let $ \Lambda^{(s_2,\dots,s_t)} = (\{\lambda_{i,s_t} A^{(2)}_{s_2, i} \cdots A^{(t)}_{s_t,i}\}_{i=1}^p) \in \R^{p} $. Then,
\begin{align*}
\sum_{s_1,\dots,s_t=1}^m \Big(\sum_{i=1}^p \lambda_{i,s_t} A^{(1)}_{s_1,i} \cdots A^{(t)}_{s_t,i}\Big)^2 &= \sum_{s_2,\dots,s_t = 1}^m \biggpnorm{\sum_{i=1}^p \lambda_{i,s_t}A^{(2)}_{s_2,i} \cdots A^{(t)}_{s_t,i} \cdot A^{(1)}_{\cdot i}}{}^2\\
&= \sum_{s_2,\dots,s_t = 1}^m \pnorm{A^{(1)} \Lambda^{(s_2,\dots,s_t)} }{}^2\\
&\leq \pnorm{A^{(1)}}{\op}^2 \sum_{i=1}^p \sum_{s_2,\dots,s_t = 1}^m  \lambda_{i,s_t}^2 (A^{(2)}_{s_2,i} \cdots A^{(t)}_{s_t,i})^2\\
&= \pnorm{A^{(1)}}{\op}^2 \sum_{i=1}^p \pnorm{A^{(2)}_{\cdot i}}{}^2 \cdots \pnorm{A^{(t-1)}_{ \cdot i}}{}^2 \sum_{s_t=1}^m \lambda_{i,s_t}^2 (A^{(t)}_{s_t,i})^2\\
&\leq \pnorm{A^{(1)}}{\op}^2 \cdots \pnorm{A^{(t)}}{\op}^2 \sum_{i=1}^p \max_{j \in [m]} \lambda_{ij}^2,
\end{align*}
as desired.
%\hw{This could also be bounded by $ \opnorm{A^{(1)}}^2 \pnorm{A^{(2)}}{\infty}^2 \dots \pnorm{A^{(t)}}{\infty}^2 \sum_{i=1}^p \max_{j \in [m]} \lambda_{ij}^2 $}
\end{proof}

\begin{lemma}\label{lem:exp_of_var}
Recall $F_k(\cdot)$ in (\ref{eq:F_k}) and assume $\y = \X\bbeta + \beps$ with $\beps\sim \N(0,\sigma^2\I_n)$. For any integer $k\geq 2$, it holds that
\begin{align*}
\Var(F_k(\y)|\bbeta) \leq \sum_{\ell=1}^{k-1} {k\choose\ell}^2\ell!\sigma^{2\ell}\pnorm{\X}{\op}^{2\ell}\sum_{j=1}^p (\H\bbeta)_j^{2(k-\ell)} + k!\sigma^{2k}(n\wedge p)\pnorm{\X}{\op}^{2k}. 
\end{align*}
Moreover, if $\beta_1,\ldots,\beta_p$ are i.i.d.\ centered and $M$-subgaussian for some $M > 0$ and $\pnorm{\X}{\op}\geq c_0\sigma$ for some $c_0 > 0$, then
\begin{align*}
\E_{\bbeta}[\Var(F_k(\y)|\bbeta)]\leq \Big(\frac{CMk}{c_0}\Big)^{2k+2}(n\wedge p)\pnorm{\X}{\op}^{4k},
\end{align*}
where $C > 0$ is universal.
\end{lemma}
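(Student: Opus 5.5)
The plan is to condition on $\bbeta$, so that the only randomness is the Gaussian noise, and to expand each Hermite term by the Hermite translation formula. Fix $\bbeta$ and write $\btheta=\X\bbeta$. Set $\a_j=\x_j/\pnorm{\x_j}{}$, $\mu_j=\a_j^\top\btheta/\sigma$ and $w_j=\a_j^\top\beps/\sigma$. Then $(w_1,\dots,w_p)$ is a centered Gaussian vector with unit variances and correlations $\rho_{jj'}=\a_j^\top\a_{j'}$, and $F_k(\y)=\sum_j(\sigma\pnorm{\x_j}{})^kH_k(\mu_j+w_j)$. I will use
\[
H_k(\mu+w)=\sum_{\ell=0}^k\binom{k}{\ell}\mu^{k-\ell}H_\ell(w).
\]
I will also use the Mehler-type orthogonality relation for standard Gaussians with correlation $\rho$:
\[
\Cov\big(H_\ell(w_j),H_{\ell'}(w_{j'})\big)=\delta_{\ell\ell'}\,\ell!\,\rho_{jj'}^\ell ,\qquad \ell,\ell'\geq 1.
\]
The $\ell=0$ term is deterministic and drops out. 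This gives the exact formula
\[
\Var(F_k(\y)\mid\bbeta)=\sum_{\ell=1}^k\binom{k}{\ell}^2\ell!\sum_{j,j'}c_jc_{j'}\mu_j^{k-\ell}\mu_{j'}^{k-\ell}\rho_{jj'}^\ell,\qquad c_j=(\sigma\pnorm{\x_j}{})^k .
\]

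Next I simplify the inner sum. We have $c_j\mu_j^{k-\ell}=\sigma^\ell\pnorm{\x_j}{}^\ell(\x_j^\top\btheta)^{k-\ell}$, and $\pnorm{\x_j}{}^\ell\pnorm{\x_{j'}}{}^\ell\rho_{jj'}^\ell=H_{jj'}^\ell$. Hence the $\ell$-th inner sum equals $\sigma^{2\ell}\v^\top\H^{(\ell)}\v$, where $v_j=(\H\bbeta)_j^{k-\ell}$ and $\H^{(\ell)}$ is the Hadamard power. For $1\le\ell\le k-1$, the Hadamard bound \eqref{eq:Hadamardop} gives $\v^\top\H^{(\ell)}\v\le\pnorm{\X}{\op}^{2\ell}\sum_j(\H\bbeta)_j^{2(k-\ell)}$. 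For $\ell=k$ we have $\v=\ones$, so the term is $k!\,\sigma^{2k}A_k$. Arguing as in Lemma \ref{lem:gamma}, $A_k\le\pnorm{\H}{\op}^{k-2}\Fnorm{\H}^2\le(n\wedge p)\pnorm{\X}{\op}^{2k}$. Together these give the first claimed inequality.

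For the second claim I take expectation over $\bbeta$. Each $(\H\bbeta)_j=\H_{j\cdot}^\top\bbeta$ is a linear form in i.i.d.\ centered $M$-subgaussian variables, so it is subgaussian with constant $CM\pnorm{\H_{j\cdot}}{}$. Therefore $\E(\H\bbeta)_j^{2m}\le(CM\sqrt{m})^{2m}\pnorm{\H_{j\cdot}}{}^{2m}$ with $m=k-\ell$. Summing over $j$ and using $\pnorm{\H_{j\cdot}}{}\le\pnorm{\X}{\op}^2$,
\[
\sum_j\pnorm{\H_{j\cdot}}{}^{2m}\le\pnorm{\X}{\op}^{4(m-1)}\Fnorm{\H}^2\le(n\wedge p)\pnorm{\X}{\op}^{4m}.
\]
It remains to convert the powers of $\sigma$. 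Using $\sigma\le\pnorm{\X}{\op}/c_0$, we get $\sigma^{2\ell}\pnorm{\X}{\op}^{2\ell}\le c_0^{-2\ell}\pnorm{\X}{\op}^{4\ell}$. So every term is at most
\[
\binom{k}{\ell}^2\ell!\,c_0^{-2\ell}(CM\sqrt{k})^{2(k-\ell)}(n\wedge p)\pnorm{\X}{\op}^{4k},
\]
and the $\ell=k$ term is bounded the same way.

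The last step is to absorb the combinatorial factors. We have $\binom{k}{\ell}^2\le 4^k$, $\ell!\le k^k$, and there are $k$ terms, so the total is at most $(CMk/c_0)^{2k+2}(n\wedge p)\pnorm{\X}{\op}^{4k}$. This requires replacing $M$ and $1/c_0$ by $\max(M,1)$ and $\max(1/c_0,1)$, or else enlarging $C$ via the bound $M\gtrsim$ the standard deviation; that normalization is harmless. The only step needing care is the correlated-Hermite covariance identity, which is standard via generating functions, and the bookkeeping that turns $\rho^\ell$ times norms into the Hadamard power $\H^{(\ell)}$. That bookkeeping is what makes the operator-norm bound available despite the dependence among the $p$ projections.
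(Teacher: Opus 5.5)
Your proposal is correct and follows the paper's proof essentially step by step. The shared steps are: the Hermite translation/correlated-Hermite covariance identity (the paper's Lemma \ref{lem:hermite_cov}); rewriting the $\ell$-th term as a quadratic form in the Hadamard power $\H^{\circ\ell}$, bounded via \eqref{eq:Hadamardop}; the separate $\ell=k$ bound $A_k\le\pnorm{\H}{\op}^{k-2}\Fnorm{\H}^2$; and subgaussian moments of $(\H\bbeta)_j$ combined with $\sigma\le\pnorm{\X}{\op}/c_0$. On your normalization remark, only the first fix works, namely replacing $M$ by $\max(M,1)$ and $c_0$ by $\min(c_0,1)$ (the latter is the paper's convention $c_0\in(0,1)$). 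The $\ell=k$ term $k!\,\sigma^{2k}A_k$ does not depend on the prior at all (take $\bbeta\equiv 0$), so relating $M$ to the prior's standard deviation cannot absorb it; the paper's final display tacitly makes the same normalization.
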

\begin{proof}
By Lemma \ref{lem:hermite_cov}, we have
\begin{align*}
\Var(F_k(\y) | \bbeta) &= \sum_{j,j'=1}^p (\sigma \pnorm{\x_j}{})^k (\sigma\pnorm{\x_j'}{})^k \Cov\Big(H_k\Big(\frac{\x_j^\top \y}{\sigma\pnorm{\x_j}{}}\Big),H_k\Big(\frac{\x_{j'}^\top \y}{\sigma\pnorm{\x_{j'}}{}}\Big)\Big)\\
&= \sum_{j,j'=1}^p \sum_{\ell=1}^k {k\choose \ell}^2 \ell! (\sigma \pnorm{\x_j}{})^k (\sigma\pnorm{\x_j'}{})^k\Big(\frac{\x_j^\top \btheta}{\sigma\pnorm{\x_j}{}}\Big)^{k-\ell}\Big(\frac{\x_{j'}^\top \btheta}{\sigma\pnorm{\x_{j'}}{}}\Big)^{k-\ell}\Big(\frac{\x_j^\top \x_{j'}}{\pnorm{\x_j}{}\pnorm{\x_{j'}}{}}\Big)^\ell\\
&= \sum_{\ell=1}^k{k\choose \ell}^2 \ell!\sigma^{2\ell} \cdot \sum_{j,j'=1}^p (\x_j^\top \btheta)^{k-\ell}(\x_{j'}^\top\btheta)^{k-\ell} (\x_j^\top \x_{j'})^\ell.
\end{align*}
Let us bound the inner summation for each $\ell\in[k]$. If $\ell = k \geq 2$, we have 
\begin{align*}
\big|\sum_{j,j'=1}^p (\x_j^\top \x_{j'})^k\big| \leq \pnorm{\X}{\op}^{2(k-2)} \sum_{j,j'=1}^p (\x_j^\top \x_{j'})^2 = \pnorm{\X}{\op}^{2(k-2)} \pnorm{\H}{F}^2 \leq (n\wedge p)\pnorm{\X}{\op}^{2k}. 
\end{align*}
For $\ell \leq k-1$, $(\H\bbeta)^{\circ \alpha}\in\R^p$ denotes the $\alpha$th Hadamard product of $\H\bbeta$, then using $\x_j^\top \btheta = (\H \bbeta)_j$,
\begin{align*}
&\Big|\sum_{j,j'=1}^p (\x_j^\top \btheta)^{k-\ell}(\x_{j'}^\top\btheta)^{k-\ell} (\x_j^\top \x_{j'})^\ell\Big|= \Big|\sum_{j,j'=1}^p [(\H\bbeta)^{\circ(k-\ell)}]_j [(\H\bbeta)^{\circ(k-\ell)}]_{j'} [\H^{\circ \ell}]_{j,j'} \Big|\\
&=(\H\bbeta)^{\circ(k-\ell)\top}  \H^{\circ \ell} (\H\bbeta)^{\circ(k-\ell)}\leq \pnorm{\H^{\circ \ell}}{\op}\pnorm{(\H\bbeta)^{\circ (k-\ell)}}{}^2 \leq \pnorm{\X}{\op}^{2\ell}\sum_{j=1}^p (\H\bbeta)_j^{2(k-\ell)},
\end{align*}
leading to the desired bound for $\Var(F_k(\y)|\bbeta)$. To bound its expectation over $\bbeta$, note that for each $j\in[p]$, $(\H\bbeta)_j$ is subgaussian with constant $CM\pnorm{\H_{j\cdot}}{}$ for some universal $C > 0$ (in the following, $C > 0$ denotes a universal constant whose value changes from instance to instance). Hence the subgaussian assumption yields
\begin{align*}
\sum_{j=1}^p \E (\H \bbeta)_j^{2(k-\ell)} \leq (CM)^{2(k-
\ell)}\pnorm{\X}{\op}^{4(k-\ell-1)}\pnorm{\H}{F}^2 (2k)^k\leq (CM)^{2k}(2k)^k(n\wedge p)\pnorm{\X}{\op}^{4(k-\ell)},
\end{align*}
which further implies that 
%We claim that for each $\ell\in[k]$, $\Big|\sum_{j,j'=1}^p (\x_j^\top \btheta)^{k-\ell}(\x_{j'}^\top\btheta)^{k-\ell} (\x_j^\top \x_{j'})^\ell\Big| \leq p\pnorm{\X^\top\btheta}{\infty}^{2(k-\ell)}\pnorm{\X}{\op}^{2\ell}$. To see this, if $\ell = 1$, then
%\begin{align*}
%\Big|\sum_{j,j'=1}^p (\x_j^\top \btheta)^{k-\ell}(\x_{j'}^\top\btheta)^{k-\ell} (\x_j^\top \x_{j'})^\ell\Big| &= \biggpnorm{\sum_{j=1}^p (\x_j^\top\btheta)^{k-1} \x_j}{}^2 = \biggpnorm{\X\Big(\sum_{j=1}^p (\x_j^\top\btheta)^{k-1}\e_j\Big)^2}{}\\
%&\leq \pnorm{\X}{\op}^2 \sum_{j=1}^p (\x_j^\top\btheta)^{2(k-1)} \leq p\pnorm{\X}{\op}^2\pnorm{\X^\top\btheta}{\infty}^{2(k-1)}.
%\end{align*}
%If $\ell \geq 2$, then
%\begin{align*}
%\Big|\sum_{j,j'=1}^p (\x_j^\top \btheta)^{k-\ell}(\x_{j'}^\top\btheta)^{k-\ell} (\x_j^\top \x_{j'})^\ell\Big| &\leq \sum_{j,j'=1}^p |\x_j^\top \btheta|^{k-\ell}|\x_{j'}^\top \btheta|^{k-\ell}|\x_j^\top \x_{j'}|^2\pnorm{\x_j}{}^{\ell-2}\pnorm{\x_{j'}}{}^{\ell-2}\\
%&\leq \pnorm{\X^\top \btheta}{\infty}^{2(k-\ell)}\pnorm{\X}{\op}^{2(\ell-4)} \pnorm{\X^\top \X}{F}^2 \leq p\pnorm{\X^\top\btheta}{\infty}^{2(k-\ell)}\pnorm{\X}{\op}^{2\ell},
%\end{align*}
%proving the claim. 
\begin{align*}
\E_{\bbeta}\big[\Var(F_k|\bbeta)\big] &\leq (n\wedge p)(CM)^{2k} (2k)^k \sum_{\ell=1}^k {k\choose \ell}^2 \ell! \sigma^{2\ell} \pnorm{\X}{\op}^{4k-2\ell}\\
&\leq (n\wedge p)(CM)^{2k}k^{2k}8^k\sum_{\ell=1}^k \sigma^{2\ell}\pnorm{\X}{\op}^{4k-2\ell}\\
&\leq (n\wedge p)(CM)^{2k}k^{2k}8^k\pnorm{\X}{\op}^{4k}\frac{\sigma^2}{\pnorm{\X}{\op}^2}\sum_{\ell=0}^{k-1}\Big(\frac{\sigma}{\pnorm{\X}{\op}}\Big)^{2\ell}\\
&\leq (n\wedge p)(CM)^{2k}k^{2k+1}8^k\sigma^2\pnorm{\X}{\op}^{4k-2}\Big(1\vee \frac{\sigma}{\pnorm{\X}{\op}}\Big)^{2(k-1)}\\
&\leq (n\wedge p)\Big(\frac{CMk}{c_0}\Big)^{2k+2}\pnorm{\X}{\op}^{4k},
\end{align*}
where we apply Assumption \ref{assump:design} in the last step.
%\hw{We can also write the bound as
%\[
%\E_{\bbeta}\Var(F_k|\bbeta) \leq (CM)^{2k} k^{3k + 1} (n %\wedge p) \opnorm{\X}^{4k} \frac{\sigma^2}{\opnorm{\X}^2} \Big( \frac{\sigma}{\opnorm{\X}} \vee 1 \Big)^{2k-2},
%\]
%so that when $ \sigma = 0 $ we do have $ %\E_{\bbeta}\Var(F_k|\bbeta) = 0 $.
%}
%If $\bbeta_1,\ldots,\bbeta_p$ are iid $M$-subgaussian (\ys{used centeredness here}), then each $(\X^\top \btheta)_j = (\X^\top \X \bbeta)_j$, $j\in[p]$ is $M'$-subgaussian with $M' \leq C\pnorm{(\X^\top \X)_{j\cdot}}{} \leq C\pnorm{\X}{\op}^2$, hence $\E\pnorm{\X^\top\btheta}{\infty}^{2k} \leq C^k\pnorm{\X}{\op}^{4k}(\log p)^k$, yielding the final conclusion $\E_{\beta}[\Var(F_k|\bbeta)] \leq p(C\log p)^k k!(1+\sigma^2)^k(\pnorm{\X}{\op}^{2k} \vee \pnorm{\X}{\op}^{4k})$.
\end{proof}

\begin{lemma}[Hermite polynomials of correlation Gaussians]\label{lem:hermite_cov}
Let $H_k$ denote the monic degree-$k$ Hermite polynomial. Let $(Z_1,Z_2)$ 
be joint normally distributed with $Z_1,Z_2\sim \N(0,1)$  and $\Expect[Z_1Z_2]=\rho \in [-1,1]$. Then for $k,k'\geq 1$,
\begin{align}
\Cov(H_k(Z_1),H_{k'}(Z_2)) = \bm{1}_{k=k'}\rho^kk!,
\label{eq:cov-Hermite1}
\end{align}
and for any $\theta_1,\theta_2\in\R$,
%\nbwu{Note that the sum below starts from 1. The old version starts from 0.}
\begin{align}
\Cov(H_k(\theta_1+Z_1),H_{k}(\theta_2+Z_2)) = \sum_{\ell=1}^k {k\choose \ell}^2 \ell! \cdot \theta_1^{k-\ell}\theta_2^{k-\ell}\rho^\ell.
\label{eq:cov-Hermite2}
\end{align}
\end{lemma}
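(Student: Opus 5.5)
The plan is to use the generating function of Hermite polynomials, which turns both identities into a computation with Gaussian moment generating functions. Recall that for the monic (probabilists') Hermite polynomials,
\[
e^{sx - s^2/2} = \sum_{k\geq 0} H_k(x)\frac{s^k}{k!}.
\]
For jointly Gaussian $(Z_1,Z_2)$ with unit variances and correlation $\rho$, we have
\[
\E\big[e^{sZ_1 - s^2/2}\, e^{uZ_2 - u^2/2}\big] = \exp\big(\tfrac{s^2+u^2+2\rho su}{2} - \tfrac{s^2+u^2}{2}\big) = e^{\rho su} = \sum_{k\ge0}\frac{(\rho su)^k}{k!}.
\]
Next, expand the left side as $\sum_{k,k'}\E[H_k(Z_1)H_{k'}(Z_2)]\,s^ku^{k'}/(k!k'!)$ and compare coefficients. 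This gives $\E[H_k(Z_1)H_{k'}(Z_2)] = \bm{1}_{k=k'}\rho^k k!$. Since $\E H_k(Z_i)=0$ for $k\ge1$, this is exactly \eqref{eq:cov-Hermite1}. The interchange of sum and expectation is justified by absolute convergence: the series in $s,u$ is dominated using $\E e^{|s||Z_1|+|u||Z_2|}<\infty$.

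For \eqref{eq:cov-Hermite2}, I would use the Appell property of Hermite polynomials, namely the binomial shift identity
\[
H_k(\theta+z) = \sum_{j=0}^k\binom{k}{j}\theta^{k-j}H_j(z).
\]
This follows from the generating function: $e^{s(\theta+z)-s^2/2} = e^{s\theta}e^{sz-s^2/2}$, and one then matches coefficients of $s^k$. Substituting this expansion for both factors, bilinearity of covariance and \eqref{eq:cov-Hermite1} give
\[
\Cov\big(H_k(\theta_1+Z_1),H_k(\theta_2+Z_2)\big) = \sum_{j,j'=0}^k\binom{k}{j}\binom{k}{j'}\theta_1^{k-j}\theta_2^{k-j'}\Cov\big(H_j(Z_1),H_{j'}(Z_2)\big).
\]
The $j=0$ or $j'=0$ terms vanish because $H_0\equiv1$ is constant. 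By \eqref{eq:cov-Hermite1}, only the diagonal terms $j=j'=\ell\ge1$ survive, and each contributes $\binom{k}{\ell}^2\ell!\,\theta_1^{k-\ell}\theta_2^{k-\ell}\rho^\ell$. Summing over $\ell=1,\dots,k$ gives the claim.

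No step is a real obstacle. The only point needing care is the edge cases $\rho=\pm1$, where $(Z_1,Z_2)$ is degenerate. The moment generating function computation is still valid there, since it only uses the joint Gaussian law (possibly singular), so no separate argument is needed.
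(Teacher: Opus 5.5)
Your proof is correct. For \eqref{eq:cov-Hermite1} it takes a different route from the paper; for \eqref{eq:cov-Hermite2} it is the same.

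The paper rewrites $\E[H_k(Z_1)H_{k'}(Z_2)]$ as an expectation over independent standard Gaussians weighted by the likelihood ratio $f_\rho/f_0$. It expands that ratio by Mehler's formula $\sum_{\ell\ge 0}\frac{\rho^\ell}{\ell!}H_\ell(x)H_\ell(y)$ and concludes by orthogonality of the $H_\ell$ in $L^2(\N(0,1))$. You instead compute the joint generating function $\E[e^{sZ_1-s^2/2}e^{uZ_2-u^2/2}]=e^{\rho su}$ and match coefficients, which is essentially the computation that proves Mehler's formula in the first place.

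Your version is more elementary and self-contained. The termwise interchange is justified by the explicit bound $\sum_{k\ge0}|H_k(x)|\,|s|^k/k!\le e^{|s||x|+s^2/2}$, which follows from the coefficient formula for $H_k$; this is the domination you allude to. Your argument also covers $\rho=\pm1$ with no extra work. The paper's argument presupposes a joint density $f_\rho$, which exists only for $|\rho|<1$, so the endpoints would need a separate continuity or direct check.

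What the paper's route buys is brevity once Mehler's formula is taken as known. It also exposes the spectral (Ornstein--Uhlenbeck) structure behind the identity, from which $\Cov(f(Z_1),g(Z_2))=\sum_{\ell\ge1}\frac{\rho^\ell}{\ell!}\E[f(W)H_\ell(W)]\,\E[g(W)H_\ell(W)]$ for general $f,g\in L^2(\N(0,1))$ follows directly.

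For \eqref{eq:cov-Hermite2}, both arguments expand via the shift identity $H_k(\theta+z)=\sum_{j}\binom{k}{j}\theta^{k-j}H_j(z)$ and use bilinearity of covariance. You derive the shift identity from the generating function, while the paper obtains it from $H_k(x)=\E_{W\sim\N(0,1)}[(x+iW)^k]$, a cosmetic difference.
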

\begin{proof}
We first prove \prettyref{eq:cov-Hermite1}.
Let $f_\rho(x,y)$ denote the joint density of $(Z_1,Z_2)$ and $f_0(x,y)=\varphi(x)\varphi(y)$, where $\varphi(\cdot)$ is the standard normal density.
It is well-known that the likelihood ratio $\frac{f_\rho}{f_0}$ (Mehler kernel) admits the following spectral decomposition:
\[
K(x,y) = 
\frac{f_\rho(x,y)}{f_0(x,y)}
= \sum_{k\geq 0} \frac{\rho^k}{k!} H_k(x)H_k(y).
\]
Let $W_1,W_2$ be i.i.d.\ standard normal. Then
\begin{align*}
 \Cov(H_k(Z_1),H_{k'}(Z_2)) 
 =   & ~ \Expect[H_k(Z_1)H_{k'}(Z_2)] \\
 =   & ~ \Expect[H_k(W_1)H_{k'}(W_2)K(W_1,W_2)] \\
 =   & ~ 
 \sum_{\ell\geq 0} \frac{\rho^\ell}{\ell!} \Expect[(H_k(W_1) H_\ell(W_1)] \Expect[H_{k'}(W_2)H_\ell(W_2)] \\
 = & ~ \begin{cases}
     \rho^k k! & k=k'\\
     0 & k\neq k'\\
 \end{cases},
\end{align*}
where we used the orthogonality 
$\Expect[H_k(W_1) H_\ell(W_1)] = \bm{1}_{k=\ell} k!$.

Finally, \prettyref{eq:cov-Hermite2} follows  \prettyref{eq:cov-Hermite1} and the identity that $H_k(x+y) = \sum_{\ell=0}^k {k\choose \ell}x^{k-\ell}H_{\ell}(y)$,
which is a consequence of $H_k(x) = \Expect_{W\sim\N(0,1)}[(x+ i W)^k]$.    
% Next to show \prettyref{eq:cov-Hermite2}, recall the identity that $H_k(x+y) = \sum_{\ell=0}^k {k\choose \ell}x^{k-\ell}H_{\ell}(y)$ 
% which is a consequence of $H_k(x) = \Expect_{W\sim\N(0,1)}[(x+ i W)^k]$.    
% Then
% \prettyref{eq:cov-Hermite2} follows applying  \prettyref{eq:cov-Hermite1} to
% \[
% \Cov(H_k(\theta_1+Z_1),H_{k}(\theta_2+Z_2)) 
% % = \Expect[(H_k(\theta_1+Z_1)-\theta_1^k)(H_{k}(\theta_2+Z_2)-\theta_2^k)]
% = \Cov\pth{\sum_{a=1}^k \binom{k}{a} \theta^{k-a} H_a(Z_1),
% \sum_{b=1}^k \binom{k}{b} \theta^{k-b} H_b(Z_2)}.
% \]
\end{proof}

\begin{lemma}\label{lem:estimate_L2}
There exists some $C_1 = C_1(M,c_0) > 0$ such that $\E[F_k^4(\bar\y)] \leq (C_1 k)^{6k}p^4\pnorm{\X}{\op}^{8k}$.
\end{lemma}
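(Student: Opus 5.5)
The bound is crude (a factor $p^4$ rather than $(n\wedge p)^2$), so I would expand $F_k(\bar\y)$ term by term and use Minkowski's inequality. By \eqref{eq:F_k},
\[
F_k(\bar\y)=\sum_{j=1}^p (\sigma\pnorm{\x_j}{})^k H_k(Z_j),\qquad Z_j:=\frac{\x_j^\top\bar\y}{\sigma\pnorm{\x_j}{}}.
\]
Minkowski's inequality in $L^4$ then gives
\[
\E[F_k^4(\bar\y)]^{1/4}\le \sum_{j=1}^p (\sigma\pnorm{\x_j}{})^k\,\E[H_k(Z_j)^4]^{1/4}.
\]
This loses a factor $p$ in the $L^4$ norm, i.e.\ $p^4$ overall, which is exactly the allowance in the statement. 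It therefore suffices to show, uniformly in $j$,
\[
(\sigma\pnorm{\x_j}{})^{4k}\,\E[H_k(Z_j)^4]\le (C_1k)^{6k}\pnorm{\X}{\op}^{8k}.
\]

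To bound a single term, write $Z_j=\theta_j+W_j$. Here $W_j\sim\N(0,1)$ comes from the noise, and $\theta_j=(\H\bar\bbeta)_j/(\sigma\pnorm{\x_j}{})$, which is independent of $W_j$ given $\bar\bbeta$. I would use the expansion $H_k(\theta+W)=\sum_{\ell=0}^k\binom{k}{\ell}\theta^{k-\ell}H_\ell(W)$ from the proof of Lemma \ref{lem:hermite_cov}. Multiplying by $(\sigma\pnorm{\x_j}{})^k$ gives
\[
(\sigma\pnorm{\x_j}{})^kH_k(Z_j)=\sum_{\ell=0}^k\binom{k}{\ell}(\H\bar\bbeta)_j^{k-\ell}(\sigma\pnorm{\x_j}{})^{\ell}H_\ell(W_j).
\]
Applying Minkowski again, it suffices to control each summand in $L^4$.

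For the noise factor, hypercontractivity (or direct moment bounds on Hermite polynomials) gives $\E[H_\ell(W)^4]^{1/4}\le 3^{\ell/2}\sqrt{\ell!}\le (C\ell)^{\ell/2}$.

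For the signal factor, $(\H\bar\bbeta)_j$ is subgaussian with constant $CM\pnorm{\H_{j\cdot}}{}\le CM\pnorm{\X}{\op}^2$, exactly as in the proof of Lemma \ref{lem:exp_of_var}. Hence
\[
\E[(\H\bar\bbeta)_j^{4(k-\ell)}]^{1/4}\le (CM\sqrt{k})^{k-\ell}\pnorm{\X}{\op}^{2(k-\ell)}.
\]
Since the two factors are independent, their moments multiply. Using $\pnorm{\x_j}{}\le\pnorm{\X}{\op}$, the $\ell$-th summand is at most
\[
\binom{k}{\ell}(CM)^{k}k^{k/2}\,\sigma^\ell\pnorm{\X}{\op}^{2k-\ell}.
\]
Finally, $\sigma\le \pnorm{\X}{\op}/c_0$ gives $\sigma^\ell\pnorm{\X}{\op}^{2k-\ell}\le c_0^{-k}\pnorm{\X}{\op}^{2k}$. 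Summing over $\ell$ with $\sum_\ell\binom{k}{\ell}=2^k$ yields a per-$j$ $L^4$ bound of $(C'k)^{k/2}\pnorm{\X}{\op}^{2k}$, with $C'$ depending on $(M,c_0)$.

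Raising to the fourth power and multiplying by $p^4$ gives $(C_1k)^{2k}p^4\pnorm{\X}{\op}^{8k}$, which is comfortably within the claimed $(C_1k)^{6k}$. The only genuine care needed is the Hermite $L^4$ moment bound and the subgaussian moment bound with explicit $k$-dependence. Neither is a real obstacle; the main bookkeeping is using $\pnorm{\X}{\op}\ge c_0\sigma$ to absorb the powers of $\sigma$.
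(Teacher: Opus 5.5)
Your proof is correct and gives a sharper bound, $(Ck)^{2k}p^4\pnorm{\X}{\op}^{8k}$. It shares its outer structure with the paper but handles $H_k$ differently.

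\textbf{What is shared.} Both arguments lose the same factor $p$ from the triangle inequality over $j$: your Minkowski step is equivalent to the paper's $(\sum_j a_j)^4\le p^3\sum_j a_j^4$. Both also control the signal through subgaussian moments of $(\H\bar\bbeta)_j$, and both use $\pnorm{\X}{\op}\ge c_0\sigma$ to absorb powers of $\sigma$.

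\textbf{How the paper handles $H_k$.} It uses the crude pointwise bound $|H_k(x)|\le (Ck)^k(1+|x|^k)$. It then splits $\x_j^\top\bar\y$ into a diagonal piece $\pnorm{\x_j}{}^2\bar\beta_j$, an off-diagonal piece $\x_j^\top\X_{-j}\bar\bbeta_{-j}$ and the noise $\x_j^\top\beps$, and takes $4k$-th moments of each. The $(Ck)^{4k}$ from the coefficient bound, multiplied by the $(4k)^{2k}$ from these moments, is where the $(Ck)^{6k}$ comes from.

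\textbf{How you handle it.} You use the exact translation identity $H_k(\theta+W)=\sum_\ell\binom{k}{\ell}\theta^{k-\ell}H_\ell(W)$. The independence of $\bar\bbeta$ and $\beps$ then lets each summand factor. Gaussian hypercontractivity gives $\|H_\ell(W)\|_{L^4}\le 3^{\ell/2}\sqrt{\ell!}$. This avoids the coefficient blow-up and removes any need to separate the diagonal term.

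\textbf{Trade-off.} The paper's argument is more elementary: it needs only a coefficient bound and scalar moment bounds, with no chaos or hypercontractivity. Yours gives better $k$-dependence. That gain is immaterial downstream, since $\Delta_k$ already carries $(Ck)^{5k^2}$.

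\textbf{Minor bookkeeping.} The product $(CM)^{k-\ell}(C\ell)^{\ell/2}$ is bounded by $(C\max\{M,1\})^k k^{k/2}$, not by $(CM)^k k^{k/2}$, when $M<1$. This only changes the constant.
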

\begin{proof}
Using the explicit expansion of the Hermite polynomial
\begin{align}\label{eq:hermite_explicit}
H_k(x) = k!\sum_{m=0}^{\floor{k/2}}\frac{(-1)^m}{m!(k-2m)!2^m}x^{k-2m},
\end{align}
it is easy to obtain the bound
\begin{align*}
|H_k(x)| \leq (Ck)^k (1+|x|^k)
\end{align*}
for some universal $C >0 $. Hence using $\bar\y = \X\bar\bbeta + \beps = \x_j\bar\beta_j + \X_{-j}\bar\bbeta_{-j} + \beps$, we have
\begin{align*}
|F_k(\bar\y)| &= \Big|\sum_{j=1}^p (\sigma\pnorm{\x_j}{})^k H_k\Big(\frac{\x_j^\top \bar\y}{\sigma\pnorm{\x_j}{}}\Big)\Big| \leq (Ck)^k\sum_{j=1}^p \big((\sigma\pnorm{\x_j}{})^k + |\x_j^\top \bar\y|^k\big)\\
&\leq (Ck)^k \sum_{j=1}^p (\pnorm{\X}{\op}^{2k} + \pnorm{\X}{\op}^{2k}|\beta_j|^k + |\x_j^\top \X_{-j}\bbeta_{-j}|^k + |\x_j^\top \beps|^k),
\end{align*}
where we apply $\pnorm{\X}{\op} \geq c_0\sigma$ via Assumption \ref{assump:design}. This implies
\begin{align*}
\E[F_k^4(\bar\y)] \leq 64(Ck)^{4k}\Big[p^4\pnorm{\X}{\op}^{8k} + \pnorm{\X}{\op}^{8k} \E \Big(\sum_{j=1}^p |\bar\beta_j|^k\Big)^4 + \E \Big(\sum_{j=1}^p |\x_j^\top \X_{-j}\bar\bbeta_{-j}|^k\Big)^4 + \E\Big(\sum_{j=1}^p |\x_j^\top\beps|^k\Big)^4\Big].
\end{align*}
Using $\E|\bar\beta_j|^{4k} \leq M^{4k}(4k)^{2k}$, we have  
\begin{align*}
\E \Big(\sum_{j=1}^p |\bar\beta_j|^k\Big)^4 \leq p^3\sum_{j=1}^p \E[|\beta_j|^{4k}] \leq p^3 M^{4k}(4k)^{2k}. 
\end{align*}
Next, we note that $\u^\top \bar\bbeta$ is subgaussian with constant $CM\pnorm{\u}{}$ for some universal $C > 0$. Applying this fact yields that $\x_j^\top \X_{-j}\bar\bbeta_{-j}$ is subgaussian with constant $CM\pnorm{\X_{-j}^\top \x_j}{} \leq CM\pnorm{\X}{\op}^2$, so we have
\begin{align*}
\E \Big(\sum_{j=1}^p |\x_j^\top \X_{-j}\bar\bbeta_{-j}|^k\Big)^4 \leq p^3\sum_{j=1}^p \E |\x_j^\top \X_{-j}\bar\bbeta_{-j}|^{4k} \leq Cp^4\pnorm{\X}{\op}^{8k} M^{4k}(4k)^{2k}.
\end{align*}
A similar estimate yields $\E\Big(\sum_{j=1}^p |\x_j^\top\beps|^k\Big)^4 \leq Cp^4\pnorm{\X}{\op}^{4k}\sigma^{4k} (4k)^{2k} \leq C(4k/c_0^2)^{2k} p^4 \pnorm{\X}{\op}^{8k}$, using Assumption \ref{assump:design}. Putting together the estimates finishes the proof.
\end{proof}

Equipped with these auxiliary results, we are now ready to prove Proposition \ref{prop:MSE_centered} by induction. 

\begin{proof}[Proof of Proposition \ref{prop:MSE_centered}]
~
Recall that $K_\ell = M^\ell \ell^{\ell/2}$. We will prove via induction the following two hypotheses: 
\begin{align}
\E [(\bar\mu_k - \mu_k)^2 \Indc_{\cE_{k-1}}] & \leq \Delta_k,
    \label{eq:induction-hyp1}\\
\Prob(\cE_k^c) & \leq \Delta_k,
    \label{eq:induction-hyp2}
\end{align}
for all $2\leq k\leq k_0$, with $k=2$ being the base case. Recall  $\Delta_k$  defined in 
\prettyref{eq:MSE_centered}, namely
\begin{align*}
\Delta_k = (C_0 k)^{5k^2}  \frac{1}{n \wedge p}\Big( \frac{p}{n} \vee 1 \Big)^{2k^2}
\end{align*}
for some sufficiently large constant $C_0$ to be chosen later depending only on $M$ and $c_0$.

\paragraph{Base case} For $k=2$,  
% $\Delta_2 = (\frac{2CM}{c_0})^{27}\big(\frac{p}{n}\vee 1\big)^4\frac{1}{n\wedge p}$, 
 $\bar\mu_2 = \frac{F_2(\bar\y)}{A_2}$ as defined in (\ref{eq:centered_moment_ideal}) is unbiased and satisfies the variance bound in (\ref{eq:m2var}), namely:  
\begin{align*}
\Var(\bar\mu_2) \leq \frac{
(\mu_4 + 2 \mu_2^2)   \tr(\H^4)
+ 4  \sigma^2 \mu_2  \tr(\H^3)
+ 2 \sigma^4 \tr(\H^2) }{\tr(\H^2)^2}. 
\end{align*}
We now apply
\begin{align*}
\Tr(\H^4) \leq (n\wedge p)\pnorm{\X}{\op}^8, \quad \Tr(\H^3) \leq (n\wedge p)\pnorm{\X}{\op}^6, \quad \Tr(\H^2) \leq (n\wedge p)\pnorm{\X}{\op}^4,
\end{align*}
along with Assumption \ref{assump:design}, which implies $A_2 = \Tr(\H^2) \geq c_0(n\wedge p)\pnorm{\X}{\op}^4 $. Applying the subgaussian bound $\mu_k \leq (CM)^kk^{k/2}$ yields the desired result $\E[(\bar\mu_2 - \mu_2)^2] = \Var(\bar\mu_2)\leq \Delta_2$, provided that $C_0=C_0(c_0,M)$ is chosen sufficiently large.

\paragraph{Induction}
Suppose that the hypotheses (\ref{eq:induction-hyp1})-(\ref{eq:induction-hyp2}) hold up to some $k-1$. We now show that they also hold for $k$. We first prove (\ref{eq:induction-hyp1}), which can be decomposed as
%\nbwu{I think we should clearly state what the induction hypothesis is as well as which is applied where. I think it consists of two parts and below I added reference to which one at appropriate places. The statement is still incomplete. }
%\nbwu{This paragraph seems obsolete?}By Propositions \ref{prop:Bias} and \ref{prop:variance_bound}, we have
%\begin{align*}
%\E [(\bar\mu_k - \mu_k)^2] &= |\E[\bar\mu_k] - \mu_k|^2 + \Var(\bar\mu_k)\\
%&\leq \Big(\frac{2Mk}{c_0}\Big)^{5k+2} \Big(\frac{p}{n} \vee 1\Big)^{2k-2} \Delta_{k-1} + \Big(\frac{CMk}{c_0}\Big)^{2k^2}\Big(\frac{p}{n} \vee 1\Big)^{2k-2}\Delta_{k-1}\\
%&\quad + \Big(\frac{CMk}{c_0} \Big)^{4k} \frac{1}{n \wedge p} \Big(\frac{p}{n}\vee 1\Big)^{2k-2}\\
%&\leq \Big(\frac{CMk}{c_0}\Big)^{2k^2}\Big(\frac{p}{n} \vee 1\Big)^{2k-2}\Delta_{k-1} + \Big(\frac{CMk}{c_0} \Big)^{4k} \frac{1}{n \wedge p} \Big(\frac{p}{n}\vee 1\Big)^{2k-2}\\
%&\leq \Delta_k, 
%\end{align*}
%where the last step follows from the expression $\Delta_k = \Big( \frac{CMk}{c_0} \Big)^{(k+1)^3}  \frac{1}{n \wedge p}\Big( \frac{p}{n} \vee 1 \Big)^{k^2}$ in (\ref{eq:MSE_centered}). The proof is complete.
\begin{align}\label{eq:bias_var_decomp}
\E[(\bar\mu_k - \mu_k)^2\bm{1}_{\cE_{k-1}}] = \Big(\E[\bm{1}_{\cE_{k-1}}(\bar\mu_k - \mu_k)]\Big)^2 + \Var\Big(\bm{1}_{\cE_{k-1}}(\bar\mu_k - \mu_k)\Big). 
\end{align}
For the bias term in (\ref{eq:bias_var_decomp}), we have
\begin{align*}
\E[\bm{1}_{\cE_{k-1}}\bar\mu_k] &= \frac{1}{A_k}\E\Big[\bm{1}_{\cE_{k-1}}F_k(\bar\y) - \sum_{t=2}^k \sum_{d_1,\ldots,d_t:d_\ell\geq 2}\gamma^{(d_1,\ldots,d_t)}\bar\mu_{d_1}\ldots \bar{\mu}_{d_t}\bm{1}_{\cE_{k-1}}\Big]\\
&= \frac{1}{A_k}\E\Big[F_k(\bar\y) - \sum_{t=2}^k \sum_{d_1,\ldots,d_t:d_\ell\geq 2}\gamma^{(d_1,\ldots,d_t)}\bar\mu_{d_1}\ldots \bar{\mu}_{d_t}\bm{1}_{\cE_{k-1}}\Big] - \frac{1}{A_k}\E[\bm{1}_{\cE_{k-1}^c}F_k(\bar\y)]\\
&\overset{(*)}{=} \mu_k + \frac{1}{A_k}\sum_{t=2}^k \sum_{d_1,\ldots,d_t:d_\ell\geq 2}\gamma^{(d_1,\ldots,d_t)} \big(\mu_{d_1}\ldots\mu_{d_t} - \E[\bar\mu_{d_1}\ldots\bar\mu_{d_t}\bm{1}_{\cE_{k-1}}]\big) - \frac{1}{A_k}\E[\bm{1}_{\cE_{k-1}^c}F_k(\bar\y)]\\
&= \mu_k + \Prob(\cE_{k-1}^c)\cdot\frac{1}{A_k}\sum_{t=2}^k \sum_{d_1,\ldots,d_t:d_\ell\geq 2}\gamma^{(d_1,\ldots,d_t)} \mu_{d_1}\ldots\mu_{d_t} - \frac{1}{A_k}\E[\bm{1}_{\cE_{k-1}^c}F_k(\bar\y)]\\
&\quad + \frac{1}{A_k}\sum_{t=2}^k \sum_{d_1,\ldots,d_t:d_\ell\geq 2}\gamma^{(d_1,\ldots,d_t)}  \E[(\mu_{d_1}\ldots\mu_{d_t} - \bar\mu_{d_1}\ldots\bar\mu_{d_t})\bm{1}_{\cE_{k-1}}],
\end{align*}
where $(\ast)$ follows from the expression of $\E[F_k(\bar\y)]$ in Lemma \ref{prop:Bias}. This further implies that
\begin{align*}
\E[\bm{1}_{\cE_{k-1}}(\bar\mu_k - \mu_k)] &= \underbrace{\Prob(\cE_{k-1}^c)\cdot\frac{1}{A_k}\sum_{t=1}^k \sum_{d_1,\ldots,d_t:d_\ell\geq 2}\gamma^{(d_1,\ldots,d_t)} \mu_{d_1}\ldots\mu_{d_t}}_{(I)} - \underbrace{\frac{1}{A_k}\E[\bm{1}_{\cE_{k-1}^c}F_k(\bar\y)]}_{(II)}\\
&\quad + \underbrace{\frac{1}{A_k}\sum_{t=2}^k \sum_{d_1,\ldots,d_t:d_\ell\geq 2}\gamma^{(d_1,\ldots,d_t)}  \E[(\mu_{d_1}\ldots\mu_{d_t} - \bar\mu_{d_1}\ldots\bar\mu_{d_t})\bm{1}_{\cE_{k-1}}]}_{(III)},
\end{align*}
and hence
\begin{align}\label{eq:main_bias}
\big(\E[\bm{1}_{\cE_{k-1}}(\bar\mu_k - \mu_k)]\big)^2 \leq 3((I)^2 + (II)^2 + (III)^2).
\end{align}
For $(I)$, by the induction hypothesis \prettyref{eq:induction-hyp2} we have $\Prob(\cE_{k-1}^c)\leq \Delta_{k-1} \leq \sqrt{\Delta_{k-1}}$. 
%\nbwu{If the induction hypothesis is indeed $\Prob(\cE_{k-1}^c)  \leq \Delta_{k-1}$, this below is not right for example $M$ is missing.
%}
Applying the upper bound of $|\gamma^{(d_1,\ldots,d_t)}|$ in Lemma \ref{lem:gamma}, we have 
\begin{align*}
|\sum_{t=1}^k \sum_{d_1,\ldots,d_t:d_\ell\geq 2} \gamma^{(d_1,\ldots,d_t)}\mu_{d_1}\ldots\mu_{d_t}| \leq C^kk^{3k/2}(n\wedge p)\pnorm{\X}{\op}^{2k}.
\end{align*}
%\zf{Missing factor $C^k$ from $\mu_d$? Also, here and throughout the proof below, I'm not sure where the various $k^k$ factors are coming from. Here, the number of ordered partitions of $k$ into positive integers $d_1,\ldots,d_t$ is $2^{k-1}$, and $\mu_{d_1}\ldots \mu_{d_t}$ gives $k^{k/2}$, so why is this not $k^{3k/2}$?}. 
So using the $A_k$ lower bound in Assumption \ref{assump:design} yields
\begin{align*}
(I) \leq \frac{\sqrt{\Delta_{k-1}} \cdot k^{3k/2}(n\wedge p)\pnorm{\X}{\op}^{2k}}{c_0^k  \pnorm{\X}{\op}^{2k}p \big(\frac{n}{p}\wedge 1\big)^k} \leq \frac{1}{3}\sqrt{\Delta_k}
\end{align*}
provided $C_0=C_0(c_0,M)$ is sufficiently large. 
% (In this case $C_0>1/c_0$ works; below we do not spell out the conditions explicitly.)
For $(II)$, we have $\E[\bm{1}_{\cE_{k-1}^c}F_k(\bar\y)] \leq \sqrt{\Prob(\cE_{k-1}^c) \E[F_k^2(\bar\y)]}$, so using Lemma \ref{lem:estimate_L2}, a similar bound as above yields %\zf{Lemma \ref{lem:estimate_L2} gives $(C_1k)^{3k/2}$ below?}
\begin{align*}
(II) \leq \frac{\sqrt{\Prob(\cE_{k-1}^c) \E[F_k^2(\bar\y)]}}{A_k} \leq \frac{\sqrt{\Delta_{k-1}}(C_1 k)^{3k/2}p\pnorm{\X}{\op}^{2k}}{c_0^k  \pnorm{\X}{\op}^{2k}p \big(\frac{n}{p}\wedge 1\big)^k}\leq \frac{1}{3}\sqrt{\Delta_k},
\end{align*}
provided that $C_0$ is sufficiently large.
The term $(III)$ can be telescoped as
\begin{align*}
\E\Big[(\bar\mu_{d_1} \cdots \bar\mu_{d_t} - \mu_{d_1} \cdots \mu_{d_t})\bm{1}_{\cE_{k-1}}\Big] = \E \Big[ \sum_{j=1}^t \bar\mu_{d_1} \cdots \bar\mu_{d_{j-1}} (\bar\mu_{d_j} - \mu_{d_j}) \mu_{d_{j+1}} \cdots \mu_{d_t} \bm{1}_{\cE_{k-1}}\Big].
\end{align*}
On the event $\cE_{k-1}$, we have $\bar\mu_{d_j}\leq 2K_{d_j}$ (with $K_d = M^d d^{d/2}$), so %\zf{$\bm{1}_{\cE_{d_j}}$ below show be $d_{j-1}$?}
\begin{align}\label{eq:bias_of_prod}
\notag\Big|\E\Big[(\bar\mu_{d_1} \cdots \bar\mu_{d_t} - \mu_{d_1}\ldots\mu_{d_t})\bm{1}_{\cE_{k-1}}\Big]\Big| &\leq \E \Big[\Big|\sum_{j=1}^t \bar\mu_{d_1} \cdots \bar\mu_{d_{j-1}} (\bar\mu_{d_j} - \mu_{d_j}) \mu_{d_{j+1}} \cdots \mu_{d_t}\Big|\bm{1}_{\cE_{k-1}}\Big]\\
\notag&\leq \E \Big[ \Big(\sum_{j=1}^t (\bar\mu_{d_j} - \mu_{d_j})^2\bm{1}_{\cE_{d_j-1}}\Big)^{1/2} \Big(\sum_{j=1}^t (\bar\mu_{d_1} \cdots \bar\mu_{d_{j-1}} \mu_{d_{j+1}} \cdots \mu_{d_t})^2\bm{1}_{\cE_{k-1}}\Big)^{1/2}\Big]\\
% &\leq (t 2^{2t} M^{2k})^{\frac{1}{2}} \EE \qth{\pth{\sum_{j=1}^t (\tm_{d_j} - m_{d_j})^2}^{\frac{1}{2}}}\\
\notag&\leq (t 2^{2t}M^{2k}k^k)^{\frac{1}{2}} \Big(\E\sum_{j=1}^t (\bar\mu_{d_j} - \mu_{d_j})^2\bm{1}_{\cE_{d_j-1}}\Big)^{1/2}\\
% &\leq \sqrt{t} 2^{t} M^{k} \pth{\EE \qth{\sum_{j=1}^t (\checkm_{d_j} - m_{d_j})^2}}^{1/2}\\
%\notag&\leq \sqrt{t} 2^tM^kk^{k/2} \Big(\E\sum_{j=1}^t (\bar\mu_{d_j} - \mu_{d_j})^2\bm{1}_{\cE_{d_j-1}}\Big)^{1/2}\\
% &\leq t 2^{t} M^{k} \Delta_{k-1}^{\frac{1}{2}}.
\notag&\leq  \sqrt{t} 2^tM^kk^{k/2} \Big(\sum_{j=1}^t \Delta_{d_j} \Big)^{1/2}\\
&\leq t 2^{t}M^kk^{k/2} \Delta_{k-1}^{\frac{1}{2}},
\end{align}
where the penultimate step applies the induction hypothesis \prettyref{eq:induction-hyp1}.
Combining  this with the bound on $\gamma^{(d_1,\ldots,d_t)}$ in Lemma \ref{lem:gamma}, we have
\begin{align*}
(III) &\leq \frac{1}{A_k}\sum_{t=2}^k \sum_{d_1,\ldots,d_t:d_\ell\geq 2}|\gamma^{(d_1,\ldots,d_t)}|  \Big|\E[(\mu_{d_1}\ldots\mu_{d_t} - \bar\mu_{d_1}\ldots\bar\mu_{d_t})\bm{1}_{\cE_{k-1}}]\Big|\\
% &\leq \frac{1}{|A_k|} k^k (k^k p \opnorm{X}^{2k}) (k 2^{k} M^{k} \Delta_{k-1}^{\frac{1}{2}})
\notag&\leq \frac{1}{A_k} 2^k(k^k (n \wedge p) \opnorm{\X}^{2k}) (k 2^{k} M^kk^{k/2}\Delta_{k-1}^{\frac{1}{2}})\\
&= \frac{1}{A_k} C^k k^{\frac{3k}{2}+1} (n \wedge p) \opnorm{\X}^{2k} \Delta_{k-1}^{\frac{1}{2}} \leq \frac{1}{3}\sqrt{\Delta_k},
\end{align*}
provided that $C_0$ is sufficiently large.
Plugging the bounds of $(I)$-$(III)$ into (\ref{eq:main_bias}) yields
\begin{align}\label{eq:truncate_bias}
\big(\E[\bm{1}_{\cE_{k-1}}(\bar\mu_k - \mu_k)]\big)^2 \leq \frac{1}{2}\Delta_k.
\end{align}

Next we bound the variance term $\Var\big(\bm{1}_{\cE_{k-1}}(\bar\mu_k - \mu_k)\big)$ in (\ref{eq:bias_var_decomp}). First we have $\Var\big(\bm{1}_{\cE_{k-1}}(\bar\mu_k - \mu_k)\big) \leq 2 \Var\big(\bm{1}_{\cE_{k-1}}\bar\mu_k\big) + 2 \Var\big(\bm{1}_{\cE_{k-1}}\mu_k\big)$. Applying the induction hypothesis \prettyref{eq:induction-hyp2}, we have
\begin{align}\label{eq:var_bound_1}
\Var\big(\bm{1}_{\cE_{k-1}}\mu_k\big) \leq \mu_k^2 \Prob(\cE_{k-1}^c) \leq M^{2k}k^k \Delta_{k-1} \leq \frac{1}{10}\Delta_k,
\end{align}
with the last inequality due to  $C_0$ being sufficiently large.
Next we have
\begin{align*}
\Var\big(\bm{1}_{\cE_{k-1}}\bar\mu_k\big) &= \Var\Big(\frac{\bm{1}_{\cE_{k-1}}}{A_k}\Big(F_k(\bar\y) - \sum_{t=2}^k \sum_{d_1+\ldots+d_t=k:d_\ell\geq 2}\gamma^{(d_1,\ldots,d_t)}\bar\mu_{d_1}\ldots\bar\mu_{d_t}\Big)\Big)\\
&\leq 2\underbrace{A_k^{-2}\Var(\bm{1}_{\cE_{k-1}}F_k(\bar\y))}_{(I)} + 2\underbrace{A_k^{-2}\Var\Big(\bm{1}_{\cE_{k-1}}\sum_{t=2}^k \sum_{d_1+\ldots+d_t=k:d_\ell\geq 2}\gamma^{(d_1,\ldots,d_t)}\bar\mu_{d_1}\ldots\bar\mu_{d_t}\Big)}_{(II)}.
\end{align*}
To bound $(I)$, note that for any random variable $X$ and event $\cE$, we have
\begin{align*}
\Var(\bm{1}_{\cE}X) &= \E\big[\big(\bm{1}_{\cE}X - \E[\bm{1}_{\cE} X]\big)^2\big]\\
&\leq 2\E\Big[\bm{1}_{\cE} (X - \E[X])^2\Big] + 2\E\Big[\big(\bm{1}_{\cE} \E[X] - \E[\bm{1}_{\cE}X]\big)^2\Big]\\
&\leq 2\E\Big[(X - \E[X])^2\Big] + 4\E\Big[(\bm{1}_{\cE} - \Prob(\cE))^2(\E[X])^2\Big] + 4 \big(\E[\bm{1}_{\cE}X] - \Prob(\cE)\E[X]\big)^2\\
&= 2\Var(X) + 4\Var(\bm{1}_{\cE})(\E[X])^2 + 4 \Cov(\bm{1}_{\cE}, X)^2\\
&\leq 2\Var(X) + 4\Var(\bm{1}_{\cE})(\E[X])^2 + 4\Var(\bm{1}_{\cE})\Var(X).
\end{align*}
Applying this fact yields
\begin{align*}
\Var(\bm{1}_{\cE_{k-1}}F_k(\bar\y)) \leq 2\Var(F_k(\bar\y)) + 8\Prob(\cE_{k-1}^c)\E[F_k^2(\bar\y)].
\end{align*}
By Lemma \ref{lem:variance_bound} and the lower bound of $A_k$ in Assumption \ref{assump:design}, we have
\begin{align*}
\frac{\Var(F_k(\bar\y))}{A_k^2} \leq \frac{1}{500}\Delta_k.
\end{align*}
Next using the induction hypothesis $\Prob(\cE^c_{k-1})\leq \Delta_{k-1}$ and the bound of $\E[F_k^2(\bar\y)]$ in Lemma \ref{lem:estimate_L2}, we have
\begin{align*}
\frac{\Prob(\cE^c_{k-1})\E[F_k^2(\bar\y)]}{A_k^2} \leq \frac{1}{500}\Delta_k.
\end{align*}
Combining the two estimates yields $(I) \leq \frac{1}{10}\Delta_k$. To bound $(II)$, note that $|\bar\mu_{d_\ell}\bm{1}_{\cE_{k-1}}| \leq |\mu_{d_\ell}| + K_{d_\ell} \leq 2K_{d_\ell}$, so by Lemma \ref{lem:Product_Variance},
\begin{align*}
\Var(\bm{1}_{\cE_{k-1}}\bar\mu_{d_1} \cdots \bar\mu_{d_t}) &\leq 2^{3k}M^{2k}k^{k} \max_{1\leq j\leq t} \Var(\bar\mu_{d_j}\bm{1}_{\cE_{k-1}}).
\end{align*}
For each $1\leq j\leq t$, the induction hypothesis yields that
\begin{align*}
\Var(\bar\mu_{d_j}\bm{1}_{\cE_{k-1}}) &\leq \E[(\bar\mu_{d_j}\bm{1}_{\cE_{k-1}} - \mu_{d_j})^2] \leq 2\E[\bm{1}_{\cE_{k-1}}(\bar\mu_{d_j} - \mu_{d_j})^2] + 2\Prob(\cE_{k-1}^c)\mu_{d_j}^2\\
&\leq 2\Delta_{d_j} + 2\Delta_{k-1}K_{d_j}^2 \leq 4\Delta_{k-1}(M^{k-1}(k-1)^{k-1/2})^2.
\end{align*}
Consequently, by the lower bound on $A_k$ in Assumption \ref{assump:design} and the upper bound of $|\gamma^{(d_1,\ldots,d_t)}|$ in Lemma \ref{lem:gamma}, we have
\begin{align*}
(II) &\leq \frac{k^k}{A_k^2} \sum_{t=2}^k \sum_{d_1+\cdots+d_t = k, d_j \geq 2} |\gamma^{(d_1,\dots,d_t)}|^2 \Var\pth{\bm{1}_{\cE_{k-1}}\bar\mu_{d_1} \cdots \bar\mu_{d_t}} \\
% &\leq c^{-k} \qth{\pth{\frac{p}{n}}^{2k} \vee 1} k^{4k+1} 8^{k-1} M^{2(k-1)} \Delta_{k-1}
&\leq \Big(\frac{8}{c_0^2}\Big)^{k}k^{5k} M^{4k} \Big(\frac{p}{n} \vee 1\Big)^{2k-2}  \Delta_{k-1}\\
&\leq \Big(\frac{CMk}{c_0}\Big)^{5k}\Big(\frac{p}{n} \vee 1\Big)^{2k-2}\Delta_{k-1} \leq \frac{1}{20}\Delta_k,
\end{align*}
where the last step holds for  sufficiently large   $C_0$.
Combining the bounds of $(I)$ and $(II)$ yields that
\begin{align*}
\Var(\bm{1}_{\cE_{k-1}}\bar\mu_k) \leq \frac{1}{10}\Delta_k,
\end{align*}
which, along with (\ref{eq:var_bound_1}), yields that
\begin{align*}
\Var\big(\bm{1}_{\cE_{k-1}}(\bar\mu_k - \mu_k)\big) \leq \frac{1}{2}\Delta_k.
\end{align*}
Together with the bias bound (\ref{eq:truncate_bias}), we have $\E[\bm{1}_{\cE_{k-1}}(\bar\mu_k - \mu_k)^2] \leq \Delta_k$, establishing the induction for (\ref{eq:induction-hyp1}).

Finally we lower bound the probability $\Prob(\cE_k)$. By definition, we have
\begin{align*}
\Prob(\cE_{k-1}) - \Prob(\cE_k) &= \Prob(\cE_{k-1}) - \Prob\Big(\cE_{k-1} \cap |\bar\mu_k - \mu_k| \leq M^kk^{k/2}\Big)\\
&=\Prob\Big(\cE_{k-1} \cap |\bar\mu_k - \mu_k| > M^kk^{k/2}\Big)\\
&= \Prob\Big(|\bar\mu_k - \mu_k|\bm{1}_{\cE_{k-1}} > M^kk^{k/2}\Big)\\
&\leq \frac{1}{M^{2k}k^k}\E\big[(\bar\mu_k - \mu_k)^2\bm{1}_{\cE_{k-1}}\big] \leq \frac{\Delta_k}{M^{2k}k^k},
\end{align*}
which, along with the induction hypothesis $\Prob(\cE_{k-1})\geq 1-\Delta_{k-1}$, implies
\begin{align*}
\Prob(\cE_k) \geq \Prob(\cE_{k-1}) - \frac{\Delta_k}{M^{2k}k^k} \geq 1 - \Delta_{k-1} - \frac{\Delta_k}{M^{2k}k^k} \geq 1 - \Delta_k.
\end{align*}
Here the last step follows from the definition of $\Delta_k$ and choosing 
  $C_0$ sufficiently large.
This concludes the induction for (\ref{eq:induction-hyp2}) and the proof of the proposition.
\end{proof}

\subsubsection{Perturbation analysis for debiased model}\label{subsection:perturbation_noncentered}

Now we focus on the difference between the moment estimators of the approximately centered model $\hat{\y} := \X \hat{\bbeta} + \beps $ and the true centered model $\bar{\y} := \X \bar{\bbeta} + \beps$, where $ \hat{\bbeta} := \bbeta - \hat{m}_1 \bm{1}_p $ with $\hat{m}_1$ given in (\ref{eq:m1hat}), and $\bar{\bbeta} = \bbeta - m_1 \bm{1}_p$. The following result bounds the mean squared difference between the true estimator $\hat\mu_k$ in (\ref{eq:centered_moment_est}) and the oracle estimator $\bar\mu_k$ in (\ref{eq:centered_moment_ideal}) truncated on a high-probability event $\cF_{k-1}$ defined as follows:  
Let $\cF_1$ denote the entire sample space and for $k \geq 2$, 
\begin{align*}
\cF_k := \{|\hat\mu_\ell - \mu_\ell| \leq K_\ell, |\bar\mu_\ell - \mu_\ell| \leq K_\ell, \ell = 2,\ldots,k\}.
\end{align*}

% Also let $\cF_1$ denote the entire space (so that $\cF_1$ always holds).

\begin{proposition}\label{prop:perturbation_bound}
Suppose that Assumption \ref{assump:design} holds for some $c_0 > 0$ up to some integer $k_0\geq 2$, and Assumption \ref{assump:prior} holds for some $M > 0$. Then for any $2\leq k\leq k_0$, it holds that 
\begin{align*}
\E[(\hat\mu_k - \bar\mu_k)^2\bm{1}_{\cF_{k-1}}] \leq \Delta_k,
\end{align*}
where $\Delta_k$ is given by (\ref{eq:MSE_centered}). Moreover, event $\cF_k$ holds with probability at least $1-\Delta_k$.
%\nbwu{Missing the lower bound on probability of event?}
%\begin{equation}
%\delta_\ell = \Big(\frac{CM\ell}{c_0^2}\Big)^{3\ell^2} \frac{1}{n \wedge p} \Big(\frac{p}{n} \vee 1\Big)^{2\ell^2}
%\end{equation}
%for some universal $C > 0$.
\end{proposition}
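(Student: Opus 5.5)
We argue by induction on $k$, in the same way as Proposition~\ref{prop:MSE_centered}. The two hypotheses are
\[
\E[(\hat\mu_k-\bar\mu_k)^2\bm{1}_{\cF_{k-1}}]\le \Delta_k,
\qquad
\Prob(\cF_k^c)\le \Delta_k .
\]
Write $\delta:=\hat m_1-m_1$, so that $\hat\y=\bar\y-\delta\,\X\bm{1}$. Set $\v:=\X^\top\X\bm{1}=\H\bm{1}$. The key observation is that, for each $j$, $\x_j^\top\hat\y=\x_j^\top\bar\y-\delta v_j$. The Hermite shift identity
\[
H_k(x+y)=\sum_{\ell=0}^k{k\choose\ell}y^{k-\ell}H_\ell(x)
\]
(already used in Lemma~\ref{lem:hermite_cov}) then gives the exact expansion
\begin{align*}
F_k(\hat\y)-F_k(\bar\y)=\sum_{\ell=0}^{k-1}{k\choose \ell}(-\delta)^{k-\ell}\sum_{j=1}^p v_j^{k-\ell}(\sigma\pnorm{\x_j}{})^{\ell}H_\ell\Big(\frac{\x_j^\top\bar\y}{\sigma\pnorm{\x_j}{}}\Big).
\end{align*}
Subtracting (\ref{eq:centered_moment_ideal}) from (\ref{eq:centered_moment_est}) yields
\[
A_k(\hat\mu_k-\bar\mu_k)=[F_k(\hat\y)-F_k(\bar\y)]-\sum_{t,\boldsymbol d}\gamma^{\boldsymbol d}\big(\hat\mu_{d_1}\cdots\hat\mu_{d_t}-\bar\mu_{d_1}\cdots\bar\mu_{d_t}\big).
\]
We treat the two pieces separately.

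\textbf{Correction terms.} Telescope exactly as in (\ref{eq:bias_of_prod}). On $\cF_{k-1}$ all $|\hat\mu_d|,|\bar\mu_d|\le 2K_d$, so each product difference is at most $t2^tM^kk^{k/2}\max_{j}|\hat\mu_{d_j}-\bar\mu_{d_j}|$. Since $\cF_{k-1}\subseteq\cF_{d_j-1}$, the induction hypothesis bounds the truncated second moment of $\hat\mu_{d_j}-\bar\mu_{d_j}$ by $\Delta_{d_j}\le\Delta_{k-1}$. Lemma~\ref{lem:gamma} gives $|\gamma^{\boldsymbol d}|\le k!(n\wedge p)\pnorm{\X}{\op}^{2k}$, and Assumption~\ref{assump:design} gives $A_k\ge c_0^k\pnorm{\X}{\op}^{2k}p(\frac np\wedge1)^k$. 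Together these show this piece contributes at most $(Ck)^{Ck}(\frac pn\vee1)^{2k}\Delta_{k-1}\le\frac14\Delta_k$ once $C_0$ is large. This is the same bookkeeping as term $(III)$ in the proof of Proposition~\ref{prop:MSE_centered}.

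\textbf{Main perturbation.} Here we use Cauchy--Schwarz. We have $\E\delta^{4(k-\ell)}\le (C k)^{2k}(n\wedge p)^{-2(k-\ell)}$, because $\delta$ is subgaussian with variance proxy $\lesssim \pnorm{\X}{\op}^2/\pnorm{\X\bm{1}}{}^2\lesssim 1/(n\wedge p)$, as shown in the proof of Proposition~\ref{prop:MSE_first_two}. Next we bound the fourth moment of the random sum $S_\ell:=\sum_j v_j^{k-\ell}(\cdots)H_\ell(\cdots)$ in the same way as Lemma~\ref{lem:estimate_L2}. We use $|v_j|\le \pnorm{\X}{\op}\pnorm{\X\bm{1}}{}$ and $\sum_jv_j^2\le\pnorm{\X}{\op}^2\pnorm{\X\bm{1}}{}^2$, and bound $|H_\ell|$ by a polynomial of subgaussian projections. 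This gives roughly
\[
\E S_\ell^4\lesssim (Ck)^{Ck}\big(\pnorm{\X}{\op}^{2\ell}\,\|\v\|_{2(k-\ell)}^{k-\ell}\sqrt p\big)^4 .
\]
The factor $|\delta|^{k-\ell}\lesssim (\pnorm{\X}{\op}/\pnorm{\X\bm{1}}{})^{k-\ell}$ cancels the growth of $\v$. Hence each term is of order $(Ck)^{Ck}p\,\pnorm{\X}{\op}^{2k}(n\wedge p)^{-1/2}$ up to $\sqrt p$-type losses. Dividing by $A_k$ gives at most $(Ck)^{Ck}(\frac pn\vee1)^{k}(n\wedge p)^{-1/2}\cdot(\text{poly})$. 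Squaring, this fits within $\frac14\Delta_k$, since $\Delta_k$ carries the generous factor $(\frac pn\vee1)^{2k^2}$.

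\textbf{Main obstacle.} The hard step is the $\ell=k-1$ term and its analogues. There $\delta$ appears only to the first power, so we gain only one factor $(n\wedge p)^{-1/2}$. A crude triangle-inequality bound on $\sum_j v_jH_{k-1}(\cdot)$ loses a factor $\sqrt p$ relative to $A_k$. If the crude bound fails, we split $H_{k-1}$ into its conditional mean $(\H\bar\bbeta)_j^{k-1}$ plus a fluctuation, and use the variance bounds of Lemmas~\ref{lem:var_of_exp} and \ref{lem:exp_of_var} with $\v$ in place of $\bm 1$. This should give an $(n\wedge p)\pnorm{\X}{\op}^{4k}$-type second moment instead of $p^2$.

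\textbf{Probability bound.} We conclude as in the proof of Proposition~\ref{prop:MSE_centered}. First, $\Prob(\cF_{k-1}\cap\{|\hat\mu_k-\mu_k|>K_k\})$ is bounded via Markov using the two truncated MSE bounds, those for $\hat\mu_k-\bar\mu_k$ and $\bar\mu_k-\mu_k$. Second, $\Prob(\cE_k^c)\le\Delta_k$ from Proposition~\ref{prop:MSE_centered}. Combining these gives $\Prob(\cF_k^c)\le\Delta_{k-1}+C\Delta_k/K_k^2\le\Delta_k$ after enlarging $C_0$.
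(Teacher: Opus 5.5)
Your proof is correct in substance and follows the paper's architecture. It uses the same two induction hypotheses, the same telescoping of the correction terms via Lemma~\ref{lem:gamma} and the lower bound on $A_k$, and a Markov bound for $\Prob(\cF_k^c)$.

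Where you differ is the main term $F_k(\hat\y)-F_k(\bar\y)$. The paper applies the mean value theorem and bounds $|H_k'(\xi_j)|$ by powers of $|\u_j^\top\hat\y|$ and $|\u_j^\top\bar\y|$. It must then re-expand $\hat\y=\bar\y-\delta\X\bm 1$ inside those powers, which creates the extra terms $(II_\ell)$ carrying $\delta^{2\ell+2}$. Your exact translation identity keeps only $\bar\y$ inside the Hermite polynomials and isolates $\delta$ as the explicit prefactor $(-\delta)^{k-\ell}$. The moment computations are therefore cleaner and no intermediate points appear; the paper's route avoids the identity but pays with cruder bookkeeping. The decisive estimate is the same in both: Cauchy--Schwarz over $j$ together with $\|\H\bm 1\|\le\opnorm{\X}\|\X\bm 1\|$, which cancels the scale $\opnorm{\X}/\|\X\bm 1\|$ of $\delta$.

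Two points need correcting.

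First, your ``main obstacle'' does not exist. The hedges ``up to $\sqrt p$-type losses'' and ``$\cdot(\mathrm{poly})$'' should be replaced by the exact bound your own estimates already give. For every $0\le\ell\le k-1$, including $\ell=k-1$,
\[
\E\big[\delta^{2(k-\ell)}S_\ell^2\big]\le\sqrt{\E[\delta^{4(k-\ell)}]\,\E[S_\ell^4]}\le (Ck)^{Ck}\Big(\frac{\opnorm{\X}}{\|\X\bm 1\|}\Big)^{2(k-\ell)} p\,\opnorm{\X}^{4\ell}\big(\opnorm{\X}\|\X\bm 1\|\big)^{2(k-\ell)}=(Ck)^{Ck}p\,\opnorm{\X}^{4k}.
\]
Dividing by $A_k^2\ge c_0^{2k}p^2\opnorm{\X}^{4k}(\frac np\wedge 1)^{2k}$ gives $(Ck/c_0)^{Ck}\frac1p(\frac pn\vee1)^{2k}$, which is exactly the paper's bound for this term. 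So a single power of $\delta$ suffices. Cauchy--Schwarz gains, rather than loses, a factor $\sqrt p$ against $A_k$, and no conditional-mean/fluctuation split is needed. It does matter that there is no further loss: an extra $\sqrt p$ would leave a ratio of order $(\frac pn\vee1)^{2k}$, which is not $O(\Delta_k)$ when $n\ge p$.

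Second, in the probability step you cannot add $\Prob(\cE_k^c)\le\Delta_k$. That yields $\Delta_{k-1}+C\Delta_k/K_k^2+\Delta_k$, which strictly exceeds $\Delta_k$ however $C_0$ is chosen, since every term scales with it. Instead, bound $\Prob(\cF_{k-1}\cap\{|\bar\mu_k-\mu_k|>K_k\})$ by Markov as well, using $\cF_{k-1}\subseteq\cE_{k-1}$ and \eqref{eq:induction-hyp1}, as the paper does.
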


\begin{proof}
We will prove via induction the following two hypotheses: 
\begin{align}
\E [(\hat\mu_k - \bar\mu_k)^2 \Indc_{\cF_{k-1}}] & \leq \Delta_k,
    \label{eq:induction-hyp3}\\
\Prob(\cF_k^c) & \leq \Delta_k,
    \label{eq:induction-hyp4}
\end{align}
for all $2\leq k\leq k_0$, with $k=2$ being the base case.

The base case $k=2$ follows from Proposition \ref{prop:MSE_first_two}. Assuming \prettyref{eq:induction-hyp3}--\prettyref{eq:induction-hyp4}
hold for $2,\ldots,k-1$ with $k\geq 3$, we will bound separately the two terms 
\begin{align*}
\frac{\bm{1}_{\cF_{k-1}}}{A_k}(F_k(\hat\y) - F_k(\bar\y)) \quad\text{ and } \quad\frac{\bm{1}_{\cF_{k-1}}}{A_k} \sum_{t=2}^k \sum_{d_1 + \ldots + d_t = k: d_j \geq 2} \gamma^{{(d_1,\dots,d_t)}} (\hat{\mu}_{d_1} \ldots \hat{\mu}_{d_t} - \bar\mu_{d_1} \dots \bar\mu_{d_t}),
\end{align*}
where we recall $\gamma^{(d_1,\ldots,d_t)} = \sum_{s_1,\ldots,s_t=1}^p \tilde T^{(d_1,\ldots,d_t)}_{s_1,\ldots,s_t}$.

\paragraph{Bounding the first term} Recall that
$F_k(\hat{\y}) := \sum_{j=1}^p (\sigma\pnorm{\x_j}{})^k H_k\Big(\frac{\x_j^\top \hat{\y}}{\sigma\pnorm{\x_j}{}}\Big)$. Denoting $ \u_j := \x_j / \|\x_j\| $ and $ \delta := |\hat{m}_1 - m_1| $, we have
\begin{align*}
\frac{1}{A_k} |F_k(\hat{\y}) - F_k(\bar{\y})| &\leq \frac{1}{A_k} \sum_{j=1}^p \sigma^k \|\x_j\|^k \left| H_k(\u_j^\top \hat{\y} / \sigma) - H_k(\u_j^\top \bar{\y} / \sigma) \right|\\
&\leq \frac{1}{A_k} \sum_{j=1}^p \sigma^k \|\x_j\|^k |H_k'(\xi_j)| |\u_j^\top (\hat{\y} - \bar{\y}) / \sigma|\\
&= \frac{\delta}{A_k} \sum_{j=1}^p \sigma^{k-1} \|\x_j\|^k |H_k'(\xi_j)| |\u_j^\top \X \bm{1}|,
\end{align*}
for some $ \xi_j $ between $ \u_j^\top \hat{\y} / \sigma $ and $ \u_j^\top \bar{\y} / \sigma $.
Note that $ H_k'(x) = \sum_{\ell=0}^{k-1} a_{k,\ell} x^\ell$ is a polynomial of degree $k-1 $. For each term $x^\ell$ with $ x \in [a,b] $, we have $ |x|^\ell \leq |a|^\ell + |b|^\ell$. Then we have
\begin{align*}
\frac{1}{A_k} |F_k(\hat{\y}) - F_k(\bar{\y})| &\leq \frac{\delta}{A_k} \sum_{j=1}^p \sigma^{k-1} \|\x_j\|^k \sum_{\ell=0}^{k-1} |a_{k,\ell}| \Big(\Big|\frac{\u_j^\top \hat{\y}}{\sigma}\Big|^\ell + \Big|\frac{\u_j^\top \bar{\y}}{\sigma}\Big|^\ell\Big) |\u_j^\top \X \bm{1}|\\
&\leq \frac{\delta}{A_k} \sum_{\ell=0}^{k-1} |a_{k,\ell}| C^\ell \sigma^{k-\ell-1} \sum_{j=1}^p \|\x_j\|^{k-\ell-1} \Big(|\x_j^\top \X \hat\bbeta|^\ell + |\x_j^\top \X \bar{\bbeta}|^\ell + |\x_j^\top \beps|^\ell\Big) |\x_j^\top \X \bm{1}|\\
&= \frac{\delta}{A_k} \sum_{\ell=0}^{k-1} |a_{k,\ell}| C^\ell \sigma^{k-\ell-1} \sum_{j=1}^p \|\x_j\|^{k-\ell-1} \Big(|\x_j^\top \X (\hat\bbeta - \bar\bbeta +\bar\bbeta)|^\ell + |\x_j^\top \X \bar\bbeta|^\ell + |\x_j^\top \beps|^\ell\Big) |\x_j^\top \X \bm{1}|\\
&\leq \frac{\delta}{A_k} \sum_{\ell=0}^{k-1} |a_{k,\ell}| C^\ell \sigma^{k-\ell-1} \sum_{j=1}^p \|\x_j\|^{k-\ell-1} \Big(|\x_j^\top \X \bar\bbeta|^\ell + \delta^\ell|\x_j^\top \X \bm{1}|^\ell + |\x_j^\top \beps|^\ell\Big) |\x_j^\top \X \bm{1}|.
\end{align*}
Consequently, using $\max_{0\leq \ell\leq k-1}|a_{k,\ell}|\leq (Ck)^k$ for some universal $C > 0$,
which follows from the explicit formula of Hermite polynomial  (\ref{eq:hermite_explicit}) and the fact that $H_k'(x) = kH_{k-1}(x)$,
we have
\begin{align*}
&\E[(F_k(\hat\y) - F_k(\bar\y))^2]\\
&\leq \E\Big[\delta^2 \Big(\sum_{\ell=0}^{k-1}C^\ell (Ck)^k(\sigma\pnorm{\X}{\op})^{k-\ell-1}\Big(\sum_{j=1}^p \big(|\x_j^\top \X \bar\bbeta|^\ell + \delta^\ell|\x_j^\top \X \bm{1}|^\ell + |\x_j^\top \beps|^\ell\big) |\x_j^\top \X \bm{1}|\Big)\Big)^2\Big]\\
&\leq (Ck)^{2k}k \E\Big[\delta^2 \sum_{\ell=0}^{k-1} (\sigma\pnorm{\X}{\op})^{2(k-\ell-1)}\Big(\sum_{j=1}^p \big(|\x_j^\top \X \bar\bbeta|^\ell + \delta^\ell|\x_j^\top \X \bm{1}|^\ell + |\x_j^\top \beps|^\ell\big) |\x_j^\top \X \bm{1}|\Big)^2\Big]\\
&\leq (Ck)^{2k}k\sum_{\ell=0}^{k-1} (\sigma\pnorm{\X}{\op})^{2(k-\ell-1)} \bigg\{\underbrace{\E\Big[\delta^2 \Big(\sum_{j=1}^p |\x_j^\top \X\bar\bbeta|^\ell|\x_j^\top \X\bm{1}|\Big)^2\Big]}_{(I_\ell)} + 
\underbrace{\E\Big[\delta^{2\ell+2} \Big(\sum_{j=1}^p |\x_j^\top \X\bm{1}|^{\ell+1}|\Big)^2\Big]}_{(II_\ell)}\\
&\quad + \underbrace{\E\Big[\delta^2 \Big(\sum_{j=1}^p |\x_j^\top \beps|^\ell|\x_j^\top \X\bm{1}|\Big)^2\Big]}_{(III_\ell)}\bigg\}.
\end{align*}

To bound $(I_\ell)$, we have
\begin{align*}
(I_\ell)^2 &\leq \E[\delta^4]\E\Big[\Big(\sum_{j=1}^p |\x_j^\top \X\bar\bbeta|^\ell|\x_j^\top \X\bm{1}|\Big)^4\Big] \leq \E[\delta^4]\E\Big[\Big(\sum_{j=1}^p |\x_j^\top \X\bar\bbeta|^{2\ell}\Big)^2\Big(\sum_{j=1}^p (\x_j^\top \X\bm{1})^2\Big)^2\Big]\\
&= \E[\delta^4]\pnorm{\X^\top\X\bm{1}}{}^4\E\Big[\Big(\sum_{j=1}^p |\x_j^\top \X\bar\bbeta|^{2\ell}\Big)^2\Big] \leq p\E[\delta^4]\pnorm{\X^\top\X\bm{1}}{}^4 \E\Big[\sum_{j=1}^p (\x_j^\top \X\bar\bbeta)^{4\ell}\Big]\\
&\leq (C\ell)^{2\ell}M^{4\ell}p\E[\delta^4]\pnorm{\X^\top\X\bm{1}}{}^4\sum_{j=1}^p \pnorm{\X^\top \x_j}{}^{4\ell} \leq (C\ell)^{2\ell}M^{4\ell}p^2\pnorm{\X}{\op}^{8\ell+4}\pnorm{\X\bm{1}}{}^4 \E[\delta^4].
\end{align*}
Using the definition $\delta = \hat m_1 - m_1 = \frac{\bm{1}^\top \X^\top \bar\y}{\pnorm{\X\bm{1}}{}^2}$ and Assumption \ref{assump:prior}, we have 
\begin{align*}
\E[\delta^4] \leq C\frac{\E (\bm{1}^\top\X^\top\X\bar\bbeta)^4 + \E(\bm{1}^\top \X^\top\beps)^4}{\pnorm{\X\bm{1}}{}^8} \leq C\frac{\pnorm{\X}{\op}^4}{\pnorm{\X\bm{1}}{}^4}\Big(M+\frac{\sigma}{\pnorm{\X}{\op}}\Big)^4,
\end{align*}
which further implies 
\begin{align*}
(I_\ell)^2 \leq (C\ell)^{2\ell}M^{4\ell}p^2\pnorm{\X}{\op}^{8\ell+8}\Big(M+\frac{\sigma}{\pnorm{\X}{\op}}\Big)^4,
\end{align*}
and hence $(I_\ell) \leq (C\ell)^{\ell}M^{2\ell}p\pnorm{\X}{\op}^{4\ell+4}\Big(M+\frac{\sigma}{\pnorm{\X}{\op}}\Big)^2$. A similar consideration yields 
\begin{align*}
(III_\ell) \leq (C\ell)^\ell p\sigma^{2\ell}\pnorm{\X}{\op}^{2\ell+4}\Big(M+\frac{\sigma}{\pnorm{\X}{\op}}\Big)^2.
\end{align*}
Lastly, to bound $(II_\ell)$, we have
\begin{align*}
(II_\ell) &= \E[\delta^{2\ell+2}] \Big(\sum_{j=1}^p |\x_j^\top \X\bm{1}|^{\ell+1}\Big)^2 \leq p\E[\delta^{2\ell+2}] \sum_{j=1}^p (\x_j^\top \X\bm{1})^{2(\ell+1)} \leq p\E[\delta^{2\ell+2}]\pnorm{\X}{\op}^{2\ell}\pnorm{\X\bm{1}}{}^{2\ell} \sum_{j=1}^p (\x_j^\top \X\bm{1})^2\\
&= p\E[\delta^{2\ell+2}]\pnorm{\X}{\op}^{2\ell}\pnorm{\X\bm{1}}{}^{2\ell}\pnorm{\X^\top \X\bm{1}}{}^2 \leq p\E[\delta^{2\ell+2}]\pnorm{\X}{\op}^{2\ell+2}\pnorm{\X\bm{1}}{}^{2\ell+2},
\end{align*}
where
\begin{align*}
\E[\delta^{2\ell+2}] &= \frac{\E (\bm{1}^\top \X^\top\bar \y)^{2\ell+2}}{\pnorm{\X\bm{1}}{}^{4\ell+4}} \leq (C\ell)^{\ell+1}\frac{M^{2\ell+2}\pnorm{\X^\top\X\bm{1}}{}^{2\ell+2} + \sigma^{2\ell+2}\pnorm{\X\bm{1}}{}^{2\ell+2}}{\pnorm{\X\bm{1}}{}^{4\ell+4}}\\ &\leq \frac{(C\ell)^{\ell+1}\pnorm{\X}{\op}^{2\ell+2}\Big(M+\frac{\sigma}{\pnorm{\X}{\op}}\Big)^{2\ell+2}}{\pnorm{\X\bm{1}}{}^{2\ell+2}}.
\end{align*}
Thus we conclude
\begin{align*}
(II_\ell) \leq (C\ell)^{\ell+1}p\pnorm{\X}{\op}^{4\ell+4}\Big(M+\frac{\sigma}{\pnorm{\X}{\op}}\Big)^{2\ell+2}.
\end{align*}
Combining the estimates of $(I_\ell)$-$(III_\ell)$ yields %\zf{below exponent should be $2l+2$ instead of $2\ell$?} 
\begin{align*}
(I_\ell) + (II_\ell) + (III_\ell) \leq (C\ell)^{\ell+1} p\pnorm{\X}{\op}^{4\ell+4}\Big(M + \frac{\sigma}{\pnorm{\X}{\op}}\Big)^{2\ell+2},
\end{align*}
which implies that
\begin{align*}
\E[(F_k(\hat\y) - F_k(\bar\y))^2] &\leq (Ck)^{2k}k\sum_{\ell=0}^{k-1}(\sigma\pnorm{\X}{\op})^{2(k-\ell-1)}(Ck)^{k+1} p\pnorm{\X}{\op}^{4\ell+4}\Big(M + \frac{\sigma}{\pnorm{\X}{\op}}\Big)^{2\ell}\\
&\leq p(Ck)^{3k+2}\pnorm{\X}{\op}^{4k} \sum_{\ell=0}^{k-1} \Big(\frac{\sigma}{\pnorm{\X}{\op}}\Big)^{2(k-\ell-1)}\Big(M+\frac{\sigma}{\pnorm{\X}{\op}}\Big)^{2\ell}\\
&\leq p\Big(\frac{Ck}{c_0}\Big)^{3k+2}\pnorm{\X}{\op}^{4k}\Big(M+\frac{\sigma}{\pnorm{\X}{\op}}\Big)^{2k}\\
&\leq p\Big(\frac{Ck M}{c_0^2}\Big)^{3k+2}\pnorm{\X}{\op}^{4k},
\end{align*}
where we apply Assumption \ref{assump:design}. Hence using the lower bound of $A_k$ in Assumption \ref{assump:design}, we have
\begin{align*}
\E\Big[\Big(\frac{F_k(\hat\y) - F_k(\bar\y)}{A_k}\Big)^2\Big] \leq \Big(\frac{Ck M}{c_0^4}\Big)^{3k+2}\frac{1}{p}\Big(\frac{p}{n}\vee 1\Big)^{2k}.
\end{align*}

\paragraph{Bounding the second term} For the individual difference $ \hat{\mu}_{d_1} \dots \hat{\mu}_{d_t} - \bar\mu_{d_1} \dots \bar\mu_{d_t}$ for $(d_1,\ldots,d_t)$ such that $\sum_{i=1}^t d_i = k$ and $d_i\geq 2$, we have
\begin{align*}
\hat{\mu}_{d_1} \dots \hat{\mu}_{d_t} - \bar\mu_{d_1} \dots \bar\mu_{d_t} = \sum_{i=1}^t \hat{\mu}_{d_1} \cdots \hat{\mu}_{d_{i-1}} (\hat{\mu}_{d_i} - \bar\mu_{d_i}) \bar\mu_{d_{i+1}} \cdots \bar\mu_{d_t},
\end{align*}
where on the event $\cF_{k-1}$ we have $|\hat{\mu}_{d_i}| \vee |\bar\mu_{d_i}|\leq 2 K_{d_i}$ with $ K_d := M^d d^{d/2}$. This implies
\begin{align*}
\E\Big[\bm{1}_{\cF_{k-1}}\big(\hat{\mu}_{d_1} \dots \hat{\mu}_{d_t} - \bar\mu_{d_1} \dots \bar\mu_{d_t}\big)^2\Big] &\leq t \sum_{i=1}^t \E\Big[\bm{1}_{\cF_{k-1}}(\hat{\mu}_{d_1} \cdots \hat{\mu}_{d_{i-1}} (\hat{\mu}_{d_i} - \bar\mu_{d_i}) \bar\mu_{d_{i+1}} \cdots \bar\mu_{d_t})^2\Big]\\
&\leq t 2^{t-1} M^{2k} k^k \sum_{i=1}^t \E[\bm{1}_{\cF_{k-1}}(\hat{\mu}_{d_i} - \bar\mu_{d_i})^2]\\
&\leq t^2 2^{2t} M^{2k} k^k \Delta_{k-1},
\end{align*}
applying the induction hypothesis since $d_i\leq k-1$.
Recall the bound of $ \gamma^{{(d_1,\dots,d_t)}} = \sum_{s_1,\ldots,s_t = 1}^p \tilde{T}^{(d_1,\ldots,d_t)}_{s_1,\ldots,s_t} $ in Lemma \ref{lem:gamma}. Using a similar argument as in (\ref{eq:bias_of_prod}), we obtain
\begin{align*}
&\E\Big[\bm{1}_{\cF_{k-1}}\Big(\frac{1}{A_k} \sum_{t=2}^k \sum_{d_1 + \ldots + d_t = k: d_j \geq 2} \gamma^{{(d_1,\dots,d_t)}} (\hat{\mu}_{d_1} \ldots \hat{\mu}_{d_t} - \bar\mu_{d_1} \dots \bar\mu_{d_t})\Big)^2\Big]\\
&\quad\leq \frac{1}{A_k^2} k^{2k} (k^{2k} (n \wedge p)^2 \opnorm{\X}^{4k}) (k^2 2^{2k} M^{2k} k^k \Delta_{k-1}) \leq \Big( \frac{CM}{c_0} \Big)^{2k} k^{5k+2} \Big( \frac{p}{n} \vee 1 \Big)^{2k-2} \Delta_{k-1}.
% \leq \frac{1}{|A_k|^2} \ell^{2\ell} (\ell^{2\ell} p^2 \opnorm{\X}^{4\ell}) (\ell^2 2^{2\ell} M^{2\ell} \ell^{\ell} \delta_{\ell-1}) = C^\ell M^{2\ell} \ell^{6\ell + 2} \pth{\frac{p}{n} \vee 1}^{2\ell} \delta_{\ell-1}
\end{align*}
Hence, combining the two bounds and using the expression of $\Delta_{k-1}$ with a sufficiently large constant $C_0$, we have %\zf{first term has already been simplified further above}
\begin{align*}
\E[\bm{1}_{\cF_{k-1}}(\hat{\mu}_k - \bar\mu_k)^2] &\leq \Big(\frac{Ck M}{c_0^4}\Big)^{3k+2}\frac{1}{p}\Big(\frac{p}{n}\vee 1\Big)^{2k} + \Big( \frac{CM}{c_0} \Big)^{2k} k^{5k+2} \Big( \frac{p}{n} \vee 1 \Big)^{2k-2} \Delta_{k-1}\\
&\leq \Delta_k,
\end{align*}
concluding the induction for (\ref{eq:induction-hyp3}).

Lastly we prove (\ref{eq:induction-hyp4}). By definition, with $K_k = M^k k^{k/2}$, we have
\begin{align*}
\Prob(\cF_{k-1}) - \Prob(\cF_k) &= \Prob(\cF_{k-1}) - \Prob\Big(\cF_{k-1} \cap |\hat\mu_k - \mu_k| \leq K_k \cap |\bar\mu_k - \mu_k| \leq K_k\Big)\\
&=\Prob\Big(\{\cF_{k-1} \cap |\hat\mu_k - \mu_k| > K_k\} \cup \{\cF_{k-1} \cap |\bar\mu_k - \mu_k| > K_k\} \Big)\\
&\leq \Prob\Big(\cF_{k-1} \cap |\hat\mu_k - \mu_k| > K_k\Big) + \Prob\Big(\cF_{k-1} \cap |\bar\mu_k - \mu_k| > K_k\Big)\\
&= \Prob\Big(\bm{1}_{\cF_{k-1}}|\hat\mu_k - \mu_k| > K_k\Big) + \Prob\Big(\bm{1}_{\cF_{k-1}}|\bar\mu_k - \mu_k| > K_k\Big)\\
&\leq \frac{1}{K_k^2}\Big(\E\big[(\hat\mu_k - \mu_k)^2\bm{1}_{\cF_{k-1}}\big] + \E\big[(\bar\mu_k - \mu_k)^2\bm{1}_{\cF_{k-1}}\big]\Big)\\
&\leq \frac{1}{K_k^2}\Big(2\E\big[(\hat\mu_k - \bar\mu_k)^2\bm{1}_{\cF_{k-1}}\big] + 3\E\big[(\bar\mu_k - \mu_k)^2\bm{1}_{\cF_{k-1}}\big]\Big)  \leq \frac{5\Delta_k}{M^{2k}k^k},
\end{align*}
where in the last step we apply (\ref{eq:induction-hyp3}) and (\ref{eq:induction-hyp1}). Using induction hypothesis $\Prob(\cF_{k-1})\geq 1-\Delta_{k-1}$, we have
\begin{align*}
\Prob(\cF_k) \geq \Prob(\cF_{k-1}) - \frac{5\Delta_k}{M^{2k}k^k} \geq 1 - \Delta_{k-1} - \frac{5\Delta_k}{M^{2k}k^k} \geq 1 - \Delta_k,
\end{align*}
with the last step again holding for sufficiently large $C_0$. This concludes the induction for (\ref{eq:induction-hyp4}).
\end{proof}

\subsubsection{Completing the proof}\label{subsection:complete_noncentered}

Combining Propositions \ref{prop:MSE_centered} and \ref{prop:perturbation_bound} and noting that $\cF_k\subset \cE_k$ we have
%\nbwu{$\bm{1}_{\cF_\ell}$ is $\bm{1}_{\cF_{\ell-1}}$? Also, $\ell\to k$?}
\begin{align*}
\E[\bm{1}_{\cF_{k-1}}(\hat{\mu}_k - \mu_k)^2] &\leq 2 \pth{\E[\bm{1}_{\cF_{k-1}}(\hat{\mu}_k - \bar\mu_k)^2] + \E[\bm{1}_{\cF_{k-1}}(\bar\mu_k - \mu_k)^2]} \leq 4\Delta_k.
\end{align*}
Moreover, we have $\Prob(\cF_{k-1})\geq 1 - \Delta_{k-1}\geq 1-\Delta_k$, as desired.

\subsection{Proof of Lemma \ref{lem:random_lsi}}\label{subsec:proof_assumption}

It is clear that with probability converging to $1$, $\pnorm{\bar \X}{\op} \geq C\sqrt{n+p}$ for some $C = C(\kappa)$, so that $\pnorm{\X}{\op} = \lambda(n,p)\pnorm{\bar\X}{\op} \geq c_0\sigma$ for some $c_0 = c_0(\kappa)$.

Next we verify the lower bound condition on $A_k$. Since this condition is scale invariant, it suffices to verify it for $\bar \X$. Let $\bar \H := \bar \X^\top \bar\X$. We first compute $\E[\sum_{i,j=1}^p \bar H_{ij}^k]$. Note that $\bar H_{ij}^k = \iprod{\bar\X_i}{\bar\X_j}^k = \iprod{\bar\X_i^{\otimes k}}{\bar\X_j^{\otimes k}}$ for $i,j\in[p]$,
%\begin{equation}\label{eq:Assumption_LHS}
%\H_{ij}^k = \iprod{\x_i}{\x_j}^k = \iprod{\x_i^{\otimes k}}{\x_j^{\otimes k}},
%\end{equation}
where $\bar\X_i\in \R^n$ is the $i$th column of $\bar\X$, so
\begin{align*}
\E\Big[\sum_{i,j=1}^p \bar H_{ij}^k\Big] &= \E\iprod{\sum_{i=1}^p \bar\X_i^{\otimes k}}{\sum_{j=1}^p \bar\X_j^{\otimes k}} = \E\biggpnorm{\sum_{i=1}^p \bar\X_i^{\otimes k}}{F}^2 = \sum_{a_1,\dots,a_k =1}^n \E\Big(\sum_{i=1}^p \bar X_{a_1,i} \cdots \bar X_{a_k,i}\Big)^2\\
&\geq \sum_{a_1,\ldots,a_k\in[n]:a_1\neq \ldots \neq a_k}\E\Big(\sum_{i=1}^p \bar X_{a_1,i} \cdots \bar X_{a_k,i}\Big)^2 = n(n-1)\ldots (n-k+1) \sum_{i,j=1}^p \Sigma_{ij}^k\\
&\geq (n-k+1)^k \bm{1}^\top \bSigma^{\circ k}\bm{1}
\geq (n-k+1)^k p \lambda_{\min}(\bSigma^{\circ k}) \geq (n-k+1)^k p \kappa^k,
\end{align*}
where $\bSigma^{\circ k}$ denotes the $k$th Hadamard product of $\bSigma$, and we use the fact that $\lambda_{\min}(\bSigma^{\circ k}) \geq (\lambda_{\min}(\bSigma))^k \geq \kappa^k$. (This fact on Hadamard product can be proved as follows: Let $\{\g^{(m)}\}_{m=1}^k$ be $k$ copies of a random vector $\g$ such that $\E [\g\g^\top] = \Sigma$, then $\Sigma_{ij}^k = \E [\prod_{m=1}^k g^{(m)}_i g^{(m)}_j]$, and hence for any unit norm $\v$, 
\begin{align*}
\v^\top \bSigma^{\circ k}\v &= \E \Big(\sum_{i=1}^p v_i a_ig^{(k)}_i\Big)^2 = \E \sum_{i,j=1}^p v_iv_ja_ia_j \Sigma_{ij} = \E (\v\circ \a)^\top \bSigma (\v\circ \a)\\
&\geq \kappa \E \pnorm{\v\circ \a}{}^2 = \kappa \sum_{i=1}^p v_i^2 \Sigma_{ii}^{k-1} \geq \ldots \geq \kappa^k,
\end{align*}
where $\a = (a_1,\ldots,a_p)$ with $a_i := \prod_{m=1}^{k-1} g^{(m)}_i$.) Then using $(n-k+1)^k = n^k(1-\frac{k-1}{n})^k \geq n^k/2^k$ for $k\leq n/2$, we have 
\begin{align*}
\E\Big[\sum_{i,j=1}^p \bar H_{ij}^k\Big] \geq (\kappa/2)^k pn^k.
\end{align*} 

Next we bound $\Var(\sum_{i,j=1}^p \bar H_{ij}^k)$. Let $f(\bar\X) = \sum_{i,j=1}^p (\bar\X^\top\bar\X)_{ij}^k$. Then for any $\alpha\in[n]$ and $\beta \in [p]$, using 
\begin{align*}
\frac{\partial}{\partial \bar X_{\alpha\beta}} (\bar\X^\top \bar\X)_{ij} = \bar X_{\alpha j}\bm{1}_{i = \beta} + \bar X_{\alpha i}\bm{1}_{j = \beta},
\end{align*}
we have
\begin{align*}
\frac{\partial f(\bar\X)}{\partial \bar X_{\alpha\beta}} &= k\sum_{i,j=1}^p (\bar\X^\top \bar\X)_{ij}^{k-1} \partial_{\alpha\beta}[(\bar\X^\top \bar\X)_{ij}]= 2k\sum_{i,j=1}^p (\bar\X^\top \bar\X)_{ij}^{k-1}\bar X_{\alpha j}\bm{1}_{i = \beta} = 2k \Big[\bar\X (\bar\X^\top \bar\X)^{\circ (k-1)}\Big]_{\alpha\beta}. 
\end{align*}
By tensorization \cite[Proposition 4.3.1]{bakry2014analysis}, the distribution of $\bar \X$ in $\R^{n\times p}$ satisfies the Poincar\'e inequality with the same constant, i.e., 
\begin{align*}
\Var\big[f(\bar\X)\big] \leq C_{\PI}\E \pnorm{\nabla f(\bar\X)}{F}^2,
\end{align*}
for all absolutely continuous $f:\R^{n\times p}\rightarrow\R$. This implies
\begin{align*}
\Var\Big(\sum_{i,j=1}^p (\bar\X^\top \bar\X)_{ij}^k\Big) &\leq 4C_{\PI}k^2 \E \Bigpnorm{\bar\X (\bar\X^\top \bar\X)^{\circ (k-1)}}{F}^2\\
&\leq 4C_{\PI}k^2 \E \pnorm{\bar\X}{F}^2 \pnorm{\bar\X}{\op}^{4(k-1)}\\
&\leq 4C_{\PI}k^2  \E^{1/2} \pnorm{\bar\X}{F}^4 \cdot \E^{1/2} \pnorm{\bar\X}{\op}^{8(k-1)}\\
&\overset{(*)}{\leq} (Ck^2)^k   (np) \cdot (\sqrt{n} + \sqrt{p})^{4(k-1)}\\
&\leq (Ck^2)^k  (n+p)^{2k},
\end{align*}
for some $C > 0$ that depends on $(\kappa,C_{\PI},C_0)$, where $(\ast)$ follows from the bound of $\big(\E[\pnorm{\bar\X}{\op}^r]\big)^{1/r} \leq C(\sqrt{n} + \sqrt{p} + \sqrt{r})$ with $r = 8(k-1) $ by integrating the tail in \cite[Theorem 4.6.1]{vershynin2018high}
and $\E\pnorm{\bar\X}{F}^4 \leq C(np)^2$ via a further application of Poincar\'e inequality to control $\Var(\pnorm{\bar\X}{F}^2)$.

Let $\mathcal{E}_k$ denote the event that $\sum_{i,j=1}^p \bar H_{ij}^k \geq \E[\sum_{i,j=1}^p \bar H_{ij}^k]/2$. Then applying Chebyshev inequality yields
\begin{align*}
\Prob(\mathcal{E}_k^c) \leq \Prob\Big(\Big|\sum_{i,j=1}^p \bar H_{ij}^k - \E\Big[\sum_{i,j=1}^p \bar H_{ij}^k\Big]\Big|\geq \frac{1}{2}\E\Big[\sum_{i,j=1}^p \bar H_{ij}^k\Big]\Big) \leq \frac{4\Var(\sum_{i,j=1}^p \bar H_{ij}^k)}{(\E[\sum_{i,j=1}^p \bar H_{ij}^k])^2} \leq \frac{1}{p^2}(Ck^2)^k \Big(1+\frac{p}{n}\Big)^{2k}. 
\end{align*}
Taking the union bound yields that
\begin{align*}
\Prob((\cap_{k=1}^{k_0} \mathcal{E}_k)^c) \leq \sum_{k=1}^{k_0}\frac{1}{p^2}(Ck^2)^k \Big(1+\frac{p}{n}\Big)^{2k} \leq \frac{1}{p^2} (Ck_0^2)^{k_0}\Big[C\Big(1+\frac{p}{n}\Big)\Big]^{2k_0+2} \rightarrow 0,
\end{align*}
where we apply $n\geq p^{1-r}$ and $k_0 \leq c(r^{-1}\wedge \frac{\log p}{\log\log p})$ for some small $c>0$. On the other hand, define the event $\cE = \{\pnorm{\bar\X}{\op} \leq C(\sqrt{n} + \sqrt{p})\}$ which holds with probability converging to $1$ \cite[Theorem 4.6.1]{vershynin2018high}. On this event, it holds that
\begin{align*}
\pnorm{\bar\X}{\op}^{2k} \cdot p\Big(\frac{n}{p}\wedge 1\Big)^k \leq C^k\Big(n\vee p\Big)^k \cdot p\Big(\frac{n}{p}\wedge 1\Big)^k = C^k pn^k.
\end{align*}
Hence we conclude that on the event $\cap_{k=1}^{k_0}\mathcal{E}_k \cap \mathcal{E}$, which holds with probability converging to $1$, we have  
\begin{align*}
\sum_{i,j=1}^p \bar H_{ij}^k \geq \frac{\E\Big[\sum_{i,j=1}^p \bar H_{ij}^k\Big]}{2} \geq    \frac{1}{2} \Big(\frac{\kappa}{2}\Big)^k pn^k \geq c_0^k \pnorm{\bar\X}{\op}^{2k} \cdot p\Big(\frac{n}{p}\wedge 1\Big)^k
\end{align*}
for all $1\leq k\leq k_0$ and some $c_0 = c_0(\kappa,C_{\PI},C_0) > 0$, as desired.

\subsection{Proof of \prettyref{cor:prior}}
\label{sec:pf-cor_prior}

\begin{lemma}\label{lmm:W1truncate}
For any one-dimensional distribution $g$ and real number $A>0$, let $g_A$ be the conditional version of $g$ on $[-A,A]$, i.e., $g_A(B) = \frac{g(B \cap [-A,A])}{g([-A,A])}$ for any Borel set $B \subset\R$.
Then 
\begin{align}\label{eq:W1_general}
W_1(g,g_A) \leq \mathbb{E}_g\big[(|X|-A)_+\big] + 2A\,\mathbb{P}_g(|X|>A),
\end{align}
where $X \sim g$. Furthermore, if $g$ is subgaussian with some constant $M > 0$, then 
\begin{align*}
W_1(g,g_A)  \leq C e^{-A^2/C}
\end{align*} 
for some constant $C=C(M)$.
\end{lemma}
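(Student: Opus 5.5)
We prove \eqref{eq:W1_general} by an explicit coupling and then bound the two tail quantities using the subgaussian assumption. Write $q := \mathbb{P}_g(|X|>A)$; if $q=0$ then $g_A=g$ and there is nothing to show, and if $q=1$ the statement is vacuous for our purposes (we may assume $q<1$ so that $g_A$ is well defined).

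\textbf{Coupling.} Decompose $g = (1-q)\,g_A + q\,h$, where $h$ is $g$ conditioned on $\{|x|>A\}$. Since $g_A = (1-q)g_A + q\,g_A$, we can couple $g$ and $g_A$ as follows:
\begin{itemize}
\item with probability $1-q$, draw the same point $Y\sim g_A$ for both coordinates;
\item with probability $q$, draw $X\sim h$ and independently $Y\sim g_A$.
\end{itemize}
Only the second branch costs anything, so
\[
W_1(g,g_A)\le q\,\mathbb{E}[|X-Y|] \le q\,\mathbb{E}_h|X| + q\,\mathbb{E}_{g_A}|Y|.
\]
We bound the two terms separately:
\begin{itemize}
\item Since $|Y|\le A$, the second term is at most $qA$.
\item For the first, $q\,\mathbb{E}_h|X| = \mathbb{E}_g[|X|\mathbf 1_{|X|>A}] = \mathbb{E}_g[(|X|-A)_+] + Aq$.
\end{itemize}
Summing gives exactly \eqref{eq:W1_general}.

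\textbf{Subgaussian bound.} Assumption \ref{assump:prior} controls central moments, so write $X = m_1 + (X-m_1)$ with $|m_1|\le$ some constant depending on $g$. The moment bound $(\mathbb{E}|X-m_1|^r)^{1/r}\le M\sqrt r$ for all $r$ gives the tail bound $\mathbb{P}(|X-m_1|>s)\le 2e^{-s^2/(CM^2)}$. Since $|m_1|$ is a fixed constant, for $A\ge 2|m_1|$ this yields $q\le 2e^{-A^2/C}$. For the remaining bounded range of $A$, the claimed bound holds trivially after enlarging $C$, because both sides are bounded there; in the centered setting in which the lemma is applied, $m_1=0$.

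Next, by the layer-cake formula,
\[
\mathbb{E}_g[(|X|-A)_+] = \int_A^\infty \mathbb{P}(|X|>s)\,ds \le \int_A^\infty 2e^{-s^2/C}\,ds \le C' e^{-A^2/C}
\]
for $A\ge1$, using the Gaussian tail (Mills) bound. Finally, $2Aq \le 4A e^{-A^2/C}\le C'' e^{-A^2/(2C)}$, since the factor $A$ is absorbed by halving the exponent. Renaming constants gives $W_1(g,g_A)\le Ce^{-A^2/C}$.

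The only mildly delicate point is the dependence on the mean when $g$ is not centered. This is handled by either absorbing $|m_1|$ into the constant or noting that the lemma is applied to the centered distribution; no real obstacle is expected.
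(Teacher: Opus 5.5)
Your argument is correct, and for the main inequality it takes a different route from the paper.

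\textbf{The paper's route.} The paper uses the one-dimensional identity $W_1(g,g_A)=\int_{\mathbb{R}}|G-G_A|$. It writes the CDF of $g_A$ explicitly and finds that the contribution from outside $[-A,A]$ equals $\mathbb{E}_g[(|X|-A)_+]$ exactly. On $[-A,A]$ it checks that $G(x)-G_A(x)=\frac{G(-A)-pG(x)}{1-p}$ is monotone with values between $G(-A)-p$ and $G(-A)$. This difference is therefore at most $p=\mathbb{P}_g(|X|>A)$ (your $q$) in absolute value, which gives the $2Ap$ term.

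\textbf{Your route.} You build an explicit transport plan: leave the common mass $(1-q)g_A$ in place and send the outside mass $q\,h$ to an independent draw from $g_A$. You then bound its cost by $|X|+|Y|$, which lands on exactly the same bound.

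\textbf{What each buys.} The coupling is shorter and uses nothing specific to the line. The same lines give $W_1(g,g_K)\le \mathbb{E}_g[|X|\mathbf{1}\{X\notin K\}]+q\sup_{y\in K}|y|$ for any truncation set $K$ in a normed space. It also makes transparent that the bound is just the cost of routing the outside mass through the origin into $[-A,A]$. The CDF computation is tied to one dimension, but it is a mechanical calculation that needs no construction and identifies the outside contribution exactly.

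\textbf{Subgaussian part.} The paper only says this follows from tail estimates, which is what you carry out. Your caution about the mean is warranted. Under the central-moment definition of Assumption~\ref{assump:prior}, no constant depending only on $M$ can work for non-centered $g$. For example, $g=\mathcal{N}(m,1)$ with $A=m/2$ has $g_A$ concentrated near $m/2$, so $W_1(g,g_A)\approx m/2$. The centered reading is the right one, and the lemma is indeed only invoked for the centered $g_0$ in the proof of Corollary~\ref{cor:prior}.
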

\begin{proof}
Denote the CDF of  $g$ by $G$. 
The CDF of $g_A$ is
\[
G_A(x) =
\begin{cases}
0, & x < -A, \\
\dfrac{G(x)-G(-A)}{1-p}, & -A \le x \le A, \\
1, & x > A
\end{cases}
\]
where 
$p \equiv \mathbb{P}_g(|X|>A) = G(-A)+1-G(A)$. In one dimension,
\begin{align*}
W_1(g,g_A) = \int_{-\infty}^{\infty} |G(x)-G_A(x)|\d x.
\end{align*}
The contribution outside $[-A,A]$ is
\begin{align*}
\int_{(-\infty,-A] \cup [A,\infty)}|G(x)-G_A(x)|\d x 
= \int_{-\infty}^{-A} G(x)\d x + \int_A^\infty (1-G(x))\d x = \mathbb{E}_g\big[(|X|-A)_+\big].
\end{align*}
Inside the interval, for $x\in[-A,A]$, $
\Delta(x) \equiv G(x)-G_A(x) = \frac{G(-A)-p\,G(x)}{1-p}$.
Note that $\Delta$ is monotone and bounded between $G(-A)$ and $G(-A)-p$, and so $
|\Delta(x)| \le p$ for all $ x\in[-A,A]$. 
Therefore, $\int_{-A}^A |G(x)-G_A(x)|\d x \leq 2Ap$. This proves (\ref{eq:W1_general}). The bound for subgaussian $g$ follows from applying the tail estimates.
\end{proof}

\begin{proof}[Proof of \prettyref{cor:prior}]
Let
\[
m:=m_1(g),
\qquad
g_0:=\text{Law}(\beta-m), \quad \beta\sim g.
\]
Thus $g_0$ is centered and
\[
m_\ell(g_0)=\mu_\ell(g), \qquad \ell\ge 1,
\]
where $\mu_1(g)=0$. Let $\widetilde g$ denote the centered DMM
estimator and recall that the final estimator is
\[
\widehat g=\tau_{\widehat m_1}\#\widetilde g.
\]
Since translations are isometries for $W_1$, the triangle inequality
gives
\begin{equation}
W_1(g,\widehat g)
\le
W_1(g_0,\widetilde g)
+
|\widehat m_1-m|.
\label{eq:W1-center-shift}
\end{equation}

We first construct a simultaneous high-probability event on which all
of the estimated moments are sufficiently accurate. Set
\[
\eta:=k^{-4L},
\qquad
\widehat{\boldsymbol\mu}_L
:=
(0,\widehat\mu_2,\ldots,\widehat\mu_L).
\]
For each $2\le \ell\le L$, let $\mathcal C_\ell$ be the event supplied
by Theorem~\ref{thm:main_moment}. Thus
\[
\mathbb P(\mathcal C_\ell^c)\le \Delta_\ell,
\qquad
\mathbb E\left[
(\widehat\mu_\ell-\mu_\ell)^2
\mathbf 1_{\mathcal C_\ell}
\right]
\le \Delta_\ell.
\]
Consequently, Markov's inequality gives
\begin{align}
\mathbb P\left(
|\widehat\mu_\ell-\mu_\ell|>\eta
\right)
&\le
\mathbb P(\mathcal C_\ell^c)
+
\mathbb P\left(
|\widehat\mu_\ell-\mu_\ell|>\eta,\,
\mathcal C_\ell
\right) \notag\\
&\le
\Delta_\ell
+
\eta^{-2}
\mathbb E\left[
(\widehat\mu_\ell-\mu_\ell)^2
\mathbf 1_{\mathcal C_\ell}
\right] \notag\\
&\le
(1+\eta^{-2})\Delta_\ell.
\label{eq:moment-hp-correct}
\end{align}
Similarly, Proposition~\ref{prop:MSE_first_two} yields
\begin{equation}
\mathbb P\left(
|\widehat m_1-m|>\eta
\right)
\le
\frac{C}{N\eta^2}.
\label{eq:mean-hp-correct}
\end{equation}
Define
\[
\mathcal G
:=
\left\{
|\widehat m_1-m|\le\eta,\quad
|\widehat\mu_\ell-\mu_\ell|\le\eta
\text{ for every }2\le\ell\le L
\right\}.
\]
By \eqref{eq:moment-hp-correct}--\eqref{eq:mean-hp-correct},
\begin{equation}
\mathbb P(\mathcal G^c)
\le
\frac{C}{N\eta^2}
+
\sum_{\ell=2}^L(1+\eta^{-2})\Delta_\ell.
\label{eq:G-prob-first}
\end{equation}

We next simplify the right side. Since $n\ge p^{1-\varepsilon}$ and
$0<\varepsilon\le 1/2$, we have
\begin{equation}
\frac pn\vee 1\le N^{2\varepsilon}.
\label{eq:aspect-N}
\end{equation}
Indeed, this is immediate when $n\ge p$. When $n<p$, we have $N=n$
and
\[
\frac pn
\le
n^{\varepsilon/(1-\varepsilon)}
\le n^{2\varepsilon}.
\]
It follows from Theorem~\ref{thm:main_moment} that, for $\ell\le L$, %\zf{$5\ell^2$ instead of $5\ell/2$?}
\[
\Delta_\ell
\le
(C\ell)^{5\ell^2}
N^{-1+4\varepsilon\ell^2}.
\]
Since $L=2k-1$, $\eta^{-2}=k^{8L}$, and
$k\le c\varepsilon^{-1/2}$, \eqref{eq:G-prob-first} implies
\begin{equation}
\mathbb P(\mathcal G^c)
\le
k^{C_0k^2}N^{-1+a},
\qquad
a:=4\varepsilon L^2\le 16c^2,
\label{eq:G-prob-second}
\end{equation}
for a constant $C_0>0$. We choose $c$ sufficiently small that
$a\le 1/4$.

Fix any constant $C_2>0$. We claim that, after choosing $C_1$
sufficiently large,
\begin{equation}
k^{C_0k^2}N^{-1+a}
\le
\frac{k^{C_1k^2}}{N}
+
k^{-C_2}.
\label{eq:G-prob-split}
\end{equation}
To see this, first suppose that
\[
N^a\le k^{C_1k^2/2}.
\]
Then
\[
k^{C_0k^2}N^{-1+a}
\le
\frac{k^{(C_0+C_1)k^2/2}}{N}
\le
\frac{k^{C_1k^2}}{N}
\]
provided $C_1 \geq 2C_0$. Otherwise,
$N^a>k^{C_1k^2/2}$. Since $a\le 1/4$,
\[
k^{C_0k^2}N^{-1+a}
\le
k^{C_0k^2}N^{-3/4}
\le
k^{C_0k^2-3C_1k^2/(8a)}
\le k^{-C_2},
\]
by choosing $C_1$ sufficiently large (both $C_1$ and $C_2$ are independent of $a,\eps,c$). 
%\zf{Perhaps clarify that $k^2/(8a)$ has a universal lower bound for all $\eps \leq 1/2$, hence $C_1,C_2$ do not depend on $c,a,\eps$.}
Combining
\eqref{eq:G-prob-second} and \eqref{eq:G-prob-split}, we obtain
\begin{equation}
\mathbb P(\mathcal G^c)
\le
\frac{k^{C_1k^2}}{N}
+
k^{-C_2}.
\label{eq:G-prob-final}
\end{equation}

We now work on the event $\mathcal G$. Let $g_{0,A}$ be the
conditional distribution of $g_0$ on $[-A,A]$, and let $q_A$ be the
$k$-point Gaussian quadrature of $g_{0,A}$. Thus $q_A$ is supported on
$[-A,A]$, has at most $k$ atoms, and satisfies
\begin{equation}
m_\ell(q_A)=m_\ell(g_{0,A}),
\qquad
1\le\ell\le L=2k-1.
\label{eq:quadrature-match}
\end{equation}
By Lemma~\ref{lmm:W1truncate},
\begin{equation}
W_1(g_0,g_{0,A})
\le
C\exp(-A^2/C).
\label{eq:W-trunc-correct}
\end{equation}
Furthermore, since $g_{0,A}$ and $q_A$ are both supported on
$[-A,A]$ and their first $L$ moments agree, \cite[Lemma ~7.4.1]{WY-fnt} yields that
%\zf{reference for this fact?}
\begin{equation}
W_1(g_{0,A},q_A)
\le
\frac{CA}{L}.
\label{eq:W-quadrature-correct}
\end{equation}

We next control the difference between the moments of $g_0$ and
$g_{0,A}$. Let $Z\sim g_0$ and
\[
p_A:=\mathbb P(|Z|>A).
\]
For every $1\le\ell\le L$,
\[
m_\ell(g_{0,A})
=
\frac{
m_\ell(g_0)
-
\mathbb E[Z^\ell\mathbf 1_{\{|Z|>A\}}]
}{
1-p_A
},
\]
and hence, provided $p_A\le 1/2$,
\begin{equation}
|m_\ell(g_{0,A})-m_\ell(g_0)|
\le
2p_A|m_\ell(g_0)|
+
2\mathbb E\left[
|Z|^\ell\mathbf 1_{\{|Z|>A\}}
\right].
\label{eq:truncated-moment-difference}
\end{equation}
The correct tail-integration identity is
\begin{equation}
\mathbb E\left[
|Z|^\ell\mathbf 1_{\{|Z|>A\}}
\right]
=
A^\ell\mathbb P(|Z|>A)
+
\int_A^\infty
\ell t^{\ell-1}\mathbb P(|Z|>t)\,dt.
\label{eq:tail-integration-correct}
\end{equation}
By subgaussianity,
\[
\mathbb P(|Z|>t)\le 2e^{-ct^2},
\qquad
|m_\ell(g_0)|\le (C\sqrt{\ell})^\ell.
\]
Since $A^2=C_Ak\log k$ and $L=2k-1$, choosing $C_A$ sufficiently
large gives
\begin{equation}
\tau_A
:=
\max_{1\le\ell\le L}
|m_\ell(g_{0,A})-m_\ell(g_0)|
\le
k^{-5L}
\le \eta.
\label{eq:truncated-moments-small}
\end{equation}

Let
\[
\widetilde{\boldsymbol\mu}_L
=
\argmin_{\boldsymbol m\in\mathcal M_L([-A,A])}
\left\|
\boldsymbol m-\widehat{\boldsymbol\mu}_L
\right\|_2
\]
be the projected moment vector in the DMM construction, and let
$\widetilde g$ be an at-most-$k$-atomic distribution on $[-A,A]$
whose first $L$ moments equal
$\widetilde{\boldsymbol\mu}_L$. Since
$\boldsymbol m_L(q_A)\in\mathcal M_L([-A,A])$, the defining property
of the projection gives
\begin{align}
\left\|
\boldsymbol m_L(\widetilde g)
-
\boldsymbol m_L(q_A)
\right\|_2
&\le
\left\|
\boldsymbol m_L(\widetilde g)
-
\widehat{\boldsymbol\mu}_L
\right\|_2
+
\left\|
\widehat{\boldsymbol\mu}_L
-
\boldsymbol m_L(q_A)
\right\|_2 \notag\\
&\le
2
\left\|
\widehat{\boldsymbol\mu}_L
-
\boldsymbol m_L(q_A)
\right\|_2 \notag\\
&\le
2\sqrt L\,(\eta+\tau_A).
\label{eq:projection-correct}
\end{align}
Here the coordinate $\ell=1$ is also covered because
$m_1(g_0)=0$ and
$|m_1(g_{0,A})|\le\tau_A$. Therefore,
\begin{equation}
\delta
:=
\max_{1\le\ell\le L}
|m_\ell(\widetilde g)-m_\ell(q_A)|
\le
4\sqrt L\,k^{-4L}.
\label{eq:delta-moments}
\end{equation}

Both $\widetilde g$ and $q_A$ have at most $k$ atoms and are supported
on $[-A,A]$. Rescaling them by $x\mapsto x/A$ and applying the
moment-comparison theorem for two $k$-atomic distributions \cite[Proposition 6.3.1]{WY-fnt} gives
\begin{align}
W_1(\widetilde g,q_A)
&\le
CkA\,\delta^{1/(2k-1)}
=
CkA\,\delta^{1/L} \notag\\
&\le
CkA
\left(
4\sqrt L\,k^{-4L}
\right)^{1/L}
\le
\frac{CA}{k^3}.
\label{eq:W-moment-comparison-correct}
\end{align}
Notice that the exponent here is $1/(2k-1)=1/L$, and that the
multiplicative factor $k$ must be retained.

Combining
\eqref{eq:W1-center-shift},
\eqref{eq:W-trunc-correct},
\eqref{eq:W-quadrature-correct}, and
\eqref{eq:W-moment-comparison-correct}, on $\mathcal G$ we have
\begin{align}
W_1(g,\widehat g)
&\le
|\widehat m_1-m|
+
W_1(g_0,g_{0,A})
+
W_1(g_{0,A},q_A)
+
W_1(q_A,\widetilde g) \notag\\
&\le
\eta
+
Ce^{-A^2/C}
+
\frac{CA}{L}
+
\frac{CA}{k^3} \notag\\
&\le
\frac{CA}{L}
\le
C\sqrt{\frac{\log k}{k}}.
\end{align}
Together with \eqref{eq:G-prob-final}, this proves the result.
\end{proof}

\section{Proofs of lower bounds}
\label{sec:lb-pf}

\subsection{Proof of \prettyref{prop:lower_bound-block}}
\label{sec:lower_bound-block}

For simplicity, we take the noise variance to be $\sigma^2 = 1$.
Fix $L\geq 2$. Let $g,g'$ be two distinct symmetric distributions supported on $[-c,c]$ whose first $2L-1$ moments agree, where the universal constant $c>0$ will be chosen sufficiently small. Such a pair exists, for example, by taking $g$ to be the uniform distribution on $[-c,c]$ and $g'$ to be its $L$-point Gaussian quadrature, which is also symmetric and matches all moments through degree $2L-1$.

Recall the block design $\X$ in \prettyref{eq:blockdesign}, where $m = p/n \in \naturals$.
Let $\bbeta=(\beta_1,\ldots,\beta_p) \iiddistr g$. Then $\btheta = \X \bbeta \sim (g^{(m)})^{\otimes n}$, where $g^{(m)}  = \text{Law}\pth{\frac{\beta_1+\ldots+\beta_m}{\sqrt{m}}}$ is the rescaled $m$-fold self-convolution of $g$ defined in  \prettyref{eq:self-conv}. Thus,
$\y = \btheta + \beps$ has a product distribution $P_g(\y) = (g^{(m)} * \varphi)^{\otimes n}$, where $\varphi$ is the standard normal density. So we can bound the total variation between product distribution by
\[
\TV(P_g(\y),P_{g'}(\y)) 
\leq 
n \cdot\TV(g^{(m)} * \varphi, g'^{(m)} * \varphi).
\]
Hence it suffices to bound this one dimensional total variation.

The distance between the one-dimensional normal mixture can be bounded using standard moment matching argument:
\begin{align}
4 \TV(g^{(m)} * \varphi, g'^{(m)} * \varphi)^2
&\stepa{\leq} \int \frac{\big(
g^{(m)} * \varphi- g'^{(m)} * \varphi\big)(y)^2}{\varphi(y)} \d y \nonumber \\
& \stepb{=}  \sum_{k=0}^\infty \frac{1}{k!} \big(m_k(g^{(m)}) - m_k(g'^{(m)})\big)^2,
\label{eq:hermite-block}
% & = \sum_{k=0}^\infty \frac{1}{k!} m^{-k}\big(m_k(g^{* m}) - m_k(\tilde{g}^{* m})\big)^2,
\end{align}
where (a) follows from the variational representation $\TV(p,q) = \frac{1}{2} \inf_r \sqrt{\int \frac{(p-q)^2}{r}}$ with infimum is over all probability densities $r$ (cf.~\cite[Exercise I.36]{PW-it});
(b) follows from expanding the mixture density under the Hermite basis whose coefficients are given by the moments of the mixing distribution (see \cite[Lemma 9]{wu2020optimal}).

It remains to bound the moment differences of the self-convolutions. Abbreviate
\[
\mu_r\equiv m_r(g),\qquad \mu_r'\equiv m_r(g').
\]
Let $\beta_1,\ldots,\beta_m\iiddistr g$ and $\beta_1',\ldots,\beta_m'\iiddistr g'$, and write
\[
T_m=\beta_1+\cdots+\beta_m,
\qquad
T_m'=\beta_1'+\cdots+\beta_m'.
\]
For an even integer $k\geq 0$, let $\cE_k$ denote the collection of set partitions of $[k]$ into nonempty blocks, each of even cardinality, and $\cE_0 \equiv \{\emptyset\}$ by convention.
 Since $g$ and $g'$ are symmetric, grouping the expansion of the $k$th moment according to the equality pattern among the indices gives, for even $k$,
\begin{equation}
\E[T_m^k]
=\sum_{\pi\in\cE_k}(m)_{|\pi|}\prod_{B\in\pi}\mu_{|B|},
\qquad
\E[(T_m')^k]
=\sum_{\pi\in\cE_k}(m)_{|\pi|}\prod_{B\in\pi}\mu_{|B|}',
\label{eq:partition-moment-block}
\end{equation}
where 
the falling factorial
$(m)_b\equiv m(m-1)\cdots(m-b+1)$ is the number of ways to assign distinct indices in $[m]$ to $b$ blocks. For odd $k$, both moments vanish.

For $\pi\in\cE_k$, let
\[
N_L(\pi)=\#\{B\in\pi:|B|\geq 2L\}.
\]
Note that  $\mu_r=\mu_r'$ for $r\leq 2L-1$ and $0\leq\mu_r,\mu_r'\leq c^r$ for even $r$, and the fact that 
for $a_j,b_j\geq 0$,
\[
\left|\prod_j a_j-\prod_j b_j\right|
\leq \sum_j |a_j-b_j| \prod_{\ell\neq j} \max\{a_\ell,b_\ell\},
\]
% a telescoping expansion of the difference of the two products 
Applying this to \prettyref{eq:partition-moment-block} yields
\[
\left|\prod_{B\in\pi}\mu_{|B|}
-\prod_{B\in\pi}\mu_{|B|}'\right|
\leq 2c^k N_L(\pi).
\]
Consequently,
\begin{equation}
\left|\E[T_m^k]-\E[(T_m')^k]\right|
\leq 2c^k\sum_{\pi\in\cE_k}(m)_{|\pi|}N_L(\pi).
\label{eq:marked-partition-block}
\end{equation}

To evaluate this sum, note that
\[
\sum_{\pi\in\cE_k}(m)_{|\pi|}N_L(\pi)
= |\{(\pi,B_*,\lambda): 
\pi\in \cE_k, B_* \in \pi, |B_*| \geq 2L, \lambda: \pi\to[m] \text{ injective}
\}|.
\]
To enumerate all such triples, note that if the marked block $B_*$ has size $r$, first choose its $r$ elements from $[k]$ and its index assignment from  $[m]$, and then partition the remaining positions into even blocks and assign them distinct labels from the remaining $m-1$ indices. This gives the exact identity
\begin{equation}
\sum_{\pi\in\cE_k}(m)_{|\pi|}N_L(\pi)
=m\sum_{\substack{r\geq 2L\\r\text{ even}}}^{k}
\binom{k}{r}
\sum_{\rho\in\cE_{k-r}}(m-1)_{|\rho|}.
\label{eq:marked-block-identity}
\end{equation}
 Applying 
\prettyref{eq:partition-moment-block} again, we recognize that 
\[
\sum_{\rho\in\cE_q}(m-1)_{|\rho|}
=\E\left(\eps_1+\cdots+\eps_{m-1}\right)^q
\]
where $\eps_i$'s are i.i.d.\ Rademacher random variables.
Since each $\eps_i$ is subgaussian with constant proxy variance, this sum is 
subgaussian with proxy variance proportional to $m$. Hence
% for every integer $q\geq 1$,
\[
\Expect[|\eps_1+\cdots+\eps_{m-1}|^q]
\leq (C\sqrt{q(m-1)})^q, \quad q\geq 1
\]
for some universal constant $C$. Combining this estimate with \prettyref{eq:marked-partition-block}--\prettyref{eq:marked-block-identity}, for even $k\geq 2L$ we obtain
\begin{align*}
\left|m_k(g^{(m)})-m_k(g'^{(m)})\right|
&=m^{-k/2}\left|\E[T_m^k]-\E[(T_m')^k]\right|\\
&\leq 2c^k m^{-k/2}m
\sum_{\substack{r\geq 2L\\r\text{ even}}}^{k}
\binom{k}{r}\big(C\sqrt{(k-r)(m-1)}\big)^{k-r}\\
&\leq 2c^k m^{-(L-1)}
\sum_{r=0}^{k}\binom{k}{r}(C\sqrt{k})^{k-r}\\
&=2c^k m^{-(L-1)}(1+C\sqrt{k})^k\\
&\leq m^{-(L-1)}(C_1c)^k k^{k/2}\exp(C_0\sqrt{k})
\end{align*}
for universal constants $C_0,C_1>0$. 
% In the second line, the term with $r=k$ is interpreted using the zeroth moment, and in the third line we used $r\geq 2L$.

Substituting this bound into \prettyref{eq:hermite-block} and recalling that the moment differences vanish for $k\leq 2L-1$ and for odd $k$, we get
\begin{align*}
4\TV(g^{(m)}*\varphi,g'^{(m)}*\varphi)^2
&\leq m^{-2(L-1)}
\sum_{k=2L}^{\infty}
\frac{(C_1c)^{2k}k^k}{k!}\exp(2C_0\sqrt{k})\\
&\leq C_2m^{-2(L-1)},
\end{align*}
where the last inequality follows from Stirling's approximation, provided that the universal constant $c>0$ (defining the support of $g,g'$) is chosen sufficiently small. Therefore, we obtain the desired
\[
\TV(g^{(m)}*\varphi,g'^{(m)}*\varphi)
\leq C m^{-(L-1)}.
\]  \qed

\subsection{Proof of \prettyref{thm:lower_bound}}
\label{sec:lower_bound-pf}

For simplicity, we take the error variance $\sigma^2 = 1$. Let $h,h'$ be two centered distributions supported on $[-1/2,1/2]$ with matching first $L$ moments. Let
\begin{align*}
\bbeta := \bgamma + \z, \quad \bbeta' := \bgamma' + \z, \quad \text{ where } \bgamma \sim h^{\otimes p}, \quad \bgamma' \sim h'^{\otimes p}, \quad \z\sim \N(0,\I_p),   
\end{align*}
so that $\bbeta$ and $\bbeta'$ have i.i.d.~entries distributed as $g = h \ast \N(0,1)$ and $g' = h'\ast \N(0,1)$ respectively, both of which are 1-subgaussian. Let
\begin{align*}
\tilde\beps := \X\z + \beps \sim \N(0, \X\X^\top + \I_n),\quad \w := (\X\X^\top + \I_n)^{-1/2}\tilde\beps \sim \N(0, \I_n).
\end{align*}
For any $0\leq k \leq p$, let $\bgamma^{[k]}\in\R^p$ be a random vector such that $(\bgamma^{[k]})_\ell = \gamma_\ell$ for $1\leq \ell\leq k$, and $(\bgamma^{[k]})_{\ell} = \gamma'_\ell$ for $k+1 \leq \ell \leq p$, so that $\bgamma^{[0]} = \bgamma'$ and $\bgamma^{[p]} = \bgamma$. Then
\begin{align}
\notag\dTV(P_g(\y), P_{g'}(\y)) &= \dTV(\X\bgamma^{[p]} + \tilde\beps, \X\bgamma^{[0]}+ \tilde\beps)\\
\notag&\overset{(\ast)}{=} \dTV(\tilde \X \bgamma^{[p]} + \w, \tilde \X \bgamma^{[0]} + \w)\\
&\leq \sum_{k=1}^p \dTV(\tilde\X\bgamma^{[k]}+ \w, \tilde\X\bgamma^{[k-1]}+ \w),
\label{eq:TV1}
\end{align}
where $(\ast)$ uses the fact that the total variation distance is invariant under invertible linear transformations, and
\begin{align*}
\tilde \X = (\X\X^\top + \I_n)^{-1/2}\X.
\end{align*}
Let $\tilde\x_j$ denote the $j$th column of $\tilde\X$. Then for any $1\leq k\leq p$, by the data processing inequality and rotational invariance of the total variation distance, 
\begin{align}
\notag&\dTV(\tilde\X\bgamma^{[k]}+ \w, \tilde\X\bgamma^{[k-1]}+ \w)\\
&= \dTV\Big(\sum_{\ell=1}^{k-1}\tilde\x_\ell \gamma_\ell + \tilde\x_k \gamma_k + \sum_{\ell=k+1}^p \tilde\x_\ell \gamma_\ell' + \w, \sum_{\ell=1}^{k-1}\tilde\x_\ell \gamma_\ell + \tilde\x_k \gamma_k' + \sum_{\ell=k+1}^p \tilde\x_\ell \gamma_\ell' + \w\Big)\nonumber\\
&\leq \dTV(\tilde\x_k \gamma_k + \w, \tilde\x_k \gamma_k' + \w) = \dTV(\pnorm{\tilde\x_k}{}\gamma_k + w_1, \pnorm{\tilde\x_k}{}\gamma_k' + w_1).\label{eq:TV2}
\end{align}

Next we apply a standard moment matching argument to bound the distance between these two univariate normal mixtures. 
% Let $Y = \pnorm{\x_k}{}\bbeta_k + \beps_1$ and $Y' = \pnorm{\x_k}{}\bbeta_k' + \beps_1$.
Recall the assumption that $\max_{k\in[p]}\pnorm{\tilde\x_k}{} \leq C_0 (\frac{n}{p})^\alpha (\log p)^\gamma
\leq 1/2$ which holds for 
$n \leq p^{1-\delta}$ and all large enough $p$.
Applying \cite[Theorem 3.3.3, Part 2]{WY-fnt}, 
we get for some universal constant $C$,
\begin{align*}
\chi^2(\pnorm{\tilde\x_k}{}\gamma_k + w_1, \pnorm{\tilde\x_k}{}\gamma_k' + w_1) \leq (C \pnorm{\tilde\x_k}{})^{2(L+1)}.
\end{align*}
Finally, combining this with \prettyref{eq:TV1}--\prettyref{eq:TV2} and
applying $\TV^2 \leq \frac{1}{4} \chi^2$ (see, e.g., \cite[Proposition 7.15]{PW-it}), we get
\begin{align*}
\dTV(P_g(\y), P_{g'}(\y)) \leq  p C^{L+1} \cdot \max_{k\in[p]}\pnorm{\tilde\x_k}{}^{L+1} \leq  p  \qth{CC_0 \Big(\frac{n}{p}\Big)^\alpha (\log p)^\gamma}^{L+1} \leq 0.1,
\end{align*}
by choosing $L$ to be large enough (depending only on $C_0,\alpha,\delta,\gamma$). The proof is complete. \qed

\subsection{Proof of Lemma \ref{lem:lower_gaussian}}\label{subsec:proof_lower_gaussian}
\begin{proof}
By the definition of $\X$,
\begin{align*}
\X\X^\top+\sigma^2\I_n
&=
\sigma^2\left(
\I_n+\frac{1}{n+p}\bar\X\bar\X^\top
\right).
\end{align*}
It follows that
\begin{align*}
\tilde\X =
(\X\X^\top+\sigma^2\I_n)^{-1/2}\X =
\frac{1}{\sqrt{n+p}}
\left(
\I_n+\frac{1}{n+p}\bar\X\bar\X^\top
\right)^{-1/2}
\bar\X,
\end{align*}
and hence for every $j\in[p]$,
\begin{equation}
\label{eq:lower-gaussian-contraction}
\pnorm{\tilde\X\e_j}{}^2
\leq
\frac{1}{n+p}\pnorm{\bar\X\e_j}{}^2.
\end{equation}

It therefore remains to control the column norms of $\bar\X$.
Let $\bar X_{ij}$ denote the $(i,j)$th entry of $\bar\X$.
By the subgaussian assumption and
$\bSigma\preceq \kappa\I_p$, there exists a constant
$K=K(C_0,\kappa)$ such that
\begin{align*}
\max_{i\in[n],\,j\in[p]}
\pnorm{\bar X_{ij}}{\psi_2}
\leq K,
\end{align*}
which further implies that for a universal constant $C>0$,
\begin{align*}
\pnorm{
\bar X_{ij}^2-\E\bar X_{ij}^2
}{\psi_1}
\leq CK^2.
\end{align*}
For each fixed $j$, the random variables $\bar X_{ij}^2-\E\bar X_{ij}^2$, $i\in[n]$ are independent and centered. Bernstein's inequality for sub-exponential random variables therefore gives, for every $t\geq 0$,
\begin{align}
\label{eq:lower-gaussian-bernstein}
\Prob\left(
\left|
\sum_{i=1}^n
\left(
\bar X_{ij}^2-\E\bar X_{ij}^2
\right)
\right|
>
CK^2\left(\sqrt{nt}+t\right)
\right)
\leq 2e^{-t},
\end{align}
where $C>0$ is a universal constant. Since the rows of $\bar\X$ have covariance $\bSigma$,
\begin{align*}
\E\bar X_{ij}^2
=
\Sigma_{jj}
\leq
\pnorm{\bSigma}{\op}
\leq \kappa.
\end{align*}
Taking $t=12\log p$ in
\eqref{eq:lower-gaussian-bernstein} and applying a union bound over
$j\in[p]$, we obtain, with probability at least $1-2p^{-11}$,
\begin{align*}
\max_{j\in[p]}\pnorm{\bar\X\e_j}{}^2
\leq
\kappa n
+
CK^2\left(
\sqrt{12n\log p}+12\log p
\right) \leq
C'(n+\log p),
\end{align*}
where $C'=C'(C_0,\kappa)$. Combining this estimate with
\eqref{eq:lower-gaussian-contraction}, we conclude that, with
probability at least $1-2p^{-11}$,
\begin{align*}
\max_{j\in[p]}\pnorm{\tilde\X\e_j}{}^2 \leq
C'\frac{n+\log p}{n+p} \leq
2C'\frac{n}{p}\log p
\end{align*}
and hence condition
\eqref{eq:design_assumption_lower} holds with probability converging to one, with $\alpha=\gamma=\frac{1}{2}$.
\end{proof}

\appendix

\section{Unbiased MoM with minimal variance}
\label{app:bettermom}
The EBMoM method is based on constructing an  unbiased estimator for each moment assuming all lower moments are known. As discussed in \prettyref{sec:construction}, there are in fact a continuum of such unbiased estimators, including the EBMoM estimator as a special case. It is of interest to consider the best moment estimator that attains the minimal variance. We explore this for the first two moments 
in this appendix, discuss the connections to the Rao's MIVQUE theory, and some of the computational challenges.

\paragraph{First moment}
To start, let us consider estimating the first moment $m_1$. 
Fix a test vector $\b \in \reals^n$.
Then
\[
\iprod{\b}{\y} = 
\mu \iprod{\ones}{\X^\top \b} + 
\iprod{\bbeta-\mu \ones}{\X^\top \b} + 
\iprod{\beps}{\b}.
\]
Provided that $\iprod{\ones}{\X^\top \b} \neq 0$, the estimator $\tilde m_1 = \frac{\iprod{\b}{\y}}{\iprod{\ones}{\X^\top \b}}$ is unbiased and satisfies
\[
\Var(\tilde m_1)
= \frac{\sigma^2 \|\b\|^2 + \mu_2 \|\X^\top \b\|^2}{\iprod{\b}{\X \ones}^2}
\]
where $\mu_2$ is the variance of the prior.
Minimizing this variance among all test vectors leads to the (oracle) choice of test vector
\begin{equation}
\b_* = \Q^{-1} \X \ones, \quad \Q = \mu_2\X\X^\top + \sigma^2 \I_n
\label{eq:oracle-b}    
\end{equation}
leading to the minimal variance
\[
\frac{1}{(\X\ones)^\top \Q^{-1} \X\ones }.
\]
In comparison, our estimator \prettyref{eq:m1hat} in \prettyref{sec:method} corresponds to choosing $\b = \X\ones$.
We say \prettyref{eq:oracle-b} is an oracle choice because it relies the knowledge of the true second moment (and the noise level). In practice we can plug in any consistent estimator of the second moment such as those from \prettyref{sec:method}.
In principle, one can iterate this because the second moment estimate relies on the first moment estimate. \prettyref{fig:m1} gives simulation result that shows the benefit of choosing the optimal test vector. In both cases, the variance is reduced by about a quarter.
\begin{figure}[!ht]
    \centering
\includegraphics[width=0.9\linewidth]{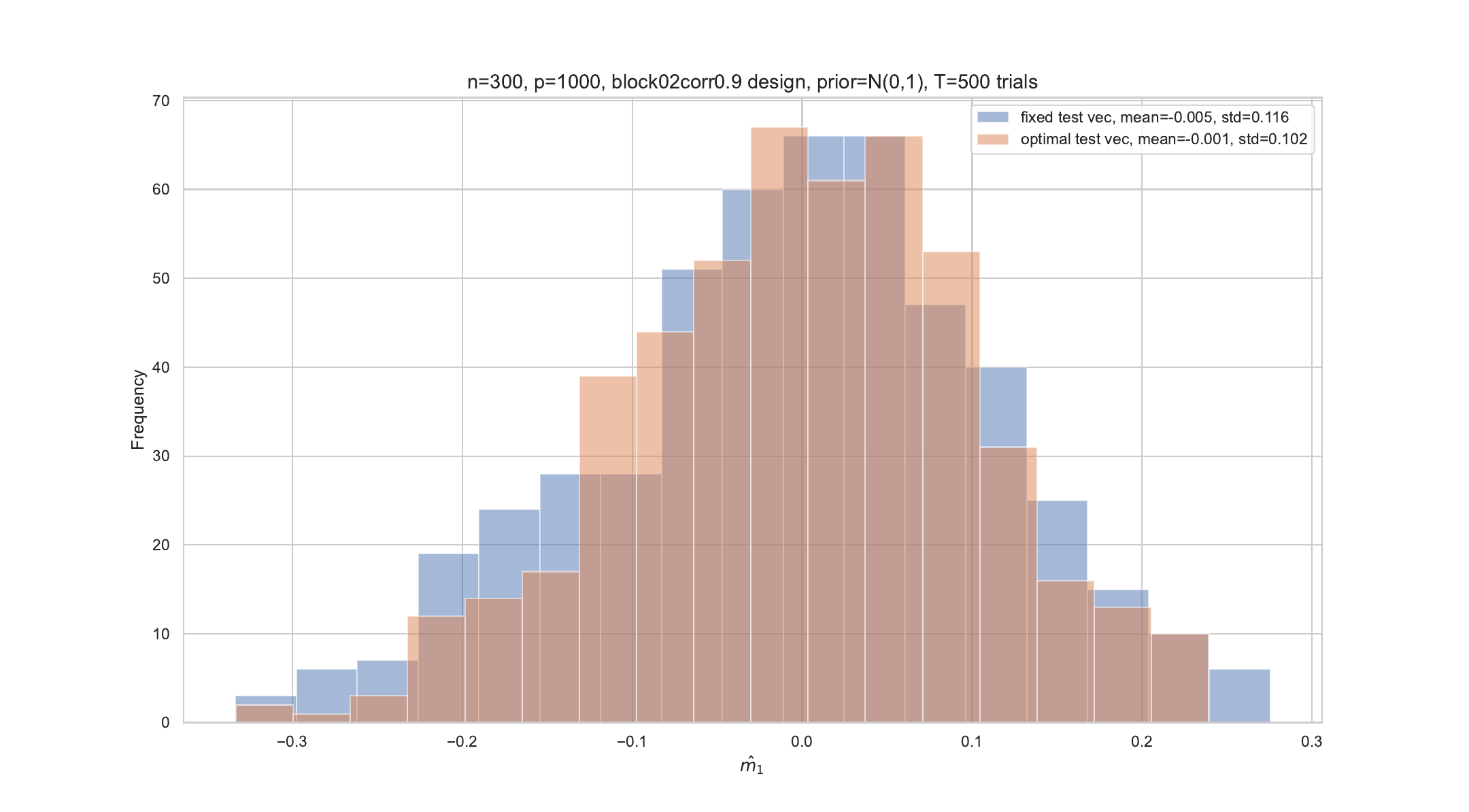}    \includegraphics[width=0.9\linewidth]{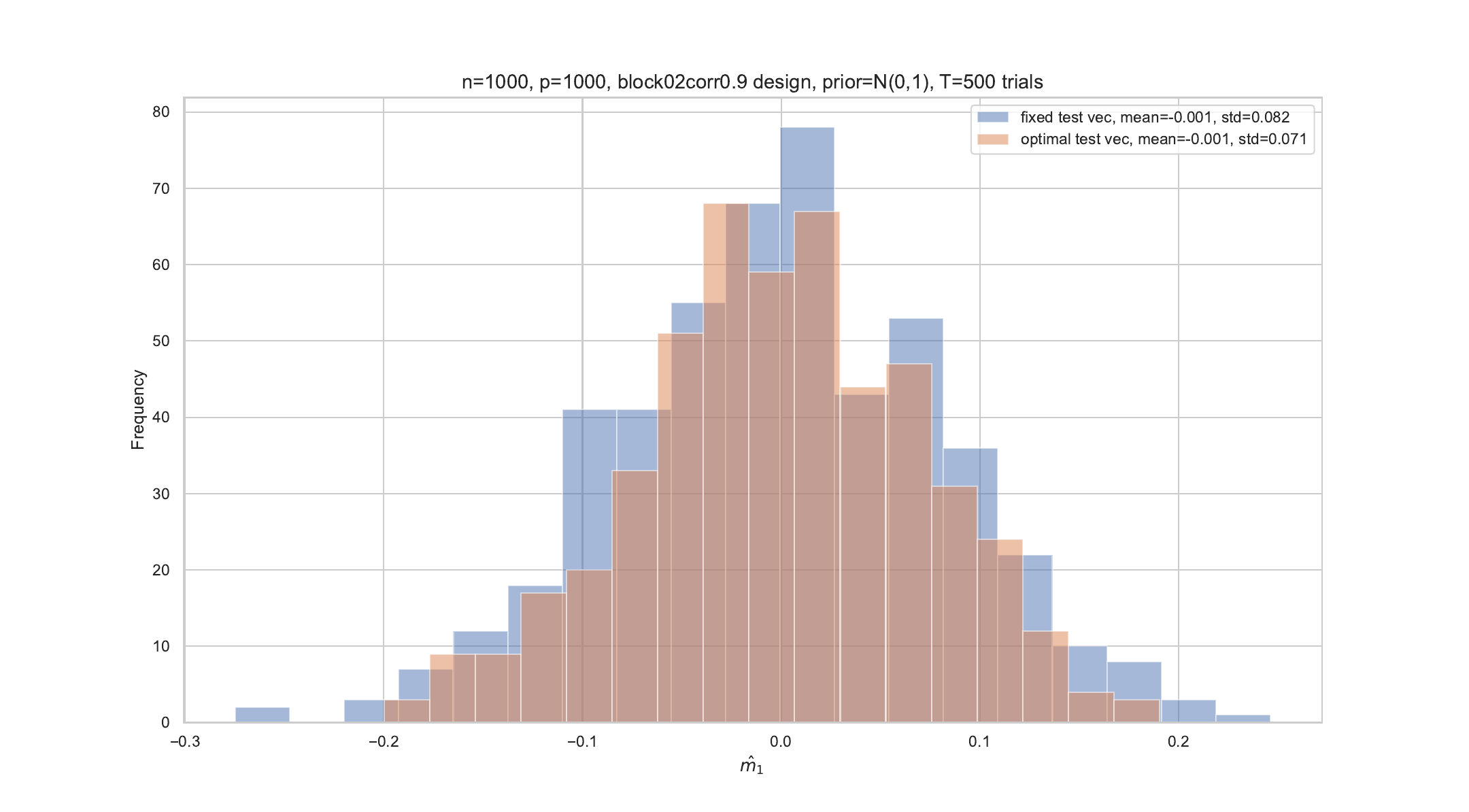}\caption{Histogram of first moment estimates: the simple estimator \prettyref{eq:m1hat} versus the oracle estimator \prettyref{eq:oracle-b}, with  estimated second moment using \prettyref{eq:mu2hat}.}
    \label{fig:m1}
\end{figure}

Note that not all test vectors work equally well. As a counterexample, consider $\b=\ones$ so that $\tilde{m}_1 = \frac{\bm{1}^\top \y}{\bm{1}^\top \X \bm{1}}$. This is unbiased, but
\begin{align*}
\var(\tilde{m}_1) = \frac{\var(\bm{1}^\top \X\bbeta) + \var(\bm{1}^\top \beps)}{(\bm{1}^\top \X \bm{1})^2} = \frac{\mu_2\pnorm{\X^\top\bm{1}}{}^2 + n\sigma^2}{(\bm{1}^\top \X \bm{1})^2}. 
\end{align*}
For block design with $n\leq p$ (see (\ref{eq:blockdesign}) for the definition), $\pnorm{\X^\top\bm{1}}{}^2 = n$ and $\bm{1}^\top \X\bm{1} = \sqrt{np}$, so the variance is $O(1/p)$. But for Gaussian design, $\pnorm{\X^\top\bm{1}}{}^2 = \Theta(n)$ and $(\bm{1}^\top \X \bm{1})^2 = \Theta(n)$ with high probability, so $\tilde{m}_1$ is not consistent regardless of how big  $n,p$ are.

% Conceivably, similar improved estimators can be found for the second and higher moments, but the calculation 

\paragraph{Second moment}
For simplicity, assume that the prior is already centered. To estimate its variance $\mu_2$, fix a symmetric $\Q \in \reals^{n\times n}$ and consider a quadratic form $\y^\top \Q \y$.
Then 
$\Expect[\y^\top \Q \y] = 
\tr(\X^\top \Q \X) \mu_2 + \tr(\Q) \sigma^2$,
so
\[
\hat \mu_2 = \frac{\y^\top \Q \y - \tr(\Q) \sigma^2}{\tr(\X^\top \Q \X)}
\]
is an unbiased estimator for $\mu_2$.
  Repeating the same calculation as in the proof of \prettyref{prop:MSE_first_two} (which corresponds to $\Q = \X\X^\top$), we get the counterpart of \prettyref{eq:m2var}:
\begin{equation}
\var(\hat \mu_2) 
 = 
 \frac{
(\mu_4 -3\mu_2^2) \sum_{a=1}^p (\X^\top \Q\X)_{aa}^2 + 2 \mu_2^2   \Fnorm{\X^\top \Q \X}^2
+ 4  \sigma^2 \mu_2  \tr(\Q \X\X^\top \Q)
+ 2 \sigma^4 \Fnorm{\Q}^2 }{\tr(\X^\top \Q \X)^2}.
    \label{eq:mu2-variance-general}
\end{equation}
Because this ratio is scale-invariant, we can minimize it by fixing the denominator as a constraint and minimizing the quadratic objective in the numerator. For Gaussian priors (or any distributions with vanishing 4th cumulant), which satisfy $\mu_4 = 3\mu_2^2 $, the first term in \prettyref{eq:mu2-variance-general} drops, and the optimal $\Q $ can be found by the first-order optimality condition: with $\bar\H = \X\X^\top$,
 \[
 \mu_2^2 
 \bar\H \Q \bar\H +\mu_2\sigma^2 (\bar\H \Q+\Q\bar\H)+\sigma^4\Q = \bar\H,
 % \Tr(\H \Q)=1.
 \]
 which has a simple closed form solution through the eigenvalue decomposition of $ \bar\H = \U \D \U^\top $, given as
$$ \Q_* = \U \bLambda \U^\top,\ \bLambda = \diag \Big( \frac{D_{ii}}{(\mu_2 D_{ii} + \sigma^2)^2} \Big)_{i=1}^n. $$
Similar to Rao's MIVQUE estimator (which simultaneously estimates $\mu_2$ and $\sigma^2$ under the same excess kurtosis condition \cite{rao1971minimum,swallow1978minimum}), this optimal estimator depends on the unknown parameter $\mu_2$ and hence requires a pilot estimator.
Without assuming normality, 
\prettyref{eq:mu2-variance-general} can no longer be solved spectrally; instead, it can be  solved as a quadratic optimization problem using vectorization, which involves $O(n^2)$ variables. Conceivably, optimizing the estimators for higher moments is even more computationally intensive. 
For this reason, we advocate for the explicit (and natural) construction in the EBMoM estimators.

\section{Improved analysis of third moment estimate}
\label{app:m3}

Under appropriate assumptions on the design and prior, Theorem \ref{thm:main_moment} suggests that our central moment estimator $\hat\mu_k$ will be consistent for $\mu_k$ as long as $n \gg p^{1-1/(1+2k^2)}$. This sub-linear gap $1/(1+2k^2)$ is rather crude for finite $k$ and can be significantly improved upon more careful analysis of $\hat\mu_k$. As a proof of concept, we show such a result for the third moment estimator $\hat\mu_3$ in the special case of Gaussian design. For simplicity, we assume $m_1 = 0$, in which case debiasing of lower moments in (\ref{eq:moment_def}) is not necessary, and, using $H_3(x) = x^3 - 3x$,
\begin{align*}
\hat\mu_3 = \frac{F_3(\y)}{A_3} = \frac{1}{A_3}\Big[\sum_{j=1}^p (\x_j^\top \y)^3 - 3\pnorm{\x_j}{}^2 (\x_j^\top \y)\Big], 
\end{align*}
with $A_3$ given in (\ref{eq:Ak}).

\begin{proposition}
Suppose $\X$ has i.i.d.~$\N(0,1/p)$ entries and the prior Assumption \ref{assump:prior} holds. Assume further $m_1 = 0$, $\sigma = 1$, and $p^2/n^3 \rightarrow 0$. Then there exists an $\X$-measurable event $\cE_n$ with $\Prob_{\X}(\cE_n) \rightarrow 1$ such that
\begin{align*}
\E\big[(\hat\mu_3 - m_3)^2\bm{1}_{\cE_n}|\X\big] \rightarrow 0.
\end{align*}
\end{proposition}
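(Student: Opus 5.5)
Because $m_1=0$, no centering or lower-order correction is needed. In the expansion of Lemma~\ref{prop:Bias} with $k=3$, every correction term requires $t\ge 2$ parts $d_j\ge 2$ summing to $3$, which is impossible. Hence $\E[F_3(\y)\mid\X]=A_3m_3$ exactly, $\hat\mu_3$ is conditionally unbiased, and it suffices to show $\Var(F_3(\y)\mid\X)/A_3^2\to0$ on a good event $\cE_n$. I would split the variance, as in Lemma~\ref{lem:variance_bound}, into $\Var(\iprod{\T^{(3)}}{\bbeta^{\otimes3}})$ plus $\E_{\bbeta}\Var(F_3\mid\bbeta)$. The point is to replace the crude operator-norm bounds of Section~\ref{subsec:proof_upper_main}, which lose factors of $p/n$, by bounds sharp for the Gaussian ensemble. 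Throughout I take $n\le p$; the case $n\gtrsim p$ is already covered by Theorem~\ref{thm:main_moment}.

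\textbf{Step 1 (signal).} Under $X_{ij}\iiddistr\N(0,1/p)$ we have $H_{ii}\approx n/p$ and $H_{ij}=O_P(\sqrt n/p)$ for $i\ne j$. Thus $A_3=\sum_iH_{ii}^3+\sum_{i\ne j}H_{ij}^3$. The diagonal part is $\asymp n^3/p^2$. The off-diagonal part has mean zero by sign symmetry and, after a second-moment computation, is of smaller order. I would put into $\cE_n$ the event $A_3\ge c\,n^3/p^2$, established by Chebyshev as in the proof of Lemma~\ref{lem:random_lsi}. Consequently the target is
\[
\Var(F_3\mid\X)=o\big(n^6/p^4\big).
\]

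\textbf{Step 2 (prior fluctuation).} Expand $\iprod{\T^{(3)}}{\bbeta^{\otimes3}}$ into its diagonal-free pieces, following \eqref{eq:T_beta_expansion}, and treat each piece separately.
\begin{itemize}
\item The $\beta_s^3$ piece has coefficient $\sum_jH_{js}^3\approx(n/p)^3$, giving variance $\approx p(n/p)^6$, which is negligible.
\item The $\beta_s^2\beta_t$ piece, after splitting $\beta_s^2=(\beta_s^2-m_2)+m_2$, contains the linear term $3m_2\,\v^\top\H\bbeta$ with $v_j=\sum_sH_{js}^2\approx n/p$. Its variance is roughly $(n/p)^2\,\ones^\top\H^3\ones\asymp n^3/p^2$.
\item The fully off-diagonal trilinear piece has variance $6\sum T_{stu}^2\lesssim\sum_{j,j'}(\H^2)_{jj'}^3$, which Wick/moment computations also show to be $O(n^3/p^2)$.
\end{itemize}
Each of these is $o(n^6/p^4)$ precisely when $p^2/n^3\to0$. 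For every such random quadratic or cubic functional of $\X$, I would bound its $\X$-expectation by Gaussian moment computations, control its fluctuation by the Poincar\'e inequality, and add the resulting concentration statements to $\cE_n$.

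\textbf{Step 3 (noise).} Use the exact formula from Lemma~\ref{lem:exp_of_var} for $\Var(F_3\mid\bbeta)$, indexed by $\ell=1,2,3$.
\begin{itemize}
\item For $\ell=3$ the term is $\sum_{j,j'}H_{jj'}^3=A_3$, and $A_3=o(A_3^2)$.
\item For $\ell=1$ the term is $\big((\H\bbeta)^{\circ2}\big)^\top\H\,(\H\bbeta)^{\circ2}$. Since $(\H\bbeta)_j^2$ has mean $\approx n/p$, this is $\approx(n/p)^2\,\ones^\top\H\ones\asymp n^3/p^2$, plus fluctuation terms bounded similarly.
\item For $\ell=2$ the term is $(\H\bbeta)^\top\H^{\circ2}(\H\bbeta)$. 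Write $\H^{\circ2}$ as a diagonal of size $(n/p)^2$, plus an approximately rank-one part $(n/p^2)\ones\ones^\top$, plus a remainder of small operator norm; this gives the bound $O(n^3/p^2+n^2/p^2)$.
\end{itemize}
Each term is again $o(n^6/p^4)$ under $p^2\ll n^3$.

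The main obstacle is the off-diagonal cubic and the $\ell=2$ Hadamard terms in Steps 2--3. There the naive bound $\opnorm{\H^{\circ d}}\le\opnorm{\H}^d\approx1$ is far too lossy, and one must exploit cancellations among the off-diagonal entries of $\H$: their mean-zero signs, and the near rank-one structure of $\H^{\circ 2}$ off the diagonal. I would handle this by explicit Wick expansions of the expectations over $\X$, followed by Poincar\'e-based concentration to define $\cE_n$.
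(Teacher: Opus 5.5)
Your plan is sound, and its overall strategy is the paper's. You reduce $\Var(F_3\mid\X)$ to finitely many polynomial functionals of the Gaussian design, control their $\X$-means and fluctuations, and intersect with $\{A_3\ge c\,n^3/p^2\}$.

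The bookkeeping is different. The paper splits $H_3(x)=x^3-3x$. It expands the sixth moment of $\sum_j(\x_j^\top\X\bbeta)^3$ by index multiplicities into an exact combination of six functionals $T_1,\dots,T_6$, and bounds each by its $\X$-mean plus a Poincar\'e fluctuation. The noise cross terms and the linear part are left to ``similar arguments''. You instead use the law of total variance, the diagonal-free expansion for the signal, and the exact Hermite covariance identity of Lemma~\ref{lem:exp_of_var} for the noise. This handles the noise and the $-3x$ correction together and isolates the dominant terms directly: your $\v^\top\H^2\v$ and $\sum_{j,j'}(\H^2)_{jj'}^3$ are exactly the paper's $T_1$ and $T_4$. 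The paper's expansion, in exchange, is an exact identity that exhibits every lower-order term.

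A few things to tighten.

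\textbf{First.} The variance of $3m_2\,\v^\top\H\bbeta$ is $9m_2^3\,\v^\top\H^2\v$, not $(n/p)^2\ones^\top\H^3\ones$. The order, $n^3/p^2$, is unchanged.

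\textbf{Second.} You dropped the bilinear piece $3\sum_{s\neq t}T^{(3)}_{sst}(\beta_s^2-m_2)\beta_t$. Its variance is at most a constant times $\sum_{s,t}(T^{(3)}_{sst})^2\le\opnorm{\H}^2A_4\lesssim n^4/p^3+n^2/p^2=o(n^6/p^4)$.

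\textbf{Third.} The $\ell=1$ noise term needs the same cancellation you reserve for $\ell=2$. Write $(\H\bbeta)^{\circ 2}=m_2\v+\w$. Then
$\E_{\bbeta}[\w^\top\H\w]=2m_2^2\sum_{j,j'}H_{jj'}(\H^2)_{jj'}^2+(m_4-3m_2^2)\sum_s \mathbf{h}_s^\top\H\mathbf{h}_s$, where $(\mathbf{h}_s)_j=H_{js}^2$. The second sum is at most $\opnorm{\H}A_4$ and is harmless. However, the crude bound $\opnorm{\H}\,\E\|\w\|^2\asymp n^2/p$ is $o(n^6/p^4)$ only when $n\gg p^{3/4}$. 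So the off-diagonal part of the first sum must actually be shown to cancel.

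All these off-diagonal cancellations follow at once, without Poincar\'e:
\begin{itemize}
\item For $j'\neq j$, the map $\x_j\mapsto-\x_j$ flips $H_{jj'}$ and $(\H^2)_{jj'}$.
\item It fixes $H_{kk'}$ and $(\H^2)_{kk'}$ whenever $j\notin\{k,k'\}$.
\item Hence for $f_{jj'}$ equal to $(\H^2)_{jj'}^3$, $(\H^2)_{jj'}H_{jj'}^2$ or $H_{jj'}(\H^2)_{jj'}^2$, the summands indexed by distinct unordered pairs are centered and uncorrelated.
\item Chebyshev then gives $\sum_{j\neq j'}f_{jj'}=O_P(n^{3/2}/p^2)$.
\end{itemize}

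\textbf{Finally.} Theorem~\ref{thm:main_moment} concerns the $\hat m_1$-centered estimator on a $\y$-dependent event, so it does not literally give the case $n\gtrsim p$. Cite Lemma~\ref{lem:variance_bound} and the $A_3$ lower bound from Lemma~\ref{lem:random_lsi} instead. Since $m_1=0$, the present $\hat\mu_3$ is the oracle $\bar\mu_3$, and these results give $\Var(F_3\mid\X)/A_3^2\lesssim 1/p$ on an $\X$-measurable event.
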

%\zf{It gets a bit confusing in this statement and the proof below which $\E,\P,\Var$ are conditional on $\X$ and which are marginally over $\X$.}
\begin{proof}
Throughout the proof, we write $\E,\Var,\P$ for probabilistic statements conditioning on $X$, and $\E_{\X},\Var_{\X},\P_{\X}$ for those with respect to the randomness of $\X$. Since $m_1 = 0$, the third moment tensor $\M^{(3)} = \E[\bbeta^{\otimes 3}] = (m_3\bm{1}_{i=j=k})_{i,j,k\in[p]}$, hence (\ref{eq:F_k2}) shows that
\begin{align*}
\E[\hat\mu_3] = \frac{1}{A_3} \iprod{\T^{(3)}}{ (m_3\bm{1}_{i=j=k})_{i,j,k\in[p]}} = m_3 \frac{\Tr \T^{(3)}}{A_3} = m_3, 
\end{align*}
showing that $\hat\mu_3$ is unbiased. To compute the variance, we have
\begin{align}\label{eq:var_decompose}
\notag\var(\hat{m}_3) &= A_3^{-2}\var\Big(\sum_{j=1}^p (\x_j^\top \y)^3 - 3 \pnorm{\x_j}{}^2 (\x_j^\top \y)\Big)\\
&\leq A_3^{-2}\Big[2\underbrace{\var\Big(\sum_{j=1}^p (\x_j^\top \y)^3\Big)}_{(I)} + 18 \underbrace{\var\Big(\sum_{j=1}^p\pnorm{\x_j}{}^2 (\x_j^\top \y)\Big)}_{(II)}\Big].
\end{align}
We will show that on a sequence of event $\cE_n(\X)$ with probability converging to 1, $(I)/A_3^2 \rightarrow 0$ as $n,p \rightarrow\infty$. Similar calculation will yield that $(II)/A_3^2 \rightarrow 0$, which completes the proof. For $(I)$, we have
\begin{align*}
\sum_{j=1}^p (\x_j^\top \y)^3 = \sum_{j=1}^p \Big[(\x_j^\top \X\bbeta)^3 + 3(\x_j^\top \X\bbeta)^2(\x_j^\top \beps) + 3(\x_j^\top \X\bbeta)(\x_j^\top \beps)^2 + (\x_j^\top \beps)^3\Big].
\end{align*}
Let us bound $\var(\sum_{j=1}^p(\x_j^\top \X\bbeta)^3) = \E[(\sum_{j=1}^p(\x_j^\top \X\bbeta)^3)^2] - (\E[\sum_{j=1}^p(\x_j^\top \X\bbeta)^3])^2$, and the variances of other terms can be bounded by similar arguments. 
% For the latter, we have
% \begin{align*}
% \var\Big(\sum_{j=1}^p(\x_j^\top \beps)^3\Big) = \E \Big(\sum_{j=1}^p(\x_j^\top \beps)^3\Big)^2 = \E \Big[\sum_{j=1}^p (\x_j^\top \beps)^6\Big] + \sum_{j\neq j'} \E\Big[(\x_j^\top \beps)^3(\x_{j'}^\top \beps)^3\Big].
% \end{align*}
% Note that for $j\neq j'$,
% \begin{align*}
% \begin{pmatrix}
% \x_j^\top \beps\\
% \x_{j'}^\top \beps
% \end{pmatrix} \overset{d}{=}
% \N\bigg(
% \begin{pmatrix}
% 0\\
% 0
% \end{pmatrix},
% \begin{pmatrix}
% \pnorm{\x_j}{}^2 & \x_j^\top \x_{j'}\\
% \x_j^\top \x_{j'} & \pnorm{\x_{j'}}{}^2,
% \end{pmatrix}
% \bigg)
% \end{align*}
% hence $\E[\sum_{j=1}^p (\x_j^\top \beps)^6] = 15\sum_{j=1}^p \pnorm{\x_j}{}^6$, and
% \begin{align*}
% \E\big[(\x_j^\top \beps)^3(\x_{j'}^\top \beps)^3\big] = 6(\x_j^\top \x_{j'})^3 + 9(\x_j^\top \x_{j'})\pnorm{\x_j}{}^2\pnorm{\x_{j'}}{}^2,
% \end{align*}
% leading to
% \begin{align*}
% \var\Big(\sum_{j=1}^p(\x_j^\top \beps)^3\Big) = \sum_{j,j'=1}^p 6(\x_j^\top \x_{j'})^3 + 9(\x_j^\top \x_{j'})\pnorm{\x_j}{}^2\pnorm{\x_{j'}}{}^2.
% \end{align*}
% For iid Gaussian design, this is $O(n^3/p^2)$. 
Note that
\begin{align*}
\sum_{j=1}^p (\x_j^\top \X\bbeta)^3 = \sum_{j=1}^p \sum_{a,b,c=1}^p (\x_j^\top \x_a)(\x_j^\top \x_b)(\x_j^\top \x_c) \beta_a\beta_b\beta_c,
\end{align*}
so using $m_1 = 0$, we have
\begin{align*}
\E\Big[\sum_{j=1}^p(\x_j^\top \X\bbeta)^3\Big] = m_3 \cdot \sum_{j,k=1}^p (\x_j^\top \x_k)^3,  
\end{align*}
and
\begin{align*}
\E\Big[\Big(\sum_{j=1}^p(\x_j^\top \X\bbeta)^3\Big)^2\Big] = \sum_{j,k=1}^p \underbrace{\sum_{a,b,c,d,e,f=1}^p (\x_j^\top \x_a)(\x_j^\top \x_b)(\x_j^\top \x_c)(\x_k^\top \x_d)(\x_k^\top \x_e)(\x_k^\top \x_f)\E[\beta_a\beta_b\beta_c\beta_d\beta_e\beta_f]}_{E(j,k)}.
\end{align*}
As a notation shorthand, for fixed $j,k\in[p]$, let
\begin{align*}
u_a := u_a(j) = \x_j^\top \x_a, \quad v_a := v_a(k) = \x_k^\top \x_a, \quad a\in[p],
\end{align*}
and
\begin{align*}
S_{rs} := S_{rs}(j,k) := \sum_{a = 1}^p u_a^rv_a^s = \sum_{a=1}^p (\x_j^\top \x_a)^r (\x_k^\top \x_a)^s. 
\end{align*}
To have non-zero contribution towards $E(j,k)$, we have the following configurations for the multiplicities of $a$-$f$: (i) 6; (ii) 2+2+2; (iii) 2 + 4; (iv) 3 + 3, which correspond to
\begin{equation*}
\begin{gathered}
 m_6 \cdot\sum_{j,k=1}^p \sum_{a=1}^p (\x_j^\top \x_a)^3(\x_k^\top \x_a)^3 \quad \text{case} (i)\\
 m_2^3 \cdot \Big[\sum_{j,k=1}^p \sum_{a\neq b\neq c} 9(\x_j^\top \x_a)^2 (\x_j^\top \x_b)(\x_k^\top \x_b)(\x_k^\top \x_c)^2 + 6(\x_j^\top \x_a)(\x_k^\top \x_a)(\x_j^\top \x_b)(\x_k^\top \x_b)(\x_j^\top \x_c)(\x_k^\top \x_c)\Big] \quad \text{case} (ii)\\
 m_2m_4 \cdot \Big[\sum_{j,k=1}^p \sum_{a\neq b} 6(\x_j^\top \x_a)^2 (\x_j^\top \x_b)(\x_k^\top \x_b)^3 + 9(\x_j^\top \x_a)(\x_j^\top \x_b)^2(\x_k^\top \x_a)(\x_k^\top \x_b)^2\Big] \quad \text{case} (iii)\\
 m_3^2 \cdot\Big[\sum_{j,k=1}^p \sum_{a\neq b} (\x_j^\top \x_a)^3(\x_k^\top \x_b)^3 + 9 (\x_j^\top \x_a)(\x_j^\top \x_b)^2(\x_k^\top \x_a)^2(\x_k^\top \x_b)\Big] \quad \text{case} (iv).
\end{gathered}
\end{equation*}
Adding up the contributions of these terms, we have %\zf{should second line be $S_{30}S_{03}+9S_{12}S_{21}+9S_{21}S_{12}-19S_{33}$?} \ys{I think I had a typo in the case (iv) expression, the current 9 was previous written as 18}
\begin{align*}
E(j,k) &= m_6\cdot S_{33} + m_2m_4\cdot \Big[3(S_{20}S_{13} + S_{31}S_{02}) + 9 S_{11}S_{22} - 15 S_{33}\Big]\\
&\quad + m_3^2\cdot\Big[S_{30}S_{03} + 9S_{12}S_{21} - 10S_{33}\Big]\\
&\quad + m_2^3 \cdot \Big[9S_{20}S_{11}S_{02} - 9(S_{31}S_{02} + S_{20}S_{13}) + 6 S_{11}^3 - 27 S_{11}S_{22} + 30 S_{33}\Big].
\end{align*}
Define the quantities
\begin{align*}
T_1 &:= \sum_{j,k=1}^p S_{20}(j,k)S_{11}(j,k)S_{02}(j,k),\\
T_2 &:= \sum_{j,k=1}^p \Big(S_{20}(j,k)S_{13}(j,k) + S_{31}(j,k)S_{02}(j,k)\Big),\\
T_3 &= \sum_{j,k = 1}^p S_{11}(j,k)S_{22}(j,k),\quad T_4 = \sum_{j,k=1}^p S_{11}(j,k)^3,\\
T_5 &= \sum_{j,k=1}^p S_{12}(j,k)S_{21}(j,k),\quad T_6 = \sum_{j,k=1}^pS_{33}(j,k).
\end{align*}
Since $(\E[\sum_{j=1}^p(\x_j^\top \X\bbeta)^3])^2 = m_3^2\sum_{j,k=1}^p S_{03}(j,k)S_{3,0}(j,k)$, we have
\begin{align*}
\var\Big(\sum_{j=1}^p(\x_j^\top \X\bbeta)^3\Big) &= m_6\cdot T_6 + m_2m_4\cdot \Big(3T_2 + 9 T_3 - 15 T_6\Big) + m_3^2\cdot\Big(9T_5 - 10T_6\Big)\\
&\quad + m_2^3 \cdot \Big(9T_1 - 9T_2 + 6 T_4 - 27 T_3 + 30 T_6\Big).
\end{align*}

Under the Gaussian assumption on $\X$, direct calculation yields that
\begin{align*}
\E_{\X}[T_1], \E_{\X}[T_4] &\asymp \frac{n^3}{p^2} \vee \frac{n^6}{p^5}, \quad \E_{\X}[T_2], \E_{\X}[T_3] \asymp \frac{n^3}{p^3} \vee \frac{n^5}{p^4} \vee \frac{n^6}{p^5}, \quad \E_{\X}[T_5], \E_{\X}[T_6] \asymp \frac{n^3}{p^4} \vee \frac{n^6}{p^5},  
\end{align*}
while the Gaussian Poincar\'e inequality yields the variance bound
\begin{align*}
\Var_{\X}(T_1) \vee \Var_{\X}(T_4) &\leq C\Big(\frac{n^5}{p^4} \vee \frac{n^{10}}{p^9}\Big),\\
\Var_{\X}(T_2) \vee \Var_{\X}(T_3) &\leq C\Big(\frac{n^5}{p^6} \vee \frac{n^9}{p^8} \vee \frac{n^{10}}{p^9}\Big),\\
\Var_{\X}(T_5) \vee \Var_{\X}(T_6) &\leq C\Big(\frac{n^5}{p^7} \vee \frac{n^8}{p^9} \vee \frac{n^{10}}{p^{11}}\Big).
\end{align*}
Define the $\X$-event
\begin{align*}
\cE_{1,n}(\X) = \{|T_i| \leq 1.5\E[T_i], \quad i = 1,...,6\},
\end{align*}
which holds with probability converging to 1 using the above variance bounds. On this event, we have by $n^3/p^2 \rightarrow \infty$ that
\begin{align*}
\var\Big(\sum_{j=1}^p(\x_j^\top \X\bbeta)^3\Big)\bm{1}_{\cE_{1,n}} \leq C\Big(\frac{n^3}{p^2} + \frac{n^6}{p^5}\Big).
\end{align*}
On the other hand, let $\cE_{2,n} := \{A_3 \geq cn^3/p^2\}$ for some small enough $c > 0$. By Lemma \ref{lem:random_lsi} and the fact that $\P(\pnorm{\X}{\op} \geq (1+\sqrt{n/p})/2) \rightarrow 1 $, we have $\Prob(\cE_{2,n}) \rightarrow 1$. Hence on the event $\cE_{n} = \cE_{1,n}\cap \cE_{2,n}$, we have
\begin{align*}
\frac{\var\Big(\sum_{j=1}^p(\x_j^\top \X\bbeta)^3\Big)}{A_3^2}\bm{1}_{\cE_{n}} \rightarrow 0. 
\end{align*}
The proof is then completed by applying similar arguments to the terms in $(I)$ and $(II)$ of (\ref{eq:var_decompose}).
\end{proof}

\section{Implementation details of EBMoM and EBflow}\label{sec:implementation}

%\ys{Delete DMM GMM and combine with EBflow}

\subsection{EBMoM}

In the simulation Section \ref{sec:exp}, we use DMM, GMM, and non-parametric splines to turn the EBMoM moment estimates into a prior estimator. We refer to \cite{wu2020optimal} for the implementation details of DMM and GMM, and explain below those of non-parametric splines. 

The input is the moment vector $\hat\mu = (\hat\mu_1,\dots,\hat\mu_L)$ with
$\hat\mu_1 = 0$. The output is a discrete distribution
\begin{equation*}
  \tilde g^{\mathrm{spl}} = \sum_{i=1}^{N} \hat p_i\, \delta_{x_i},
\end{equation*}
supported on the grid $\{x_i\}_{i=1}^N$, obtained as follows by a roughness-penalized moment fit.

We first project the raw moments onto the moment space. Applying the
projection~\eqref{eq:DMM} to $\hat\mu$ on the interval $[-A, A]$ yields a
valid moment vector $\tilde\mu = (\tilde\mu_1,\dots,\tilde\mu_L)$. Next, fix a uniform grid on $[-A, A]$,
\begin{equation*}
  x_i = -A + (i-1)\,\delta, \quad i = 1,\dots,N,
  \qquad \delta = \frac{2A}{N-1},
\end{equation*}
with $N = 201$ points. We seek a probability vector $p \in \mathbb{R}^N$,
representing masses $p_i$ at the grid points $x_i$, whose moments match
$\tilde\mu$ while the induced density is smooth. To this end, define the
moment (Vandermonde) matrix $V \in \mathbb{R}^{L \times N}$ and the
second-difference operator $D_2 \in \mathbb{R}^{(N-2)\times N}$ by
\begin{equation*}
  V_{\ell, i} = x_i^{\ell}, \quad 1 \le \ell \le L,
  \qquad
  (D_2 v)_i = \frac{v_i - 2v_{i+1} + v_{i+2}}{\delta^2},
  \quad 1 \le i \le N-2,
\end{equation*}
so that $(Vp)_\ell = \sum_{i=1}^N x_i^\ell p_i$ is the $\ell$th moment of
$p$, and $D_2(p/\delta)$ is a discrete second derivative of the density
$f = p/\delta$. The weight vector is the solution of the convex program
\begin{equation}
  \hat p = \operatorname*{arg\,min}_{p \in \mathbb{R}^N}
  \; \bigl\| V p - \tilde\mu \bigr\|_2^2
  \;+\; \lambda\,\delta\,\bigl\| D_2 (p/\delta) \bigr\|_2^2
  \quad \text{s.t.} \quad
  p \ge 0, \;\; \mathbf{1}^\top p = 1,
  \label{eq:spline_program}
\end{equation}
with penalty parameter $\lambda = 10^{-3}$. The first term enforces moment
matching, and the second penalizes roughness: since
$\delta \sum_i (f''(x_i))^2 \approx \int (f'')^2$, it approximates a
penalty on the squared curvature of the fitted density.

\subsection{EBflow}\label{sec:EBflow}

EBflow is applied to minimize an unregularized negative log-likelihood objective
\[F_n(g)={-}\frac{1}{p}\log \int \exp\Big({-}\frac{\|\y-\X\bbeta\|^2}{2\sigma^2}\Big)\prod_{j=1}^p \d g(\beta_j).\]
Fixing a tuning parameter $\tau^2 \in (0,\sigma^2/\|\X\|_{\op}^2)$, this is equivalently
\[F_n(g)={-}\frac{1}{p}\log \int \exp\Big({-}\frac{1}{2}(\y-\X\bvarphi)^\top \bSigma^{-1}(\y-\X\bvarphi)\Big)\prod_{j=1}^p \d [\N(0,\tau^2)*g](\varphi_j), \quad \bSigma=\sigma^2\I-\tau^2\X\X^\top.\]

Nonparametric estimation is carried out via a Langevin diffusion in $\bvarphi$ and gradient flow in $g$,
\begin{align}
\d \bvarphi_t&=\left({-}\X^\top \bSigma^{-1}(\X\bvarphi_t-\y)+\left(\frac{[\N(0,\tau^2)*g_t]'(\varphi_{t,j})}{[\N(0,\tau^2)*g_t](\varphi_{t,j})}\right)_{j=1}^p \right)\d t+\sqrt{2}\,\d\b^t,\label{eq:langevin}\\
\frac{\d}{\d t} g_t(\beta)&=\alpha_t g_t(\beta) \left(\N(0,\tau^2)*\frac{\frac{1}{p}\sum_{j=1}^p \delta_{\varphi_{t,j}}}{\N(0,\tau^2)*g_t}(\beta)-1\right).\label{eq:priorflow}
\end{align}
These are implemented with a fixed discrete support for $g$ having $N=201$ uniformly spaced points on $[-A,A]$, using a time discretization with learning rate $\eta_t^\varphi$ for \eqref{eq:langevin} and $\eta_t^w$ for \eqref{eq:priorflow} (corresponding to the relative learning rate $\alpha_t=\eta_t^w/\eta_t^\varphi$), as described in \cite[Section 2.3]{fan2023gradient}. We choose $\tau^2=0.5\sigma^2/\|\X\|_{\op}^2$ as recommended in
\cite[Section 4.1]{fan2023gradient}. %\zf{TODO: update these section numbers after the arxiv paper is updated}. 
For the uninformative initialization, EBflow is initialized with $g_0$ being the discrete uniform prior over this $N$-point grid, run for 1000 burn-in iterations with $\eta_t^w=0$ (i.e.\ fixed prior) and $\eta_t^\varphi=1/\|\X^\top\bSigma^{-1}\X+\tau^{-2}\I\|_\op$ starting at $\bvarphi_0=0$, followed by 29000 iterations with $\eta_t^w \in [10^{-3},10^{-2}]$ and $\eta_t^\varphi \in [10^{-3},1]/\|\X^\top\bSigma^{-1}\X+\tau^{-2}\I\|_\op$ both following exponential rates of decay from the maximum to minimum value in this range. For the EBMoM initialization, EBflow is initialized with $g_0$ as the EBMoM estimate, run for 3000 burn-in iterations as above, followed by 27000 iterations with fixed learning rates $\eta_t^w=10^{-3}$ and $\eta_t^\varphi=10^{-3}/\|\X^\top\bSigma^{-1}\X+\tau^{-2}\I\|_\op$.

For parametric estimation of $g$, parametrizing $g_t(\beta)$ as $g(\beta \mid \theta_t)$, \eqref{eq:priorflow} is replaced by the gradient flow equation of prior parameters
\begin{equation}\label{eq:thetaflow}
\frac{\d}{\d t} \theta_t
=\alpha_t\nabla_\theta \left(\frac{1}{p}\sum_{j=1}^p \log [\N(0,\tau^2)*g(\cdot \mid \theta_t)](\varphi_{t,j})\right).
\end{equation}
For the Rademacher and sparse prior examples,
we take the parametric family $g(\cdot \mid \theta)$ to be a mixture of two discrete point masses (the same number as in the DMM estimates of EBMoM) with unknown locations and weights. We discretize in time \eqref{eq:langevin} 
with learning rate $\eta_t^\varphi$ and \eqref{eq:thetaflow} with learning rate $\eta_t^\theta$, and again choose $\tau^2=0.5\sigma^2/\|\X\|_{\op}^2$. The uninformative prior initialization is given by $g(\beta \mid \theta_0)=\frac{1}{2}\delta_{\beta=-0.1}+\frac{1}{2}\delta_{\beta=0.1}$, while the EBMoM initialization is given by the two-point DMM estimate. Burn-in periods are as described above for nonparametric estimation, followed in the case of the uninformative initialization by 29000 iterations with $\eta_t^\theta \in [10^{-2},10^{-1}]$ and $\eta_t^\varphi \in [10^{-3},1]/\|\X^\top\bSigma^{-1}\X+\tau^{-2}\I\|_\op$ following exponential decay, or in the case of the EBMoM initialization by 27000 iterations with fixed learning rates $\eta_t^\theta=10^{-2}$ and $\eta_t^\varphi=10^{-3}/\|\X^\top\bSigma^{-1}\X+\tau^{-2}\I\|_\op$.

\bibliographystyle{alpha}
\bibliography{Regression}

@article{bayati2011dynamics,
  title={The dynamics of message passing on dense graphs, with applications to compressed sensing},
  author={Bayati, Mohsen and Montanari, Andrea},
  journal={IEEE Transactions on Information Theory},
  volume={57},
  number={2},
  pages={764--785},
  year={2011},
  publisher={IEEE}
}

@article{lee2026parametric,
  title={Parametric Mean-Field empirical {B}ayes in high-dimensional linear regression},
  author={Lee, Seunghyun and Deb, Nabarun},
  journal={arXiv preprint arXiv:2601.16842},
  year={2026}
}

@article{morgante2023flexible,
  title={A flexible empirical {B}ayes approach to multivariate multiple regression, and its improved accuracy in predicting multi-tissue gene expression from genotypes},
  author={Morgante, Fabio and Carbonetto, Peter and Wang, Gao and Zou, Yuxin and Sarkar, Abhishek and Stephens, Matthew},
  journal={PLoS Genetics},
  volume={19},
  number={7},
  pages={e1010539},
  year={2023},
  publisher={Public Library of Science San Francisco, CA USA}
}

@article{spence2022flexible,
  title={A flexible modeling and inference framework for estimating variant effect sizes from {GWAS} summary statistics},
  author={Spence, Jeffrey P and Sinnott-Armstrong, Nasa and Assimes, Themistocles L and Pritchard, Jonathan K},
  journal={BioRxiv},
  pages={2022--04},
  year={2022},
  publisher={Cold Spring Harbor Laboratory}
}

@inproceedings{kuntz2023particle,
  title={Particle algorithms for maximum likelihood training of latent variable models},
  author={Kuntz, Juan and Lim, Jen Ning and Johansen, Adam M},
  booktitle={International Conference on Artificial Intelligence and Statistics},
  pages={5134--5180},
  year={2023},
  organization={PMLR}
}

@inproceedings{lim2024momentum,
  title={Momentum particle maximum likelihood},
  author={Lim, JN and Kuntz, J and Power, S and Johansen, Adam M},
  booktitle={Proceedings of 41st International Conference on Machine Learning (ICML)},
  volume={235},
  pages={29816--29871},
  year={2024}
}

@article{caprio2025error,
  title={Error bounds for particle gradient descent, and extensions of the log-{S}obolev and {T}alagrand inequalities},
  author={Caprio, Rocco and Kuntz, Juan and Power, Samuel and Johansen, Adam M},
  journal={Journal of Machine Learning Research},
  volume={26},
  number={103},
  pages={1--38},
  year={2025}
}

@article{akyildiz2023interacting,
  title={Interacting Particle {L}angevin Algorithm for Maximum Marginal Likelihood Estimation},
  author={Akyildiz, {\"O} Deniz and Crucinio, Francesca Romana and Girolami, Mark and Johnston, Tim and Sabanis, Sotirios},
  journal={arXiv preprint arXiv:2303.13429},
  year={2023}
}

@article{fan2025dynamicalI,
  title={Dynamical mean-field analysis of adaptive {L}angevin diffusions: {P}ropagation-of-chaos and convergence of the linear response},
  author={Fan, Zhou and Ko, Justin and Loureiro, Bruno and Lu, Yue M and Shen, Yandi},
  journal={arXiv preprint arXiv:2504.15556},
  year={2025}
}

@article{fan2025dynamicalII,
  title={Dynamical mean-field analysis of adaptive {L}angevin diffusions: {R}eplica-symmetric fixed point and empirical {B}ayes},
  author={Fan, Zhou and Ko, Justin and Loureiro, Bruno and Lu, Yue M and Shen, Yandi},
  journal={arXiv preprint arXiv:2504.15558},
  year={2025}
}

@incollection{neal1998view,
  title={A view of the {EM} algorithm that justifies incremental, sparse, and other variants},
  author={Neal, Radford M and Hinton, Geoffrey E},
  booktitle={Learning in graphical models},
  pages={355--368},
  year={1998},
  publisher={Springer}
}

@article{delyon1999convergence,
  title={Convergence of a stochastic approximation version of the {EM} algorithm},
  author={Delyon, Bernard and Lavielle, Marc and Moulines, Eric},
  journal={Annals of statistics},
  pages={94--128},
  year={1999},
  publisher={JSTOR}
}

@article{diebolt1993asymptotic,
  title={Asymptotic properties of a stochastic {EM} algorithm for estimating mixing proportions},
  author={Diebolt, Jean and Celeux, Gilles},
  journal={Stochastic Models},
  volume={9},
  number={4},
  pages={599--613},
  year={1993},
  publisher={Taylor \& Francis}
}

@article{neath2013convergence,
  title={On convergence properties of the {M}onte {C}arlo {EM} algorithm},
  author={Neath, Ronald C},
  journal={Advances in modern statistical theory and applications: a Festschrift in Honor of Morris L. Eaton},
  volume={10},
  pages={43--63},
  year={2013},
  publisher={Institute of Mathematical Statistics}
}

@article{fort2003convergence,
  title={Convergence of the {M}onte {C}arlo expectation maximization for curved exponential families},
  author={Fort, Gersende and Moulines, Eric},
  journal={The Annals of Statistics},
  volume={31},
  number={4},
  pages={1220--1259},
  year={2003},
  publisher={Institute of Mathematical Statistics}
}

@article{chan1995monte,
  title={{M}onte {C}arlo {EM} estimation for time series models involving counts},
  author={Chan, KS and Ledolter, Johannes},
  journal={Journal of the American Statistical Association},
  volume={90},
  number={429},
  pages={242--252},
  year={1995},
  publisher={Taylor \& Francis}
}

@article{zhou2021fast,
  title={A fast and robust {B}ayesian nonparametric method for prediction of complex traits using summary statistics},
  author={Zhou, Geyu and Zhao, Hongyu},
  journal={PLoS genetics},
  volume={17},
  number={7},
  pages={e1009697},
  year={2021},
  publisher={Public Library of Science}
}

@article{ge2019polygenic,
  title={Polygenic prediction via {B}ayesian regression and continuous shrinkage priors},
  author={Ge, Tian and Chen, Chia-Yen and Ni, Yang and Feng, Yen-Chen Anne and Smoller, Jordan W},
  journal={Nature communications},
  volume={10},
  number={1},
  pages={1776},
  year={2019},
  publisher={Nature Publishing Group UK London}
}

@article{lloyd2019improved,
  title={Improved polygenic prediction by {B}ayesian multiple regression on summary statistics},
  author={Lloyd-Jones, Luke R and Zeng, Jian and Sidorenko, Julia and Yengo, Lo{\"\i}c and Moser, Gerhard and Kemper, Kathryn E and Wang, Huanwei and Zheng, Zhili and Magi, Reedik and Esko, T{\~o}nu and others},
  journal={Nature communications},
  volume={10},
  number={1},
  pages={5086},
  year={2019},
  publisher={Nature Publishing Group UK London}
}

@article{zhou2013polygenic,
  title={Polygenic modeling with Bayesian sparse linear mixed models},
  author={Zhou, Xiang and Carbonetto, Peter and Stephens, Matthew},
  journal={PLoS genetics},
  volume={9},
  number={2},
  pages={e1003264},
  year={2013},
  publisher={Public Library of Science San Francisco, USA}
}

@article{polyanskiy2021sharp,
  title={Sharp regret bounds for empirical Bayes and compound decision problems},
  author={Polyanskiy, Yury and Wu, Yihong},
  journal={arXiv preprint arXiv:2109.03943},
  year={2021}
}

@article {jiang2009general,
    AUTHOR = {Jiang, Wenhua and Zhang, Cun-Hui},
     TITLE = {General maximum likelihood empirical {B}ayes estimation of
              normal means},
   JOURNAL = {Ann. Statist.},
  FJOURNAL = {The Annals of Statistics},
    VOLUME = {37},
      YEAR = {2009},
    NUMBER = {4},
     PAGES = {1647--1684},
      ISSN = {0090-5364,2168-8966},
   MRCLASS = {62C12 (62C25 62G05)},
  MRNUMBER = {2533467},
MRREVIEWER = {Tonglin\ Zhang},
       DOI = {10.1214/08-AOS638},
       URL = {https://doi.org/10.1214/08-AOS638},
}

@article {li2005convergence,
    AUTHOR = {Li, Jianjun and Gupta, Shanti S. and Liese, Friedrich},
     TITLE = {Convergence rates of empirical {B}ayes estimation in
              exponential family},
   JOURNAL = {J. Statist. Plann. Inference},
  FJOURNAL = {Journal of Statistical Planning and Inference},
    VOLUME = {131},
      YEAR = {2005},
    NUMBER = {1},
     PAGES = {101--115},
      ISSN = {0378-3758,1873-1171},
   MRCLASS = {62C12 (62G05)},
  MRNUMBER = {2137529},
MRREVIEWER = {Yu-ling\ Tseng},
       DOI = {10.1016/j.jspi.2003.12.017},
       URL = {https://doi.org/10.1016/j.jspi.2003.12.017},
}

@book {gautschi2004orthogonal,
    AUTHOR = {Gautschi, Walter},
     TITLE = {Orthogonal polynomials: computation and approximation},
    SERIES = {Numerical Mathematics and Scientific Computation},
      NOTE = {Oxford Science Publications},
 PUBLISHER = {Oxford University Press, New York},
      YEAR = {2004},
     PAGES = {x+301},
      ISBN = {0-19-850672-4},
   MRCLASS = {42-02 (33C45 33F05 42C05)},
  MRNUMBER = {2061539},
MRREVIEWER = {Leonid\ B.\ Golinski\u i},
}

@article {golub1969calculation,
    AUTHOR = {Golub, Gene H. and Welsch, John H.},
     TITLE = {Calculation of {G}auss quadrature rules},
   JOURNAL = {Math. Comp.},
  FJOURNAL = {Mathematics of Computation},
    VOLUME = {23},
      YEAR = {1969},
     PAGES = {221--230; addendum, ibid. 23 (1969), no. 106, loose microfiche
              suppl. A1--A10},
      ISSN = {0025-5718,1088-6842},
   MRCLASS = {65.55},
  MRNUMBER = {245201},
MRREVIEWER = {Frank\ Stenger},
       DOI = {10.2307/2004418},
       URL = {https://doi.org/10.2307/2004418},
}

@article{barbier2020mutual,
  title={Mutual information and optimality of approximate message-passing in random linear estimation},
  author={Barbier, Jean and Macris, Nicolas and Dia, Mohamad and Krzakala, Florent},
  journal={IEEE Transactions on Information Theory},
  volume={66},
  number={7},
  pages={4270--4303},
  year={2020},
  publisher={IEEE}
}

@book {bakry2014analysis,
    AUTHOR = {Bakry, Dominique and Gentil, Ivan and Ledoux, Michel},
     TITLE = {Analysis and geometry of {M}arkov diffusion operators},
    SERIES = {Grundlehren der mathematischen Wissenschaften [Fundamental
              Principles of Mathematical Sciences]},
    VOLUME = {348},
 PUBLISHER = {Springer, Cham},
      YEAR = {2014},
     PAGES = {xx+552},
      ISBN = {978-3-319-00226-2; 978-3-319-00227-9},
   MRCLASS = {60J25 (58J65 60J35 60J60)},
  MRNUMBER = {3155209},
MRREVIEWER = {Ming\ Liao},
       DOI = {10.1007/978-3-319-00227-9},
       URL = {https://doi.org/10.1007/978-3-319-00227-9},
}

@article{wu2020optimal,
  title={Optimal estimation of Gaussian mixtures via denoised method of moments},
  author={Wu, Yihong and Yang, Pengkun},
  journal={Annals of Statistics},
  volume={48},
  number={4},
  year={2020}
}

@article{rao1971estimation,
  title={Estimation of variance and covariance components--{MINQUE} theory},
  author={Rao, C Radhakrishna},
  journal={Journal of multivariate analysis},
  volume={1},
  number={3},
  pages={257--275},
  year={1971},
  publisher={Elsevier}
}

@article{zhou2017unified,
  title={A unified framework for variance component estimation with summary statistics in genome-wide association studies},
  author={Zhou, Xiang},
  journal={The Annals of Applied Statistics},
  volume={11},
  number={4},
  pages={2027},
  year={2017}
}

@book{PW-it,
  author = {Yury Polyanskiy and Yihong Wu},
  title = {Information Theory: From Coding to Learning},
  year = {2024},
  publisher={Cambridge University Press},
  note={{A}vailable at: \url{http://www.stat.yale.edu/~yw562/teaching/itbook-export.pdf}}
}

@article{WY-fnt,
 title={Polynomial methods in statistical inference: theory and practice},
 author={Yihong Wu and Pengkun Yang},
 journal={Monograph in \emph{Foundations and Trends in Communications and Information Theory}},
month={Oct},
 year={2020},
 Volume={17},number={4}, pages={402--586}
}

@book{vershynin2018high,
  title={High-dimensional probability: An introduction with applications in data science},
  author={Vershynin, Roman},
  volume={47},
  year={2018},
  publisher={Cambridge university press}
}

@article{hansen1982large,
  title={Large sample properties of generalized method of moments estimators},
  author={Hansen, Lars Peter},
  journal={Econometrica: Journal of the econometric society},
  pages={1029--1054},
  year={1982},
  publisher={JSTOR}
}

@article{casella1985introduction,
  title={An introduction to empirical Bayes data analysis},
  author={Casella, George},
  journal={The American Statistician},
  volume={39},
  number={2},
  pages={83--87},
  year={1985},
  publisher={Taylor \& Francis}
}

@article{zhang2003compound,
  title={Compound decision theory and empirical Bayes methods},
  author={Zhang, Cun-Hui},
  journal={Annals of Statistics},
  pages={379--390},
  year={2003},
  publisher={JSTOR}
}

@book{efron2012large,
  title={Large-scale inference: empirical Bayes methods for estimation, testing, and prediction},
  author={Efron, Bradley},
  volume={1},
  year={2012},
  publisher={Cambridge University Press}
}

@article{ver1996parametric,
  title={Parametric empirical Bayes methods for ecological applications},
  author={Ver Hoef, Jay M},
  journal={Ecological Applications},
  volume={6},
  number={4},
  pages={1047--1055},
  year={1996},
  publisher={Wiley Online Library}
}

@article{efron2001empirical,
  title={Empirical Bayes analysis of a microarray experiment},
  author={Efron, Bradley and Tibshirani, Robert and Storey, John D and Tusher, Virginia},
  journal={Journal of the American statistical association},
  volume={96},
  number={456},
  pages={1151--1160},
  year={2001},
  publisher={Taylor \& Francis}
}

@article{brown2008season,
  title={In-Season Prediction of Batting Averages: A Field Test of Empirical Bayes and Bayes Methodologies},
  author={Brown, Lawrence D},
  journal={The Annals of Applied Statistics},
  pages={113--152},
  year={2008},
  publisher={JSTOR}
}

@inproceedings{robbins1951asymptotically,
  title={Asymptotically Subminimax Solutions of Compound Statistical Decision Problems},
  author={Robbins, Herbert},
  booktitle={Proceedings of the Second Berkeley Symposium on Mathematical Statistics and Probability},
  volume={2},
  pages={131--149},
  year={1951},
  organization={University of California Press}
}

@inproceedings{robbins1956empirical,
  title={An Empirical Bayes Approach to Statistics},
  author={Robbins, Herbert},
  booktitle={Proceedings of the Third Berkeley Symposium on Mathematical Statistics and Probability, Volume 1: Contributions to the Theory of Statistics},
  volume={3},
  pages={157--164},
  year={1956},
  organization={University of California Press}
}

@article{fan2023gradient,
  title={Gradient flows for empirical Bayes in high-dimensional linear models},
  author={Fan, Zhou and Guan, Leying and Shen, Yandi and Wu, Yihong},
  journal={arXiv preprint arXiv:2312.12708},
  year={2023}
}

@article{mukherjee2023mean,
  title={A mean field approach to empirical Bayes estimation in high-dimensional linear regression},
  author={Mukherjee, Sumit and Sen, Bodhisattva and Sen, Subhabrata},
  journal={arXiv preprint arXiv:2309.16843},
  year={2023}
}

@article{henderson1953estimation,
  title={Estimation of variance and covariance components},
  author={Henderson, Charles R},
  journal={Biometrics},
  volume={9},
  number={2},
  pages={226--252},
  year={1953},
  publisher={JSTOR}
}

@article{rao1972estimation,
  title={Estimation of variance and covariance components in linear models},
  author={Rao, C Radhakrishna},
  journal={Journal of the American Statistical Association},
  volume={67},
  number={337},
  pages={112--115},
  year={1972},
  publisher={Taylor \& Francis}
}

@article{bulik2015ld,
  title={LD Score regression distinguishes confounding from polygenicity in genome-wide association studies},
  author={Bulik-Sullivan, Brendan K and Loh, Po-Ru and Finucane, Hilary K and Ripke, Stephan and Yang, Jian and Patterson, Nick and Daly, Mark J and Price, Alkes L and Neale, Benjamin M},
  journal={Nature genetics},
  volume={47},
  number={3},
  pages={291--295},
  year={2015},
  publisher={Nature Publishing Group}
}

@article{o2019extreme,
  title={Extreme polygenicity of complex traits is explained by negative selection},
  author={O'Connor, Luke J and Schoech, Armin P and Hormozdiari, Farhad and Gazal, Steven and Patterson, Nick and Price, Alkes L},
  journal={The American journal of human genetics},
  volume={105},
  number={3},
  pages={456--476},
  year={2019},
  publisher={Elsevier}
}

@article{o2021distribution,
  title={The distribution of common-variant effect sizes},
  author={O'Connor, Luke J},
  journal={Nature genetics},
  volume={53},
  number={8},
  pages={1243--1249},
  year={2021},
  publisher={Nature Publishing Group US New York}
}

@article{yang2011gcta,
  title={GCTA: a tool for genome-wide complex trait analysis},
  author={Yang, Jian and Lee, S Hong and Goddard, Michael E and Visscher, Peter M},
  journal={The American Journal of Human Genetics},
  volume={88},
  number={1},
  pages={76--82},
  year={2011},
  publisher={Elsevier}
}

@article{loh2015contrasting,
  title={Contrasting genetic architectures of schizophrenia and other complex diseases using fast variance-components analysis},
  author={Loh, Po-Ru and Bhatia, Gaurav and Gusev, Alexander and Finucane, Hilary K and Bulik-Sullivan, Brendan K and Pollack, Samuela J and Schizophrenia Working Group of the Psychiatric Genomics Consortium and de Candia, Teresa R and Lee, Sang Hong and Wray, Naomi R and others},
  journal={Nature genetics},
  volume={47},
  number={12},
  pages={1385--1392},
  year={2015},
  publisher={Nature Publishing Group US New York}
}

@article{pearson1894contributions,
  title={Contributions to the mathematical theory of evolution},
  author={Pearson, Karl},
  journal={Philosophical Transactions of the Royal Society of London. A},
  volume={185},
  pages={71--110},
  year={1894},
  publisher={JSTOR}
}

@book{hall2005generalized,
  title={Generalized Method of Moments},
  author={Hall, Alastair R},
  year={2005},
  publisher={Oxford University Press}
}

@article{nebebe1986bayes,
  title={Bayes and empirical Bayes shrinkage estimation of regression coefficients},
  author={Nebebe, Fassil and Stroud, TWF},
  journal={Canadian Journal of Statistics},
  volume={14},
  number={4},
  pages={267--280},
  year={1986},
  publisher={Wiley Online Library}
}

@article{george2000calibration,
  title={Calibration and empirical Bayes variable selection},
  author={George, EdwardI and Foster, Dean P},
  journal={Biometrika},
  volume={87},
  number={4},
  pages={731--747},
  year={2000},
  publisher={Oxford University Press}
}

@article{yuan2005efficient,
  title={Efficient empirical Bayes variable selection and estimation in linear models},
  author={Yuan, Ming and Lin, Yi},
  journal={Journal of the American Statistical Association},
  volume={100},
  number={472},
  pages={1215--1225},
  year={2005},
  publisher={Taylor \& Francis}
}

@article{Verzelen2018Adaptive,
  author    = {Verzelen, Nicolas and Gassiat, Elisabeth},
  title     = {Adaptive estimation of high-dimensional signal-to-noise ratios},
  journal   = {Bernoulli},
  volume    = {24},
  number    = {4B},
  pages     = {3683--3710},
  year      = {2018}
}

@article{Dicker2014Variance,
  author    = {Dicker, Lee H.},
  title     = {Variance estimation in high-dimensional linear models},
  journal   = {Biometrika},
  volume    = {101},
  number    = {2},
  pages     = {269--284},
  year      = {2014}
}

@article{swallow1978minimum,
  title={Minimum variance quadratic unbiased estimation (MIVQUE) of variance components},
  author={Swallow, William H and Searle, SR},
  journal={Technometrics},
  volume={20},
  number={3},
  pages={265--272},
  year={1978},
  publisher={Taylor \& Francis}
}

@article{rao1971minimum,
  title={Minimum variance quadratic unbiased estimation of variance components},
  author={Rao, C Radhakrishna},
  journal={Journal of Multivariate Analysis},
  volume={1},
  number={4},
  pages={445--456},
  year={1971},
  publisher={Elsevier}
}

@book{horn2012matrix,
  title={Matrix analysis},
  author={Horn, Roger A and Johnson, Charles R},
  year={2012},
  publisher={Cambridge university press},
edition={2nd}
}

@article{kong2018estimating,
  title={Estimating learnability in the sublinear data regime},
  author={Kong, Weihao and Valiant, Gregory},
  journal={Advances in Neural Information Processing Systems},
  volume={31},
  year={2018}
}

@article{lindsay1989moment,
  title={Moment matrices: applications in mixtures},
  author={Lindsay, Bruce G},
  journal={The Annals of Statistics},
  volume={17},
  number={2},
  pages={722--740},
  year={1989},
  publisher={Institute of Mathematical Statistics}
}
	
\end{document}